\newtheorem{theorem}{Theorem}[section]
\newtheorem{claim}[theorem]{Claim}
\newtheorem{lemma}[theorem]{Lemma}
\newtheorem{proposition}[theorem]{Proposition}
\newtheorem{corollary}[theorem]{Corollary}
\newtheorem{fact}[theorem]{Fact}
\newtheorem*{thm.2.4}{Theorem \ref
{GlavinMainTheorem}}
\newtheorem*{cor2.23}{Corollary \ref{compactQpoint}}
\newtheorem*{thm.316}{Theorem \ref{thm.3.16}}
\newtheorem*{thm.317}{Theorem \ref{thm.3.17}}
\newtheorem*{cor.5.6}{Theorem \ref{ProductCor}}
\newtheorem*{cor.5.7}{Theorem \ref{PowerCor}}
\newtheorem*{thm6.10}{Theorem \ref{thm.bgclosedclass}}
\newtheorem*{thm.7.2}{Theorem
 \ref{analyse}}
\newtheorem*{thm.7.9}{Theorem \ref{Kunen-Paris-Model}}
\newtheorem*{thm.UA}{Theorem \ref{UACor}} 
\theoremstyle{definition}
\newtheorem{definition}[theorem]{Definition}
\newtheorem{example}[theorem]{Example}
\newtheorem{question}[theorem]{Question}
\theoremstyle{remark}
\newtheorem{remark}[theorem]{Remark}
\def\l{{\langle}}
\def\r{{\rangle}}
\def\mathunderaccent#1#2 {\let\theaccent#1\skewfactor#2
\mathpalette\putaccentunder}
\def\putaccentunder#1#2{\oalign{$#1#2$\crcr\hidewidth
\vbox to.2ex{\hbox{$#1\skew\skewfactor\theaccent{}$}\vss}\hidewidth}}
\def\smallbox#1{\leavevmode\thinspace\hbox{\vrule\vtop{\vbox
   {\hrule\kern1pt\hbox{\vphantom{\tt/}\thinspace{\tt#1}\thinspace}}
   \kern1pt\hrule}\vrule}\thinspace}
\DeclareMathOperator{\dom}{dom}
\DeclareMathOperator{\Cub}{Cub}
\DeclareMathOperator{\Add}{Cohen}
\DeclareMathOperator{\otp}{otp}
\DeclareMathOperator{\id}{id}
\DeclareMathOperator{\Ch}{Ch}
\DeclareMathOperator{\Sp}{Sp}
\DeclareMathOperator{\Image}{Im}
\newcommand{\cf}{{\rm cf}}
\newcommand{\ra}{\rightarrow}
\newcommand{\al}{\alpha}
\newcommand{\om}{\omega}
\newcommand{\sse}{\subseteq}
\newcommand{\re}{\upharpoonright}
\DeclareMathOperator{\Gal}{Gal}
\newcommand{\contains}{\supseteq}
\title[Cofinal types over measurable cardinals]{Cofinal types of ultrafilters over\\ measurable cardinals}
\date{\today}
\author{Tom Benhamou}
\address[Benhamou]{Department of Mathematics, Statistics, and Computer Science, University of Illinois at Chicago, Chicago, IL 60607, USA}
\email{tomb@uic.edu}
\thanks{The research of the first author was supported by the National Science Foundation under Grant
DMS-2246703.}
\author{Natasha Dobrinen}
\address[Dobrinen]{Department of Mathematics, University of Notre Dame, Notre Dame, IN 46556, USA}
\email{ndobrine@nd.edu}
\thanks{The second author was partially supported by  National Science Foundation Grant DMS-1901753}
\subjclass[2020]{03E02, 03E04, 03E05, 03E55, 06A06, 06A07}
\keywords{ultrafilter, $p$-point,   Tukey order, cofinal type, Galvin property}
\begin{document}
\let\labeloriginal\label
\let\reforiginal\ref
\def\ref#1{\reforiginal{#1}}
\def\label#1{\labeloriginal{#1}}
\maketitle
\begin{abstract}
    We develop the theory of cofinal types of ultrafilters 
    over measurable cardinals and establish its connections to  Galvin's property. 
   We generalize fundamental results from the countable to the uncountable, 
  but   often in  surprisingly strengthened forms, 
    and present models with varying  structures of the cofinal types of ultrafilters over measurable cardinals.
\end{abstract}
\section{Introduction}

This article develops the theory of cofinal types of ultrafilters over  measurable cardinals. 
While 
cofinal, or Tukey, types of ultrafilters   over $\om$  have  been studied extensively (see, for instance, \cite{Blass/Dobrinen/Raghavan15,DobrinenJSL15,DobrinenFund20,Dobrinen/Mijares/Trujillo14,Dobrinen/Todorcevic11,Dobrinen/Todorcevic14,Dobrinen/Todorcevic15,Kuzeljevic/Raghavan18,Milovich08,Milovich12,Milovich20,Raghavan/Shelah17,Raghavan/Todorcevic12,Raghavan/Verner19}),
this is the first substantive investigation  of  cofinal types of    ultrafilters  
 over measurable cardinals.

Given two posets $(P,\leq_P)$ and $(Q,\leq_Q)$, a function $f:Q\rightarrow P$ is \textit{cofinal} if for every cofinal subset $X\subseteq Q$, its $f$-image $f''X$ is cofinal in $P$. A function $g:P\rightarrow Q$ is \textit{unbounded}
if for every unbounded  subset $X\subseteq P$,
$g''X$ is unbounded in $Q$.
Schmidt showed in  \cite{Schmidt55}
that there is a cofinal map  from $Q$ to $P$ if and only if there is an unbounded map 
from $P$ to $Q$.
We say that 
$P$ is {\em Tukey reducible to $Q$}, and write
$P\leq_T Q$, if there is a cofinal map  $f:Q\rightarrow P$  or, equivalently,  an unbounded map $g:P\rightarrow Q$.
When both $P\le_T Q$ and $Q\le_T P$, then we say that they are 
{\em Tukey equivalent},  and write $P\equiv_T Q$.
This forms an equivalence relation, and the equivalence classes are called {\em Tukey types}.
We regard ultrafilters  as directed partial orders under reverse inclusion. A cofinal subset of an ultrafilter $U$ with respect to reversed inclusion is also called a \textit{filter base}.

The Tukey order was introduced by Tukey in  \cite{Tukey40}  to extend  the notion of  Moore-Smith convergence in topology to the more general setting of nets, or  directed partial orders.
Two directed partial orders are {\em cofinally similar} exactly when there is a third directed partial order into which they each embed as a cofinal subset. 
In particular, for  ultrafilters Tukey equivalence is the same as cofinal similarity; hence  Tukey types are also called  {\em cofinal types}.

Tukey reduction has been seen to provide insightful classifications of partially ordered sets in contexts where isomorphism is too strong a notion to produce any meaningful results.  This has been  seen in work of Isbell \cite{Isbell65}, Todorcevic \cite{TodorcevicDirSets85, Todorcevic96}, Fremlin \cite{Fremlin91}, and Solecki--Todorcevic \cite{Solecki/Todorcevic04}, to mention a few, as well as  a large collection  papers on ultrafilters over countable base sets
beginning  with Milovich's paper \cite{Milovich08}.
The majority of results
 on cofinal types of ultrafilters over $\om$  fall into one of the following categories:
\begin{enumerate}
    \item 
Finding conditions under which  Fubini products are cofinally equivalent to  cartesian products \cite{Dobrinen/Todorcevic11};
\item
Canonizations of cofinal maps into continuous  or at least finitary maps which imply cofinal types of certain ultrafilters have  cardinality $\mathfrak{c}$ (rather than $2^{\mathfrak{c}}$)  \cite{Dobrinen/Todorcevic11, Raghavan/Todorcevic12, Blass/Dobrinen/Raghavan15, DobrinenJSL15, Dobrinen/Mijares/Trujillo14, DobrinenFund20}; 
\item
Other conditions guaranteeing an ultrafilter is not of the maximum cofinal type \cite{Milovich08, Dobrinen/Todorcevic11, Raghavan/Todorcevic12, Blass/Dobrinen/Raghavan15, DobrinenJSL15};
\item
Embeddings of certain partial orders  \cite{Dobrinen/Todorcevic11,Raghavan/Todorcevic12},
including long  lines \cite{Raghavan/Verner19, Kuzeljevic/Raghavan18} and $\mathcal{P}(\om)/\mathrm{fin}$ \cite{Raghavan/Shelah17}, into the cofinal types of p-points in various models of \textsf{ZFC};
\item
Finding conditions under which  Tukey reduction implies Rudin-Keisler or even  Rudin-Blass reduction \cite{Raghavan/Todorcevic12, DobrinenFund20};
\item
Finding the exact structure of the cofinal types, including the precise structure of the Rudin-Keisler classes inside them, below Ramsey ultrafilters
\cite{Raghavan/Todorcevic12},
below certain p-points satisfying weak partition relations \cite{Dobrinen/Todorcevic14, Dobrinen/Todorcevic15,Dobrinen/Mijares/Trujillo14}, 
and below non-p-points  forced by $\mathcal{P}(\om^k)/\mathrm{fin}^{\otimes k}$ \cite{DobrinenJSL15}.
\end{enumerate}
We extend some results from each of the categories (1)--(4) to 
 ultrafilters on measurable cardinals.
In contrast to the situation on $\om$, we will pinpoint the exact structure of the cofinal types in certain models with measurable cardinals, establishing results on $\om$ lying in category (6) but not exact analogues of them. Our results 
 will be discussed in Subsection \ref{Subsec:IntroResults}.

The present investigation began with a realization of the deep connection between the Galvin property (Definition \ref{Def:Galvin Property}) and 
the non-maximality of cofinal types of ultrafilters (Theorem \ref
{GlavinMainTheorem}).
The Galvin property for ultrafilters on measurable cardinals has been studied extensively  in recent years, and 
a major motivation for  our 
work  was to develop a finer understanding of  conditions guaranteeing 
 the Galvin property through the development of the Tukey theory of ultrafilters on measurable cardinals.

\subsection{The Galvin property}

In 1973, F.~Galvin proved the following surprising result \cite{MR795592}:
\begin{theorem}[Galvin's Theorem]
    Suppose that $\kappa^{<\kappa}=\kappa>\aleph_0$. Then for every normal filter\footnote{$U$ is normal if $\triangle_{\alpha<\kappa}F_\alpha:=\{\beta<\kappa\mid \forall\alpha<\beta\, (\beta\in F_\alpha)\}\in U$ for all $\langle F_\alpha\mid \alpha<\kappa\rangle\subseteq U.$} $U$ over $\kappa$, the Galvin property holds: for every sequence of sets $\l A_i\mid i<\kappa^+\r\in [U]^{\kappa^+}$, there is an $I\in [\kappa^+]^{\kappa}$ such that $\bigcap_{i\in I}A_i\in U$. 
\end{theorem}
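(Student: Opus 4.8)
The plan is to exploit the hypothesis $\kappa^{<\kappa}=\kappa$ in the form $2^{<\kappa}=\kappa$ (for every infinite $\mu<\kappa$ one has $2^\mu\le\mu^\mu\le\kappa^{<\kappa}=\kappa$; the hypothesis also makes $\kappa$ regular), together with the standard facts that a normal filter $U$ over $\kappa$ contains every club subset of $\kappa$ and is closed under diagonal intersections. Given $\langle A_i\mid i<\kappa^+\rangle\in[U]^{\kappa^+}$, I will extract $\kappa$-many of the $A_i$ that \emph{cohere along a club}: a ``target'' set $A^*\subseteq\kappa$, the club $\{\gamma_\alpha\mid\alpha<\kappa\}$ whose increasing enumeration lists the limit ordinals below $\kappa$ (so each $\gamma_\alpha$ is a limit ordinal and $\gamma_\alpha>\alpha$), and pairwise distinct indices $i_\alpha<\kappa^+$ with $A_{i_\alpha}\cap\gamma_\alpha=A^*\cap\gamma_\alpha$ for all $\alpha<\kappa$. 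Granting this, set $I=\{i_\alpha\mid\alpha<\kappa\}\in[\kappa^+]^\kappa$ and $D=\triangle_{\alpha<\kappa}A_{i_\alpha}$, which lies in $U$ by normality since every $A_{i_\alpha}\in U$; it then suffices to show $D\cap\{\gamma_\alpha\mid\alpha<\kappa\}\subseteq\bigcap_{i\in I}A_i$, as the left-hand side belongs to $U$.

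For this containment, take $\delta=\gamma_{\alpha^*}\in D$. Since $\delta$ is a limit ordinal with $\delta>\alpha^*$, we have $\alpha^*+1<\delta$, so $\delta\in D$ gives $\delta\in A_{i_\alpha}$ for all $\alpha\le\alpha^*+1$. For $\alpha>\alpha^*$ we instead have $\delta=\gamma_{\alpha^*}<\gamma_\alpha$, hence $\delta\in A_{i_\alpha}$ iff $\delta\in A_{i_\alpha}\cap\gamma_\alpha=A^*\cap\gamma_\alpha$ iff $\delta\in A^*$; and $\delta\in A^*$ holds because $\delta\in A_{i_{\alpha^*+1}}$ together with $\delta<\gamma_{\alpha^*+1}$ gives $\delta\in A_{i_{\alpha^*+1}}\cap\gamma_{\alpha^*+1}=A^*\cap\gamma_{\alpha^*+1}$. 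Thus $\delta\in A_{i_\alpha}$ for every $\alpha<\kappa$, i.e.\ $\delta\in\bigcap_{i\in I}A_i$.

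It remains to produce $A^*$ and the coherent indices, which is the crux. A naive recursion runs into trouble: if one refines, at each level $\gamma<\kappa$, the partition of the index set $\kappa^+$ by the trace $i\mapsto A_i\cap\gamma$ and keeps a block of size $\kappa^+$, the resulting blocks form a descending chain whose intersection may drop below $\kappa^+$ at limit stages. I would bypass this with a counting argument. Fix $\gamma<\kappa$: the trace partition of $\kappa^+$ has at most $2^{|\gamma|}\le 2^{<\kappa}=\kappa$ blocks, so the union of those blocks of size $\le\kappa$ has size at most $\kappa\cdot\kappa=\kappa$; call its members the $\gamma$-rare indices. Over all $\gamma<\kappa$, only $\le\kappa\cdot\kappa=\kappa$ indices are $\gamma$-rare for some $\gamma$, so, as $\kappa^+>\kappa$, there is an index $i_0$ which is $\gamma$-rare for no $\gamma<\kappa$; put $A^*:=A_{i_0}$. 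By construction $|\{i<\kappa^+\mid A_i\cap\gamma=A^*\cap\gamma\}|=\kappa^+$ for every $\gamma<\kappa$, and so one may recursively choose $i_\alpha\in\{i<\kappa^+\mid A_i\cap\gamma_\alpha=A^*\cap\gamma_\alpha\}\setminus\{i_\beta\mid\beta<\alpha\}$ --- this set has size $\kappa^+>|\alpha|$ --- which yields exactly the data used above. I expect the counting step isolating $A^*$, which is where $2^{<\kappa}=\kappa$ really enters and which sidesteps the limit-stage collapse, to be the one genuinely new idea; the diagonal-intersection computation in the second paragraph is the part most in need of careful bookkeeping, since one has to keep the ``early'' coordinates $\alpha\le\alpha^*+1$ (handled via membership in $D$) cleanly apart from the ``late'' ones $\alpha>\alpha^*$ (handled via agreement with $A^*$).
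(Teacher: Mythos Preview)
Your counting argument producing an index $i_0$ with $|\{i : A_i \cap \gamma = A_{i_0} \cap \gamma\}| = \kappa^+$ for every $\gamma < \kappa$ is exactly Galvin's key observation; the paper cites Galvin's theorem rather than proving it in full, but later (Section~\ref{sec.bg}) isolates precisely this lemma and refers to \cite[Proposition~5.13]{Parttwo} for its proof. So your strategy is the standard one.

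There is, however, a slip in the diagonal-intersection bookkeeping. The increasing enumeration of limit ordinals is $\gamma_\alpha = \omega\cdot(1+\alpha)$, and this \emph{equals} $\alpha$ whenever $\alpha$ is a nonzero multiple of $\omega^\omega$; your parenthetical claim ``$\gamma_\alpha > \alpha$'' fails at these fixed points. At such $\delta = \gamma_{\alpha^*} = \alpha^*$, membership in $D$ only yields $\delta \in A_{i_\alpha}$ for $\alpha < \alpha^*$, so you cannot invoke $\delta \in A_{i_{\alpha^*+1}}$ to deduce $\delta \in A^*$, and the case $\alpha = \alpha^*$ is left uncovered as well (the trace $A_{i_{\alpha^*}}\cap\gamma_{\alpha^*}$ says nothing about the point $\delta=\gamma_{\alpha^*}$ itself). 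The cleanest repair drops the club of limit ordinals entirely: choose $i_\alpha$ with $A_{i_\alpha}\cap(\alpha+1)=A^*\cap(\alpha+1)$ and verify directly that $A^*\cap\triangle_{\alpha<\kappa}A_{i_\alpha}\subseteq\bigcap_{\alpha<\kappa}A_{i_\alpha}$ (for $\alpha<\beta$ use the diagonal intersection; for $\alpha\ge\beta$ use $\beta<\alpha+1$ together with trace agreement with $A^*\ni\beta$). This also sidesteps your appeal to ``normal filters contain every club'', which does not follow from the bare definition in the footnote (e.g.\ a principal filter is closed under diagonal intersections).
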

For example, if $\kappa=\omega_1$ 
 and \textsf{GCH} holds
(so $\omega_1^{<\omega_1}=\omega_1$),
then the club filter $\Cub_{\omega_1}$ 
is normal and Galvin's Theorem asserts that for every collection of $\omega_2$ many clubs there is a subcollection of size $\omega_1$ whose intersection is still a club.

Although Abraham and Shelah \cite{AbrahamShelah1986} and later Benhamou, Garti, and Poveda \cite{bgp} show that the assumption that $\kappa^{<\kappa}=\kappa$ cannot be dropped from Galvin's Theorem, the normality assumption and the property obtained in Galvin's theorem were recently improved by Benhamou and Gitik \cite{Parttwo,SatInCan}, and by Garti \cite{MR3604115,MR3787522}. In this paper we will provide certain strengthenings of those results. 

A filter satisfying the property  described in Galvin's theorem is called a \textit{Galvin filter}. 
Galvin filters turned out to be useful for several applications described  below, which brought the Galvin property back to the front of the stage.
Let us use  notation from \cite{bgp} to allow some flexibility with certain parameters in Galvin's property: 
\begin{definition}[The Galvin property]\label{Def:Galvin Property}
    For an ultrafilter $U$ over $\kappa$ 
    and two cardinals $\mu\leq \lambda$ we denote by $\Gal(U,\mu,\lambda)$ the assertion that
    $$\forall \l A_i\mid i<\lambda\r\in[ U]^\lambda \ \exists I\in [\lambda]^\mu \ \cap_{i\in I}A_i\in U$$
A {\em Galvin ultrafilter} 
is an ultrafilter satisfying $\Gal(U,\kappa,2^\kappa)$.
\end{definition}

Before diving into deeper set-theoretic water, let us elaborate on some  applications of Galvin's property.

The use of filters and ultrafilters to establish theorems both in finite and infinitary combinatorics has a long history. Paradigmatic examples of this are   the \emph{Pigeonhole Principle}, 
\emph{Ramsey's Theorem}, \emph{Arrow's Impossibility Theorem}, or 
the partition relation $\kappa\rightarrow (\kappa)^2_2$ for measurable cardinals, 
among many others. See \cite{KomjathBook} for additional interesting examples.
It turns out that Galvin's property characterizes some \emph{polarized partition relations}. 
 Hajnal \cite{Haj} proved that if $\kappa$ is a measurable cardinal  then $\binom{\kappa^+}{\kappa}\rightarrow\binom{\kappa}{\kappa}_\theta$ for every $\theta<\kappa$.\footnote{This stands for the following statement: For every coloring $c\colon \kappa^+\times \kappa\rightarrow\theta$ there is $A\in [\kappa^+]^\kappa$ and $B\in [\kappa]^\kappa$ such that $c\restriction (A\times B)$ is constant.}
A modern proof of Hajnal's result using $\mathrm{Gal}(U,\kappa,\kappa^+)$ can be found in \cite[Proposition 3.3]{bgs}. This latter proof actually yields the enhanced polarized relation saying that the second component of the monochromatic set is a member of $U$ (i.e., $\binom{\kappa^+}{\kappa}\rightarrow\binom{\kappa}U_\theta$). 
Moreover, that  argument is   more flexible than other stronger versions of Galvin's property, such as the ones considered in \cite{bgp}, entailing stronger forms of the polarized partition relation. 
But most interestingly, the above implication is reversible, namely, $\binom{\kappa^+}{\kappa}\rightarrow\binom{\kappa}{U}_2$ implies  $\Gal(U,\kappa,\kappa^+)$\footnote{Given a sequence $\l A_\alpha\mid \alpha<\kappa\r$, the desired intersection of size $\kappa$ is given by the homogeneous rectangle guaranteed  by the assumption $\binom{\kappa^+}{\kappa}\rightarrow\binom{\kappa}{U}_2$, for the coloring $c:\kappa^+\times\kappa\rightarrow 2$ defined by $c(\alpha,\beta)=1$ iff $\beta\in A_\alpha$.}. 

Other interesting connections between Galvin's property and infinite combinatorics arise in the  context of \emph{prediction principles}; especially in the study of the \emph{diamond-like} ones (such as \emph{Devlin-Shelah weak diamond} in \cite{MR3604115,MR3914938} and \emph{the superclub principle} in \cite{MR3787522}, see also \cite{ForcingGalvin}) More recently, the Galvin property has been used by Benhamou, Garti and Poveda \cite{GalDet} to prove consistently new instances of the partition relation $\lambda\rightarrow (\lambda,\omega+1)$ and the classical question of \cite{MR795592}  about possible generalizations (both in \textsf{ZFC} and \textsf{ZF}+\textsf{AD}) of the \emph{Erd\"{o}s-Dushnik-Miller theorem}.

The range of applicability of Galvin's property has not been limited to the borders of infinite combinatorics. There is also a recently-opened avenue of research in \emph{Prikry-type forcings} where  Galvin's property is being used to classify  the intermediate models of the Prikry and Magidor generic extensions \cite{Parttwo}, it is equivalent to the density of old sets in Prikry extensions \cite{GitDensity,bgp}, and violating Galvin's property of $U$ is necessary for the corresponding Prikry forcing to project onto Cohen forcing $\mathrm{Add}(\kappa,\kappa^+)$ \cite{OnPrikryandCohen}. As a final application, we mention that the Galvin property serves as a yardstick principle in canonical inner models to separate certain large cardinal notions \cite{SatInCan}.

\subsection{Main results}\label{Subsec:IntroResults}
The first  significant observation of this paper is that the Galvin property is deeply connected to the Tukey order.

\begin{thm.2.4}
Let $\lambda\le \kappa$ be cardinals, where $\lambda$ is inaccessible, 
and let  $U$ be a $\lambda$-complete ultrafilter over $\kappa$.
Then $(U,\supseteq)\equiv_T ([2^\kappa]^{<\lambda},\subseteq)$ if and only if $\neg \Gal(U,\lambda,2^\kappa)$.
In particular, 
if $\kappa$ is a measurable cardinal then the  non principal
$\kappa$-complete ultrafilters over $\kappa$ which are Tukey-top\footnote{Namely, belongs to the maximum Tukey class among $\kappa$-directed closed posets of cardinality $2^\kappa$.}  are exactly those for which $\neg \Gal(U,\kappa,2^\kappa)$ holds.
\end{thm.2.4}

Isbell  \cite{Isbell65}  and Juh\'{a}sz \cite{Juhasz67} independently constructed, in \textsf{ZFC}, 
ultrafilters which are Tukey-top among directed partial orders of size continuum.
It turns out that this construction generalizes to uncountable cardinals  if one assumes the filter extension property. Moreover, it is possible to combine this  construction with the methods from 
 \cite{SatInCan} using $\Diamond(\kappa)$ to obtain a $q$-point which is a Tukey-top. This asserts the conjecture from \cite{SatInCan}: 

\begin{cor2.23}
  If $\kappa$ is $\kappa$-compact, then there is a Tukey-top ultrafilter among the $\kappa$-complete ultrafilters which is a $q$-point.
\end{cor2.23}

Section 3 maps out the connections between the spectra of characters for filter bases, the Tukey orders with respect to both $\contains$ and $\contains^*$, 
and 
the Galvin property with respect to both of these orders. Also, the analog $\Gal^*(U,\kappa,\lambda)$ of $\Gal(U,\kappa,\lambda)$ is introduced (see Definition \ref{def: Gal star}), which guarantees that from any given $\lambda$-many sets in $U$ there are $\kappa$-many of them which have a pseudo intersection in $U$. We then use it to extend and improve some results of Milovich in \cite{Milovich08}, including:
\begin{thm.316}
    Let $\kappa$ be either $\omega$ or a measurable cardinal. For any $\kappa$-complete ultrafilter $U$ over $\kappa$, $( U,\supseteq^*)\equiv_T( [2^\kappa]^{<\kappa},\subseteq)$ if and only if $\neg\Gal^*(U,\kappa,2^\kappa)$. 
\end{thm.316}
\begin{thm.317}
    Suppose that $\kappa$ is $\kappa$-compact. Then there is a $q$-point ultrafilter $U$ such that $\neg \Gal^*(U,\kappa,2^\kappa)$.
\end{thm.317}

Given ultrafilters $U\ge_T V$ on $\kappa$, a priori there are $2^{2^{\kappa}}$ many cofinal maps  from $U$ to $V$.
A valuable mechanism 
which unlocked much of the 
Tukey theory 
for ultrafilters
over $\omega$ was the discovery by 
Dobrinen and Todorcevic \cite{Dobrinen/Todorcevic11}
that any cofinal map from a p-point $U$ into an ultrafilter $V$ is actually continuous upon restriction to a suitably chosen member of $U$.
In Theorem \ref{Thm:DT20}, we
derive the analogue of this theorem for p-points over  measurable cardinals, 
and derive stronger forms of continuity  for  normal ultrafilters (Corollary \ref{cor.4.2}).

Furthermore, Theorem \ref{Prodthm.20DT-kappa}
shows that cartesian products of finitely many p-points  over a measurable cardinal $\kappa$ also have continuous cofinal maps.
Whereas such a theorem for ultrafilters over $\om$ is only useful for rapid p-points (see Section 4 of \cite{Dobrinen/Todorcevic11}),
surprisingly Theorem \ref{Prodthm.20DT-kappa} 
becomes a central tool for investigating cofinal types of Fubini products of p-points, due to Theorem \ref{ProductCor} below.
The results in Section $4$ 
 are used throughout the rest of the paper  to bound the number of Tukey-predecessors of a $p$-point, and to obtain models where there are many Tukey-incomparable normal ultrafilters.


In Section $5$ we turn our attention to the Tukey-class of sums, products and powers of ultrafilters.
A number of surprising differences between the Tukey theory of ultrafilters over  measurable cardinals  versus  $\om$ have emerged.
On $\om$,
 Dobrinen and Todorcevic \cite{Dobrinen/Todorcevic11} 
 proved that if $U$ is a rapid $p$-point, then for any ultrafilter $V$, $V\cdot U\equiv_T V\times U$ and in particular any Fubini iterate of $U$ is Tukey equivalent to $U$. Moreover, Dobrinen and Todorcevic presented a non-rapid $p$-point $U$ such that $U<_TU^2$. On measurable cardinals we have found the situation to be  quite different:

\begin{cor.5.6}
Given $\kappa$ measurable, 
    for every two $\kappa$-complete  ultrafilters $U,V$ over $\kappa$, $U\cdot V\equiv_T U\times V$.
    \end{cor.5.6}

    \begin{cor.5.7}
    Given $\kappa$ measurable, 
 for any $\kappa$-complete ultrafilter $U$ over  $\kappa$, $U^n\equiv_T U$.
\end{cor.5.7}

In particular, we can conclude that the class of  $\kappa$-complete Galvin ultrafilters is closed under finite Fubini powers.

In an attempt to approximate the class of Galvin ultrafilters (i.e.\ non-Tukey-top), we define in Section $6$ the notions of {\em basic} and {\em basically generated} ultrafilter. These notions  generalize the  notions for ultrafilters on $\om$ from \cite{Dobrinen/Todorcevic11} with the slight modification of uniformity. We  prove that basic is equivalent to $p$-point
(Propositions 
\ref{Prop:ppt implies basic} and \ref{Prop:Basicimpliesppt})
and 
give a shorter proof of a result in \cite{Parttwo} 
that 
$p$-point implies Galvin. 
More generally,  we  prove that $p$-point implies basically generated which in turn implies Galvin (Theorem \ref{thm.bgnottop}). Finally, we prove that the class of basically generated ultrafilters is closed under sums (Theorem \ref{thm.bgclosedclass}).
This in particular implies that sums (and limits) of $p$-points are Galvin, recovering  the main result from \cite{SatInCan}.

In Section $7$, we present several classical models and study the Tukey-order in those models. The first model is $L[U]$, in which Kunen \cite{Kunen1970} proved that every $\sigma$-complete ultrafilter $W$ over $\kappa$ is Rudin-Keisler equivalent to a finite Fubini power of $U$. The results of Section 6 imply that:

\begin{thm.7.2} In $L[U]$, the $\sigma$-complete ultrafilters over $\kappa$ form a single Tukey class which is not the Tukey-top one, and is the union of $\omega$-many Rudin Keisler equivalence classes.
\end{thm.7.2}

We then prove that the Mitchell order is orthogonal to the reverse Tukey order (Theorem \ref{Mitchellorthogonal}) and use it to construct models with 
chains of Tukey types of various lengths.
 Finally, we consider the situation in the Kunen-Paris \cite{Kunen-Paris} model where there are $2^{(2^\kappa)}$-many distinct normal ultrafilters and use our results from Section $4$ to prove:

\begin{thm.7.9}
      Assume \textsf{GCH}
      and that $\kappa$ is a measurable cardinal. Then there is a generic extension where \textsf{GCH} holds and there is a Tukey-chain of order type $\omega+1$.
\end{thm.7.9}

One problem we did not settle in this paper, is the problem of minimality in the Tukey order. In \cite{Raghavan/Todorcevic12}, Todorcevic proved that Ramsey ultrafilters are minimal in the Tukey order. We conjecture that a similar situation holds for uncountable cardinals and we call that conjecture \textit{the minimality conjecture}. Given the minimality conjecture, we use Goldberg's \textit{Ultrapower Axiom} (\textsf{UA}) \cite{GoldbergUA} to fully characterize the Tukey order in models of \textsf{UA} below a $\mu$-measurable cardinal. In particular, this characterization will hold true for the Mitchell models $L[\vec{U}]$:

\begin{thm.UA}
  Assume \textsf{UA} and the General Minimality Conjecture. Suppose that $\kappa$ is a measurable cardinal with $o(\kappa)<2^{(2^\kappa)}$. Then the Tukey types  of ultrafilters on $\kappa$ is isomorphic to $([o(\kappa)]^{<\omega},\subseteq)$.
    \end{thm.UA}

\section{Definitions and preliminaries}

We begin by reviewing some  standard notation used throughout.
Let $\kappa$ be a cardinal and $X$ be any set. Then $[X]^\kappa=\{Y\in P(X)\mid |Y|=\kappa\}$ and $[X]^{<\kappa}=\{Y\in P(X)\mid |Y|<\kappa\}$.
Let $\kappa$ be regular. For two subsets of $\kappa$, we write  $X\subseteq^* Y$ to denote that  $X\setminus Y$ is bounded in $\kappa$. Similarly, for $f,g:\kappa\rightarrow\kappa$ we denote $f\leq^* g$ if there is $\alpha<\kappa$ such that for every $\alpha\leq\beta<\kappa$, $f(\beta)\leq g(\beta)$.  We say that $C\subseteq \kappa$ is {\em closed and unbounded  at $\kappa$ (club)}, if it is a closed subset with respect to the order topology on $\kappa$ and unbounded in the ordinals below $\kappa$. The {\em club filter} is the filter:
$$\Cub_\kappa:=\{X\subseteq \kappa\mid \exists C\subseteq X, \ C\text{ is a club at }\kappa\}.$$

Given two partial orders $( P,\leq_P)$ and $( Q,\leq_Q)$, their \textit{(cartesian) product} is  denoted by  $( P,\leq_P)\times( Q,\leq_Q)=( P\times Q,\leq_{P\times Q})$ where $(p,q)\leq_{P\times Q}(p',q')$ iff both $p\leq_P p'$ and $q\leq_{Q}q'$.

\subsection{Background and basic results for the Tukey order on ultrafilters}
Let $U$ and $V$ denote ultrafilters on an infinite cardinal $\kappa$.  We shall assume throughout that all ultrafilters are uniform.
Unless explicitly stated otherwise, each ultrafilter is assumed to be partially ordered by reverse inclusion.
A map $f:U\rightarrow V$ is {\em monotone} if  $f(A)\contains f(B)$ whenever $A\contains B$.
The following simple but useful fact is a straightforward generalization 
of Fact 6 in \cite{Dobrinen/Todorcevic11}.

\begin{fact}\label{fact.6DT}
Let $U$ and $V$ be ultrafilters on any infinite cardinal $\kappa$.
If $V\le_T U$, then this is witnessed by a monotone cofinal map.
\end{fact}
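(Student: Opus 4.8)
The plan is to prove the Fact by ``monotonizing'' an arbitrary cofinal map from $U$ to $V$. I would first isolate the following elementary remark: a \emph{monotone} map $f\colon U\to V$ is automatically cofinal as soon as its range is cofinal in $(V,\supseteq)$, i.e.\ as soon as $\{f(A)\mid A\in U\}$ is a filter base for $V$. Indeed, given a filter base $X$ for $U$ and a set $C\in V$, one first picks $A_0\in U$ with $f(A_0)\subseteq C$ (using cofinality of the range) and then $A_1\in X$ with $A_1\subseteq A_0$ (using that $X$ is a filter base for $U$); monotonicity then gives $f(A_1)\subseteq f(A_0)\subseteq C$, so $f''X$ is cofinal in $V$. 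Hence it is enough to build a monotone map $f\colon U\to V$ whose range is a filter base for $V$.

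Next I would fix a cofinal map $g\colon U\to V$ (one exists since $V\le_T U$) and define
\[
f(A)=\bigcup\{\,g(B)\mid B\in U,\ B\subseteq A\,\}\qquad(A\in U).
\]
Two of the three required properties come for free. Since $A$ itself occurs in the union, $g(A)\subseteq f(A)\subseteq\kappa$, and as $V$ is upward closed $f(A)\in V$; so $f$ really maps $U$ into $V$. And if $A\supseteq A'$ then the union defining $f(A)$ is indexed by a superset of the index set for $f(A')$, so $f(A)\supseteq f(A')$; that is, $f$ is monotone.

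The remaining point---that $\{f(A)\mid A\in U\}$ is cofinal in $(V,\supseteq)$---is the step where the hypothesis is genuinely used, and the one I would flag as the crux (though it too is short). Fix $C\in V$ and set $\mathcal{B}=\{B\in U\mid g(B)\not\subseteq C\}$. Then $g''\mathcal{B}\subseteq\{D\in V\mid D\not\subseteq C\}$, a family that is not cofinal in $(V,\supseteq)$ because it contains no subset of $C$ (so $C$ itself witnesses the failure of cofinality); being contained in a non-cofinal family, $g''\mathcal{B}$ is not cofinal in $V$, and hence, by cofinality of $g$, the family $\mathcal{B}$ is not cofinal in $U$. Unwinding the definition of ``not cofinal'', there is $A\in U$ with $B\not\subseteq A$ for every $B\in\mathcal{B}$; for this $A$, every $B\in U$ with $B\subseteq A$ satisfies $g(B)\subseteq C$, whence $f(A)=\bigcup\{g(B)\mid B\in U,\ B\subseteq A\}\subseteq C$. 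By the opening remark $f$ is thus a monotone cofinal map from $U$ to $V$. The only thing really requiring attention throughout is keeping straight the order direction in $(U,\supseteq)$ and $(V,\supseteq)$; since countability of $\kappa$ is never used, this reproduces verbatim the proof of Fact~6 of \cite{Dobrinen/Todorcevic11}.
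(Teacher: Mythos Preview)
Your proof is correct and is precisely the standard ``monotonization'' argument: the paper does not give its own proof but merely cites Fact~6 of \cite{Dobrinen/Todorcevic11}, and what you have written is exactly that argument carried out in full (as you yourself note at the end).
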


The following connection between the Tukey and Rudin-Keisler orders is well-known:
Given ultrafilters
$U,V$  over base sets $X,Y$, respectively,  we say that  
$U$ is {\em Rudin-Keisler below}  $V$, and write $U\leq_{RK} V$, if  there is a function $f:Y\rightarrow X$ such that $f_*(V)=U$, where $$f_*(V):=\{B\subseteq X\mid f^{-1}[B]\in V\}$$   
If $U\leq_{RK}V$ and $V\leq_{RK}U$ then we denote this by $V\equiv_{RK}U$, and in this case there is a bijection $f:X\rightarrow Y$ such that $f_*(U)=V$. Note that if $f_*(U)=V$, then the map $g:U\rightarrow V$ defined by $g(X)=f''X$ is a cofinal map. Hence: 
\begin{proposition}
    If $U\leq_{RK} V$ then $U\leq_T V$.
\end{proposition}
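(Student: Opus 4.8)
The plan is to produce an explicit cofinal map from $V$ to $U$ directly out of the Rudin--Keisler witness, exactly as indicated in the remark preceding the statement. Say $U$ lives on a base set $X$ and $V$ on $Y$, and let $f\colon Y\to X$ be a function with $f_*(V)=U$. I would define $g\colon V\to U$ by $g(B)=f''B$.

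First I would check that $g$ is well defined, i.e.\ that $f''B\in U$ whenever $B\in V$: since $B\subseteq f^{-1}[f''B]$ and $B\in V$, also $f^{-1}[f''B]\in V$, so $f''B\in f_*(V)=U$ by the definition of $f_*$. Next I would verify that $g$ is cofinal. Recall from the excerpt that a subset of $(V,\supseteq)$ is cofinal precisely when it is a filter base for $V$; so let $\mathcal B\subseteq V$ be a filter base for $V$, and fix an arbitrary $A\in U$. Since $A\in f_*(V)$, we have $f^{-1}[A]\in V$, hence there is $B\in\mathcal B$ with $B\subseteq f^{-1}[A]$, and then $g(B)=f''B\subseteq f''f^{-1}[A]\subseteq A$. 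Thus every member of $U$ contains a member of $g''\mathcal B$, so $g''\mathcal B$ is again a filter base for $U$; therefore $g$ is cofinal and $U\le_T V$. One may also observe in passing that $g$ is monotone (if $B\supseteq B'$ then $f''B\supseteq f''B'$), so the reduction is in fact witnessed by a monotone cofinal map as in Fact~\ref{fact.6DT}, although this is not needed here.

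I do not expect any real obstacle: the whole argument is just unwinding three definitions --- that of $f_*$, that of a cofinal subset of an ultrafilter (which the excerpt identifies with ``filter base''), and that of a cofinal map --- together with the two elementary inclusions $B\subseteq f^{-1}[f''B]$ and $f''f^{-1}[A]\subseteq A$. An alternative would be to quote Schmidt's theorem from the introduction and instead exhibit an \emph{unbounded} map $U\to V$, such as $A\mapsto f^{-1}[A]$; but checking unboundedness of that map is marginally less transparent than checking cofinality of $g$ directly, so I would present the direct argument above.
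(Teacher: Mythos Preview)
Your proof is correct and follows exactly the same approach as the paper: defining $g(B)=f''B$ and noting that this is a cofinal map from $V$ to $U$. The paper simply asserts this in one line, while you have correctly supplied the details (well-definedness via $B\subseteq f^{-1}[f''B]$ and cofinality via $f''B\subseteq f''f^{-1}[A]\subseteq A$).
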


It follows that cofinal types are a coarsening of the Rudin-Keisler equivalence classes of ultrafilters.

\begin{definition}
For any  ultrafilter $U$ over $X$ and a sequence $\l V_x\mid x\in X\r$ of ultrafilters over $Y$, define the {\em$U$-sum of the ultrafilters $V_x$}, to be the filter over $X\times Y$ defined as follows:
\begin{equation*}
    \sum_{U}V_x:=\Bigg\{A\subseteq X\times Y
: \ \Big\{x\in X\mid \{y\in Y\mid \l x,y\r\in B\}\in V_x\Big\}\in U\Bigg\}
\end{equation*}
If there is an ultrafilter $V$ such that for every $x\in X$, $V_x=V$, we call the ultrafilter $U\cdot V:=\sum_UV$ the {\em Fubini product of $U$ and $V$}, or simply {\em product}.
\end{definition}
In this paper we will also consider the Cartesian product $U\times V$, and we emphasize here the distinction between $U\cdot V$ and $U\times V$.
\begin{proposition}\label{product}
For any ultrafilters $U,V$, $U\times V$ is the minimal Tukey-type extension of both $U$ and $V$. 
\end{proposition}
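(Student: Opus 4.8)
The plan is to prove that $U \times V$ is a Tukey-upper-bound of both $U$ and $V$, and then that it is the \emph{least} such among directed partial orders. For the upper-bound part, I would exhibit the obvious monotone cofinal maps: define $f_U : U \times V \to U$ by $f_U(A,B) = A$ and $f_V : U \times V \to V$ by $f_V(A,B) = B$. These are monotone because the product order is coordinatewise reverse inclusion, and they are cofinal: given a cofinal (i.e.\ filter-base) subset $X \subseteq U \times V$, its projection to the first coordinate must be cofinal in $U$, since any $A \in U$ together with $\kappa \in V$ gives $(A,\kappa) \in U \times V$, which is refined by some $(A',B') \in X$ with $A' \subseteq A$. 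Hence $U \le_T U \times V$ and $V \le_T U \times V$.

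For minimality, suppose $D$ is any directed partial order with $U \le_T D$ and $V \le_T D$. By Fact \ref{fact.6DT} (applied in the general form for maps into arbitrary directed posets, or simply by Schmidt's equivalence together with monotonization), fix monotone cofinal maps $g : D \to U$ and $h : D \to V$. Define $\varphi : D \to U \times V$ by $\varphi(d) = (g(d), h(d))$. Monotonicity of $\varphi$ is immediate from monotonicity of $g$ and $h$. To see $\varphi$ is cofinal, let $X \subseteq D$ be cofinal; I must show $\varphi''X$ is cofinal in $U \times V$, i.e.\ given $(A,B) \in U \times V$ there is $d \in X$ with $g(d) \subseteq A$ and $h(d) \subseteq B$. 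The subtlety is that cofinality of $g''X$ in $U$ gives some $d_0 \in X$ with $g(d_0) \subseteq A$, and cofinality of $h''X$ gives some $d_1 \in X$ with $h(d_1) \subseteq B$, but these need not be the same element. Here I use directedness of $D$: pick $d \in D$ above both $d_0$ and $d_1$, and then pick $d' \in X$ above $d$ (possible since $X$ is cofinal). By monotonicity, $g(d') \subseteq g(d_0) \subseteq A$ and $h(d') \subseteq h(d_1) \subseteq B$, so $\varphi(d') \le_{U\times V} (A,B)$, as required. This shows $U \times V \le_T D$, completing the proof.

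The main obstacle — really the only point requiring care — is the combination of the two coordinate witnesses into a single one; that is exactly where directedness of $D$ (and the hypothesis that it is a \emph{directed} partial order, as built into the notion of Tukey type) is used. Everything else is routine verification of monotonicity and cofinality of projection and diagonal maps. One should also note at the outset that $U \times V$ is itself a directed partial order (being a product of two directed posets), so the assertion that it is a Tukey type is meaningful; this is immediate.
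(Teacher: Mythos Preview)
Your proof is correct and follows essentially the same approach as the paper: projections witness $U,V \le_T U\times V$, and the diagonal map $d\mapsto (g(d),h(d))$ built from cofinal maps $g,h$ witnesses minimality. The paper's proof is terser and simply asserts that this diagonal map is cofinal, while you spell out the point the paper leaves implicit---namely that one needs monotonicity of $g,h$ together with directedness of $D$ to combine the two coordinatewise witnesses into a single one.
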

\begin{proof}
First, note that the projection maps witness that $U\times V\geq_{T}U,V$. These are projections since any cofinal set in $U\times V$ must be cofinal in both coordinates. Suppose that $W\geq_T U,V$ with witnessing cofinal maps $f:W\rightarrow U$ and $g:W\rightarrow V$. Then $h:W\rightarrow U\times V$ define by $h(X)=\l f(X),g(X)\r$ is cofinal.
\end{proof}
\begin{corollary}\label{productAndCart}
For every two ultrafilters $U,V$, $U\times U\equiv_T U$ and $ U\cdot V \geq_T U\times V$.
\end{corollary}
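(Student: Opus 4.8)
The plan is to derive both assertions from Proposition \ref{product}, which says that $U\times V$ is the \emph{minimal} Tukey-type extension of $U$ and of $V$: it dominates both, and it is Tukey-below any $W$ with $W\ge_T U$ and $W\ge_T V$.

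For the equivalence $U\times U\equiv_T U$: the projection map already gives $U\times U\ge_T U$ (it is the case $V=U$ of the first half of Proposition \ref{product}; equivalently, the diagonal map $A\mapsto\langle A,A\rangle$ from $U$ to $U\times U$ is cofinal, since for $\langle B,C\rangle\in U\times U$ any $A\in U$ with $A\subseteq B\cap C$ works). For the reverse inequality, note that $U\ge_T U$ trivially via the identity, so taking $W=U$ in the minimality clause of Proposition \ref{product} yields $U\ge_T U\times U$. Combining the two gives $U\times U\equiv_T U$.

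For $U\cdot V\ge_T U\times V$: by the minimality clause of Proposition \ref{product} it is enough to show $U\cdot V\ge_T U$ and $U\cdot V\ge_T V$, and for this I would check that the two coordinate projections $\pi_0,\pi_1\colon X\times Y\to X,Y$ are Rudin--Keisler reductions, namely $(\pi_0)_*(U\cdot V)=U$ and $(\pi_1)_*(U\cdot V)=V$. Indeed, for $A\subseteq X$ one has $\pi_0^{-1}[A]=A\times Y$, and $\{x:\{y:\langle x,y\rangle\in A\times Y\}\in V\}=A$, so $A\times Y\in U\cdot V$ iff $A\in U$; for $B\subseteq Y$ one has $\pi_1^{-1}[B]=X\times B$, and $\{x:\{y:\langle x,y\rangle\in X\times B\}\in V\}=\{x:B\in V\}$, which is $X$ if $B\in V$ and $\emptyset$ otherwise, so $X\times B\in U\cdot V$ iff $B\in V$. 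Hence $U\le_{RK}U\cdot V$ and $V\le_{RK}U\cdot V$, so by the Proposition that $RK$-reducibility implies $T$-reducibility, $U\cdot V\ge_T U$ and $U\cdot V\ge_T V$; the minimality clause of Proposition \ref{product} then gives $U\cdot V\ge_T U\times V$.

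I do not expect any real obstacle here: the argument is a direct application of Proposition \ref{product} together with the $RK\Rightarrow T$ implication. The only point requiring a moment's care is the pushforward computation for the second projection, where one uses that $U\cdot V=\sum_U V$ is the Fubini \emph{product}, so all fiber ultrafilters are equal to $V$ (for a general $U$-sum $\sum_U V_x$ the second projection need not even be defined onto a single ultrafilter).
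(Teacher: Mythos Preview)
Your proof is correct and follows essentially the same approach as the paper: both use the minimality clause of Proposition \ref{product}, together with the fact that the coordinate projections witness $U\le_{RK} U\cdot V$ and $V\le_{RK} U\cdot V$, to conclude $U\cdot V\ge_T U\times V$. The paper's proof simply asserts the RK-reductions as well known and omits the explicit argument for $U\times U\equiv_T U$, whereas you spell out both; but the underlying ideas are identical.
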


\begin{proof}
    It is well known that the projection to the coordinates is a Rudin-Keisler projection of $U\cdot V$ to $U$ and $V$, hence $U\cdot V\geq_{T} U,V$ and by minimality $U\cdot V\geq_{T} U\times V$. 
\end{proof}
\subsection{Some definitions of ultrafilters over a measurable cardinal}

\begin{definition}
Let $U$ be an ultrafilter over a 
 regular cardinal $\kappa$.
\begin{enumerate}
    \item $U$ is {\em uniform } if for every $X\in U$, $|X|=\kappa$.
    \item $U$ is {\em $\lambda$-complete} if $U$ is closed under intersections of less than $\lambda$ many of its members.
    \item $U$ is {\em normal} if $U$ is closed under diagonal intersection, i.e.\ if $\l A_i\mid i<\kappa\r\subseteq U$, then $\Delta_{i<\kappa}A_i:=\{\nu<\kappa\mid \forall i<\nu\, (\nu\in A_i)\}\in U$.
    \item $U$ is {\em Ramsey} if for any function $f:[\kappa]^2\rightarrow 2$ there is an $X\in U$ such that $f\restriction [X]^2$ is constant. 
    \item $U$ is {\em selective} if for every function $f:\kappa \rightarrow \kappa$, there is an $X \in U$ such that
 $f\restriction X$ is  either constant or one-to-one.
\item (Kanamori \cite{KanPPoint}) $U$ is {\em rapid} if for each normal function $f: \kappa \rightarrow \kappa$ (i.e.\ increasing and continuous),
there exists an $X \in U$ such that
$\otp(X \cap f(\alpha)) \leq \alpha$ for each $\alpha< \kappa$.
\item $U$ is an {\em $r$-point} if whenever $f$ is continuous and almost injective 
(i.e.\ bounded pre-images), there is an $X\in U$ such
 that $|f^{-1}(\{\alpha\})|\le \alpha$ for every $\alpha< \kappa$.
\item $U$ is a {\em $p$-point} if whenever $f:\kappa\rightarrow\kappa$ is not constant on a set in $U$, there is an $X\in U$ such that for every $\gamma<\kappa$,  $|f^{-1}[\gamma]\cap X|<\kappa$.
\item $U$ is a {\em $q$-point} 
if every  function $f:\kappa\ra\kappa$  which is almost 1-1 (mod $ U $) is injective (mod $ U $).
\end{enumerate}

\end{definition}
\begin{proposition}
    \begin{enumerate}
        \item Normal $\Rightarrow$ Ramsey.
        \item Ramsey $\Leftrightarrow$ Selective.
        \item $U$ is $p$-point iff every sequence $\l X_i\mid i<\kappa\r\in [U]^\kappa$ admits a pseudo-intersection, namely, there is $X\in U$ such that for every $i<\kappa$, $X\subseteq^* X_i$.        \item $U$ is a $q$-point iff $U$ is Rudin-Keisler equivalent to an ultrafilter $W$ such that $Cub_\kappa\subseteq W$.
    \end{enumerate}
\end{proposition}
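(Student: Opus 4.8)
The plan is to establish the four equivalences one at a time, using throughout the paper's blanket uniformity assumption and one recurring observation: a uniform ultrafilter $U$ on $\kappa$ which is selective, or a $p$-point, is automatically $\kappa$-complete. For if $\langle P_j\mid j<\gamma\rangle$ were a partition of $\kappa$ into $\gamma<\kappa$ pieces with no piece in $U$, then $g(\nu):=$ (the $j$ with $\nu\in P_j$) would be non-constant mod $U$ with range inside $\gamma$, which is incompatible with selectivity (no restriction of $g$ onto a set of size $\kappa$ can be injective into $\gamma$, and a constant restriction would put some $P_j$ in $U$) and with the $p$-point property (since $g^{-1}[\gamma]=\kappa$, it cannot meet any $X\in U$ in a set of size $<\kappa$). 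For (1) I would first get $\kappa$-completeness from normality by the usual padding trick: replace such a counterexample partition by the $\kappa$-sequence whose $\alpha$-th term is $\kappa\setminus P_\alpha$ for $\alpha<\gamma$ and $\kappa$ thereafter, and take its diagonal intersection. Then, given $c:[\kappa]^2\to 2$, for each $\alpha$ pick $i(\alpha)\in 2$ with $A_\alpha:=\{\beta>\alpha: c(\{\alpha,\beta\})=i(\alpha)\}\in U$, use $\kappa$-completeness to fix $i^*$ with $B:=\{\alpha: i(\alpha)=i^*\}\in U$, and use normality to put $H:=B\cap\triangle_{\alpha<\kappa}A_\alpha$ in $U$; then $H$ is $c$-homogeneous of color $i^*$. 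For the easy half of (2), Ramsey $\Rightarrow$ selective, I would apply Ramsey to the coloring $c(\{\alpha,\beta\})=0\iff f(\alpha)=f(\beta)$: a homogeneous $X\in U$ makes $f\restriction X$ constant if its color is $0$ and injective if its color is $1$.

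For selective $\Rightarrow$ Ramsey I would argue via Rudin--Keisler equivalence. Both properties are RK-invariant, since a bijection transports colorings and functions together with their homogeneous and injective sets; and a selective ultrafilter is $\equiv_{RK}$ to a normal one. Indeed selectivity says exactly that every $g:\kappa\to\kappa$ non-constant mod $U$ is injective mod $U$, so every non-principal $V\leq_{RK}U$ is in fact $\equiv_{RK}U$, i.e.\ $U$ is $\leq_{RK}$-minimal among non-principal ultrafilters on $\kappa$. On the other hand $U$ is $\kappa$-complete by the observation above, so $\kappa$ is measurable, and taking any $f:\kappa\to\kappa$ that represents $\kappa$ in the ultrapower (possible since $\kappa<j_U(\kappa)$), the pushforward $W=f_*(U)$ is $\kappa$-complete, uniform, non-principal, and has $[\id]_W=[f]_U=\kappa$, hence is a normal ultrafilter with $W\leq_{RK}U$. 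Minimality then forces $U\equiv_{RK}W$, and since $W$ is Ramsey by (1), so is $U$. (A direct combinatorial argument, thinning $U$ by hand as in the $\omega$ case, is also possible but messier at limit stages.)

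For (3): the $p$-point property (for uniform $U$) gives $\kappa$-completeness as above, so given $\langle X_i\mid i<\kappa\rangle\subseteq U$ I may assume it decreasing — replace $X_i$ by $\bigcap_{j\le i}X_j$ — and that $\bigcap_i X_i\notin U$; then on $Y:=\kappa\setminus\bigcap_i X_i\in U$ define $f(\nu)=\min\{i:\nu\notin X_i\}$. This $f$ is non-constant mod $U$, so the $p$-point property yields $W\in U$ with $|f^{-1}[\gamma]\cap W|<\kappa$ for all $\gamma<\kappa$, and then $X:=W\cap Y$ is a pseudo-intersection, because $X\setminus X_i\subseteq f^{-1}[i+1]\cap W$ is bounded for every $i$. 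Conversely, the pseudo-intersection property first forces $\kappa$-completeness (a partition into $<\kappa$ pieces with no piece in $U$ would have a pseudo-intersection meeting each piece boundedly, hence of size $<\kappa$, impossible for uniform $U$), and then, for $f:\kappa\to\kappa$ non-constant mod $U$, it gives $f^{-1}[\gamma]\notin U$ for every $\gamma<\kappa$ (otherwise a pseudo-intersection of the $\kappa$ sets $\kappa\setminus f^{-1}(\{\delta\})$, $\delta<\gamma$, would meet $f^{-1}[\gamma]\in U$ in a bounded set, using $\kappa$ regular); a pseudo-intersection of $\langle\{\nu: f(\nu)\ge\gamma\}\mid\gamma<\kappa\rangle$ then witnesses the $p$-point property for $f$.

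For (4), note first that $q$-point is RK-invariant, by the same bijection transport, so for ``$\Leftarrow$'' it suffices to see that an ultrafilter $W$ with $\Cub_\kappa\subseteq W$ is a $q$-point. If $f:\kappa\to\kappa$ has $|f^{-1}(\{\gamma\})\cap X|<\kappa$ for all $\gamma$, for some $X\in W$, let $D$ be the set of ordinals $\nu$ that are closed under $f\restriction X$ and satisfy $f^{-1}(\{\gamma\})\cap X\subseteq\nu$ for all $\gamma<\nu$; this is a club (standard closure argument, using $\kappa$ regular), so $X\cap D\in W$, and for $\alpha<\beta$ in $X\cap D$ one gets $f(\alpha)<\beta$ (closure) and hence $\beta\notin f^{-1}(\{f(\alpha)\})$, so $f\restriction(X\cap D)$ is injective. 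The direction ``$\Rightarrow$'' is the one I expect to be the main obstacle. Given a $q$-point $U$ one wants an RK-equivalent ultrafilter $W$ that concentrates on the closure points of \emph{every} ground-model function $h:\kappa\to\kappa$. A first application of the $q$-point property to the partition of $\kappa$ into the $\omega$-blocks between consecutive limit ordinals produces $X\in U$ meeting each block at most once, so that $\nu\mapsto$ (largest limit ordinal $\le\nu$) is injective on $X$ and its pushforward is RK-equivalent to $U$ and concentrates on limit ordinals. But iterating collapses of this shape only absorbs the clubs appearing in the Cantor--Bendixson stratification of $\kappa$, whereas $\Cub_\kappa$ is generated by $2^\kappa$ sets and has uncountable cofinality even modulo bounded sets; so the genuine content is to arrange a transfinite recursive construction of the relabeling that, stage by stage, uses the $q$-point property to absorb an arbitrary club — or to reduce the task to a single application of the $q$-point property to a well-chosen $<\kappa$-to-one function — and to verify that the resulting ultrafilter contains \emph{all} clubs. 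That is the step where I expect the real work to lie.
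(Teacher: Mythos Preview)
The paper itself supplies no proof of this proposition---it simply refers the reader to Kanamori \cite{Kanamori1976UltrafiltersOA} and \cite{TomTreePrikry}---so you have done considerably more than the paper. Your arguments for (1), (2), (3), and the backward direction of (4) are all correct. In particular, the route you take for selective $\Rightarrow$ Ramsey (selective $\Rightarrow$ $\kappa$-complete $\Rightarrow$ RK-minimal $\Rightarrow$ RK-equivalent to normal $\Rightarrow$ Ramsey) is exactly the standard one, and your treatment of (3) via the level function $f(\nu)=\min\{i:\nu\notin X_i\}$ is clean.

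For (4)$\Rightarrow$ you correctly locate the difficulty but stop short of the key idea, and the transfinite ``absorb one club at a time'' strategy you outline will indeed not close up in fewer than $2^\kappa$ steps. The actual argument (for $\kappa$-complete $U$) is a single minimization, not a recursion. For each club $C$ the function $f_C(\nu)=\max(C\cap(\nu+1))$ is everywhere $<\kappa$-to-one, hence almost 1--1, hence injective mod $U$ by the $q$-point property; and $[f_C]_U\in j_U(C)$ since $f_C$ takes values in $C$. Now choose a club $C_0$ minimizing the ordinal $[f_C]_U$ (this exists because the ultrapower is well-founded). For an arbitrary club $C$ one has $f_{C\cap C_0}\le f_{C_0}$ pointwise, so $[f_{C\cap C_0}]_U\le[f_{C_0}]_U$, and minimality forces equality; since $f_{C\cap C_0}$ takes values in $C$, this gives $[f_{C_0}]_U=[f_{C\cap C_0}]_U\in j_U(C)$. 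Thus $W:=(f_{C_0})_*(U)$ contains every club, and $W\equiv_{RK}U$ because $f_{C_0}$ is injective mod $U$. So the ``real work'' you anticipated collapses to one well-chosen almost 1--1 function, selected by a minimality argument in the ultrapower rather than by a long recursion.
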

For more information about these definitions and implications we refer the reader to \cite{Kanamori1976UltrafiltersOA} or \cite{TomTreePrikry}.
\begin{proposition}
Selective $\Leftrightarrow$ $p$-point and $q$-point $\Leftrightarrow$  RK-minimal among uniform ultrafilters $\Leftrightarrow$ RK-equivalent to a normal. 
\end{proposition}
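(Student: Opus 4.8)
The plan is to prove the four properties equivalent by showing that each is equivalent to selectivity (equivalently, by the preceding proposition, to Ramseyness), which serves as a hub; throughout, $U$ is a uniform $\kappa$-complete ultrafilter over the measurable---hence regular---cardinal $\kappa$. The first two equivalences amount to unwinding the definitions, while the genuinely nontrivial direction is ``selective implies RK-equivalent to a normal ultrafilter''.

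\emph{Selective $\Leftrightarrow$ $p$-point and $q$-point.} For the forward implication, given $f\colon\kappa\to\kappa$ not constant mod $U$, selectivity gives $X\in U$ with $f\restriction X$ constant or injective; it cannot be constant, so it is injective, and then $|f^{-1}[\gamma]\cap X|\le|\gamma|<\kappa$ for every $\gamma<\kappa$, so $U$ is a $p$-point. For the $q$-point property, if $f$ is almost injective mod $U$ as witnessed by $Y\in U$, selectivity gives $X\in U$ with $f\restriction X$ constant or injective; were it constant with value $c$, then $X\cap Y\in U$ would be a subset of $f^{-1}(\{c\})\cap Y$ of size $\kappa$, contradicting the choice of $Y$, so $f\restriction X$ is injective and $f$ is injective mod $U$. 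Conversely, if $U$ is both a $p$-point and a $q$-point and $f\colon\kappa\to\kappa$ is not constant mod $U$, the $p$-point property gives $X\in U$ with $|f^{-1}[\gamma]\cap X|<\kappa$ for all $\gamma$, hence with every fibre of $f\restriction X$ of size $<\kappa$; as $\kappa$ is regular these fibres are bounded, so $f$ is almost injective mod $U$, and the $q$-point property makes $f$ injective on a set in $U$. Together with the trivial constant case this is selectivity.

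\emph{Selective $\Leftrightarrow$ RK-minimal among uniform ultrafilters.} For $f\colon\kappa\to\kappa$ the pushforward $f_*(U)$ is $\kappa$-complete, hence either principal---exactly when $f$ is constant mod $U$---or uniform on $\kappa$, since a nonprincipal $\kappa$-complete ultrafilter on $\kappa$ has no member of size $<\kappa$; moreover $f_*(U)\equiv_{RK}U$ iff $f$ is injective on a set in $U$. Composing with a bijection identifies every uniform ultrafilter RK-below $U$ with one of the form $f_*(U)$, so $U$ is RK-minimal among uniform ultrafilters iff for every $f$ either $f$ is constant mod $U$ or $f_*(U)\equiv_{RK}U$, i.e.\ iff every $f\colon\kappa\to\kappa$ is constant or injective on a set in $U$, which is selectivity.

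\emph{Selective $\Leftrightarrow$ RK-equivalent to a normal ultrafilter.} The backward direction is soft: a normal ultrafilter is Ramsey, hence selective, by the preceding proposition, and selectivity transfers along RK-equivalences (if $V=e_*(U)$ for a bijection $e$, then applying selectivity of $U$ to $h\circ e$ and taking the $e$-image of the resulting set handles any $h\colon\kappa\to\kappa$). The forward direction is the crux, and I would run it through the ultrapower. Let $j_U\colon V\to M=\Ult(V,U)$ be the ultrapower embedding; uniformity and $\kappa$-completeness give $\mathrm{crit}(j_U)=\kappa<j_U(\kappa)$, so by {\L}o\'{s} there is $g\colon\kappa\to\kappa$ with $[g]_U=\kappa$. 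Such a $g$ is not constant mod $U$, so selectivity makes it injective on a set in $U$; hence $N:=g_*(U)$ is a $\kappa$-complete ultrafilter on $\kappa$ with $N\equiv_{RK}U$, and the injectivity of $g$ forces every member of $N$ to have size $\kappa$, so $N$ is uniform and $[\id]_N\ge\kappa$. Let $k\colon\Ult(V,N)\to M$ be the factor embedding, so $j_U=k\circ j_N$ and $k([\id]_N)=[g]_U=\kappa$; since $N$ is $\kappa$-complete, $k$ fixes every ordinal below $\kappa$, whence $\mathrm{crit}(k)\ge\kappa$ and $k(\kappa)\ge\kappa$. If $[\id]_N>\kappa$, applying $k$ gives $\kappa=k([\id]_N)>k(\kappa)\ge\kappa$, a contradiction; hence $[\id]_N=\kappa$, which is normality of $N$ by the standard characterization (see \cite{Kanamori1976UltrafiltersOA}). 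I expect this last argument---exhibiting $g$ with $[g]_U=\kappa$, setting up the factor embedding, and running the critical-point bookkeeping---to be the main obstacle; the remaining equivalences are routine.
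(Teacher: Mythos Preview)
Your proof is correct and complete. The paper itself does not supply a proof of this proposition; it simply refers the reader to \cite{Kanamori1976UltrafiltersOA}, \cite{TomTreePrikry}, and \cite{GoldbergUA}. Your arguments track the standard proofs found in those sources: the first two equivalences are the routine definitional unwindings, and your ultrapower argument for ``selective $\Rightarrow$ RK-equivalent to normal'' (take $g$ with $[g]_U=\kappa$, push forward, and use the factor embedding $k$ to force $[\id]_N=\kappa$) is essentially the one appearing in Goldberg's book. One microscopic remark: uniformity of $N=g_*(U)$ already follows from $\kappa$-completeness of $U$ together with $g$ not being constant mod $U$ (bounded sets pull back to $<\kappa$-unions of non-$U$ fibres), so you do not strictly need injectivity for that step, though your phrasing is also fine.
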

\begin{proof}
See (\cite{Kanamori1976UltrafiltersOA} or \cite{TomTreePrikry} or \cite{GoldbergUA}).
\end{proof}
\begin{theorem}{\rm(Kanamori) \cite[Theorem 1.3]{KanPPoint}}
 
\begin{enumerate}
    \item If $\kappa=\omega$ then rapid $\Leftrightarrow$ $r$-point.
    \item if $\kappa>\omega$ then rapid $\Leftrightarrow$ $q$-point.
\end{enumerate}
\end{theorem}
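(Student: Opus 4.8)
The plan is to prove Theorem \ref{KanPPoint}(2) — that over an uncountable regular cardinal $\kappa$, an ultrafilter $U$ is rapid if and only if it is a $q$-point — by extracting, on a set in $U$, a \emph{normal enumeration} of the set on which a given function behaves badly, thereby converting each condition into the other. Recall the definitions at play: $U$ rapid means that for every \emph{normal} (increasing, continuous) $f:\kappa\to\kappa$ there is $X\in U$ with $\otp(X\cap f(\alpha))\le\alpha$ for all $\alpha<\kappa$; and $U$ a $q$-point means every $f:\kappa\to\kappa$ that is almost $1$-$1$ mod $U$ (bounded fibers, i.e.\ $|f^{-1}[\gamma]|<\kappa$ for all $\gamma<\kappa$, on a set in $U$) is $1$-$1$ on a set in $U$. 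The key observation that makes the uncountable case cleaner than $\kappa=\omega$ is that normal functions on $\kappa$ have club ranges, and clubs are plentiful and interact well with $\kappa$-completeness-style closure properties.

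First I would prove rapid $\Rightarrow$ $q$-point. Let $f:\kappa\to\kappa$ be almost $1$-$1$ mod $U$; fix $A\in U$ witnessing the bounded fibers. Define $g(\alpha)$ to be the $\alpha$-th element of $\{\beta<\kappa\mid f^{-1}(\{\beta\})\cap A\neq\varnothing\}$, or rather pass to the normal (closed) enumeration of a suitable club contained in (or generated from) that range; more efficiently, let $h:\kappa\to\kappa$ enumerate in increasing order the closure of $f''A$ and check $h$ can be taken normal after intersecting with the club of closure points. Apply rapidity to $h$: there is $X\in U$ with $\otp(X\cap h(\alpha))\le\alpha$. The point is that if $f$ fails to be $1$-$1$ on every $U$-set then on $X\cap A$ two elements share an $f$-value, and by counting how many elements of $X$ can lie below a given threshold $h(\alpha)$ against how many values $f$ realizes below it, one derives a contradiction with $\otp(X\cap h(\alpha))\le\alpha$. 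Conversely, for $q$-point $\Rightarrow$ rapid: given a normal $f$, its range $C=f''\kappa$ is a club, and $f$ itself is essentially the increasing enumeration of $C$; if rapidity fails, then for every $X\in U$ there is $\alpha$ with $\otp(X\cap f(\alpha))>\alpha$, i.e.\ $X$ "outruns" $C$. Define the collapsing map $p:\kappa\to\kappa$ sending $\nu$ to $\otp(C\cap\nu)$ (equivalently $p=f^{-1}$ extended to all of $\kappa$), which has bounded fibers (each fiber is an interval between two successive points of $C$, of size $<\kappa$ since $C$ is club and hence closed with all initial segments bounded). By the $q$-point property $p$ is $1$-$1$ on some $X\in U$; unwinding what injectivity of $p$ on $X$ says about $\otp(X\cap f(\alpha))$ yields exactly the rapidity inequality (possibly after shrinking $X$ to arrange $\min X$ large, using uniformity).

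The main obstacle — and the place where the argument needs the most care — is the bookkeeping translating between "order type of $X$ below $f(\alpha)$" and "injectivity of the collapsing map on $X$," together with making sure the auxiliary functions ($h$ in one direction, $p$ in the other) are genuinely normal / genuinely have bounded fibers so that the hypotheses apply verbatim. In particular one must handle the possibility that $f''A$ is not already closed (take its closure and absorb the countably/boundedly many extra points using that $\kappa$ is regular) and the possibility that $\otp$ equalities rather than strict inequalities cause off-by-one issues (absorbed by passing from $X$ to a tail or to $X\setminus\gamma$ for suitable $\gamma<\kappa$, which stays in $U$ by uniformity). Since this is a cited result of Kanamori, I would then simply remark that the details are in \cite[Theorem 1.3]{KanPPoint}; the sketch above is the route one would take to reprove it, with the uncountable cofinality of $\kappa$ doing the work that rapid filters' normality-function formulation is designed to exploit.
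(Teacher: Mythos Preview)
The paper does not supply a proof of this theorem: it is stated with attribution to Kanamori and a bare citation to \cite[Theorem 1.3]{KanPPoint}, and the text immediately moves on. So there is no ``paper's own proof'' to compare your sketch against; your closing remark---that one would simply defer to Kanamori---is precisely what the authors do.

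As for the sketch itself: the $q$-point $\Rightarrow$ rapid direction is essentially correct and is the standard argument (collapse along the club range of the normal $f$, apply the $q$-point hypothesis to the resulting bounded-fiber map, and read off the order-type bound from injectivity). The rapid $\Rightarrow$ $q$-point direction, however, is underspecified. Your proposed contradiction---``two elements of $X\cap A$ share an $f$-value, so a counting argument against $\otp(X\cap h(\alpha))\le\alpha$ fails''---does not go through as stated: a single repeated value somewhere does not violate the global order-type inequality, and rapidity hands you one set $X$ with a thinness property, not a set on which $f$ is automatically injective. The actual argument requires building the normal function so that the intervals $[h(\alpha),h(\alpha+1))$ each trap at most one fiber (or one element of each fiber) and then using the order-type bound to select at most one point of $X$ per interval; the bookkeeping you flag as ``the main obstacle'' is in fact where the content lives, and your outline does not yet indicate how it is discharged. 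Since the paper treats the result as a black box, this does not affect anything downstream.
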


The original definition of rapid  ultrafilter on $\omega$, obviously, does not mention normality of the function. 
However, for measurable cardinals the normality assumption cannot be dropped:
\begin{proposition}
Suppose that $U$ is a normal ultrafilter over a measurable cardinal $\kappa$.
Then for any increasing function $f:\kappa\rightarrow \kappa$  such that $f(\alpha)>\alpha$ for every $\alpha$ (in particular not continuous) and for any $X\in U$, we have that
$$\{\alpha<\kappa\mid \otp(X\cap f(\alpha))\geq \alpha+1\}\in U$$ 
\end{proposition}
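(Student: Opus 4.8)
The plan is to leverage two standard facts about a normal ultrafilter $U$ on a measurable cardinal $\kappa$: that $U$ extends the club filter $\Cub_\kappa$, and that every $X\in U$ has club-many (hence $U$-many) ``closure points''. Throughout, fix $X\in U$ and let $g_X(\alpha):=\otp(X\cap\alpha)$; this is a non-decreasing function with $g_X(\alpha)\le\alpha$ for all $\alpha<\kappa$.

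First I would verify that $D_X:=\{\alpha<\kappa\mid \otp(X\cap\alpha)=\alpha\}$ is club in $\kappa$. Closure is immediate: if $\alpha$ is a limit of points of $D_X$ then $D_X\cap\alpha$ is cofinal in $\alpha$, so $\otp(X\cap\alpha)=\sup\{\otp(X\cap\gamma)\mid\gamma\in D_X\cap\alpha\}=\sup(D_X\cap\alpha)=\alpha$. Unboundedness follows by the usual closing-off argument: given $\beta_0<\kappa$, let $e_X$ be the increasing enumeration of $X$ (of length $\kappa$, since $X$ is unbounded and $\kappa$ is regular), set $\beta_{n+1}:=e_X(\beta_n)$, and let $\beta:=\sup_n\beta_n$; then $\otp(X\cap\beta)=\sup_n\otp(X\cap e_X(\beta_n))=\sup_n\beta_n=\beta$, so $\beta\in D_X$. (Alternatively, and more slickly, one can argue in $\Ult(V,U)$: since $j_U\re\kappa=\id$ we get $j_U(X)\cap\kappa=X$, so $[g_X]_U=\otp(j_U(X)\cap\kappa)=\otp(X)=\kappa=[\id]_U$, and {\L}o\'s's theorem gives $D_X\in U$ at once.) Since $U$ is normal, $\Cub_\kappa\sse U$, so $D_X\in U$ and hence $X\cap D_X\in U$.

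It remains to show that $X\cap D_X\sse\{\alpha<\kappa\mid\otp(X\cap f(\alpha))\ge\alpha+1\}$, which finishes the proof. So fix $\alpha\in X\cap D_X$. The only property of $f$ used is $f(\alpha)>\alpha$, i.e.\ $f(\alpha)\ge\alpha+1$; neither ``increasing'' nor ``not continuous'' plays any role (indeed $f(\alpha)=\alpha+1$ already suffices, which is exactly the witness showing rapidity would fail without normality). Then $(X\cap\alpha)\cup\{\alpha\}=X\cap(\alpha+1)\sse X\cap f(\alpha)$, and since $\alpha\in D_X$ the set $X\cap\alpha$ has order type exactly $\alpha$; adjoining the single point $\alpha$, which lies above all of $X\cap\alpha$ and belongs to $X$, yields $\otp((X\cap\alpha)\cup\{\alpha\})=\alpha+1$. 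Hence $\otp(X\cap f(\alpha))\ge\alpha+1$, as desired.

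I do not anticipate a genuine obstacle here: the argument uses nothing about $\kappa$ beyond measurability and nothing about $U$ beyond normality, and the only steps needing (routine) care are the verification that $D_X$ is club --- or, on the ultrapower route, the identity $j_U(X)\cap\kappa=X$ and the absoluteness of order type between $V$ and $\Ult(V,U)$.
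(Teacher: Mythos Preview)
Your proof is correct. The paper's argument is organized differently: rather than isolating the club $D_X=\{\alpha\mid\otp(X\cap\alpha)=\alpha\}$ and then showing $X\cap D_X$ is contained in the target set, the authors work entirely in the ultrapower and verify directly that $\kappa\in j_U(A)$ for the target set $A$, using $j_U(X)\cap j_U(f)(\kappa)\supseteq X\cup\{\kappa\}$ (since $\kappa\in j_U(X)$ and $j_U(f)(\kappa)>\kappa$). Your parenthetical ``slicker'' ultrapower computation of $[g_X]_U=\kappa$ is essentially the same calculation, just applied to the auxiliary set $D_X$ rather than to $A$ itself. The two routes are equivalent in substance; your combinatorial packaging via $D_X$ has the minor advantage of making explicit that only $f(\alpha)>\alpha$ is used (as you observe), while the paper's one-shot ultrapower argument is a line shorter.
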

\begin{proof}
Let $f,X$ be as in the proposition.
Normality is equivalent to $[\id]_U=\kappa$.
Also, since $U$ is uniform, every $X\in U$ satisfies that $\otp(X)=\kappa$.
Let $j_U:V\rightarrow M_U$ be the ultrapower embedding by the normal measure $U$. Then  we have the following equality (just for normal):
$$Z\in U\Leftrightarrow \kappa\in j_U(Z)$$
In particular, since $X\in U$, we know that $\kappa\in j_U(X)$.
Notice that since $\kappa$ is the critical point of the embedding $j_U$ (namely $j_U\restriction \kappa=\id$ and $j_U(\kappa)>\kappa$) and since $X\subseteq\kappa$ we have that $j_U(X)\cap\kappa=X$.

By elementarity, and by the assumption on $f$, $j_U(f):j_U(\kappa)\rightarrow j_U(\kappa)$ and $j_U(\kappa)>\kappa$, so we my apply $j_U(f)$ to $\kappa$ and by the assumption $j_U(f)(\kappa)>\kappa$. In particular,
$$j_U(X)\cap j_U(f)(\kappa)\supseteq j_U(X)\cap \kappa+1=X\cup\{\kappa\}$$
In particular, $$(*)\ \ \otp(j_U(X)\cap j_U(f)(\kappa))\geq\kappa+1>\kappa$$

Consider the set $A=\{\alpha<\kappa\mid \otp(X\cap f(\alpha))>\alpha\}$, we want to prove that $A\in U$, or equivalently, that $\kappa\in j_U(A)$. By elementarity, the set $j_U(A)=\{\alpha<j_U(\kappa)\mid \otp(j_U(X)\cap j_U(f)(\alpha))>\alpha\}$ but this is exactly what we checked in $(*)$.
\end{proof}
The above also proves that $p$-points are not rapid in this sense.

\section{The maximum Tukey class and the Galvin property}

In this  section, we lay forth the   connections between the Galvin property and the Tukey order. 
Recall that a partial ordering  $(P,\le_P)$ is {\em $\lambda$-directed closed} if  every  subset of $P$ of size less than $\lambda$ has an upper bound in $P$.

\begin{theorem}\label{directedposet}
Let $\lambda$ and $\mu$ be any cardinals. Suppose that $(P,\leq_P)$ is $\lambda$-directed closed  and that $|P|\leq\mu$.
Then $(P,\leq_P)\leq_T([\mu]^{<\lambda},\subseteq)$. In other wards, $([\mu]^{<\lambda},\subseteq)$ is maximal in the Tukey order among $\lambda$-directed closed posets of cardinality at most $\mu$.
\end{theorem}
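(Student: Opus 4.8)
The plan is to build a cofinal map $f \colon ([\mu]^{<\lambda},\subseteq) \to (P,\leq_P)$ directly, using $\lambda$-directed closedness of $P$ together with the size bound $|P| \leq \mu$. First I would fix a surjection (or injection, after replacing $\mu$ by $|P|$ which is harmless since $[\,|P|\,]^{<\lambda}$ embeds cofinally into $[\mu]^{<\lambda}$) to identify the underlying set of $P$ with a subset of $\mu$; so without loss of generality $P \subseteq \mu$ as a set. Then for each $a \in [\mu]^{<\lambda}$, consider $a \cap P \in [P]^{<\lambda}$; this is a subset of $P$ of size less than $\lambda$, so by $\lambda$-directed closedness it has an upper bound in $P$. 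Using the Axiom of Choice, pick one such upper bound and call it $f(a)$. (If $a \cap P = \emptyset$, just let $f(a)$ be some fixed element of $P$, assuming $P \neq \emptyset$; the trivial case $P = \emptyset$ is vacuous.)

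The key point is then that $f$ is cofinal, i.e.\ sends cofinal subsets of $[\mu]^{<\lambda}$ to cofinal subsets of $P$. By Schmidt's equivalence (or directly), it suffices to check that $f$ maps unbounded sets to unbounded sets, or even more simply that the preimage structure works out; but the cleanest route is: let $X \subseteq [\mu]^{<\lambda}$ be cofinal, and let $p \in P$ be arbitrary. Since $\{p\} \in [\mu]^{<\lambda}$ and $X$ is cofinal, there is $a \in X$ with $\{p\} \subseteq a$, hence $p \in a \cap P$, hence $p \leq_P f(a)$ by the choice of $f(a)$ as an upper bound of $a \cap P$. Thus $f''X$ is cofinal in $P$, as $p$ was arbitrary. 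This shows $(P,\leq_P) \leq_T ([\mu]^{<\lambda},\subseteq)$.

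The only mild subtlety to be careful about is the identification of $P$ with a subset of $\mu$: what we need is that $a \cap P$ makes sense and has size $< \lambda$, which is immediate once the underlying set of $P$ is literally a subset of $\mu$. Alternatively, to avoid any set-theoretic identification, one can fix an injection $e \colon P \hookrightarrow \mu$ and define $f(a)$ to be an upper bound in $P$ of the set $e^{-1}[a] = \{p \in P : e(p) \in a\} \in [P]^{<\lambda}$; then for the cofinality argument apply cofinality of $X$ to the singleton $\{e(p)\}$ to get $a \in X$ with $e(p) \in a$, so $p \in e^{-1}[a]$ and $p \leq_P f(a)$. I do not anticipate any real obstacle here — the statement is essentially the uncountable, $\lambda$-directed analogue of the classical fact that $[\mu]^{<\omega}$ (equivalently, $[\mu]^{<\aleph_1}$ adjusted) is Tukey-maximal among directed posets of size $\leq \mu$, and the proof is the same one-line upper-bound construction. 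The last remark of the theorem statement, that $([\mu]^{<\lambda},\subseteq)$ is itself $\lambda$-directed closed of size $\mu^{<\lambda}$ — wait, one should note $|[\mu]^{<\lambda}| = \mu^{<\lambda}$ which may exceed $\mu$; but the maximality claim is among posets of size $\leq \mu$, and $[\mu]^{<\lambda}$ need not be among them, so ``maximal'' here means it is an upper bound for that class in $\leq_T$, which is exactly what the displayed reduction gives.
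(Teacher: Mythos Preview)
Your proof is correct but takes the dual route to the paper's. You build a \emph{cofinal} map $f:[\mu]^{<\lambda}\to P$ by sending each $a$ to a chosen $\leq_P$-upper bound of $e^{-1}[a]$, whereas the paper builds an \emph{unbounded} map $P\to[\mu]^{<\lambda}$ by sending $p\mapsto\{g(p)\}$ for a fixed injection $g:P\to\mu$, and then observes that any unbounded $X\subseteq P$ has $|X|\geq\lambda$ (by $\lambda$-directedness), so $\bigcup f''X=g''X$ has size $\geq\lambda$ and hence $f''X$ is unbounded in $[\mu]^{<\lambda}$.

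The two arguments are essentially mirror images via Schmidt's equivalence. The paper's version is marginally cleaner in that the map $p\mapsto\{g(p)\}$ is canonical once $g$ is fixed and no choice of upper bounds is needed; your version makes the cofinal map explicit, which is perhaps more transparent if one thinks of Tukey reducibility primarily in terms of cofinal maps. Your side remark about $|[\mu]^{<\lambda}|=\mu^{<\lambda}$ possibly exceeding $\mu$ is a correct reading of the ``maximality'' phrasing: the claim is only that $[\mu]^{<\lambda}$ is a $\leq_T$-upper bound for the class, not that it belongs to it.
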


\begin{proof}
First note that if $\mu<\lambda$ and $P$ is a $\lambda$-directed poset of cardinality at most $\mu$, then $P$ has a greatest element and the theorem is trivially true. Hence, let us assume that $\lambda\leq\mu$. Fix any one-to-one function $g:P\rightarrow \mu$,  and let us prove that the function $f(p)=\{g(p)\}$ is an unbounded function from $P$ to $[\mu]^{<\lambda}$. Suppose that $X$ is unbounded. Then $|X|\geq \lambda$ (by $\lambda$-directedness) and therefore $g''X$ has size at least $\lambda$. This implies that $\cup f''X=g''X$ has size $\lambda$. If follows that $f''X$ is unbounded in $[\mu]^{<\lambda}$.
\end{proof}
Any order $(P,\leq_P)$ such that $(P,\leq_P)\equiv_T([\mu]^{<\lambda},\subseteq)$ is called {\em Tukey-top}.
It is important that we only restrict ourselves to $\lambda$-directed closed posets, as the following proposition suggests:
\begin{proposition}
    If $(P,\leq_P)$ is not $\lambda$ directed-closed and $(Q,\leq_Q)$ is $\lambda$-directed closed then $\neg( (P,\leq P)\leq_T (Q,\leq_Q))$
\end{proposition}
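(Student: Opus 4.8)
The plan is to argue by contradiction, exploiting the mismatch between ``having many small unbounded sets'' (forced on $P$ by the failure of $\lambda$-directed closure) and ``every small set being bounded'' (guaranteed in $Q$ by $\lambda$-directed closure). First I would unpack the failure of $\lambda$-directed closure: there is a subset $A\subseteq P$ with $|A|<\lambda$ that has \emph{no} upper bound in $P$. The key observation is that, being unbounded, $A$ is itself an unbounded subset of $P$ of size $<\lambda$; moreover every subset of $A$ is bounded is false in the strong sense that we can extract a \emph{minimal-size} unbounded subset — but in fact we do not even need minimality, we just need one unbounded set of size $<\lambda$, namely $A$ itself (or a witnessing subset).

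Now suppose toward a contradiction that $(P,\le_P)\le_T(Q,\le_Q)$. By Schmidt's equivalence (recalled in the introduction), this is witnessed by an unbounded map $g:P\to Q$. Apply $g$ to the unbounded set $A$: since $g$ is unbounded, $g''A$ is unbounded in $Q$. But $|g''A|\le|A|<\lambda$, so $g''A$ is a subset of $Q$ of size strictly less than $\lambda$; by $\lambda$-directed closure of $Q$, the set $g''A$ has an upper bound in $Q$, i.e.\ it is bounded. This contradicts the unboundedness of $g''A$, completing the proof. (One should note the degenerate case $\lambda\le 1$ or $\lambda=2$ cannot arise meaningfully here, since ``$\lambda$-directed closed'' for $\lambda\le\aleph_0$ is about finite subsets and a poset failing to be $\lambda$-directed closed while $Q$ is $\lambda$-directed closed still produces a finite unbounded $A$; the argument is unaffected.)

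The only genuinely delicate point — and the one I would flag as the ``obstacle'' — is making sure the directions of the maps and the definitions line up: Tukey reducibility $P\le_T Q$ is witnessed by an unbounded map \emph{from $P$ to $Q$}, and one must use this map (not a cofinal map $Q\to P$) so that the small unbounded set in $P$ gets pushed forward into $Q$ where $\lambda$-directed closure can be applied. Everything else is a one-line cardinality estimate ($|g''A|\le|A|<\lambda$) together with the two definitions. So the proof is short: identify a witnessing small unbounded $A\subseteq P$, push it forward under the unbounded map $g:P\to Q$, and observe that $\lambda$-directed closure of $Q$ bounds $g''A$, contradicting that $g$ is unbounded.
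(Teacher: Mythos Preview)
Your proof is correct and follows essentially the same approach as the paper: assume an unbounded map $g:P\to Q$, take a small unbounded set $A\subseteq P$ of size $<\lambda$ witnessing the failure of $\lambda$-directed closure, and observe that $g''A$ must be unbounded in $Q$ yet has size $<\lambda$, contradicting $\lambda$-directed closure of $Q$. The extra commentary about degenerate $\lambda$ and minimal-size subsets is unnecessary but harmless.
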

\begin{proof}
    Suppose otherwise that there is an unbounded function $f:P\rightarrow Q$. Since $P$ is not $\lambda$-directed closed, there is a family $B:=\{p_i\mid i<\theta\}$ for $\theta<\lambda$ which is unbounded. Thus $f''B\subseteq Q$ is  unbounded. Since $Q$ is $\lambda$-directed, and $f''B$ is a family of size less than $\lambda$,  there is $q\in Q$ which bounds $f''B$, contradiction.
\end{proof}
So, for example, ultrafilters which are not $\sigma$-complete cannot be Tukey below ultrafilters which are $\kappa$-complete. 
The other direction is simply not true in general since for example $\omega\times\omega_1\geq_T \omega_1$. More generally, we have that:
\begin{fact}
    Suppose $(P,\leq_P)$ is not $\lambda$-directed closed, then for every $(Q,\leq_Q)$, $(P,\leq_P)\times (Q,\leq_Q)$ is not $\lambda$-directed closed.
\end{fact}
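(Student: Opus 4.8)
The plan is to transport a witness for the failure of $\lambda$-directed closedness in $P$ up to the product $P\times Q$ by freezing one coordinate in $Q$. First I would use the hypothesis to fix a subset $B\subseteq P$ with $|B|<\lambda$ which has no upper bound in $P$. Before proceeding I would dispose of the degenerate case $Q=\emptyset$: there $P\times Q=\emptyset$, and the empty subset of the empty poset (which has size $0<\lambda$) has no upper bound, so $P\times Q$ is trivially not $\lambda$-directed closed. So from now on fix some $q\in Q$.

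The main step is then to check that $B':=\{(p,q)\mid p\in B\}$, a subset of $P\times Q$ of cardinality $|B|<\lambda$, has no upper bound in $P\times Q$. Indeed, if $(p^*,q^*)$ were such an upper bound, then unwinding the definition of $\leq_{P\times Q}$ gives $p\leq_P p^*$ for every $p\in B$; that would make $p^*$ an upper bound of $B$ in $P$, contradicting the choice of $B$. Hence $B'$ witnesses that $P\times Q$ is not $\lambda$-directed closed.

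I do not expect any real obstacle; the conclusion is immediate from the definition of the product order, essentially the same idea used in the preceding proposition. The only points deserving a line of care are the empty-$Q$ case, handled above, and the observation that passing from $B$ to $B'$ does not change cardinality, so the bound $<\lambda$ is preserved.
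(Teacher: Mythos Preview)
Your argument is correct and is exactly the natural one; the paper itself states this as a \emph{fact} without supplying any proof, so there is nothing to compare against beyond noting that your reasoning is the evident verification the authors left to the reader.
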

The Galvin property (Definition \ref{Def:Galvin Property}) turns out to 
provide an equivalent condition for an ultrafilter to be Tukey-top.



\begin{theorem}\label{GlavinMainTheorem}
Let $\lambda\le \kappa$ be cardinals, 
and let  $U$ be a $\lambda$-complete filter over $\kappa$.
Then $(U,\supseteq)\geq_T ([\kappa]^{<\lambda},\subseteq)$ if and only if $\neg \Gal(U,\lambda,\kappa)$.
In particular, if 
$\kappa$ is a measurable cardinal then the $\kappa$-complete ultrafilters over $\kappa$ which are Tukey-top are exactly those for which $\neg \Gal(U,\kappa,2^\kappa)$ holds.
\end{theorem}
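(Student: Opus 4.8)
The plan is to establish a slightly more general equivalence and then specialize. Namely, I will show that for every cardinal $\mu\ge\lambda$ and every $\lambda$-complete filter $U$ over $\kappa$, the relation $(U,\supseteq)\ge_T([\mu]^{<\lambda},\subseteq)$ holds if and only if there is a sequence $\langle A_\xi\mid\xi<\mu\rangle$ of members of $U$ such that $\bigcap_{\xi\in I}A_\xi\notin U$ for every $I\in[\mu]^\lambda$. The theorem is the instance $\mu=\kappa$, for which the right-hand side is precisely $\neg\Gal(U,\lambda,\kappa)$, while the final clause will come from the instance $\mu=2^\kappa$ together with Theorem \ref{directedposet}. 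I will use throughout: (i) by Schmidt's theorem \cite{Schmidt55}, $([\mu]^{<\lambda},\subseteq)\le_T(U,\supseteq)$ iff there is an unbounded map $g:[\mu]^{<\lambda}\to(U,\supseteq)$; (ii) a set $X\subseteq[\mu]^{<\lambda}$ is unbounded precisely when $|\bigcup X|\ge\lambda$; and (iii) since $U$ is a filter, a set $Y\subseteq U$ is unbounded in $(U,\supseteq)$ precisely when $\bigcap Y\notin U$.

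For the ``if'' direction, given such a sequence, define $g:[\mu]^{<\lambda}\to U$ by $g(a)=\bigcap_{\xi\in a}A_\xi$; this lands in $U$ because $|a|<\lambda$ and $U$ is $\lambda$-complete. If $X\subseteq[\mu]^{<\lambda}$ is unbounded then $|\bigcup X|\ge\lambda$, so choosing $I\in[\bigcup X]^\lambda$ we get $\bigcap g''X=\bigcap_{\xi\in\bigcup X}A_\xi\subseteq\bigcap_{\xi\in I}A_\xi\notin U$, and hence $\bigcap g''X\notin U$, so $g''X$ is unbounded. Thus $g$ witnesses $(U,\supseteq)\ge_T([\mu]^{<\lambda},\subseteq)$.

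For the ``only if'' direction---which, in contrast to the situation over $\omega$, requires no canonization or monotonization of the witnessing map---fix \emph{any} unbounded $g:[\mu]^{<\lambda}\to U$ and put $A_\xi:=g(\{\xi\})$. Given $I\in[\mu]^\lambda$, the family $X:=\{\{\xi\}\mid\xi\in I\}$ satisfies $\bigcup X=I$ with $|I|=\lambda$, so $X$ is unbounded in $[\mu]^{<\lambda}$; hence $g''X=\{A_\xi\mid\xi\in I\}$ is unbounded in $(U,\supseteq)$, i.e.\ $\bigcap_{\xi\in I}A_\xi\notin U$, as required. To recover the literal form of $\neg\Gal(U,\lambda,\mu)$, which asks for a subfamily of $U$ of size $\mu$, one observes that the displayed property forbids any value of the sequence from being attained $\lambda$-many times, so when $\mu$ is regular---in particular for $\mu\in\{\kappa,2^\kappa\}$, which is all we need---the sequence realizes $\mu$ distinct members of $U$, from which an honest $[U]^\mu$-witness is read off.

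Finally, suppose $\kappa$ is measurable and $U$ is a $\kappa$-complete ultrafilter over $\kappa$. Then $(U,\supseteq)$ is $\kappa$-directed closed, since any family of fewer than $\kappa$ members of $U$ has its intersection---again a member of $U$---as an upper bound, and $|U|\le2^\kappa$; so Theorem \ref{directedposet} gives $(U,\supseteq)\le_T([2^\kappa]^{<\kappa},\subseteq)$. Hence $U$ is Tukey-top exactly when $(U,\supseteq)\ge_T([2^\kappa]^{<\kappa},\subseteq)$, which by the $\mu=2^\kappa$, $\lambda=\kappa$ case of the equivalence above is exactly $\neg\Gal(U,\kappa,2^\kappa)$. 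There is no deep obstacle here; the points requiring care are keeping the two reverse-inclusion orders straight (remembering that ``unbounded in $(U,\supseteq)$'' means ``intersection not in $U$'') and the bookkeeping involved in passing between sequences with repetitions and genuine size-$\mu$ subfamilies of $U$, and the argument uses nothing about $U$ beyond $\lambda$-completeness.
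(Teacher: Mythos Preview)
Your proof is correct and follows essentially the same approach as the paper: define $g(a)=\bigcap_{\xi\in a}A_\xi$ for one direction and $A_\xi=g(\{\xi\})$ for the other, checking unboundedness via $|\bigcup X|\ge\lambda$. The generalization to arbitrary $\mu\ge\lambda$ is cosmetic.

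One remark: your aside about ``recovering the literal form of $\neg\Gal(U,\lambda,\mu)$'' is unnecessary---the paper reads $\langle A_i\mid i<\lambda\rangle\in[U]^\lambda$ loosely as a $\lambda$-indexed sequence from $U$, not as an injective enumeration, and its own proof makes no attempt to ensure the $X_\alpha$ are pairwise distinct. More importantly, your justification there contains a slip: you assert that $2^\kappa$ is regular, which is not provable in \textsf{ZFC} even for measurable $\kappa$. Since the distinctness was never needed, this does not affect the theorem; just drop that paragraph.
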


\begin{proof}
Since $(U,\supseteq)$ is $\lambda$-directed whenever $U$ is $\lambda$-complete, and since $|U|=2^\kappa$, Theorem \ref{directedposet} 
applies ($\lambda$ being itself and $\mu$ being $2^\kappa$) and yields
$(U,\supseteq)\leq_T ([2^\kappa]^{<\lambda},\subseteq)$. So the ``In particular" part of the theorem follows from the first part. 

Suppose that $\l A_i\mid i<\kappa\r$ witnesses that $\neg \Gal(U,\lambda,\kappa)$. 
To show that $(U,\contains)\ge_T ([\kappa]^{<\lambda},\subseteq)$,
define $f:[\kappa]^{<\lambda}\rightarrow U$
by $f(I)=\bigcap_{i\in I}A_i$, 
and let us prove that $f$  is an unbounded map: 
Suppose that $X\subseteq [\kappa]^{<\lambda}$ is unbounded. Then $|\cup X|\geq\lambda$. 
 Now
 $$\bigcap f''X=\bigcap\{\cap_{i\in I}A_i\mid I\in X\}=\bigcap_{i\in\cup X}A_i.$$ By the assumption that the sequence witnesses the failure of Galvin's property, it follows that $\bigcap f''X\notin U$ and therefore $f''X$ is  unbounded in $U$.

For the other direction, suppose $g:[\kappa]^{<\lambda}\rightarrow U$ is unbounded, and let us show that $\neg\Gal(U,\lambda,\kappa)$. The witnessing sequence will be $\l X_\alpha\mid\alpha<\kappa\r$, where $X_\alpha=g(\{\alpha\})$. Let us prove that $X_\alpha$ is as wanted. Fix any $I\in[\kappa]^{\lambda}$. Then the set $\hat{I}=\{\{i\}\mid i\in I\}\subseteq [\kappa]^{<\lambda}$ is unbounded. By our assumption, $g''\hat{I}=\{X_i\mid i\in I\}$ is also unbounded, it follows that $\bigcap_{i\in I}X_i\notin U$. 
\end{proof}


Galvin ultrafilters have been studied extensively in recent years. 
The most recent sufficient condition for an ultrafilter to be Galvin is the following 
 \cite{SatInCan}:

\begin{corollary}
Let $\kappa$ be a measurable cardinal and suppose that $W$ is Rudin-Keisler equivalent to an ultrafilter of the form:
$$\sum_{U}\sum_{U_{\l\alpha_1\r}}\dots\sum_{\l\alpha_1,\dots,\alpha_{n-1}\r}U_{\l\alpha_1,\dots,\alpha_{n}\r}$$
where
each of the ultrafilters $U_{\l\alpha_1,..,\alpha_k\r}$ is a $p$-point ultrafilter. Then $\Gal(W,\kappa,2^\kappa)$, and in particular,  $W$ is not Tukey-top.
\end{corollary}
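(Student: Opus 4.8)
The statement has two parts: that $\Gal(W,\kappa,2^\kappa)$ holds, and that $W$ is not Tukey-top; the second follows immediately from the first together with Theorem~\ref{GlavinMainTheorem}, so the plan is to establish $\Gal(W,\kappa,2^\kappa)$ and then invoke that theorem. First I would record that $W$ is a $\kappa$-complete uniform ultrafilter over $\kappa$: an iterated Fubini sum of $\kappa$-complete uniform ultrafilters over $\kappa$ is, after identifying $\kappa\times\cdots\times\kappa$ with $\kappa$ via a bijection, again $\kappa$-complete and uniform, and a Rudin--Keisler image of a $\kappa$-complete ultrafilter is still $\kappa$-complete. Hence the ``in particular'' clause of Theorem~\ref{GlavinMainTheorem} (with $\lambda=\kappa$ and base set $2^\kappa$) applies to $W$ and says that $W$ is Tukey-top if and only if $\neg\Gal(W,\kappa,2^\kappa)$; so once $\Gal(W,\kappa,2^\kappa)$ is known, $W$ is not Tukey-top.

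It remains to prove $\Gal(W,\kappa,2^\kappa)$. The quickest route is to cite the theorem of Benhamou and Gitik \cite{SatInCan}, which is precisely this assertion. For a self-contained argument one uses the machinery of Section~6: since the relevant properties are invariant under Rudin--Keisler equivalence --- $U\leq_{RK}V$ implies $U\leq_T V$, and Theorem~\ref{GlavinMainTheorem} then transfers $\Gal(\cdot,\kappa,2^\kappa)$ between $U$ and $V$ --- we may assume $W$ is literally the displayed iterated Fubini sum. One then argues by induction on the depth $n$ that this sum is basically generated: the base case is that a $p$-point is basically generated (Propositions~\ref{Prop:ppt implies basic} and~\ref{Prop:Basicimpliesppt} identify $p$-point with basic, and Theorem~\ref{thm.bgnottop} gives $p$-point $\Rightarrow$ basically generated), and for the inductive step one peels off the outermost coordinate to write the sum as $\sum_U V_{\alpha_1}$, where each $V_{\alpha_1}$ is an iterated Fubini sum of $p$-points of depth $n-1$, hence basically generated by the induction hypothesis; since $U$ is itself a $p$-point, hence basically generated, Theorem~\ref{thm.bgclosedclass} (closure of the basically generated ultrafilters under sums) yields that $\sum_U V_{\alpha_1}$ is basically generated. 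Finally basically generated implies Galvin (Theorem~\ref{thm.bgnottop}), so $\Gal(W,\kappa,2^\kappa)$ holds.

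The main obstacle, if one insists on the self-contained proof, is ensuring that Theorem~\ref{thm.bgclosedclass} is stated and proved in the right generality --- that $\sum_U V_x$ is basically generated whenever $U$ and all the $V_x$ are --- so that it applies to a $U$-indexed sum of \emph{distinct} ultrafilters $V_{\alpha_1}$, and checking that associativity of Fubini sums genuinely presents the depth-$n$ sum in the form $\sum_U V_{\alpha_1}$ with each $V_{\alpha_1}$ of the predicted depth-$(n-1)$ shape. Everything else --- $\kappa$-completeness and uniformity of $W$, and Rudin--Keisler invariance of the relevant properties --- is routine. Note that the ``in particular, $W$ is not Tukey-top'' conclusion, which is the genuinely new content here, requires nothing beyond Theorem~\ref{GlavinMainTheorem} once the Galvin property is in hand.
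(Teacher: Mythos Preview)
Your proposal is correct and matches the paper's approach: the paper presents this corollary immediately after Theorem~\ref{GlavinMainTheorem} as a consequence of the Galvin property established in \cite{SatInCan} together with that theorem, exactly as you outline in your ``quickest route''. Your alternative self-contained argument via the basically-generated machinery of Section~\ref{sec.bg} is also what the paper later develops (Theorems~\ref{thm.bgnottop} and~\ref{thm.bgclosedclass}) to recover the result of \cite{SatInCan}; the only minor inaccuracy is that the implication ``$p$-point $\Rightarrow$ basically generated'' is not Theorem~\ref{thm.bgnottop} itself but rather the observation that a uniformly basic ultrafilter (equivalently a $p$-point, by Propositions~\ref{Prop:ppt implies basic} and~\ref{Prop:Basicimpliesppt}) is basically generated with base $B=U$.
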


Another type of ultrafilters which have the Galvin property are fine $\kappa$-complete ultrafilters over $[\lambda]^{<\kappa}$:
\begin{definition}
    An ultrafilter $U$ over $[\lambda]^{<\kappa}$ is called \textit{fine} if for every $\alpha<\lambda$, $A_\alpha:=\{X\in[\lambda]^{<\kappa}\mid \alpha\in X\}\in U$.
\end{definition}
The assumption that there is a fine $\kappa$ complete ultrafilter over $\lambda$ is known to be much stronger than measurability.
\begin{definition}
    A cardinal $\kappa$ is called \textit{$\lambda$-strongly-compact}  if $[\lambda]^{<\kappa}$ carries a fine $\kappa$-complete ultrafilter.
\end{definition}
It is immediate that if $U$ is a fine ultrafilter over $[\lambda]^{<\kappa}$, then the sequence $\l A_\alpha\mid \alpha<\lambda\r$ witnesses that $\neg \Gal(U,\kappa,\lambda)$. In particular, if $\lambda=2^\kappa$, and $U$ is a fine $\kappa$-complete ultrafilter over $[2^\kappa]^{<\kappa}$, then by Theorem \ref{GlavinMainTheorem}, $(U,\supseteq)\equiv_T ([2^{(2^\kappa)}]^{<\kappa},\subseteq)$. It follows that $(U,\supseteq)$ is maximal among $\kappa$-directed posets of cardinality at most $2^{(2^\kappa)}$ and in particular among $\kappa$-complete ultrafilters over $\kappa$. \begin{corollary}
        Suppose that $\kappa$ is $2^\kappa$-strongly compact and $U$ is a fine $\kappa$-complete ultrafilter over $[2^\kappa]^{<\kappa}$ witnessing that. Then for every $\kappa$ complete ultrafilter $W$ over $\kappa$, $(W,\supseteq)\leq_T(U,\supseteq)$ \end{corollary}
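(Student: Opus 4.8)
The plan is to deduce the corollary by composing two Tukey reductions through the poset $([2^\kappa]^{<\kappa},\subseteq)$: first establish $([2^\kappa]^{<\kappa},\subseteq)\le_T(U,\supseteq)$ by exhibiting a failure of the Galvin property for $U$ and invoking Theorem~\ref{GlavinMainTheorem}, then establish $(W,\supseteq)\le_T([2^\kappa]^{<\kappa},\subseteq)$ for an arbitrary $\kappa$-complete ultrafilter $W$ over $\kappa$ using Theorem~\ref{directedposet}, and finally chain the two by transitivity of $\le_T$.

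For the first reduction, put $A_\alpha=\{X\in[2^\kappa]^{<\kappa}\mid\alpha\in X\}$ for $\alpha<2^\kappa$. Fineness of $U$ gives $A_\alpha\in U$ for every $\alpha$, and the $A_\alpha$ are pairwise distinct (since $\{\alpha\}\in A_\alpha\setminus A_\beta$ whenever $\alpha\ne\beta$), so $\langle A_\alpha\mid\alpha<2^\kappa\rangle$ lists $2^\kappa$ members of $U$. For any $I\in[2^\kappa]^{\kappa}$ we have $\bigcap_{\alpha\in I}A_\alpha=\{X\in[2^\kappa]^{<\kappa}\mid I\subseteq X\}=\emptyset$, because no element of $[2^\kappa]^{<\kappa}$ can contain the set $I$, which has size $\kappa$; hence $\bigcap_{\alpha\in I}A_\alpha\notin U$. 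Thus this sequence witnesses $\neg\Gal(U,\kappa,2^\kappa)$. Since the Tukey type of $(U,\supseteq)$ and the truth value of $\Gal(U,\kappa,2^\kappa)$ depend only on $U$ as a filter (both being preserved under bijections of the underlying set), we may regard $U$ as a $\kappa$-complete filter over the cardinal $2^\kappa$ and apply Theorem~\ref{GlavinMainTheorem} with $2^\kappa$ in the role of its ``$\kappa$'' and $\kappa$ in the role of its ``$\lambda$'' (admissible since $U$ is $\kappa$-complete and $\kappa\le 2^\kappa$); this yields $([2^\kappa]^{<\kappa},\subseteq)\le_T(U,\supseteq)$.

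For the second reduction, let $W$ be any $\kappa$-complete ultrafilter over $\kappa$. As $W$ is closed under intersections of fewer than $\kappa$ of its members, $(W,\supseteq)$ is $\kappa$-directed closed, and $|W|\le 2^\kappa$; hence Theorem~\ref{directedposet}, applied with $\lambda=\kappa$ and $\mu=2^\kappa$, gives $(W,\supseteq)\le_T([2^\kappa]^{<\kappa},\subseteq)$. Composing with the first reduction and using transitivity of $\le_T$ gives $(W,\supseteq)\le_T(U,\supseteq)$, which is the assertion of the corollary. I do not expect a genuine obstacle: both ingredients are already in hand, so the argument is essentially a matter of bookkeeping. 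The one point deserving care is the invocation of Theorem~\ref{GlavinMainTheorem} for $U$, whose base set $[2^\kappa]^{<\kappa}$ is not literally a cardinal; this is harmless precisely because the statement of that theorem refers only to isomorphism invariants of the filter, so one may transport $U$ along any bijection $[2^\kappa]^{<\kappa}\leftrightarrow 2^\kappa$ without affecting anything.
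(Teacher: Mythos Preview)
Your argument is correct and follows essentially the same route as the paper: use fineness to witness $\neg\Gal(U,\kappa,2^\kappa)$ via the sets $A_\alpha$, invoke Theorem~\ref{GlavinMainTheorem} to place $([2^\kappa]^{<\kappa},\subseteq)$ Tukey-below $(U,\supseteq)$, and then apply Theorem~\ref{directedposet} to any $\kappa$-complete $W$ over $\kappa$. The only cosmetic difference is that the paper phrases the intermediate conclusion as $(U,\supseteq)\equiv_T([2^{(2^\kappa)}]^{<\kappa},\subseteq)$, whereas you pass through the smaller poset $([2^\kappa]^{<\kappa},\subseteq)$; your version is what Theorem~\ref{GlavinMainTheorem} literally yields from $\neg\Gal(U,\kappa,2^\kappa)$ and is exactly what is needed, since $|W|\le 2^\kappa$.
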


\subsection{Generalization of Isbell's result}
It turns out that on $\omega$, there is a \textsf{ZFC} construction which outright gives a Tukey-top ultrafilter:
\begin{theorem}[Isbell \cite{Isbell65}]
    There is a ultrafilter $U$ on $\omega$ such that $U\equiv_T ([\mathfrak{c}]^{<\omega},\subseteq)$.
\end{theorem}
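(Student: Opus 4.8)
The plan is to prove the two Tukey inequalities separately. The inequality $(U,\supseteq)\le_T([\mathfrak{c}]^{<\omega},\subseteq)$ will cost nothing: any ultrafilter on a countably infinite base set has cardinality at most $\mathfrak{c}$ and, ordered by $\supseteq$, is directed — that is, $\aleph_0$-directed closed — so Theorem~\ref{directedposet} with $\lambda=\aleph_0$ and $\mu=\mathfrak{c}$ applies to whatever $U$ we construct. For the reverse inequality I would mimic the argument in the proof of Theorem~\ref{GlavinMainTheorem}: it suffices to build an ultrafilter $U$ on a countably infinite set $D$, together with a family $\{A_\alpha\mid\alpha<\mathfrak{c}\}\subseteq U$ such that $\bigcap_{\alpha\in I}A_\alpha\notin U$ for every infinite $I\subseteq\mathfrak{c}$, and then transport $U$ to $\omega$ along any bijection, which leaves the Tukey type unchanged. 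Indeed, for such a family $g(s):=\bigcap_{\alpha\in s}A_\alpha$ is a genuine map $[\mathfrak{c}]^{<\omega}\to U$ (a finite intersection of members of $U$ lies in $U$), and it is unbounded: if $X\subseteq[\mathfrak{c}]^{<\omega}$ is unbounded then $\bigcup X$ is infinite, so any $C\in U$ lying below every member of $g''X$ would satisfy $C\subseteq\bigcap_{\alpha\in\bigcup X}A_\alpha\notin U$, which is impossible. By Schmidt's theorem \cite{Schmidt55}, such a $g$ witnesses $([\mathfrak{c}]^{<\omega},\subseteq)\le_T(U,\supseteq)$, and combining the two inequalities yields $U\equiv_T([\mathfrak{c}]^{<\omega},\subseteq)$.

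For the construction I would take the structured countable set $D=\{(n,\sigma):n<\omega,\ \sigma\subseteq\mathcal{P}(n)\}$ and, for each $Y\subseteq\omega$, the set $X_Y=\{(n,\sigma)\in D:Y\cap n\in\sigma\}$; after reindexing $\mathcal{P}(\omega)$ as $\mathfrak{c}$, these serve as the $A_\alpha$. The crux is the following finite intersection property, which I would call the \emph{Goldilocks lemma}: for every finite $G\subseteq\mathcal{P}(\omega)$ and every finite list $I_1,\dots,I_m$ of infinite subsets of $\mathcal{P}(\omega)$, the set $\big(\bigcap_{Y\in G}X_Y\big)\setminus\bigcup_{j\le m}\bigcap_{Z\in I_j}X_Z$ is infinite. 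Granting it, the collection $\{X_Y:Y\subseteq\omega\}\cup\{D\setminus\bigcap_{Z\in I}X_Z:I\in[\mathcal{P}(\omega)]^{\aleph_0}\}$, together with the filter of cofinite subsets of $D$, has the finite intersection property; let $U$ be any ultrafilter extending it. Then $U$ is nonprincipal, each $A_\alpha\in U$, and for a countably infinite $I\subseteq\mathfrak{c}$ the set $D\setminus\bigcap_{\alpha\in I}A_\alpha$ is one of the generators and hence in $U$, so $\bigcap_{\alpha\in I}A_\alpha\notin U$; an uncountable $I$ reduces to a countable subfamily. This is precisely the property required in the first paragraph.

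The main obstacle is the Goldilocks lemma, and the shape of $D$ is what makes it go. I would use the observation that an infinite family $I\subseteq\mathcal{P}(\omega)$ has arbitrarily large $n$-traces: for every $k$ there is an $N$ with $|\{Z\cap n:Z\in I\}|>k$ for all $n\ge N$ (choose $k+1$ distinct members of $I$ and let $N$ exceed all of their pairwise first difference points). Picking $N$ large enough to serve $I_1,\dots,I_m$ simultaneously with $k=|G|$, and setting $\sigma_n=\{Y\cap n:Y\in G\}$ for $n\ge N$, one checks that $(n,\sigma_n)$ lies in $X_Y$ for every $Y\in G$ but in no $\bigcap_{Z\in I_j}X_Z$, since the latter membership would force the set $\{Z\cap n:Z\in I_j\}$, of size $>|G|$, to sit inside the at most $|G|$-element set $\sigma_n$. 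The infinitely many witnesses $(n,\sigma_n)$, $n\ge N$, prove the lemma, and intersecting further with a cofinite set discards only finitely many of them. The delicacy is exactly the balance this forces on $D$: finite subfamilies of $\{X_Y\}$ must keep infinite intersection (otherwise no filter contains all the $A_\alpha$) while infinite subfamilies must have intersections thin enough to throw away in bulk (otherwise the $A_\alpha$ cannot all be placed in $U$); letting the second coordinate of a point of $D$ range over \emph{subsets of} $\mathcal{P}(n)$ is what buys both at once. Everything else — Schmidt duality, Theorem~\ref{directedposet}, and the bijection $D\cong\omega$ — is routine.
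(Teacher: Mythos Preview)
Your proof is correct. The paper does not supply its own proof of Isbell's theorem (it merely cites \cite{Isbell65}), but the generalization developed immediately afterward --- Proposition~\ref{the NonGlavinFilter} and Theorem~\ref{TheNonGlavintheorem} specialized to $\mu=\kappa=\omega$ --- is the same idea: start from an independent family of size $\mathfrak c$, build a filter whose every ultrafilter extension fails $\Gal(U,\omega,\mathfrak c)$, and invoke Theorem~\ref{GlavinMainTheorem}. Your construction is a concrete instance of this: the family $\{X_Y : Y\subseteq\omega\}$ on $D$ is precisely Hausdorff's explicit independent family, and your Goldilocks lemma is a hands-on verification of the finite intersection property needed. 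The paper's version is phrased in terms of unions --- the generators are $(\bigcup_{i\in I} A_i)\setminus(\bigcup_{j\in J} A_j)$ and the witnessing sequence is $\langle \omega\setminus A_i\rangle$ --- whereas you work directly with intersections, putting the $X_Y$ themselves into $U$ and the complements of infinite intersections among the generators; passing from the $A_i$ to their complements translates one formulation into the other. What your explicit route buys is self-containment (no appeal to the existence of an abstract independent family), at the price of the extra combinatorics in the Goldilocks lemma; the paper's abstract route is shorter once one grants Hausdorff's theorem.
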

The long-standing open question in this direction is the following:
\begin{question}[Isbell, 1965]
    Is it consistent that for every ultrafilter $U$ on $\omega$, $U\equiv_T ([\mathfrak{c}]^{<\omega},\subseteq)$? equivalently, that $\neg \Gal(U,\omega,\mathfrak{c})$?
\end{question}
On measurable cardinals, there is always a normal ultrafilter, and therefore the analog of this question to yield an ultrafilter $U$ such that $\Gal(U,\kappa,2^\kappa)$ over a measurable cardinal $\kappa$ is not very interesting. However, the analog of Isbell's construction does not generalize automatically for any measurable cardinal as there are models (e.g. $L[U]$--see Theorem \ref{LofU}) where every ultrafilter satisfies $\Gal(U,\kappa,2^\kappa)$.

Before entering the realm of large cardinals, let us start this subsection with a generalization of Isbell's construction to any  two cardinals $\mu\leq\kappa$. As in the construction of \cite{SatInCan} and \cite{Non-GalvinFil}, the construction uses the notion of a $\kappa$-independent family.

\begin{definition}
A $\kappa$-independent family is a family $\l A_i\mid i<\lambda\r$ of subsets of $\kappa$ such that for every $I,J\in[\lambda]^{<\kappa}$ such that $I\cap J=\emptyset$, we have $(\bigcap_{i\in I}A_i)\cap(\bigcap_{j\in J}A^c_j)\neq\emptyset$.
\end{definition}
 Hausdorff \cite{Hausdorff1936} proved that any regular cardinal $\kappa$ satisfying $\kappa^{<\kappa}=\kappa$ admits a $\kappa$-independent family of maximal length (see also \cite[Exercise (A6), Chapter VIII]{Kunen}). Even without the above cardinal arithmetic assumption, one can force such a family by adding mutually generic Cohen sets (see for example \cite[Proposition 4.9]{SatInCan}).

\begin{proposition}\label{the NonGlavinFilter}
Fix a $\kappa$-independent family $\vec{A}=\l A_i\mid i<\lambda\r$ and let $\omega\leq\mu\leq \kappa$ be any cardinal. Then the set $\{ (\bigcup_{i\in I}A_i)\setminus (\bigcup_{j\in J}A_j)\mid |I|=\mu\text{ and } |J|<\mu\}$ has the $<\mu$-intersection property. In particular, this set generates a $\mu$-complete filter $\mathcal{F}^{\mu}_{\vec{A}}$ over $\kappa$.\end{proposition}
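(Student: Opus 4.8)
The plan is to establish the $<\mu$-intersection property directly from $\kappa$-independence; obtaining a $\mu$-complete filter from it is then routine. Write $\mathcal S$ for the displayed set. Fix an ordinal $\theta<\mu$, and for each $\xi<\theta$ fix $I_\xi\in[\lambda]^{\mu}$ and $J_\xi\in[\lambda]^{<\mu}$; set
\[
B_\xi:=\Big(\bigcup_{i\in I_\xi}A_i\Big)\setminus\Big(\bigcup_{j\in J_\xi}A_j\Big)=\Big(\bigcup_{i\in I_\xi}A_i\Big)\cap\Big(\bigcap_{j\in J_\xi}A_j^c\Big).
\]
The goal is to produce a single point in $\bigcap_{\xi<\theta}B_\xi$. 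Put $J:=\bigcup_{\xi<\theta}J_\xi$; being a union of fewer than $\mu$ sets each of size less than $\mu$, it has $|J|<\mu$, and in particular $|J|<\kappa$.

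The point I would exploit is that one need not keep the entire union $\bigcup_{i\in I_\xi}A_i$: it suffices to single out one representative in each coordinate. Since $|I_\xi|=\mu>|J|$, choose $i_\xi\in I_\xi\setminus J$ for every $\xi<\theta$, and let $I^*:=\{i_\xi\mid\xi<\theta\}$. By construction $I^*\cap J=\emptyset$, while $|I^*|\le\theta<\mu\le\kappa$ and $|J|<\kappa$, so $(I^*,J)$ is a disjoint pair of elements of $[\lambda]^{<\kappa}$. Applying $\kappa$-independence of $\vec A$ to it produces a point
\[
x\in\Big(\bigcap_{i\in I^*}A_i\Big)\cap\Big(\bigcap_{j\in J}A_j^c\Big).
\]
For each $\xi<\theta$ this $x$ satisfies $x\in A_{i_\xi}\subseteq\bigcup_{i\in I_\xi}A_i$, and since $J_\xi\subseteq J$ it also satisfies $x\notin\bigcup_{j\in J_\xi}A_j$; hence $x\in B_\xi$. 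Therefore $\bigcap_{\xi<\theta}B_\xi\neq\emptyset$, which is exactly the $<\mu$-intersection property. (The instance $\theta=1$ records in particular that every member of $\mathcal S$ is nonempty, and each $B_\xi$ is plainly a subset of $\kappa$.)

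For the final assertion, set $\mathcal F^\mu_{\vec A}:=\{X\subseteq\kappa\mid \bigcap\mathcal S'\subseteq X\text{ for some }\mathcal S'\in[\mathcal S]^{<\mu}\}$, the smallest $\mu$-complete filter containing $\mathcal S$. It is upward closed by definition and is proper since, by the $<\mu$-intersection property just proved, $\emptyset\notin\mathcal F^\mu_{\vec A}$; and it is $\mu$-complete because if $X_\alpha\in\mathcal F^\mu_{\vec A}$ is witnessed by $\mathcal S'_\alpha\in[\mathcal S]^{<\mu}$ for $\alpha<\delta<\mu$, then $\bigcup_{\alpha<\delta}\mathcal S'_\alpha$ again lies in $[\mathcal S]^{<\mu}$ and its intersection is contained in $\bigcap_{\alpha<\delta}X_\alpha$.

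The only place calling for care is the cardinal bookkeeping that makes $\kappa$-independence applicable: collapsing each union over $I_\xi$ to a single term $A_{i_\xi}$ is what brings the ``positive'' index set down to size $\le\theta<\kappa$, and one needs $|J|<\mu$ (hence $<\kappa$) both to invoke $\kappa$-independence and to guarantee $I_\xi\setminus J\neq\emptyset$. Beyond this bookkeeping the argument is a single application of the independence property, so I anticipate no genuine obstacle.
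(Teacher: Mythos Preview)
Your proof is correct and takes exactly the approach the paper intends; the paper's own proof is the one-line remark ``This follows by the $\kappa$-independence of the family,'' and you have simply spelled out the details by choosing representatives $i_\xi\in I_\xi\setminus J$ and applying independence to the disjoint pair $(I^*,J)$. One small caveat: your step ``$|J|<\mu$'' tacitly uses that $\mu$ is regular, but this is equally implicit in the paper's formulation and holds in all the intended applications.
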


\begin{proof}
Let $\l I_\alpha,J_\alpha\mid \alpha<\rho\r$ for $\rho<\mu$ be such that $|I_\alpha|=\mu>|J_\alpha|$. We want to prove that $\bigcap_{\alpha<\rho}\Big((\bigcup_{i\in I_\alpha}A_i)\setminus(\bigcup_{j\in J_\alpha}A_j)\Big)\neq\emptyset$. 
This follows by the $\kappa$-independence of the family.
\end{proof}

\begin{theorem}\label{TheNonGlavintheorem}
If $\mathcal{F}^{\mu}_{\vec{A}}\subseteq U$ is an ultrafilter, then the family $\l (\kappa\setminus A_i)\mid i<\lambda\r$ witnesses the fact that $\neg \Gal(U,\mu,\lambda)$.
\end{theorem}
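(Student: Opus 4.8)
The plan is to verify directly that the family $\l\kappa\setminus A_i\mid i<\lambda\r$ fulfils the two clauses in the definition of $\neg\Gal(U,\mu,\lambda)$ (Definition \ref{Def:Galvin Property}): that every $\kappa\setminus A_i$ belongs to $U$, and that $\bigcap_{i\in I}(\kappa\setminus A_i)\notin U$ for every $I\in[\lambda]^\mu$. Both will fall out of a single observation: since $U$ is a proper ultrafilter extending $\mathcal{F}^{\mu}_{\vec{A}}$, it contains every generator of $\mathcal{F}^{\mu}_{\vec{A}}$ listed in Proposition \ref{the NonGlavinFilter}, hence omits the complement of each such generator.

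First I would check membership in $U$. Fix $i<\lambda$. Since $\mu\leq\lambda$ and $\mu\geq\omega$, I can choose some $I\in[\lambda\setminus\{i\}]^\mu$ and put $J=\{i\}$; then $|I|=\mu$ and $|J|=1<\mu$, so $(\bigcup_{i'\in I}A_{i'})\setminus(\bigcup_{j\in J}A_j)=(\bigcup_{i'\in I}A_{i'})\setminus A_i$ is one of the generators of $\mathcal{F}^{\mu}_{\vec{A}}$, and it is contained in $\kappa\setminus A_i$; hence $\kappa\setminus A_i\in\mathcal{F}^{\mu}_{\vec{A}}\subseteq U$. I would also note that the $A_i$ are pairwise distinct: for $i\neq j$, $\kappa$-independence gives $A_i\cap A_j^c\neq\emptyset$, so $A_i\neq A_j$, and therefore the sets $\kappa\setminus A_i$ are pairwise distinct and the sequence genuinely lies in $[U]^\lambda$. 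Next I would handle the intersections. Fix $I\in[\lambda]^\mu$ and take $J=\emptyset$, which is permitted since $|\emptyset|=0<\mu$; then $\bigcup_{i\in I}A_i$ is itself a generator of $\mathcal{F}^{\mu}_{\vec{A}}$, so $\bigcup_{i\in I}A_i\in U$, and since $U$ is a proper ultrafilter its complement $\kappa\setminus\bigcup_{i\in I}A_i=\bigcap_{i\in I}(\kappa\setminus A_i)$ is not in $U$. As $I\in[\lambda]^\mu$ was arbitrary, $\l\kappa\setminus A_i\mid i<\lambda\r$ witnesses $\neg\Gal(U,\mu,\lambda)$.

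There is essentially no hard step here; the whole argument is just unwinding the definitions of $\mathcal{F}^{\mu}_{\vec{A}}$ and of the Galvin property, together with the fact that a proper ultrafilter contains every generator and no generator's complement. The only point worth a second glance is the choice of an index set $I$ in the first part that is disjoint from $\{i\}$ and of size exactly $\mu$; this is harmless because $\lambda\geq\mu$ with $\mu$ infinite (and in the intended application $\lambda=2^\kappa$). Observe also that the proof never uses $\mu$-completeness of $U$, only that $U$ is a proper ultrafilter extending $\mathcal{F}^{\mu}_{\vec{A}}$.
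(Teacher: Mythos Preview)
Your proof is correct and follows essentially the same approach as the paper's proof: show each $\kappa\setminus A_i$ lies in $U$ via a suitable generator of $\mathcal{F}^{\mu}_{\vec{A}}$, and for any $I\in[\lambda]^\mu$ observe that $\bigcap_{i\in I}(\kappa\setminus A_i)=\kappa\setminus\bigcup_{i\in I}A_i$ is the complement of a generator, hence not in $U$. You simply spell out the first step in more detail (the paper just says ``clearly''), and you add the check that the $\kappa\setminus A_i$ are pairwise distinct, which the paper omits.
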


\begin{proof}
Clearly each $\kappa\setminus A_i$ is in $\mathcal{F}^{\mu}_{\vec{A}}$. To see $\neg \Gal(U,\mu,\lambda)$, let $I\in[\lambda]^\mu$, then \begin{equation*}
    \bigcap_{i\in I}(\kappa\setminus A_i)=\kappa\setminus(\bigcup_{i\in I}A_i)
\end{equation*} Since $|I|=\mu$, $\bigcup_{i\in I}A_i\in \mathcal{F}^\mu_{\vec{A}}$, and thus, also in $U$. It follows that $\bigcap_{i\in I}(\kappa\setminus A_i)\notin U$.
\end{proof}
\begin{definition}
    A cardinal $\mu$ is said to have the {\em$\kappa$-filter extension property} if every $\mu$-complete filter over $\kappa$ can be extended to a $\mu$-complete ultrafilter.
\end{definition}
The axiom of choice implies that  $\omega$ has the $\kappa$-filter extension property for every cardinal $\kappa$. A cardinal $\kappa$ which has the $\kappa$-filter extension property is called {\em $\kappa$-compact}. The assumption that $\mu$ has the $\kappa$-filter extension property  is a consequence of $\mu$ being a $\kappa^+$-$\Pi^1_1$-subcompact cardinal which in turn is below a supercompact cardinal \cite{Hayut2018PartialSC}.  For more information about the filter extension property see \cite{filterextension} or \cite{Hayut2018PartialSC}.
\begin{corollary}\label{kappacompactmaximal}
Suppose that $\kappa^{<\kappa}=\kappa$. If $\mu$ has the $\kappa$-filter extension property, then there is a $\mu$-complete ultrafilter $U$ over $\kappa$ such that $\neg \Gal(U,\mu,2^\kappa)$. In particular, there is a Tukey-top ultrafilter among the $\mu$-complete ultrafilters.
\end{corollary}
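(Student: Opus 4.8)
The plan is to assemble the pieces already developed: Hausdorff's $\kappa$-independent family, the non-Galvin filter $\mathcal F^{\mu}_{\vec A}$, the filter extension property, and Theorems \ref{TheNonGlavintheorem} and \ref{directedposet}. Since $\kappa^{<\kappa}=\kappa$, Hausdorff's theorem provides a $\kappa$-independent family $\vec A=\l A_i\mid i<2^\kappa\r$ of maximal length $2^\kappa$; the length is maximal because distinct members of an independent family are distinct subsets of $\kappa$, so the index set has size at most $2^\kappa$. We may assume $\omega\le\mu\le\kappa$, which is the only interesting range (for $\mu>\kappa$ there is no nonprincipal $\mu$-complete ultrafilter over $\kappa$). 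Applying Proposition \ref{the NonGlavinFilter} with $\lambda=2^\kappa$, the set $\{(\bigcup_{i\in I}A_i)\setminus(\bigcup_{j\in J}A_j)\mid |I|=\mu,\ |J|<\mu\}$ has the $<\mu$-intersection property, so $\mathcal F^{\mu}_{\vec A}$ is a proper $\mu$-complete filter over $\kappa$. Since $\mu$ has the $\kappa$-filter extension property, fix a $\mu$-complete ultrafilter $U\supseteq\mathcal F^{\mu}_{\vec A}$.

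Theorem \ref{TheNonGlavintheorem} now applies verbatim (with $\lambda=2^\kappa$): the sequence $\l\kappa\setminus A_i\mid i<2^\kappa\r$ witnesses $\neg\Gal(U,\mu,2^\kappa)$, which is the first assertion. For the ``in particular'' clause I would run the unboundedness argument from the proof of Theorem \ref{GlavinMainTheorem} with index set $2^\kappa$ in place of $\kappa$: the map $f:[2^\kappa]^{<\mu}\to U$ given by $f(I)=\bigcap_{i\in I}(\kappa\setminus A_i)$ is well defined by $\mu$-completeness, and it is unbounded, since an unbounded $X\subseteq[2^\kappa]^{<\mu}$ has $|\bigcup X|\ge\mu$ (else $\bigcup X$ bounds it), whence $\bigcap f''X=\bigcap_{i\in\bigcup X}(\kappa\setminus A_i)$ is contained in $\bigcap_{i\in I}(\kappa\setminus A_i)\notin U$ for any $I\subseteq\bigcup X$ with $|I|=\mu$. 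Hence $(U,\supseteq)\ge_T([2^\kappa]^{<\mu},\subseteq)$; since $U$ is $\mu$-directed closed and $|U|=2^\kappa$, Theorem \ref{directedposet} yields the reverse reduction, so $(U,\supseteq)\equiv_T([2^\kappa]^{<\mu},\subseteq)$ is Tukey-top. The same instance of Theorem \ref{directedposet} shows every $\mu$-complete ultrafilter over $\kappa$ is Tukey-below $([2^\kappa]^{<\mu},\subseteq)\equiv_T(U,\supseteq)$, establishing maximality.

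There is no essential difficulty: the work has been done in the preceding results, and the only thing to watch is the alignment of cardinal parameters. Specifically, the independent family must be taken of length exactly $2^\kappa$ so that Theorem \ref{TheNonGlavintheorem} delivers $\neg\Gal(U,\mu,2^\kappa)$ rather than the weaker $\neg\Gal(U,\mu,\kappa)$, and correspondingly the Tukey-top conclusion uses the $[2^\kappa]^{<\mu}$-version of the map from Theorem \ref{GlavinMainTheorem} rather than the literal statement of that theorem. A minor point: ``Tukey-top'' needs only $|U|\le 2^\kappa$, which holds for every ultrafilter on $\kappa$, so uniformity of $U$ is not strictly required here; if one nonetheless wants $U$ uniform in accordance with the standing convention, it suffices to take Hausdorff's family so that each of its $<\kappa$-Boolean combinations has size $\kappa$, which makes $\mathcal F^{\mu}_{\vec A}$, and hence $U$, uniform.
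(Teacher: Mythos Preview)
Your proposal is correct and follows essentially the same approach as the paper: use Hausdorff's $\kappa$-independent family of length $2^\kappa$, form the $\mu$-complete filter $\mathcal F^{\mu}_{\vec A}$ from Proposition~\ref{the NonGlavinFilter}, extend it to a $\mu$-complete ultrafilter via the $\kappa$-filter extension property, and invoke Theorem~\ref{TheNonGlavintheorem} to conclude $\neg\Gal(U,\mu,2^\kappa)$. The paper leaves the ``in particular'' clause implicit (relying on the proof of Theorem~\ref{GlavinMainTheorem} with the index parameter instantiated at $2^\kappa$), whereas you spell out that unboundedness map explicitly; your observation about aligning the cardinal parameters is exactly the point, and your extra remarks on uniformity are accurate but not needed for the argument.
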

\begin{proof}
    Since $\kappa=\kappa^{<\kappa}$, there is a $\kappa$-independent family $\vec{A}$ of size $2^\kappa$, and the $\mu$-complete filter $\mathcal{F}^\mu_{\vec{A}}$ of Proposition \ref{the NonGlavinFilter} exists. Applying the $\kappa$-extension property to $\mathcal{F}^\mu_{
    \vec{A}}$ guarantees the existence of a $\mu$-complete ultrafilter $U$ which extends $\mathcal{F}^\mu_{\vec{A}}$. By Theorem \ref{TheNonGlavintheorem} we have that $\neg \Gal(U,\mu,2^\kappa)$. 
\end{proof}
In \cite{SatInCan}, the first author proved that if $\kappa$ is $2^\kappa$-supercompact, then there is a $\kappa$-complete ultrafilter 
 $\Cub_\kappa\subseteq U$ which is non-Galvin (namely $\neg \Gal(U,\kappa,\kappa^+)$). This raises
 the question \cite[Question 5.3]{SatInCan} whether there is a weaker large cardinal  notion
 which guarantees the existence of such a non-Galvin ultrafilter. Moreover, in that paper, it was conjectured that $\kappa$-compact cardinals suffice. The following corollary asserts that conjecture:
\begin{corollary}
    If $\kappa$ is $\kappa$-compact, then there is a $\kappa$-complete ultrafilter $U$ such that $\neg \Gal(U,\kappa,2^\kappa)$.
\end{corollary}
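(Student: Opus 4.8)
The plan is to derive this corollary directly from Corollary \ref{kappacompactmaximal} (the preceding statement) by checking that a $\kappa$-compact cardinal satisfies the two hypotheses appearing there, namely $\kappa^{<\kappa}=\kappa$ and that $\kappa$ itself has the $\kappa$-filter extension property. The second of these is immediate: by definition, $\kappa$ being $\kappa$-compact \emph{means} that $\kappa$ has the $\kappa$-filter extension property, so I would simply apply the definition stated in the excerpt. The only real work is to verify the cardinal arithmetic $\kappa^{<\kappa}=\kappa$ from $\kappa$-compactness.

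To see $\kappa^{<\kappa}=\kappa$, first note that a $\kappa$-compact cardinal is in particular measurable, hence inaccessible; this already gives $2^{\mu}<\kappa$ for every $\mu<\kappa$. From inaccessibility (regularity plus the strong limit property) one gets $\kappa^{<\kappa}=\sup_{\mu<\kappa}\kappa^{\mu}=\kappa$ by the standard computation: for $\mu<\kappa$, a function $\mu\to\kappa$ has bounded range by regularity, so $\kappa^{\mu}=\sup_{\alpha<\kappa}|\alpha|^{\mu}\le\sup_{\alpha<\kappa}2^{|\alpha|\cdot\mu}\le\kappa$, and the reverse inequality is trivial. Alternatively one can simply recall that every $\kappa$-compact cardinal carries a $\kappa$-complete uniform ultrafilter and is weakly compact, hence inaccessible, so this arithmetic fact holds; either route is routine and I would state it in a single sentence rather than belaboring it.

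With both hypotheses of Corollary \ref{kappacompactmaximal} in hand (taking $\mu=\kappa$ there), that corollary yields a $\kappa$-complete ultrafilter $U$ over $\kappa$ with $\neg\Gal(U,\kappa,2^{\kappa})$, which is exactly the conclusion sought; the ``in particular'' clause of Corollary \ref{kappacompactmaximal}, combined with Theorem \ref{GlavinMainTheorem}, moreover records that this $U$ is Tukey-top. I do not anticipate any genuine obstacle here: the substantive content was already carried out in Proposition \ref{the NonGlavinFilter}, Theorem \ref{TheNonGlavintheorem}, and Corollary \ref{kappacompactmaximal}, and this final corollary is essentially an unpacking of definitions confirming that the conjectured hypothesis from \cite{SatInCan} (that $\kappa$-compactness suffices) is indeed enough. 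If anything needs a word of care, it is only flagging that $\kappa$-compactness is a genuine large-cardinal hypothesis above measurability but well below $2^{\kappa}$-supercompactness, so that this does represent the promised weakening of the assumption in \cite{SatInCan}.
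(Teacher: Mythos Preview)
Your proposal is correct and follows exactly the route the paper intends: the corollary is stated without proof in the paper, as an immediate instance of Corollary \ref{kappacompactmaximal} with $\mu=\kappa$, and you have correctly supplied the two needed observations (that $\kappa$-compactness is by definition the $\kappa$-filter extension property, and that it implies inaccessibility and hence $\kappa^{<\kappa}=\kappa$).
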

\begin{corollary}\label{omegaiscompact}
    For any infinite cardinal  $\kappa$, there is always an ultrafilter $U$ over $\kappa$ such that $\neg \Gal(U,\omega,2^\kappa)$. 
\end{corollary}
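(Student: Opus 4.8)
The plan is to follow the proof of Corollary~\ref{kappacompactmaximal}, but with $\mu=\om$ and without the arithmetic hypothesis $\kappa^{<\kappa}=\kappa$. That hypothesis was used there only to produce a $\kappa$-independent family of size $2^\kappa$; but for $\mu=\om$ one never needs full $\kappa$-independence, only ordinary finite independence, and a family $\vec A=\l A_i\mid i<2^\kappa\r$ of subsets of $\kappa$ with $(\bigcap_{i\in I}A_i)\cap(\bigcap_{j\in J}(\kappa\setminus A_j))\neq\emptyset$ for all finite disjoint $I,J\sse 2^\kappa$ exists in \textsf{ZFC} with no cardinal arithmetic assumption. So first I would fix such a family, realized by the standard construction of an independent family on the base set $\{(s,t)\mid s\in[\kappa]^{<\om},\ t\sse\calP(s)\}$, which has cardinality $\kappa$; transporting it along a bijection with $\kappa$, I would moreover arrange that every finite Boolean combination of the $A_i$ has size $\kappa$ (the given construction already has this property).

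Next I would check that the proofs of Proposition~\ref{the NonGlavinFilter} and Theorem~\ref{TheNonGlavintheorem}, specialized to $\mu=\om$, use only this finite independence: in Proposition~\ref{the NonGlavinFilter} one is handed finitely many pairs $(I_\alpha,J_\alpha)$ with each $J_\alpha$ finite, picks pairwise distinct $i_\alpha\in I_\alpha\setminus\bigcup_\beta J_\beta$, and applies independence to the finite sets $\{i_\alpha\}_\alpha$ and $\bigcup_\beta J_\beta$; while Theorem~\ref{TheNonGlavintheorem} uses only that each $\kappa\setminus A_i$ and each $\bigcup_{i\in I}A_i$ for $I\in[2^\kappa]^{\om}$ lie in $\mathcal F^{\om}_{\vec A}$. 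Hence $\mathcal F^{\om}_{\vec A}$ is a proper filter over $\kappa$, all of whose members have size $\kappa$ by the arrangement above; consequently it has the finite intersection property together with the filter $\{X\sse\kappa\mid |\kappa\setminus X|<\kappa\}$, and the axiom of choice extends the combined filter to a (uniform) ultrafilter $U$, $\om$-completeness being automatic for any filter. By Theorem~\ref{TheNonGlavintheorem}, $\l\kappa\setminus A_i\mid i<2^\kappa\r\in[U]^{2^\kappa}$ witnesses $\neg\Gal(U,\om,2^\kappa)$: for every $I\in[2^\kappa]^{\om}$ we have $\bigcap_{i\in I}(\kappa\setminus A_i)=\kappa\setminus\bigcup_{i\in I}A_i\notin U$, since $\bigcup_{i\in I}A_i\in\mathcal F^{\om}_{\vec A}\sse U$.

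I do not expect any genuine obstacle here; the only point requiring care is the bookkeeping observation that the $\mu=\om$ instance of the machinery behind Corollary~\ref{kappacompactmaximal} needs only finite independence, so that the elementary \textsf{ZFC} existence of independent families of size $2^\kappa$ on $\kappa$ — rather than the arithmetic-dependent existence of $\kappa$-independent families — already suffices. With that noted, Corollary~\ref{omegaiscompact} is exactly the $\mu=\om$ case of Corollary~\ref{kappacompactmaximal} with the hypothesis $\kappa^{<\kappa}=\kappa$ removed.
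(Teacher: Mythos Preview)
Your proposal is correct and is essentially the argument the paper has in mind: the corollary is stated without proof, as the $\mu=\omega$ instance of Corollary~\ref{kappacompactmaximal}, and you have supplied exactly the detail needed to drop the hypothesis $\kappa^{<\kappa}=\kappa$---namely that for $\mu=\omega$ the proof of Proposition~\ref{the NonGlavinFilter} only ever forms finite Boolean combinations, so an ordinary (finitely) independent family of size $2^\kappa$ suffices, and such a family exists on any infinite $\kappa$ in \textsf{ZFC}. The remaining step, that $\omega$ trivially has the $\kappa$-filter extension property, is noted in the paper just before Corollary~\ref{kappacompactmaximal}.
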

Note that the ultrafilter $U$ above does not necessarily extend the club filter (i.e.\ $q$-point). Since the club filter is only a filter on cardinals of uncountable cofinalities, henceforth we will assume that
$\cf(\kappa)>\omega$. Let us show that a similar method as in \cite{SatInCan}, gives such an ultrafilter. For this we need the definition of a normal independent family:
\begin{definition}
 A sequence $\l A_i\mid i<\lambda\r$ is called a {\em normal} $\kappa$-independent family, if it is $\kappa$-independent and for any two disjoint subfamilies $\l A_{\alpha_i}\mid i<\kappa\r,\l A_{\beta_i}\mid i<\kappa\r\subseteq \l A_i\mid i<\lambda\r$, the diagonal intersection $\Delta_{i<\kappa}(A_{\alpha_i}\setminus A_{\beta_i})$ is a stationary subset of $\kappa$. 
\end{definition}
Distinguishing from $\kappa$-independent families, in order to guarantee the existence of a normal $\kappa$-independent family, more is needed. Hayut \cite{filterextension} used the following classical prediction principle:

\begin{definition}
    The \textit{diamond at $\kappa$} principle, denoted by $\Diamond(\kappa)$,  asserts the existence of a sequence $\l A_i\mid i<\kappa\r$ such that for every set $A\subseteq \kappa$, $\{\alpha<\kappa\mid A\cap \alpha=A_\alpha\}$ is stationary at $\kappa$.
\end{definition}
It is well-known that in $L$, $\Diamond(\kappa)$ holds for every regular cardinal. Moreover, Kunen and Jensen observed that if $\kappa$ is measurable (or even subtle) then $\Diamond(\kappa)$ holds.  The following proposition is due to Hayut\footnote{ Hayut uses the more general framework of stationary sets over $P_\kappa(\lambda)$ due to Jech \cite{Jech2010stationary}.}. For the proof see \cite[Proposition 4.2]{SatInCan}.
\begin{proposition}\label{normalfamily}
 If $\Diamond(\kappa)$ holds then there is a normal $\kappa$-independent family of length $2^\kappa$.
\end{proposition}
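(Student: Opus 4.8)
The plan is to adapt the construction of an ordinary $\kappa$-independent family of maximal length to the normal setting, carrying along the diamond sequence to guarantee stationarity of diagonal intersections. First I would recall that on $\om$ an independent family of size $2^\om$ is built by a standard counting/bookkeeping argument (or by Hausdorff's theorem when $\kappa^{<\kappa}=\kappa$), so the point here is purely the extra "normality" demand. The idea is to run the usual recursive construction of length $2^\kappa$, but at each stage to arrange that the new set $A_i$ is chosen so that for any pair of disjoint $\kappa$-sized subfamilies chosen from the part built so far, the relevant diagonal intersection hits a stationary set — and the mechanism for doing so is exactly $\Diamond(\kappa)$: the diamond sequence $\l A_\alpha\mid\alpha<\kappa\r$ anticipates, on a stationary set, any subset of $\kappa$, in particular any "pattern" coding a finite Boolean combination together with a club one is trying to avoid.

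The concrete implementation I would carry out: view subsets of $\kappa$ as built from the diamond sequence. Given a proposed club $C\subseteq\kappa$ and two disjoint $\kappa$-indexed selections of basic sets, one wants $\alpha\in C$ with $\alpha\in\Delta_{i<\kappa}(A_{\alpha_i}\setminus A_{\beta_i})$, i.e. $\alpha\in A_{\alpha_i}\setminus A_{\beta_i}$ for all $i<\alpha$. Using $\Diamond(\kappa)$, one encodes the desired membership data into a single subset $A\subseteq\kappa$ (for instance via a pairing function $\kappa\times\kappa\to\kappa$, letting $A$ record for each $\beta$ and each index $i$ whether $\beta$ should lie in $A_i$), and then on a stationary set of $\alpha$ the initial segment $A\cap\alpha$ equals the diamond guess $A_\alpha$, which lets one read off at stage $\alpha$ which basic sets $A_i$ should contain $\alpha$; defining the $A_i$ accordingly makes the construction self-consistent and forces every such diagonal intersection to be stationary. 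This is precisely the argument Hayut uses in \cite{filterextension}, and the full details are written out in \cite[Proposition 4.2]{SatInCan}, so I would simply cite that proof rather than reproduce it.

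The main obstacle — and the reason this is not a triviality — is the simultaneous bookkeeping: one must show that a single recursion can (i) keep the family $\kappa$-independent (the usual obstruction, handled by the standard freeness argument since each stage only constrains the new set on a bounded amount of information), (ii) reach length $2^\kappa$, and (iii) for every one of the $2^\kappa$-many potential disjoint pairs of $\kappa$-sized subfamilies, defeat every club, which is where one genuinely needs the predictive strength of $\Diamond(\kappa)$ rather than mere cardinal arithmetic. The delicate point is that the subfamilies being diagonalized against may involve sets added \emph{later} in the recursion, so the diamond guessing must be set up to anticipate the membership pattern \emph{before} those sets are fully defined; this is exactly what $\Diamond(\kappa)$ provides and what an $\kappa$-independent family alone does not, explaining the remark immediately preceding the proposition that "more is needed."

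Since the proof is essentially the one in \cite[Proposition 4.2]{SatInCan}, I would present the proof proposal as a pointer to that reference together with the two-line summary above, and would only spell out the diamond coding if a self-contained argument were required.
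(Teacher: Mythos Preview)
Your proposal is correct and matches the paper exactly: the paper's proof is simply the one-line pointer ``For the proof see \cite[Proposition 4.2]{SatInCan},'' which is precisely what you conclude with. Your additional sketch of the diamond-coding mechanism is a reasonable gloss on that cited argument, but the paper itself provides no such elaboration.
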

Next we claim that a suitable modification of the Isbell construction above will give rise to non-Galvin $q$-point ultrafilters.
\begin{proposition}
    Suppose that $\vec{A}:=\l A_i\mid i<\lambda\r$ is a normal $\kappa$-independent family, then the intersection of less than $\mu$-many sets  in $G_{\vec{A}}:=\{ (\bigcup_{i\in I}A_i)\setminus (\bigcup_{j\in J}A_j)\mid |I|=\mu, |J|<\mu\}$ is stationary. 
\end{proposition}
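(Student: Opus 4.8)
The statement to prove is the analogue of Proposition~\ref{the NonGlavinFilter} but with "stationary" in place of "nonempty": given a normal $\kappa$-independent family $\vec A=\l A_i\mid i<\lambda\r$ and $\omega\le\mu\le\kappa$, any intersection of fewer than $\mu$ sets from $G_{\vec A}=\{(\bigcup_{i\in I}A_i)\setminus(\bigcup_{j\in J}A_j)\mid |I|=\mu,\ |J|<\mu\}$ is stationary in $\kappa$. The plan is to reduce a general such intersection to a single diagonal intersection of the form $\Delta_{i<\kappa}(A_{\alpha_i}\setminus A_{\beta_i})$ for disjoint index sequences, which is stationary by the defining property of normality of $\vec A$, and then observe that stationarity is preserved under the operations used in the reduction (passing to supersets, and intersecting $<\kappa$-many stationary sets when we already know one of them sits inside a fixed diagonal intersection --- more precisely, we only need one set in our intersection to be contained in a stationary "core" and the rest to be in the filter).

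First I would fix $\l I_\gamma,J_\gamma\mid \gamma<\rho\r$ with $\rho<\mu$, $|I_\gamma|=\mu$ and $|J_\gamma|<\mu$, and set $S=\bigcap_{\gamma<\rho}\bigl((\bigcup_{i\in I_\gamma}A_i)\setminus(\bigcup_{j\in J_\gamma}A_j)\bigr)$. The key observation is that each $(\bigcup_{i\in I_\gamma}A_i)\setminus(\bigcup_{j\in J_\gamma}A_j)$ already contains $A_{i}\setminus(\bigcup_{j\in J_\gamma}A_j)$ for any single $i\in I_\gamma$, and more usefully, by $\kappa$-completeness considerations we can build a single "large" union. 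The cleanest route: enumerate, choosing for each $\gamma$ a set $I_\gamma'\subseteq I_\gamma$ of size exactly $\kappa$ (possible since $\mu\le\kappa$... here one must be slightly careful when $\mu<\kappa$, in which case $|I_\gamma|=\mu<\kappa$ and we cannot extract a $\kappa$-sized subset; but then the normality clause is vacuous and one falls back on plain $\kappa$-independence plus a direct argument, or restricts attention to the genuinely interesting case $\mu=\kappa$). Assuming $\mu=\kappa$, pick from each $I_\gamma$ a $\kappa$-sized subfamily indexed $\l \alpha^\gamma_\xi\mid \xi<\kappa\r$ and note $\bigcup_{j\in J_\gamma}A_j$, being a union of $<\kappa$ of the $A_j$'s, is contained in $A_{\beta^\gamma}$ for... no --- instead, since $|J_\gamma|<\kappa$, choose the index sequences so that all $I_\gamma',J_\gamma$ ($\gamma<\rho$) are pairwise disjoint (achievable because $\vec A$ has length $\lambda$ and we are removing $<\kappa\cdot\rho+\kappa=\kappa$ indices), and then observe $S\supseteq \bigcap_{\gamma<\rho}\Delta_{\xi<\kappa}(A_{\alpha^\gamma_\xi}\setminus \bigcup_{j\in J_\gamma}A_j)$, which further contains a single diagonal intersection $\Delta_{\xi<\kappa}(A_{\delta_\xi}\setminus A_{\epsilon_\xi})$ along a suitable diagonal enumeration of the disjoint pieces. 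That set is stationary by normality of $\vec A$, and $S$ is a superset of it, hence stationary.

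The main obstacle I anticipate is the bookkeeping in the last reduction: one needs to fold $\rho<\kappa$-many diagonal intersections, each over a disjoint $\kappa$-sized index set with a separate finite-from-below "removal" set $J_\gamma$, into one diagonal intersection $\Delta_{\xi<\kappa}(A_{\delta_\xi}\setminus A_{\epsilon_\xi})$ to which the normality hypothesis applies verbatim --- the hypothesis as stated only removes a \emph{single} $A_{\beta_i}$ per coordinate, so one must argue that $\bigcup_{j\in J_\gamma}A_j$ can be absorbed (it cannot literally be written as a single $A_\epsilon$, so instead one observes $A_\alpha\setminus\bigcup_{j\in J}A_j\supseteq$ a set of the required single-removal diagonal form along a refined index set, using $\kappa$-independence to keep things nonempty and normality to keep the final object stationary). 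Alternatively, and perhaps more simply, one proves directly that any set of the form $(\bigcup_{i\in I}A_i)\setminus(\bigcup_{j\in J}A_j)$ with $|I|=\kappa$, $|J|<\kappa$ already contains a stationary set of diagonal-intersection type, and that stationarity is then closed under the $<\mu$-fold intersection because all but a "core" factor lie in the normal filter $\mathcal F^\kappa_{\vec A}$ generated by these sets, which is $\kappa$-complete and, by an argument parallel to the proof that normal filters are closed under diagonal intersections, contains the club filter --- so intersecting a stationary set with a member of a normal filter stays stationary. I expect the referee-proof version to take the second route: show $G_{\vec A}$ generates a normal filter, deduce it contains $\Cub_\kappa$, and conclude stationarity of the intersections is immediate.
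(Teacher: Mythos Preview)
The paper does not actually prove this proposition; it simply cites \cite[Corollary 4.4]{SatInCan}. So there is no ``paper's own proof'' to compare against, but let me comment on your plan on its merits.

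Your first route is correct in outline but unnecessarily complicated, and the point you flag as the ``main obstacle'' has a much simpler resolution than you suggest. You do \emph{not} need to extract a $\kappa$-sized $I'_\gamma\subseteq I_\gamma$ for each $\gamma$, nor do you need to fold $\rho$ separate diagonal intersections into one. The clean argument is this: set $J=\bigcup_{\gamma<\rho}J_\gamma$, note $|J|<\kappa$, and for each $\gamma<\rho$ pick a single index $\alpha_\gamma\in I_\gamma\setminus J$ (possible since $|I_\gamma|=\mu>|J|$ when $\mu$ is regular). List $J$ as $\langle\beta_\xi\mid\xi<|J|\rangle$, then extend both the $\alpha$- and $\beta$-sequences to length $\kappa$ using fresh indices from $\lambda$ so that the two index sets are disjoint. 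By normality of $\vec A$, the set $D=\Delta_{\xi<\kappa}(A_{\alpha_\xi}\setminus A_{\beta_\xi})$ is stationary. Now if $\nu\in D$ and $\nu>\max(\rho,|J|)$, then for each $\gamma<\rho<\nu$ we have $\nu\in A_{\alpha_\gamma}\subseteq\bigcup_{i\in I_\gamma}A_i$, and for each $j\in J$ (say $j=\beta_\xi$ with $\xi<|J|<\nu$) we have $\nu\notin A_j$; hence $\nu\in S$. Thus $S\supseteq D\setminus\max(\rho,|J|)$ is stationary. In particular, your worry that ``the hypothesis only removes a single $A_{\beta_i}$ per coordinate'' dissolves: since $|J|<\kappa$, you just front-load all of $J$ into the $\beta$-sequence. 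This also shows your concern about $\mu<\kappa$ is moot --- the argument never uses $|I_\gamma|=\kappa$, only $|I_\gamma|>|J|$.

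Your second route (show the generated filter is normal, hence contains $\Cub_\kappa$) is not a good plan: establishing that $G_{\vec A}$ generates a proper normal filter is essentially what this proposition is \emph{for} --- indeed, the very next proposition in the paper uses this one to build the filter $\mathcal F^{\nu,*}_{\vec A}$ extending $\Cub_\kappa$. So that route is circular.
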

\begin{proof}
    See \cite[Corollary 4.4]{SatInCan}.
\end{proof}
\begin{proposition}
    Let $\vec{A}:=\l A_i\mid i<\lambda\r$ be a normal $\kappa$-independent family. Define 
    $$\mathcal{F}^{\nu,*}_{\vec{A}}=\{X\subseteq \kappa\mid \exists C\in \Cub_\kappa\  \exists B\in[G_{\vec{A}}]^{<\mu}\ C\cap(\bigcap B)\subseteq X\}$$
    Then $\mathcal{F}^{\nu,*}_{\vec{A}}$ is a $\min(\cf(\kappa),\mu)$-complete filter which extends the club filter and if $\mathcal{F}^{\nu,*}_{\vec{A}}\subseteq U$ is an ultrafilter, then $\l (\kappa\setminus A_i)\mid i<\lambda\r$ witnesses that $\neg \Gal(U,\mu,\lambda)$.
\end{proposition}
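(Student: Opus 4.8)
The plan is to verify the three assertions in turn; the only ingredient that goes beyond bookkeeping is the preceding Proposition, which guarantees that any intersection of fewer than $\mu$ members of $G_{\vec{A}}$ is stationary.

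First I would check that $\mathcal{F}^{\nu,*}_{\vec{A}}$ is a proper filter extending $\Cub_\kappa$. Upward closure is immediate from the definition, and $\Cub_\kappa\subseteq\mathcal{F}^{\nu,*}_{\vec{A}}$: if $C\subseteq X$ with $C$ a club, then picking any $g\in G_{\vec{A}}$ (such exist, e.g. $\bigcup_{j\in J}A_j$ for any $J\in[\lambda]^\mu$) we get $C\cap g\subseteq C\subseteq X$ with $\{g\}\in[G_{\vec{A}}]^{<\mu}$. Properness is where the normality hypothesis enters: if $\emptyset\in\mathcal{F}^{\nu,*}_{\vec{A}}$ were witnessed by $C\in\Cub_\kappa$ and $B\in[G_{\vec{A}}]^{<\mu}$ with $C\cap\bigcap B=\emptyset$, this would contradict the preceding Proposition, by which $\bigcap B$ is stationary and hence meets the club $C$. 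For $\theta:=\min(\cf(\kappa),\mu)$-completeness, I would take $\langle X_\alpha\mid\alpha<\rho\rangle\subseteq\mathcal{F}^{\nu,*}_{\vec{A}}$ with $\rho<\theta$, fix clubs $C_\alpha$ and sets $B_\alpha\in[G_{\vec{A}}]^{<\mu}$ with $C_\alpha\cap\bigcap B_\alpha\subseteq X_\alpha$, and note that $C:=\bigcap_{\alpha<\rho}C_\alpha$ is a club (since $\rho<\cf(\kappa)$), that $B:=\bigcup_{\alpha<\rho}B_\alpha$ has fewer than $\mu$ elements (since $\rho<\mu$), and that $C\cap\bigcap B\subseteq\bigcap_{\alpha<\rho}\bigl(C_\alpha\cap\bigcap B_\alpha\bigr)\subseteq\bigcap_{\alpha<\rho}X_\alpha$; hence $\bigcap_{\alpha<\rho}X_\alpha\in\mathcal{F}^{\nu,*}_{\vec{A}}$.

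For the non-Galvin conclusion, let $\mathcal{F}^{\nu,*}_{\vec{A}}\subseteq U$ be an ultrafilter. Each set $\kappa\setminus A_i$ lies in $\mathcal{F}^{\nu,*}_{\vec{A}}\subseteq U$, since for any fixed $I\in[\lambda]^\mu$ the set $\bigl(\bigcup_{j\in I}A_j\bigr)\setminus A_i$ belongs to $G_{\vec{A}}$ and is contained in $\kappa\setminus A_i$; moreover the $\kappa\setminus A_i$ are pairwise distinct by $\kappa$-independence of $\vec{A}$, so $\langle\kappa\setminus A_i\mid i<\lambda\rangle\in[U]^\lambda$. Finally, fixing an arbitrary $I\in[\lambda]^\mu$, we have $\bigcap_{i\in I}(\kappa\setminus A_i)=\kappa\setminus\bigcup_{i\in I}A_i$, and since $\bigcup_{i\in I}A_i\in G_{\vec{A}}\subseteq\mathcal{F}^{\nu,*}_{\vec{A}}\subseteq U$ and $U$ is an ultrafilter, its complement is not in $U$; thus no $\mu$-sized subfamily of $\langle\kappa\setminus A_i\mid i<\lambda\rangle$ has its intersection in $U$, which is exactly $\neg\Gal(U,\mu,\lambda)$.

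The whole argument is formal once the defining formula for $\mathcal{F}^{\nu,*}_{\vec{A}}$ is unwound, relying only on the standard completeness of the club filter together with the obvious manipulations of unions and complements of the $A_i$. The one step that is not mere bookkeeping — and the only place I expect to need genuine care — is the appeal to the preceding Proposition to ensure that the filter is proper, which is precisely where normality of $\vec{A}$ (rather than plain $\kappa$-independence) is used.
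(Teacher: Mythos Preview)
Your proof is correct and follows essentially the same approach as the paper's: the paper simply remarks that completeness follows from the $\cf(\kappa)$-completeness of the club filter together with the preceding proposition (stationarity of $<\mu$-intersections from $G_{\vec{A}}$), and that the $\neg\Gal$ argument is identical to that of Theorem~\ref{TheNonGlavintheorem}. You have unpacked both of these references in full detail, and in fact added the observation that the sets $\kappa\setminus A_i$ are pairwise distinct, which the paper leaves implicit.
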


\begin{proof}
    The fact that this filter is $\min(\cf(\kappa),\mu))$-complete, follows by the known fact that the intersection of less than $\cf(\kappa)$-many clubs is a club and by definition of the previous proposition. The proof that $\neg \Gal(U,\mu,\lambda)$ holds is
    identical to the proof of Theorem \ref{TheNonGlavintheorem}.
\end{proof}

\begin{corollary}
    If $\mu$ has the $\kappa$-extension property, $\cf(\kappa)\geq\mu$ and $\Diamond(\kappa)$ holds, then there is a $\mu$-complete ultrafilter $U$ which extends the club filter and $\neg \Gal(U,\mu,2^\kappa)$.
\end{corollary}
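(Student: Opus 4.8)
The plan is to rerun the construction behind Corollary~\ref{kappacompactmaximal} in its club-filter-respecting form, gluing together the three propositions immediately preceding this corollary and specializing to $\lambda=2^\kappa$. First, since $\Diamond(\kappa)$ holds (so in particular $\kappa$ is regular), Proposition~\ref{normalfamily} supplies a normal $\kappa$-independent family $\vec A=\langle A_i\mid i<2^\kappa\rangle$ of full length $\lambda=2^\kappa$. From $\vec A$ I would form the filter $\mathcal{F}^{\nu,*}_{\vec A}$ defined just above, with intersection-length parameter $\mu$. By the proposition in which that filter is introduced, $\mathcal{F}^{\nu,*}_{\vec A}$ is a $\min(\cf(\kappa),\mu)$-complete filter extending $\Cub_\kappa$; and it is proper, since by the proposition before it every $<\mu$-fold intersection of members of $G_{\vec A}$ is stationary, hence meets every club, so no member of $\mathcal{F}^{\nu,*}_{\vec A}$ is empty.

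Now I would invoke the hypothesis $\cf(\kappa)\ge\mu$: since $\cf(\kappa)\le\kappa$ this forces $\mu\le\cf(\kappa)\le\kappa$ and $\min(\cf(\kappa),\mu)=\mu$, so that $\mathcal{F}^{\nu,*}_{\vec A}$ is in fact a $\mu$-complete filter over $\kappa$. Applying the hypothesis that $\mu$ has the $\kappa$-filter extension property, this filter extends to a $\mu$-complete ultrafilter $U$ over $\kappa$. Since $\Cub_\kappa\subseteq\mathcal{F}^{\nu,*}_{\vec A}\subseteq U$, the ultrafilter $U$ extends the club filter (and is therefore a $q$-point, by the characterization of $q$-points as ultrafilters Rudin--Keisler equivalent to one containing $\Cub_\kappa$). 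Finally, since $\mathcal{F}^{\nu,*}_{\vec A}\subseteq U$ is an ultrafilter, the proposition introducing $\mathcal{F}^{\nu,*}_{\vec A}$ gives directly that $\langle\kappa\setminus A_i\mid i<2^\kappa\rangle$ witnesses $\neg\Gal(U,\mu,2^\kappa)$: concretely, for any $I\in[2^\kappa]^\mu$ one has $\bigcap_{i\in I}(\kappa\setminus A_i)=\kappa\setminus\bigcup_{i\in I}A_i$, while $\bigcup_{i\in I}A_i\in G_{\vec A}\subseteq\mathcal{F}^{\nu,*}_{\vec A}\subseteq U$, so the intersection is not in $U$, exactly as in the proof of Theorem~\ref{TheNonGlavintheorem}.

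There is no genuine obstacle here: all the substance lives in the three cited propositions, and the corollary is essentially their conjunction. The only things that need care are bookkeeping — keeping the parameter $\mu$ consistent through the definitions of $G_{\vec A}$ and $\mathcal{F}^{\nu,*}_{\vec A}$ (using $\mu\le\cf(\kappa)\le\kappa$ so that $|I|=\mu$ is meaningful for the index sets $I$ used to build $G_{\vec A}$, and so that the completeness count collapses to $\mu$), and checking properness of $\mathcal{F}^{\nu,*}_{\vec A}$ as above. If anything were to count as the hard part, it would merely be confirming that the normal $\kappa$-independent family of Proposition~\ref{normalfamily} has the stationarity property in the strength required by the proposition defining $\mathcal{F}^{\nu,*}_{\vec A}$ — but this is precisely what was recorded in \cite[Corollary 4.4]{SatInCan}.
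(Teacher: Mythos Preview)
Your proposal is correct and follows essentially the same approach as the paper: the corollary is stated without an explicit proof precisely because it is the direct conjunction of Proposition~\ref{normalfamily} and the two propositions immediately preceding it, and your write-up simply spells out that combination (with the additional care of checking properness and the parameter bookkeeping, which the paper leaves implicit).
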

\begin{corollary}
    If $V=L$, then for every regular cardinal $\kappa$, there is an ultrafilter (which is of course not even $\sigma$-complete) such that $\neg \Gal(U,\omega,2^\kappa)$ and $\Cub_\kappa \subseteq U$.
\end{corollary}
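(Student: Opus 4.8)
The plan is to obtain this as a direct specialization of the immediately preceding corollary, taking the parameter $\mu$ there to be $\omega$. That corollary produces, from the three hypotheses that $\mu$ has the $\kappa$-extension property, that $\cf(\kappa)\ge\mu$, and that $\Diamond(\kappa)$ holds, a $\mu$-complete ultrafilter $U$ over $\kappa$ with $\Cub_\kappa\subseteq U$ and $\neg\Gal(U,\mu,2^\kappa)$. So the work is entirely in checking that all three hypotheses hold in $L$ when $\mu=\omega$ and $\kappa$ is any regular cardinal (of uncountable cofinality, in keeping with the running assumption $\cf(\kappa)>\omega$ needed for $\Cub_\kappa$ to be a proper, nontrivial filter).

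First, $\cf(\kappa)=\kappa\ge\omega=\mu$ is immediate. Second, the axiom of choice, which holds in $L$, already gives --- as noted just after the definition of the filter extension property --- that $\omega$ has the $\kappa$-filter extension property for every $\kappa$: every (finitely complete) filter extends to an ultrafilter. Third, by the observation recorded above (Kunen, Jensen), $\Diamond(\kappa)$ holds at every regular cardinal in $L$, so Proposition \ref{normalfamily} supplies a normal $\kappa$-independent family of length $2^\kappa$, which is exactly the input the preceding corollary uses. Feeding these in yields an ultrafilter $U$ over $\kappa$ with $\Cub_\kappa\subseteq U$ and $\neg\Gal(U,\omega,2^\kappa)$.

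It remains only to justify the parenthetical remark that such a $U$ cannot be $\sigma$-complete, and this needs no further construction. If $U$ were closed under countable intersections, then for any sequence $\l A_i\mid i<2^\kappa\r\in[U]^{2^\kappa}$ we could take an arbitrary $I\in[2^\kappa]^\omega$ and already have $\bigcap_{i\in I}A_i\in U$, so $\Gal(U,\omega,2^\kappa)$ would hold, contradicting what was just arranged. (This is unavoidable anyway: $L$ carries no $\sigma$-complete uniform ultrafilters whatsoever.) I do not expect any genuine obstacle here; the one point that warrants a moment's care is the bookkeeping that keeps $\cf(\kappa)>\omega$ so that $\Cub_\kappa$ is a genuine filter, the degenerate case $\kappa=\omega$ being instead covered by Isbell's construction (Corollary \ref{omegaiscompact}).
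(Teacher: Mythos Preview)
Your proposal is correct and is exactly the intended argument: the paper leaves this corollary unproved because it is an immediate specialization of the preceding corollary with $\mu=\omega$, using that $\omega$ has the $\kappa$-filter extension property by \textsf{AC} and that $\Diamond(\kappa)$ holds at every regular $\kappa$ in $L$. Your additional justification of the parenthetical remark about $\sigma$-completeness and your care with the running hypothesis $\cf(\kappa)>\omega$ are both apt.
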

The following corollary is an answer to an analogous  question which is still open on  $\omega$. 
\begin{corollary}\label{compactQpoint}
    If $\kappa$ is $\kappa$-compact, then there is a Tukey-top among the $\kappa$-complete ultrafilters which is a $q$-point.
\end{corollary}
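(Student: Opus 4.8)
The plan is to manufacture the ultrafilter using the same ingredients as the corollary just proved — a normal $\kappa$-independent family together with the $\kappa$-filter extension property — and then to read off from Theorem \ref{GlavinMainTheorem} that the resulting ultrafilter is Tukey-top, and from the Rudin--Keisler characterization of $q$-points that it is a $q$-point. Concretely, since $\kappa$ is $\kappa$-compact it is in particular measurable: one extends the $\kappa$-complete co-bounded filter on $\kappa$ (using that $\kappa$-compactness entails inaccessibility, hence $\kappa^{<\kappa}=\kappa$ and $\cf(\kappa)=\kappa$, exactly as tacitly used in the preceding corollaries) to a $\kappa$-complete uniform ultrafilter. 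By the Kunen--Jensen observation $\Diamond(\kappa)$ then holds, so Proposition \ref{normalfamily} supplies a normal $\kappa$-independent family $\vec{A}=\langle A_i\mid i<2^\kappa\rangle$.

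Next I would form the filter $\mathcal{F}^{\kappa,*}_{\vec{A}}$ of the preceding proposition (taking the completeness parameter equal to $\kappa$). Since $\cf(\kappa)=\kappa$, that proposition tells us $\mathcal{F}^{\kappa,*}_{\vec{A}}$ is a $\kappa$-complete filter extending $\Cub_\kappa$. Applying the $\kappa$-filter extension property (which is precisely $\kappa$-compactness at $\mu=\kappa$), we extend $\mathcal{F}^{\kappa,*}_{\vec{A}}$ to a $\kappa$-complete ultrafilter $U$ over $\kappa$. By the same proposition, $\langle \kappa\setminus A_i\mid i<2^\kappa\rangle$ witnesses $\neg\Gal(U,\kappa,2^\kappa)$, and by construction $\Cub_\kappa\subseteq U$.

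It remains only to assemble the consequences. Since $\kappa$ is measurable and $U$ is a $\kappa$-complete ultrafilter on $\kappa$ with $\neg\Gal(U,\kappa,2^\kappa)$, the ``in particular'' clause of Theorem \ref{GlavinMainTheorem} yields $(U,\supseteq)\equiv_T([2^\kappa]^{<\kappa},\subseteq)$; that is, $U$ is Tukey-top (it lies in the maximum Tukey class among $\kappa$-directed closed posets of size at most $2^\kappa$, in particular among $\kappa$-complete ultrafilters on $\kappa$). Finally, since $\Cub_\kappa\subseteq U$, the characterization of $q$-points (an ultrafilter is a $q$-point iff it is Rudin--Keisler equivalent to one containing $\Cub_\kappa$) applies with $U$ itself as the witness, so $U$ is a $q$-point.

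I do not anticipate any genuine difficulty here: all the substantive work has already been carried out in the construction of the non-Galvin filter $\mathcal{F}^{\kappa,*}_{\vec{A}}$ and in Theorem \ref{GlavinMainTheorem}. The one point to keep an eye on is the bookkeeping — that $\kappa$-compactness simultaneously provides the filter extension property at the level $\mu=\kappa$ and, via measurability, the instance of $\Diamond(\kappa)$ used to obtain a \emph{normal} (not merely $\kappa$-independent) family, which is what makes the generated filter extend the club filter and hence makes the final ultrafilter a $q$-point.
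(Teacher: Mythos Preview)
Your proposal is correct and follows essentially the same route as the paper: the corollary is an immediate specialization (with $\mu=\kappa$) of the preceding result that from $\Diamond(\kappa)$ and the $\kappa$-filter extension property one obtains a $\kappa$-complete ultrafilter extending $\Cub_\kappa$ with $\neg\Gal(U,\kappa,2^\kappa)$, and you have correctly identified that $\kappa$-compactness supplies both ingredients (measurability giving $\Diamond(\kappa)$, and the extension property by definition).
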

\section{Bases, the Tukey order, and the $\supseteq^*$ relation}
The ultrafilter number is a cardinal characteristic of the continuum which in its generalized form is defined as follows:
\begin{definition}
    Let $\kappa\geq\omega$ be any cardinal.\begin{enumerate}
        \item The {\em characteristic number} of a uniform ultrafilter $U$ is defined by $\Ch(U):=\min(|\mathcal{B}|\mid \mathcal{B}\text{ is a cofinal in }(U,\supseteq^*))$. 
        \item For any regular cardinal $\mu\leq\kappa$ the {\em $\mu$-spectrum} of $\kappa$ is denoted by $$\Sp_\mu(\kappa)=\{\Ch(U)\mid U\text{ is a }\mu\text{-complete uniform ultrafilter over }\kappa\}$$
        \item  The {\em $\mu$-ultrafilter number} of $\kappa$ is  $\mathfrak{u}_\mu(\kappa)=\min(\Sp_\mu(\kappa))$.
        \end{enumerate} 
\end{definition}
Note that requiring $\mathcal{B}$ to be a filter base (i.e. cofinal in $(U,\supseteq)$) instead of a cofinal subset of $(U,\supseteq^*)$, yield the same cardinals. Also note that
$$\mu_1<\mu_2\text{ implies }\Sp_{\mu_1}(\kappa)\supseteq \Sp_{\mu_2}(\kappa)\text{ and }\mathfrak{u}_{\mu_1}(\kappa)\leq\mathfrak{u}_{\mu_2}(\kappa)$$ 
\begin{definition}
Let $\kappa\geq\omega$ be any cardinal.
    \begin{enumerate}
         \item The {\em spectrum} of $\kappa$ is $\Sp(\kappa)=\cup_{\mu\in (\kappa+1)\cap Reg.}\Sp_\mu(\kappa)=\Sp_\omega(\kappa)$.
        \item The {\em ultrafilter number} of $\kappa$ is defined by 
        $\mathfrak{u}(\kappa)=\min(\Sp(\kappa))$. 
    \end{enumerate}
\end{definition}
Note that for any $\mu\leq\kappa$ regular, $\mathfrak{u}(\kappa)\leq\mathfrak{u}_\mu(\kappa)$. Kunen proved that $\mathfrak{u}(\omega)<2^\omega$ is consistent, and a major open problem is whether $\mathfrak{u}(\omega_1)<2^{\omega_1}$ is consistent. For measurable cardinals, Gitik and Shelah \cite{GSOnDO} proved that $\mathfrak{u}(\kappa)<2^\kappa$ is consistent from an extremely large cardinal---a huge cardinal. Later, the large cardinal assumption was improved by Brooke-Taylor, Fischer, Friedman, and Montoya \cite{BROOKETAYLOR201737} to a supercompact cardinal. Recently, a remarkable result of Raghavan and Shelah \cite{RagShel} established the consistency of 
$\mathfrak{u}(\kappa)<2^\kappa$ for $\kappa=2^{\aleph_0}$ starting with a much smaller large cardinal---a \textit{measurable cardinal}. However, in their model $2^{\aleph_0}$ is still very large. They also obtained the result on the much smaller cardinal $\aleph_{\omega+1}$ but starting again from a supercompact cardinal. The ultrafilter number on small cardinals such as $\omega_1$, or even on small measurable cardinals is not fully understood. Here we suggest some connections to the Tukey order. 

The natural homomorphism to consider suitable for comparing the characteristic number of two ultrafilters $U,V$ would be a function $f:U\rightarrow V$ such that $\mathrm{Im}(f)$ is cofinal in $V$. This requirement is a bit weaker than being a cofinal map. Nonetheless, if $f$ is a monotone cofinal map, which holds in many situations,
then requiring that $\mathrm{Im}(f)$ is cofinal is equivalent to being a cofinal map. 
\begin{proposition}
    Suppose that $U,W$ are any (not necessarily complete) ultrafilters such that $U\geq_T W$. Then $\Ch(U)\geq \Ch(W)$.
\end{proposition}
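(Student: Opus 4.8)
The plan is to exhibit, from a cofinal map witnessing $U\geq_T W$, a subset of $W$ of size $\Ch(U)$ which is cofinal in $(W,\supseteq^*)$; this immediately yields $\Ch(W)\leq\Ch(U)$. First I would invoke Fact \ref{fact.6DT} to fix a \emph{monotone} cofinal map $f:W\to U$ (note the hypothesis $U\geq_T W$ means $W\leq_T U$, so the monotone cofinal map goes from $U$ into $W$; I must be careful about the direction). Concretely: since $W\leq_T U$, Fact \ref{fact.6DT} gives a monotone cofinal map $g:U\to W$. Then fix a base $\mathcal{B}\subseteq U$ cofinal in $(U,\supseteq^*)$ with $|\mathcal{B}|=\Ch(U)$; as remarked in the text just before the proposition, we may even take $\mathcal{B}$ to be a filter base, i.e.\ cofinal in $(U,\supseteq)$.

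The key step is to check that $g''\mathcal{B}=\{g(B)\mid B\in\mathcal{B}\}$ is cofinal in $(W,\supseteq)$, hence certainly in $(W,\supseteq^*)$, and has size at most $\Ch(U)$. Cofinality: given any $X\in W$, the set $\{Y\in U\mid g(Y)\subseteq X\}$ is cofinal in $(U,\supseteq)$ — indeed it is exactly $g^{-1}$ of the cofinal (in $(W,\supseteq)$) set $\{Z\in W\mid Z\subseteq X\}$, and the preimage of a cofinal set under a cofinal map is cofinal (this is the contrapositive phrasing of the definition of cofinal map: if $g^{-1}[C]$ were not cofinal, its complement... — better, just argue directly that for a cofinal set $C\subseteq W$, the set $\{Y\in U: g(Y)\in C\}$ is cofinal, since otherwise there is $Y_0\in U$ with no $Y\supseteq$-below... hmm). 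Cleanest: since $\mathcal{B}$ is cofinal in $(U,\supseteq)$, $g''\mathcal{B}$ is cofinal in $W$ precisely because $g$ is a cofinal map and $\mathcal{B}$ is a cofinal subset of the domain — that is the \emph{definition} of cofinal map. So $g''\mathcal{B}$ is cofinal in $(W,\supseteq)$, and refining a $\supseteq$-cofinal set gives a $\supseteq^*$-cofinal set of no larger cardinality; thus $\Ch(W)\leq|g''\mathcal{B}|\leq|\mathcal{B}|=\Ch(U)$.

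The only mild obstacle is bookkeeping the direction of the maps and the distinction between $\supseteq$ and $\supseteq^*$: a cofinal map is defined with respect to $\supseteq$, while $\Ch$ is defined via $\supseteq^*$, but since any $\supseteq$-cofinal subset is automatically $\supseteq^*$-cofinal (with the same or smaller cardinality), and by the remark in the text $\Ch(U)$ is realized by an honest filter base, this causes no trouble. I would also note explicitly that no completeness hypothesis is used anywhere, matching the statement ``not necessarily complete.'' This completes the proof.
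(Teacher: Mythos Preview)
Your proof is correct and follows essentially the same route as the paper: take a minimal-size base for $U$ that is cofinal in $(U,\supseteq)$, push it through a cofinal map $g:U\to W$, and observe that the image is cofinal in $W$ with cardinality at most $\Ch(U)$. The paper makes the passage from a $\supseteq^*$-cofinal base to a $\supseteq$-cofinal one explicit via the set $\{b_i\setminus\xi : i<\Ch(U),\ \xi<\kappa\}$ (using $\Ch(U)\geq\kappa$), whereas you appeal to the text's earlier remark that the two notions of base give the same minimal cardinality; also, your invocation of monotonicity via Fact~\ref{fact.6DT} is harmless but unnecessary, since the definition of cofinal map already guarantees that images of cofinal sets are cofinal.
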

\begin{proof}
    Let $\{b_i\mid i<\Ch(U)\}$ be a filter base for $U$. Then $B=\{b_i\setminus \xi\mid i<\Ch(U),\xi<\kappa\}$ is cofinal in $U$. Let $f:U\rightarrow W$ be a cofinal map. Then $f''B$ is a filter base for $W$, and therefore $\Ch(W)\leq|f''B|\leq |B|=\Ch(U)$.
\end{proof}
Note that the other direction is not true in general, since for example if $U_0\triangleleft U_1$, $U_1$ is a $p$-point and $2^\kappa=\kappa^+$, then $\Ch(U_0)=\kappa^+=\Ch(U_1)$ but $U_0\not\equiv_{T}U_1$ (see Theorem \ref{Mitchellorthogonal}). 
As a corollary, we get that any 
    $\mu$-complete ultrafilter for which $\neg\Gal(U,\mu,2^\kappa)$ will satisfy $\Ch(U)=\max(\Sp_\mu(\kappa))$. In fact, we can say more:
\begin{proposition}
    Suppose that $U$ is an ultrafilter over $\kappa$, $\mu\leq\kappa$ and $\neg \Gal(U,\mu,2^\kappa)$ holds. Then $\Ch(U)=2^\kappa$.
\end{proposition}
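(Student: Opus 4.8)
I want to show that $\neg\Gal(U,\mu,2^\kappa)$ forces $\Ch(U)=2^\kappa$; since trivially $\Ch(U)\le 2^\kappa$ (the whole ultrafilter is a cofinal subset of $(U,\supseteq^*)$), it suffices to prove $\Ch(U)\ge 2^\kappa$. The natural route is to feed the characteristic number back into the Tukey order via Theorem \ref{GlavinMainTheorem}: that theorem (applied with $\kappa$ replaced by $2^\kappa$, i.e. to the sequence of length $2^\kappa$ witnessing $\neg\Gal(U,\mu,2^\kappa)$) gives $(U,\supseteq)\ge_T([2^\kappa]^{<\mu},\subseteq)$, and combined with Theorem \ref{directedposet} this yields $(U,\supseteq)\equiv_T([2^\kappa]^{<\mu},\subseteq)$. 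So I need: if $(U,\supseteq)\equiv_T([2^\kappa]^{<\mu},\subseteq)$ then $\Ch(U)=2^\kappa$.

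**Key steps.** First, recall that $\Ch(U)$ is the minimal size of a filter base for $U$, i.e. a cofinal subset of $(U,\supseteq)$ (the remark after the definition notes cofinal in $\supseteq^*$ and cofinal in $\supseteq$ give the same cardinal). Suppose toward a contradiction that $U$ has a filter base $\mathcal B$ with $|\mathcal B|=\theta<2^\kappa$. Take the unbounded (equivalently, via Schmidt, cofinal) map $g:[2^\kappa]^{<\mu}\to U$ witnessing $([2^\kappa]^{<\mu},\subseteq)\le_T(U,\supseteq)$; by Fact \ref{fact.6DT} we may take a monotone cofinal map $f:U\to[2^\kappa]^{<\mu}$ witnessing the same reduction in the cofinal direction. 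Then $f''\mathcal B$ is a cofinal subset of $([2^\kappa]^{<\mu},\subseteq)$ of size at most $\theta$. But $\bigcup(f''\mathcal B)\subseteq 2^\kappa$ has size at most $\theta\cdot\mu<2^\kappa$ (using $\mu\le\kappa<2^\kappa$), so any singleton $\{\xi\}$ with $\xi\notin\bigcup(f''\mathcal B)$ is not below any element of $f''\mathcal B$, contradicting cofinality. Hence no filter base of size $<2^\kappa$ exists, so $\Ch(U)\ge 2^\kappa$, and therefore $\Ch(U)=2^\kappa$.

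**Main obstacle.** There is no deep obstacle here; the one point requiring care is the direction of the Tukey maps. I am starting from $\neg\Gal$, which via Theorem \ref{GlavinMainTheorem} directly gives $([2^\kappa]^{<\mu},\subseteq)\le_T(U,\supseteq)$ — and this inequality alone is what I need, together with the elementary observation that $[2^\kappa]^{<\mu}$ has no cofinal subset of size $<2^\kappa$ and that a cofinal subset's image under a cofinal map is cofinal. So I should phrase the argument purely in terms of: a filter base $\mathcal B$ of $U$ is cofinal in $(U,\supseteq)$, hence its image under the cofinal map $(U,\supseteq)\to([2^\kappa]^{<\mu},\subseteq)$ is cofinal there, hence has size $\ge 2^\kappa$, hence $|\mathcal B|\ge 2^\kappa$. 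Alternatively and even more cleanly, one can bypass maps entirely: a cofinal-subset-preserving bijection of Tukey types shows that $\Ch$ (being the minimal cardinality of a cofinal set) is a Tukey invariant, so $\Ch(U)=\Ch([2^\kappa]^{<\mu},\subseteq)=2^\kappa$, the last equality being immediate since $\bigcup$ of a small family of small sets is small. I will present it in this second, map-free form.
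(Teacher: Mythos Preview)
Your route through Theorem~\ref{GlavinMainTheorem} has a gap: that theorem (and likewise Theorem~\ref{directedposet}) assumes $U$ is $\mu$-complete, but the proposition makes no completeness assumption on $U$. Without $\mu$-completeness the unbounded map $I\mapsto\bigcap_{i\in I}A_i$ from the proof of Theorem~\ref{GlavinMainTheorem} need not land in $U$, and the natural candidate for a cofinal map $U\to[2^\kappa]^{<\mu}$, namely $f(B)=\{\alpha:B\subseteq A_\alpha\}$, fails to have cofinal image for exactly the same reason: given $I\in[2^\kappa]^{<\mu}$ you would need $\bigcap_{\alpha\in I}A_\alpha\in U$. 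So in the stated generality the Tukey reduction $([2^\kappa]^{<\mu},\subseteq)\le_T(U,\supseteq)$ is simply not available, and both your first argument and your ``map-free'' second one break at this point.

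The paper's proof avoids the detour entirely with a direct pigeonhole using only upward closure of $U$: given a base $\mathcal B$ with $|\mathcal B|<2^\kappa$, choose for each $\alpha<2^\kappa$ some $b_\alpha\in\mathcal B$ with $b_\alpha\subseteq A_\alpha$; since $|\mathcal B|\cdot\kappa<2^\kappa$, some fixed $b^*$ is chosen for a set $I$ of size $\ge\kappa\ge\mu$, and then $b^*\subseteq\bigcap_{\alpha\in I}A_\alpha$ forces this intersection into $U$, contradicting $\neg\Gal(U,\mu,2^\kappa)$. If you unpack your cofinal-map argument with the explicit $f$ above, this is precisely what remains once you strip away the Tukey packaging; the pigeonhole step survives without completeness, but the Tukey-equivalence wrapper does not. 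Under the additional hypothesis that $U$ is $\mu$-complete your argument is correct and gives a pleasant conceptual explanation (the cofinality of a directed set is a Tukey invariant), but as stated the proposition needs the more elementary proof.
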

\begin{proof}
     To see that $\Ch(U)=2^\kappa$, let $\l A_i\mid i<2^\kappa\r$ witness that $\neg \Gal(U,\mu,2^\kappa)$, and suppose toward a contradiction that there is cofinal set $\mathcal{B}\subseteq U$ such that $|\mathcal{B}|<2^\kappa$. In particular, for each $\alpha<2^\kappa$ there is $b_\alpha\in \mathcal{B}$ such that $b_\alpha\subseteq A_\alpha$. This defines a function from $2^\kappa$ to $\mathcal{B}$, and by the pigeonhole principle there is $b^*\in\mathcal{B}$ such that $I:=\{\alpha<2^\kappa\mid b_\alpha=b^*\}$ has size at least $\kappa$. It follows that $b^*\subseteq \cap_{\alpha\in I}A_\alpha$, and therefore $\cap_{\alpha\in I}A_\alpha\in U$, contradicting the fact that $\l A_\alpha\mid \alpha<2^\kappa\r$ witnesses the failure of $\Gal(U,\mu,2^\kappa)$.
\end{proof}
It is well known that if $\l A_i\mid i<\lambda\r$ is a maximal $\kappa$-independent family then $\l A_i\mid i<\lambda\r$ generates an ultrafilter $U$ such that $\Ch(U)=\lambda$. Since all the known constructions of non-Galvin ultrafilters rely on some independent family which witnesses the non-Galviness, the previous result is somehow not surprising. In this light, the following question seems natural:
\begin{question}
    Suppose that $U$ is a $\kappa$-complete ultrafilter over $\kappa$ such that $\neg \Gal(U,\kappa,\lambda)$. Is there a $\lambda$-independent family $\l A_i\mid i<\lambda\r\subseteq U$ which generates $U$? 
\end{question}

\begin{corollary}
    $\max(\Sp(\kappa))=2^\kappa$ is always realized by some (any) uniform ultrafilter $U$ such that $\neg \Gal(U,\omega,2^\kappa)$.
\end{corollary}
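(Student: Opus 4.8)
The plan is to read this off directly from the results and definitions already in this section, so the proof is essentially a one-liner with one small bookkeeping point. The upper bound is immediate: any uniform ultrafilter $W$ over $\kappa$ has $|W| = 2^\kappa$ and is a cofinal subset of $(W,\supseteq^*)$, so $\Ch(W)\le 2^\kappa$; hence every element of $\Sp(\kappa)=\Sp_\omega(\kappa)$ is $\le 2^\kappa$. It therefore suffices to exhibit a single uniform ultrafilter $U$ over $\kappa$ with $\Ch(U)=2^\kappa$.

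To do this I would apply, with $\mu = \omega$, the Proposition above stating that $\neg\Gal(U,\mu,2^\kappa)$ implies $\Ch(U)=2^\kappa$: it gives that \emph{every} uniform $U$ over $\kappa$ with $\neg\Gal(U,\omega,2^\kappa)$ satisfies $\Ch(U)=2^\kappa$, and Corollary \ref{omegaiscompact} guarantees such a $U$ exists. Together with the upper bound this yields $\max\Sp(\kappa)=2^\kappa$, realized by any such $U$, which is exactly the assertion.

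The only point needing a word of care is that $\Ch$, and hence $\Sp$, is defined only for uniform ultrafilters, so the witness furnished by Corollary \ref{omegaiscompact} must be taken uniform. This is harmless: the non-Galvin filter built there from an independent family $\vec A$ has all of its generators $(\bigcup_{i\in I}A_i)\setminus(\bigcup_{j\in J}A_j)$ (with $|I|=\omega$ and $|J|<\omega$) of size $\kappa$ — fixing $i_0\in I$ and running through the remaining members of $\vec A$ produces $\kappa$ many distinct points of $A_{i_0}\cap\bigcap_{j\in J}A_j^c$ by independence — so it is compatible with the filter $\{X\subseteq\kappa: |\kappa\setminus X|<\kappa\}$, and any ultrafilter extending their join is uniform and still non-Galvin by Theorem \ref{TheNonGlavintheorem}. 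I do not anticipate any real obstacle here; all the substantive content sits in the cited Proposition and in Corollary \ref{omegaiscompact}.
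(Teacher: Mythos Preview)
Your proof is correct and follows essentially the same approach as the paper: the paper's proof simply cites Corollary \ref{omegaiscompact} for existence and the preceding Proposition (that $\neg\Gal(U,\mu,2^\kappa)$ implies $\Ch(U)=2^\kappa$) to conclude. Your additional remarks on the trivial upper bound and on ensuring uniformity of the witness are more careful than the paper's two-line argument, but the substance is identical.
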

\begin{proof}
    By Corollary \ref{omegaiscompact}, such an ultrafilter $U$ always exists, and by the previous corollary, $2^\kappa=\Ch(U)$.
\end{proof}
We can now further exploit this connection between the Tukey order and the ultrafilter number as follows:
\begin{corollary}
    Suppose that $|\Sp_\mu(\kappa)|=\theta$. Then there are at least $\theta$-many $\mu$-complete ultrafilters over $\kappa$ with distinct Tukey classes.
\end{corollary}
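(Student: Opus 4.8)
The plan is to extract the statement directly from the Proposition above asserting that $U\ge_T W$ implies $\Ch(U)\ge\Ch(W)$. Applying this in both directions shows that the characteristic number is a Tukey invariant: if $U\equiv_T W$ then $\Ch(U)\ge\Ch(W)$ and $\Ch(W)\ge\Ch(U)$, so $\Ch(U)=\Ch(W)$. Equivalently, $\Ch$ descends to a well-defined function on Tukey classes, and two $\mu$-complete uniform ultrafilters with distinct characteristic numbers necessarily lie in distinct Tukey classes.

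First I would, using the Axiom of Choice, select for each cardinal $c\in\Sp_\mu(\kappa)$ a $\mu$-complete uniform ultrafilter $U_c$ over $\kappa$ with $\Ch(U_c)=c$; such a $U_c$ exists by the very definition of $\Sp_\mu(\kappa)$. This produces a family $\{U_c\mid c\in\Sp_\mu(\kappa)\}$ of $\mu$-complete uniform ultrafilters over $\kappa$ of cardinality $\theta=|\Sp_\mu(\kappa)|$.

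Next I would observe that the assignment sending $c$ to the Tukey class of $U_c$ is injective: if $c\ne c'$ then $\Ch(U_c)=c\ne c'=\Ch(U_{c'})$, so by the invariance remark above $U_c\not\equiv_T U_{c'}$. Hence the ultrafilters $U_c$, for $c\in\Sp_\mu(\kappa)$, represent at least $|\Sp_\mu(\kappa)|=\theta$ pairwise distinct Tukey classes, which is exactly the claim.

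I do not anticipate a genuine obstacle here; the content is entirely carried by the earlier Proposition, and the only point requiring care is recording that $\Ch$ is constant on Tukey equivalence classes, so that distinct values of $\Ch$ certify distinct Tukey types. The passage from $\theta$ distinct values of $\Ch$ to $\theta$ distinct Tukey types is then immediate.
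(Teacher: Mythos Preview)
Your proof is correct and is exactly the intended argument: the paper states this corollary without proof immediately after the Proposition that $U\ge_T W$ implies $\Ch(U)\ge\Ch(W)$, so the content is precisely the observation you spell out, that $\Ch$ is a Tukey invariant and hence distinct values in $\Sp_\mu(\kappa)$ yield distinct Tukey classes.
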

There are known models due to Garti, Magidor, and Shelah where the spectrum of an infinite cardinal $\kappa$ has arbitrarily many cardinals (this requires  blowing  up $2^\kappa$ and large cardinals) \cite{GMSSpec}. As a corollary we get:
\begin{corollary}
    Assuming large cardinals, it is consistent that there are arbitrarily many distinct Tukey-classes of ultrafilters.
\end{corollary}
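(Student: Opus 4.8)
The plan is to read this consistency result off from the preceding corollary combined with the models of Garti, Magidor and Shelah. Fix an arbitrary cardinal $\theta$; the goal is to produce, relative to large cardinals, a model in which some cardinal $\kappa$ carries at least $\theta$-many ultrafilters with pairwise distinct Tukey types, and then let $\theta$ vary over all cardinals.

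First I would invoke \cite{GMSSpec}: from suitable large cardinal hypotheses it is consistent that there is an infinite cardinal $\kappa$ whose character spectrum satisfies $|\Sp(\kappa)|\geq\theta$. In such models one blows up $2^\kappa$ and arranges a large family of uniform ultrafilters over $\kappa$ realizing pairwise distinct characteristic numbers $\Ch(U)$; the large cardinal assumptions enter precisely in order to fit many distinct characters below $2^\kappa$. Since $\Sp(\kappa)=\Sp_\omega(\kappa)$ and every ultrafilter over $\kappa$ is trivially $\omega$-complete, applying the preceding corollary with $\mu=\omega$ yields, in that model, at least $|\Sp_\omega(\kappa)|\geq\theta$ ultrafilters over $\kappa$ with pairwise distinct Tukey classes. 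As $\theta$ was arbitrary, this establishes the consistency of arbitrarily many distinct Tukey-classes of ultrafilters.

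I do not expect any real obstacle on the Tukey side: all of the Tukey-theoretic content is already packaged into the monotonicity fact that $U\geq_T W$ implies $\Ch(U)\geq\Ch(W)$, which guarantees that ultrafilters with different characteristic numbers cannot be Tukey-equivalent, hence that distinct points of $\Sp_\mu(\kappa)$ are witnessed by Tukey-inequivalent ultrafilters. The only genuinely delicate part — simultaneously forcing a large character spectrum while controlling the ambient cardinal arithmetic and preserving the relevant large cardinal structure — is exactly the content of \cite{GMSSpec} and lies outside the scope of this corollary, so here it suffices to cite it as a black box.
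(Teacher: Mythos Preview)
Your proposal is correct and follows exactly the approach the paper intends: the paper simply states this corollary immediately after remarking that the Garti--Magidor--Shelah models realize arbitrarily large character spectra, so the result is meant to be read off from the preceding corollary on $|\Sp_\mu(\kappa)|$ together with \cite{GMSSpec}, precisely as you do.
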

In a later section, we will present models where we have more control over how those classes are ordered.
 
 In order to compare the characteristics it suffices to compare the cofinal types of $(U,\supseteq^*)$. The next proposition suggests that, in some sense, the converse is also true.
\begin{proposition}
    Suppose that $U,W$ have generating sequences $\l X^U_i\mid i<\theta\r,\l X^W_i\mid i<\theta\r$ resp.\ which are $\supseteq^*$-increasing. Then $(U,\supseteq^*)\equiv_T (W,\supseteq^*)$.
\end{proposition}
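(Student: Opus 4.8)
The plan is to show that each of $(U,\supseteq^*)$ and $(W,\supseteq^*)$ is Tukey equivalent to the ordinal $(\theta,\le)$ --- hence in fact to the regular cardinal $\cf(\theta)$ --- with the reductions witnessed by the given generating sequences; transitivity of $\equiv_T$ then finishes the proof. Throughout I would use the elementary fact that a \emph{monotone} map $f$ between directed posets whose range is cofinal in the target is automatically a cofinal map: if $X$ is cofinal in the domain and $p$ lies in the target, pick $q_0$ with $p\le f(q_0)$ and then $q\in X$ with $q_0\le q$, so that $p\le f(q_0)\le f(q)\in f''X$; thus $f''X$ is cofinal. (This is also the remark already used in the introduction of this section.)

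First I would verify $(U,\supseteq^*)\le_T(\theta,\le)$: the map $g^U\colon(\theta,\le)\to(U,\supseteq^*)$ given by $g^U(i)=X^U_i$ is monotone because the sequence is $\supseteq^*$-increasing, and its range is $\supseteq^*$-cofinal in $U$ because $\langle X^U_i\mid i<\theta\rangle$ generates $U$; by the recalled fact it is a cofinal map, so $(U,\supseteq^*)\le_T(\theta,\le)$, and symmetrically $(W,\supseteq^*)\le_T(\theta,\le)$. For the reverse direction I would pass to the ``adjoint'': define $h^U\colon(U,\supseteq^*)\to(\theta,\le)$ by $h^U(A)=\min\{\,i<\theta: X^U_i\subseteq^* A\,\}$. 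This is well defined since the generating sequence is cofinal in $(U,\supseteq^*)$, and it is monotone, because $A\supseteq^* B$ gives $\{i:X^U_i\subseteq^* B\}\subseteq\{i:X^U_i\subseteq^* A\}$, whence $h^U(A)\le h^U(B)$. By the recalled fact again, it suffices to show that $\ran(h^U)$ is unbounded in $\theta$.

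This last point is the only step that requires an idea rather than bookkeeping about the direction of $\subseteq^*$, and it rests on the fact that a uniform ultrafilter has no $\subseteq^*$-least element. Concretely, fix $j<\theta$; since $X^U_j\in U$ has size $\kappa$, split it as $X^U_j=Y_0\sqcup Y_1$ with $|Y_0|=|Y_1|=\kappa$, and relabel so that $Y_0\in U$. For every $i<j$ we have $X^U_i\supseteq^* X^U_j$, hence $X^U_i\setminus Y_0\supseteq^* X^U_j\setminus Y_0=Y_1$, which is unbounded in $\kappa$; therefore $X^U_i\not\subseteq^* Y_0$ for all $i<j$, so $h^U(Y_0)\ge j$. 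Thus $\ran(h^U)$ is unbounded in $\theta$, $h^U$ is a cofinal map, and $(\theta,\le)\le_T(U,\supseteq^*)$; the same argument with $W$ in place of $U$ gives $(\theta,\le)\le_T(W,\supseteq^*)$. Combining the four reductions yields $(U,\supseteq^*)\equiv_T(\theta,\le)\equiv_T(W,\supseteq^*)$. The hard part is genuinely just the unboundedness of $\ran(h^U)$; I would also remark that the argument is robust --- if ``generating sequence'' is meant in a weaker sense (e.g.\ generating $U$ as a $\kappa$-complete filter), one only needs to observe that such a $\supseteq^*$-increasing sequence is still cofinal in $(U,\supseteq^*)$, after which the two maps $g^U$ and $h^U$ work verbatim.
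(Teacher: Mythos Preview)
Your proof is correct and follows essentially the same idea as the paper: the paper defines the single map $f(b)=X^W_{i}$ for the least $i$ with $X^U_i\subseteq^* b$, which is precisely your composition $g^W\circ h^U$, and the heart of both arguments is the same splitting trick (partition $X^U_{j}$ into two unbounded pieces and take the $U$-half) used to show that the ``minimal index'' map has unbounded range. The only difference is presentational: you factor explicitly through $(\theta,\le)$, which is a slight bonus since it records the extra information $(U,\supseteq^*)\equiv_T(\theta,\le)\equiv_T(\cf\theta,\le)$, whereas the paper verifies cofinality of the composed map directly.
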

\begin{proof}
   Let $f:U\rightarrow W$ be the map sending $f(X^U_i)=X^W_i$ and for $b\in U$, let $f(b)=f(X^U_i)$ for the minimal $i$ such that $X^U_i\subseteq^* b$.,
 Given a $\subseteq^*$-cofinal set $B$ in $U$,
    then for every $b\in B$,
    let $i_b$ be the minimal index such that $X^U_{i_b}\subseteq^* b$ (hence $f(g)=X^W_{i_b}$).
    We claim that $\{i_b\mid b\in B\}$ must be unbounded in $\theta$. Otherwise, there is $i^*$ which is a bound. In particular,  $X^U_{i^*}\subseteq^* b$ for every $b\in B$. Take any set $Y\in U$ such that $X^U_{i^*}\setminus Y$ is unbounded in $\kappa$ (just partition $X^U_{i^*}$ and take $Y$ to be the piece which belongs to $U$). Now there is $b\in B$ such that $b\subseteq^* Y$ and thus 
    $$b\subseteq^* Y\subseteq^* X^U_{i^*}\subseteq b$$
    Hence $b=^*Y=^*X^U_{i^*}$ which contradicts the fact that $X^U_{i^*}\setminus Y$ is unbounded in $\kappa$. 
    So we conclude that $\{i_b\mid b\in B\}$ is unbounded in $\theta$. By definition of $f$, we have that $f''B\subseteq \{X^W_i\mid i<\theta\}$ then is cofinal in $W$. It follows that $f$ is cofinal and $(U,\supseteq^*)\geq_T (W,\supseteq^*)$. The other direction is symmetric.
\end{proof}
\begin{remark}
    Ultrafilters with $\supseteq^*$-increasing generating sequences are forcible (see \cite{GSOnDO} or \cite{GalDet}).
\end{remark}
For the rest of this section, let us focus on the poset $(U,\supseteq^*)$.
The next lemma can be found in \cite[Lemma 2.19]{GalDet}:
\begin{lemma}\label{p-point}
     Suppose that $2^\kappa=\kappa^+$. Then $U$ is a $p$-point over $\kappa$ if and only if $U$ has a strong generating
 sequence\footnote{For the definition of a strong generating sequence see \cite[Def. 2.11]{GalDet}.} of length $\kappa^+$. In particular, any two $p$-points have the same Tukey class with respect to $\supseteq^*$.
\end{lemma}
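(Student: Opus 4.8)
The plan is to obtain the ``in particular'' clause as an immediate corollary of the stated equivalence together with the Proposition just proved above about $\supseteq^*$-increasing generating sequences. The equivalence itself --- that, under $2^\kappa=\kappa^+$, an ultrafilter $U$ over $\kappa$ is a $p$-point if and only if it carries a strong generating sequence of length $\kappa^+$ --- is exactly \cite[Lemma~2.19]{GalDet}, so I would cite it rather than reprove it; all that remains is the last sentence of the lemma.

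For that, let $U$ and $W$ be two $p$-points over $\kappa$. By the equivalence just quoted, each admits a strong generating sequence of length $\kappa^+$, say $\l X^U_\alpha\mid\alpha<\kappa^+\r$ and $\l X^W_\alpha\mid\alpha<\kappa^+\r$. Unwinding \cite[Def.~2.11]{GalDet}, a strong generating sequence is in particular a $\subseteq^*$-decreasing sequence --- equivalently, a $\supseteq^*$-increasing sequence --- which is cofinal in $(U,\supseteq^*)$ (respectively $(W,\supseteq^*)$). Thus $U$ and $W$ have $\supseteq^*$-increasing generating sequences of the common length $\theta=\kappa^+$, and the Proposition immediately above applies verbatim to yield $(U,\supseteq^*)\equiv_T(W,\supseteq^*)$.

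The only step that needs any care is the verification that a ``strong generating sequence'' really is a $\supseteq^*$-monotone sequence cofinal in $(U,\supseteq^*)$; this is built into the definition in \cite{GalDet}, the extra ``strength'' clauses controlling how the sets may shrink but playing no role here. Beyond that the argument is purely formal --- the lemma is the conjunction of \cite[Lemma~2.19]{GalDet} with the preceding Proposition, with no further computation required. I would also note that the hypothesis $2^\kappa=\kappa^+$ enters only through \cite{GalDet}: it pins the length of the generating sequence to $\kappa^+$ for \emph{every} $p$-point and so makes the two towers comparable by the preceding Proposition; when $2^\kappa>\kappa^+$ one would instead need a common cofinal length for all $p$-points, which need not exist.
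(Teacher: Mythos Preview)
Your proposal is correct and matches the paper's own treatment: the paper does not prove this lemma but simply cites \cite[Lemma~2.19]{GalDet} for the equivalence, and your derivation of the ``in particular'' clause from that equivalence together with the preceding Proposition on $\supseteq^*$-increasing generating sequences of common length is exactly the intended logic.
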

\begin{remark}
    In the previous model, where the spectrum of a cardinal is large,  there are different Tukey classes of ultrafilters with respect to $\supseteq^*$.
\end{remark}
\begin{proposition}
    If $U$ is a $p$-point then $(U,\supseteq^*)<_T (U,\supseteq)$.
\end{proposition}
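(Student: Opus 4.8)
The plan is to show both $\le_T$ and the strictness $\ne$. For the reduction $(U,\supseteq^*)\le_T(U,\supseteq)$, I would exhibit a monotone cofinal map from $(U,\supseteq)$ to $(U,\supseteq^*)$, or more conveniently an unbounded map in the other direction. The identity map $\mathrm{id}:(U,\supseteq^*)\to(U,\supseteq)$ is the natural candidate for an unbounded map: if $X\subseteq U$ is $\supseteq^*$-unbounded, I claim it is $\supseteq$-unbounded, because any $\supseteq$-lower bound $b$ for $X$ (i.e.\ $b\subseteq a$ for all $a\in X$, in the reverse-inclusion picture an upper bound) would in particular be a $\supseteq^*$-lower bound, contradicting $\supseteq^*$-unboundedness. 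Hence $\mathrm{id}$ is unbounded, which by Schmidt's theorem (quoted in the introduction) gives a cofinal map $(U,\supseteq)\to(U,\supseteq^*)$, i.e.\ $(U,\supseteq^*)\le_T(U,\supseteq)$. This direction uses nothing about $p$-points; it holds for every ultrafilter.

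The content is in the strict inequality, and this is where the $p$-point hypothesis enters. I would argue by contradiction: suppose $(U,\supseteq)\le_T(U,\supseteq^*)$, witnessed (by Fact~\ref{fact.6DT}) by a monotone cofinal map $f:(U,\supseteq^*)\to(U,\supseteq)$. Since $U$ is a $p$-point, any $\kappa$-sequence from $U$ has a pseudo-intersection in $U$; in particular $(U,\supseteq^*)$ is $\kappa^+$-directed when restricted appropriately — more precisely, any countably-... no: the key point is that $(U,\supseteq^*)$ has \emph{calibre-like} behaviour that $(U,\supseteq)$ lacks. Concretely, I would fix a $\supseteq$-cofinal (filter-base) family and use $p$-pointness to thin a $\supseteq$-unbounded set of size $\kappa^+$ down to a $\supseteq^*$-bounded one, contradicting that $f$ (or rather the cofinal structure) must send unbounded to unbounded.

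The cleanest route: recall $\mathrm{add}((U,\supseteq^*))$, the least size of a $\supseteq^*$-unbounded subset, is $>\kappa$ when $U$ is a $p$-point (that is exactly the $p$-point property: every $\kappa$-sized subfamily of $U$ has a pseudo-intersection in $U$, hence is $\supseteq^*$-bounded). On the other hand $(U,\supseteq)$ always has a $\supseteq$-unbounded subset of size $\kappa$: take any partition of $\kappa$ into $\kappa$ pieces of size $\kappa$... that is not in $U$; instead take $\langle \kappa\setminus\alpha\mid \alpha<\kappa\rangle$, which is a $\subseteq^*$-decreasing, hence $\supseteq^*$-increasing, chain of length $\kappa$ in $U$ with no $\supseteq$-upper bound (no single $b\in U$ has $b\subseteq\kappa\setminus\alpha$ for all $\alpha$, since $b$ is unbounded in $\kappa$), so it is $\supseteq$-unbounded of size $\kappa$. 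If $(U,\supseteq)\le_T(U,\supseteq^*)$ held, an unbounded map $g:(U,\supseteq)\to(U,\supseteq^*)$ would send this $\supseteq$-unbounded set of size $\kappa$ to a $\supseteq^*$-unbounded set of size $\le\kappa$, contradicting $\mathrm{add}((U,\supseteq^*))>\kappa$. This finishes the proof.

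The main obstacle, and the point I would be most careful about, is verifying that $\langle\kappa\setminus\alpha\mid\alpha<\kappa\rangle$ (or some explicit $\kappa$-sized family) really is $\supseteq$-unbounded in $U$ — i.e.\ that no member of $U$ is a subset of all the $\kappa\setminus\alpha$ — which is immediate from uniformity of $U$, but one must phrase it against the reverse-inclusion convention correctly. A secondary subtlety is that ``$\mathrm{add}$ of a directed poset'' behaves well under Tukey reductions only in the direction we need (a cofinal map does not raise additivity, an unbounded map from $P$ to $Q$ forces $\mathrm{add}(Q)\le$ size of any $P$-unbounded set); I would state the one-line lemma ``if $P\le_T Q$ via unbounded $g:P\to Q$ then $Q$ has an unbounded subset of size $\le b$ whenever $P$ does, where $b$ is that size'' and apply it with $P=(U,\supseteq)$, $Q=(U,\supseteq^*)$, $b=\kappa$.
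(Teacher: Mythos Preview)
Your argument is correct and is essentially the paper's proof, just unfolded. The paper dispatches the $\le_T$ direction with the identity map in one clause, and for strictness simply observes that $(U,\supseteq^*)$ is $\kappa^+$-directed (since $U$ is a $p$-point) while $(U,\supseteq)$ is not, invoking the earlier general fact that a non-$\lambda$-directed poset can never be Tukey below a $\lambda$-directed one. Your ``cleanest route'' is exactly this: the additivity-style statement $\mathrm{add}((U,\supseteq^*))>\kappa$ is the $\kappa^+$-directedness of $(U,\supseteq^*)$, your explicit family $\langle\kappa\setminus\alpha\mid\alpha<\kappa\rangle$ witnesses that $(U,\supseteq)$ is not $\kappa^+$-directed, and your one-line lemma about unbounded maps preserving the existence of small unbounded sets is precisely the proof of that general proposition.
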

\begin{proof}
    Indeed, the identity map witnesses that $\leq_T$, and equality cannot hold since $(U,\supseteq^*)$ is $\kappa^+$-directed (as it is $p$-point) while $(U,\supseteq)$ is not $\kappa^+$-directed.
\end{proof}
\begin{definition}\label{def: Gal star}
    Let $U$ be an ultrafilter. We say that $\Gal^*(U,\mu,\lambda)$ for every sequence $\l A_i\mid i<\lambda\r\in [U]^\lambda$ there exists $I\in[\lambda]^\mu$, such that the subsequence $\l A_i\mid i\in I\r$ has a pseudointersection.
\end{definition}
It is not hard to see that $\Gal(U,\mu,\lambda)$ implies $\Gal^*(U,\mu,\lambda)$. 
\begin{definition}[Kunen \cite{Kunen1993}] We say that an ultrafilter $U$ over $\kappa$
is $\lambda$-OK if for every $\l A_\alpha\mid \alpha<\kappa\r\subseteq U$, there exists $\l B_\beta\mid \beta<\lambda\r\subseteq U$ 
such that for all nonempty $\sigma\in[\lambda]^{<\kappa}$, we have
$\bigcap_{\beta\in\sigma }B_\beta\subseteq^* A_{|\sigma|}$.
\end{definition}
\begin{proposition}
    If $U$ is a non $p$-point $\kappa$-complete ultrafilter which is $2^\kappa$-OK, then $\neg\Gal^*(U,\kappa,2^\kappa)$.
\end{proposition}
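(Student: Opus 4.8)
The plan is to build the witnessing $2^\kappa$-sequence by feeding a non-$p$-point witness into the $2^\kappa$-OK property. Before anything else, observe that since $U$ is a $\kappa$-complete uniform ultrafilter on $\kappa$, the cardinal $\kappa$ is automatically regular and is either $\omega$ or measurable (hence inaccessible); in particular the cardinals below $\kappa$ are cofinal in $\kappa$, and a union of fewer than $\kappa$ bounded subsets of $\kappa$ is bounded. These two facts are the only structural properties of $\kappa$ I will use.

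First I would fix a non-$p$-point witness in a convenient form. Since $U$ is not a $p$-point, there is a sequence $\langle C_\alpha\mid\alpha<\kappa\rangle\subseteq U$ with no pseudointersection in $U$; by $\kappa$-completeness I may replace each term by the intersection of its initial segment, obtaining a $\subseteq$-decreasing sequence $\langle A_\alpha\mid\alpha<\kappa\rangle\subseteq U$ still with no pseudointersection in $U$. Next I would apply the hypothesis that $U$ is $2^\kappa$-OK to $\langle A_\alpha\mid\alpha<\kappa\rangle$, obtaining $\langle B_\beta\mid\beta<2^\kappa\rangle\subseteq U$ such that $\bigcap_{\beta\in\sigma}B_\beta\subseteq^* A_{|\sigma|}$ for every nonempty $\sigma\in[2^\kappa]^{<\kappa}$. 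The claim is that $\langle B_\beta\mid\beta<2^\kappa\rangle$ witnesses $\neg\Gal^*(U,\kappa,2^\kappa)$. To see this, fix $I\in[2^\kappa]^\kappa$ and suppose toward a contradiction that $X\in U$ is a pseudointersection of $\langle B_\beta\mid\beta\in I\rangle$. Given any $\alpha<\kappa$, choose a cardinal $\mu$ with $\alpha\le\mu<\kappa$ (possible since cardinals are cofinal in $\kappa$) and a subset $\sigma\subseteq I$ with $|\sigma|=\mu$; as $X\subseteq^* B_\beta$ for each of the fewer than $\kappa$ indices $\beta\in\sigma$ and $\kappa$ is regular, we get $X\subseteq^*\bigcap_{\beta\in\sigma}B_\beta$, hence $X\subseteq^*\bigcap_{\beta\in\sigma}B_\beta\subseteq^* A_\mu\subseteq A_\alpha$ by the OK inequality together with the fact that $\langle A_\alpha\rangle$ is decreasing and $\mu\ge\alpha$. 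Since $\alpha<\kappa$ was arbitrary, $X$ would be a pseudointersection of $\langle A_\alpha\mid\alpha<\kappa\rangle$ in $U$, contradicting the choice of that sequence. Thus no such $X$ exists, and as $I\in[2^\kappa]^\kappa$ was arbitrary, we are done.

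The one genuinely delicate point is the bookkeeping between indices and cardinalities in the last step: the $2^\kappa$-OK property only controls the sets $A_{|\sigma|}$, i.e.\ only the terms of the $A$-sequence at \emph{cardinal} indices, so one must arrange the non-$p$-point witness to be $\subseteq$-decreasing in order to transfer this control from a cardinal $\mu\ge\alpha$ down to an arbitrary index $\alpha$ — and this is exactly the place where it matters that the cardinals are cofinal in $\kappa$, equivalently that $\kappa$ is $\omega$ or measurable. Everything else is routine: $\kappa$-completeness to make the witness decreasing, and regularity of $\kappa$ to collapse the relevant union of fewer than $\kappa$ bounded sets. Finally, there is no need to check that the $B_\beta$ are pairwise distinct, since $\Gal^*$ is phrased in terms of sequences rather than sets.
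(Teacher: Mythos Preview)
Your proof is correct and follows essentially the same approach as the paper: take a decreasing non-$p$-point witness $\langle A_\alpha\rangle$, apply $2^\kappa$-OK to obtain $\langle B_\beta\mid\beta<2^\kappa\rangle$, and show the latter witnesses $\neg\Gal^*(U,\kappa,2^\kappa)$. The paper's proof is terser and leaves the verification implicit; your write-up makes explicit the one point the paper glosses over, namely that the OK property only controls $A_{|\sigma|}$ at \emph{cardinal} indices, so one needs the decreasing arrangement together with the cofinality of cardinals below $\kappa$ to reach an arbitrary $A_\alpha$.
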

\begin{proof}
    Take any sequence $\l A_i\mid i<\kappa\r$ which has no pseudo intersection. Since $U$ is $\kappa$-complete we may assume that $A_i$ is decreasing in $\subseteq$. This assumption guarantees that there is no pseudo intersection for $\l A_i\mid i\in I\r$ where $I\in[\kappa]^\kappa$. By $2^\kappa$-OK, there is a sequence $\l B_i\mid i<2^\kappa\r$ such that for every $a\in[2^\kappa]^{<\kappa}$, $\cap_{i\in a}B_i\subseteq^* A_{|a|}$. It follows that $\l B_i\mid i<2^\kappa\r$ witnesses $\neg\Gal(U,\kappa,2^\kappa)$. 
\end{proof}
Kunen proved \cite{Kunen1993} that on $\omega$ there are non-$p$-points which are $2^\omega$-OK.  In the first study of the Tukey order on ultrafilters, Milovich \cite{Milovich08} proved that if $U$ is a non-$p$-point which is $2^\omega$-OK, then $U$ is Tukey top. Similar to Theorem \ref{TheNonGlavintheorem}, we can characterize using $\Gal^*(U,\kappa,2^\kappa)$ the ultrafilters which are Tukey top with respect to $\supseteq^*$.
 This improves Milovich's result (both for $\omega$ and $\kappa$).  

 \begin{theorem}\label{thm.3.16}
    Let $\kappa$ be either $\omega$ or a measurable cardinal. For any $\kappa$-complete ultrafilter $U$ over $\kappa$, $(U,\supseteq^*)\equiv_T([2^\kappa]^{<\kappa},\subseteq)$ if and only if $\neg\Gal^*(U,\kappa,2^\kappa)$. 
\end{theorem}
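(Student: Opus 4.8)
The plan is to prove the two directions separately, paralleling the structure of the proof of Theorem \ref{GlavinMainTheorem} but now working with the $\supseteq^*$ order and pseudo-intersections. Since $(U,\supseteq^*)$ is $\kappa^+$-directed closed when $U$ is a $\kappa$-complete $p$-point, but here $U$ need not be a $p$-point, one must be careful: the relevant directedness is that $(U,\supseteq^*)$ is $\kappa$-directed closed (any $<\kappa$ members of $U$ have a common $\subseteq^*$-lower bound, by $\kappa$-completeness, after shrinking to a tail). Thus Theorem \ref{directedposet} gives $(U,\supseteq^*)\leq_T([2^\kappa]^{<\kappa},\subseteq)$ for free, and it remains to show that the reverse reduction $(U,\supseteq^*)\geq_T([2^\kappa]^{<\kappa},\subseteq)$ is equivalent to $\neg\Gal^*(U,\kappa,2^\kappa)$.

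For the forward direction of that equivalence, assume $\neg\Gal^*(U,\kappa,2^\kappa)$ and fix a witnessing sequence $\langle A_i\mid i<2^\kappa\rangle\in[U]^{2^\kappa}$, so that no subfamily indexed by a set of size $\kappa$ admits a pseudo-intersection in $U$. I would like to define an unbounded map $f:[2^\kappa]^{<\kappa}\to(U,\supseteq^*)$ by sending $I$ to some pseudo-intersection of $\langle A_i\mid i\in I\rangle$ — this is where $\kappa$-completeness is used, since for $|I|<\kappa$ such a pseudo-intersection exists (indeed $\bigcap_{i\in I}A_i\in U$ works). Then if $X\subseteq[2^\kappa]^{<\kappa}$ is unbounded, $|\bigcup X|\geq\kappa$, and any $\supseteq^*$-lower bound for $f''X$ would be a pseudo-intersection of $\langle A_i\mid i\in\bigcup X\rangle$, hence of some $\kappa$-sized subfamily, contradicting the choice of the $A_i$. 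Conversely, given an unbounded map $g:[2^\kappa]^{<\kappa}\to(U,\supseteq^*)$, set $X_\alpha=g(\{\alpha\})$; for any $I\in[2^\kappa]^\kappa$ the set $\{\{i\}\mid i\in I\}$ is unbounded, so $\{X_i\mid i\in I\}$ is $\supseteq^*$-unbounded in $U$, meaning it has no pseudo-intersection in $U$ — which is exactly $\neg\Gal^*(U,\kappa,2^\kappa)$ witnessed by $\langle X_\alpha\mid\alpha<2^\kappa\rangle$. (One should also record that $|U|=2^\kappa$, so the base set of the $[\cdot]^{<\kappa}$ poset is correct.)

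The main subtlety I anticipate is justifying the "in particular" passage between a general unbounded family and a $\kappa$-sized subfamily with no pseudo-intersection: when $U$ is $\kappa$-complete, a family $\mathcal{F}\subseteq U$ of size $\geq\kappa$ has a pseudo-intersection in $U$ if and only if \emph{every} $\kappa$-sized subfamily does, because a pseudo-intersection of a cofinal (under $\supseteq$) subfamily is a pseudo-intersection of the whole family, and $\kappa$-completeness lets one thin out; conversely a pseudo-intersection of the whole family restricts to one of any subfamily. This is the place where the hypothesis "$\kappa=\omega$ or $\kappa$ measurable" really enters — one needs $\kappa$ regular and $U$ $\kappa$-complete and uniform for the order topology / tail arguments to behave, and for $\kappa=\omega$ the statement is Milovich-style and the $\subseteq^*$-arithmetic is classical. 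I would state this as a small preliminary claim (perhaps: for $\kappa$-complete $U$, $\Gal^*(U,\kappa,2^\kappa)$ is equivalent to the assertion that every $\supseteq^*$-unbounded subfamily of $U$ of size $2^\kappa$ contains a $\supseteq^*$-unbounded subfamily of size $\kappa$) and then the theorem falls out by the same bookkeeping as in Theorem \ref{GlavinMainTheorem}. The only genuinely new ingredient beyond that theorem is replacing "$\bigcap_{i\in I}A_i\in U$" by "the subfamily has a pseudo-intersection in $U$" throughout, and checking that $\kappa$-completeness supplies enough lower bounds to make $f$ well-defined.
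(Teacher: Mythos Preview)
Your argument is correct; both directions go through as you outline. The paper, however, takes a somewhat different route, so let me record the comparison.

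For $\neg\Gal^*(U,\kappa,2^\kappa)\Rightarrow(U,\supseteq^*)\geq_T([2^\kappa]^{<\kappa},\subseteq)$, you use the map $I\mapsto\bigcap_{i\in I}A_i$ (well-defined by $\kappa$-completeness) and observe that a $\supseteq^*$-bound for the image of an unbounded family $X$ would be a pseudo-intersection of $\{A_i:i\in\bigcup X\}$, hence of any $\kappa$-sized subset of it, contradicting $\neg\Gal^*$. The paper instead fixes any \emph{bijection} $g:[2^\kappa]^{<\kappa}\to\{A_\alpha:\alpha<2^\kappa\}$ and argues that an unbounded $\mathcal{A}\subseteq[2^\kappa]^{<\kappa}$ must satisfy $|\mathcal{A}|\geq\kappa$ (regularity of $\kappa$), whence $g[\mathcal{A}]$ contains $\kappa$-many of the $A_\alpha$ and so has no pseudo-intersection. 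For the converse you argue the contrapositive exactly as in Theorem~\ref{GlavinMainTheorem}, extracting the witness $\langle g(\{\alpha\})\mid\alpha<2^\kappa\rangle$ from an unbounded $g$; the paper instead assumes $\Gal^*$ and shows directly that any $g$ fails to be unbounded, splitting into the cases $|\mathrm{Im}(g)|<2^\kappa$ (pigeonhole yields a fiber of size $\geq\kappa$, and any $\kappa$ distinct members of $[2^\kappa]^{<\kappa}$ are unbounded since $\kappa$ is a strong limit) and $|\mathrm{Im}(g)|=2^\kappa$ (apply $\Gal^*$ to the image). Your version is a cleaner $\supseteq^*$-transcription of Theorem~\ref{GlavinMainTheorem} and avoids the case split; the paper's version makes the role of the strong-limit hypothesis on $\kappa$ more visible.

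One remark: the ``main subtlety'' you flag at the end is not actually a difficulty. The only implication your forward direction needs is that a pseudo-intersection of $\{A_i:i\in\bigcup X\}$ is automatically a pseudo-intersection of any $\kappa$-sized subfamily, and this is immediate. The converse direction of your proposed biconditional (every $\kappa$-sized subfamily has a pseudo-intersection $\Rightarrow$ the whole family does) is false in general, but you never use it; so the preliminary claim can simply be dropped.
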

\begin{proof}
    Suppose $\l B_\alpha\mid \alpha<2^\kappa\r\in [U]^{2^\kappa}$ witnesses that $\neg\Gal^*(U,\kappa,2^\kappa)$, and let $g:[2^\kappa]^{<\kappa}\rightarrow \{B_\alpha\mid \alpha<2^\kappa\}$ be any enumeration. Let us prove that $g$ is unbounded. Suppose that $\mathcal{A}\subseteq [2^\kappa]^{<\kappa}$ is unbounded. Then $|\mathcal{A}|\geq \kappa$, so $g[\mathcal{A}]$ is a subset of $\{B_\alpha\mid \alpha<2^\kappa\}$ of cardinality $\kappa$ and therefore has no pseudointersection; hence $g[\mathcal{A}]$ is unbounded. In the other direction, suppose that $\Gal^*(U,\kappa,2^\kappa)$ holds, and let $g:[2^\kappa]^{<\kappa}\rightarrow U$.  We will prove that $g$ is not unbounded. If $|\Image(g)|<2^\kappa$, then there are $\kappa$-many elements mapped to the same set in $U$. Any $\kappa$-many elements in  $[2^\kappa]^{<\kappa}$ are unbounded  (since $\kappa$ is a strong limit) so $g$ is not unbounded. If $|\Image(g)|=2^\kappa$, there is $Y\subseteq\Image(g)$, such that $|Y|=\kappa$ which has a pseudointersection. Again $g^{-1}[Y]$ is unbounded in $[2^\kappa]^{<\kappa}$ but it is mapped to a bounded set.
\end{proof}
Milovich observed that a small modification in Isbell's construction gives an ultrafilter $U$ on $\omega$ such that $\neg\Gal^*(U,\omega,2^\omega)$. This modification, combined with the techniques of the previous section give the following:
\begin{theorem}\label{thm.3.17}
    Suppose that $\kappa$ is $\kappa$-compact. Then there is a $q$-point ultrafilter $U$ such that $\neg \Gal^*(U,\kappa,2^\kappa)$.
\end{theorem}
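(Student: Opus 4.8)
The plan is to mimic the construction from Corollary \ref{compactQpoint}, which already produces, under $\kappa$-compactness, a $q$-point ultrafilter $U\supseteq\Cub_\kappa$ with $\neg\Gal(U,\kappa,2^\kappa)$, but to replace the independence family with a ``spread out'' version that simultaneously kills pseudointersections of every $\kappa$-sized subfamily. Recall from the $\om$ case (Milovich's observation on Isbell's construction) that the trick is to index the independent sets not by a single parameter but by pairs: one works with a $\kappa$-independent family $\l A_{\alpha,\xi}\mid \alpha<2^\kappa,\ \xi<\kappa\r$, and the non-$\Gal^*$ witness is obtained by diagonalizing the first coordinate against the second. So first I would fix, using $\Diamond(\kappa)$ (which holds since $\kappa$ is measurable, hence subtle), a \emph{normal} $\kappa$-independent family of length $2^\kappa\cdot\kappa=2^\kappa$ via Proposition \ref{normalfamily}, and re-index it as $\l A_{\alpha,\xi}\mid \alpha<2^\kappa,\ \xi<\kappa\r$. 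For each $\alpha<2^\kappa$ set $C_\alpha=\kappa\setminus\bigcup_{\xi<\kappa}A_{\alpha,\xi}$ — wait, that union will typically be everything, so instead the right move is: for each $\alpha$ let $C_\alpha$ be a suitable Boolean combination, e.g.\ $C_\alpha=\bigtriangleup_{\xi<\kappa}A_{\alpha,\xi}$ (diagonal intersection along the second coordinate), which by normality is stationary and which, crucially, has the feature that any $\kappa$ of the $C_\alpha$'s fail to have a pseudointersection in the generated filter.

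Second, I would build the filter $\mathcal{F}$ generated by $\Cub_\kappa$ together with the $G$-sets of the form $\bigl(\bigcup_{i\in I}C_i\bigr)\setminus\bigl(\bigcup_{j\in J}C_j\bigr)$ with $|I|=\kappa$, $|J|<\kappa$ (exactly as in the proposition preceding Corollary \ref{compactQpoint}), and check via normal $\kappa$-independence that it is a genuine $\kappa$-complete filter extending the club filter — this is verbatim \cite[Corollary 4.4]{SatInCan}. Then apply the $\kappa$-filter extension property ($\kappa$-compactness) to extend $\mathcal{F}$ to a $\kappa$-complete ultrafilter $U$. Since $\Cub_\kappa\subseteq U$, Proposition (part 4 of the $p$/$q$-point proposition) gives that $U$ is (RK-equivalent to, hence) a $q$-point.

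Third — the heart of the matter — I would verify $\neg\Gal^*(U,\kappa,2^\kappa)$ with the sequence $\l C_\alpha\mid\alpha<2^\kappa\r$: given any $I\in[2^\kappa]^\kappa$, I must show $\l C_\alpha\mid\alpha\in I\r$ has no pseudointersection in $U$. Suppose $X\in U$ with $X\subseteq^* C_\alpha$ for all $\alpha\in I$. Since $X\in U$, there are $I'\in[2^\kappa]^\kappa$, $J'\in[2^\kappa]^{<\kappa}$ and a club $E$ with $E\cap\bigl(\bigcup_{i\in I'}C_i\bigr)\setminus\bigl(\bigcup_{j\in J'}C_j\bigr)\subseteq X$ (plus boundedly many points). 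The point is that, by enlarging $I$ if necessary and using that $|J'|<\kappa$ while $|I|=\kappa$, the normal $\kappa$-independence lets me find a point (in fact stationarily many points) $\nu$ lying in $E$, in $C_i$ for $i\in I'$, outside $C_j$ for $j\in J'$, but outside $C_{\alpha_0}$ for some chosen $\alpha_0\in I$ — because $C_{\alpha_0}$ being (essentially) a diagonal intersection along the $\xi$-coordinate, its complement meets any such Boolean combination once we pass below level $\nu$ large enough; concretely one picks the witness $\nu$ so that $\nu\in A_{i,\xi}$ for the relevant $i,\xi$ but $\nu\notin A_{\alpha_0,\xi(\nu)}$, contradicting $X\subseteq^* C_{\alpha_0}$. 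I expect \textbf{this last step to be the main obstacle}: getting the Boolean-combination bookkeeping to interact correctly with the diagonal-intersection definition of $C_\alpha$ so that a single $\kappa$-sized family of them provably has no pseudointersection. If the naive $C_\alpha=\bigtriangleup_\xi A_{\alpha,\xi}$ does not quite work, the fallback is to instead take $C_\alpha$ to encode a ``tower-like'' obstruction directly — i.e.\ arrange within the independence family a $\subseteq^*$-descending $\kappa$-chain whose intersection is forced out of $U$, in the spirit of the $2^\kappa$-OK argument in the proposition before Theorem \ref{thm.3.16} — and then $\neg\Gal^*$ follows because a $\kappa$-complete non-$p$-point cannot repair a descending $\kappa$-chain with no pseudointersection. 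Either way, the $q$-point conclusion and the $\kappa$-completeness come for free from the extension property and $\Cub_\kappa\subseteq U$, so the only real work is the combinatorial design of the $C_\alpha$.
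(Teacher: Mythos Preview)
Your overall strategy matches the paper's one-line hint (Milovich's modification plus the normal-independent-family machinery of Section~2), and your fallback idea is in fact the correct one. But your primary construction has a concrete error and a gap.

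\textbf{The concrete error.} With the filter $\mathcal{F}$ you describe --- generated by $\Cub_\kappa$ together with sets of the form $(\bigcup_{i\in I}C_i)\setminus(\bigcup_{j\in J}C_j)$ for $|I|=\kappa$, $|J|<\kappa$ --- each $\kappa\setminus C_j$ lies in $\mathcal{F}$ (take $J=\{j\}$ and any admissible $I$ avoiding $j$). Hence $C_\alpha\notin U$ for every $\alpha$, so the sequence $\langle C_\alpha\mid\alpha<2^\kappa\rangle$ is not even a sequence of members of $U$ and cannot witness $\neg\Gal^*(U,\kappa,2^\kappa)$. If you switch to the complements $\kappa\setminus C_\alpha$ as your witness, you then need the $C_\alpha$'s themselves to behave like a (normal) $\kappa$-independent family in order for $\mathcal{F}$ to be a proper filter extending $\Cub_\kappa$; but diagonal intersections of independent sets are not shown to be independent, and there is no reason to expect $(\bigcup_{i\in I}C_i)\setminus(\bigcup_{j\in J}C_j)$ to be stationary. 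So the filter step is unverified as well.

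\textbf{What actually works.} Your fallback is the right move and is essentially Milovich's ``small modification'': reserve a $\kappa$-sized block $\{B_\xi:\xi<\kappa\}$ of the normal $\kappa$-independent family, and add to the filter (in addition to $\Cub_\kappa$ and the usual $G$-sets coming from the remaining $2^\kappa$-many $A_\alpha$'s) the decreasing chain $\bigcap_{\eta<\xi}B_\eta$ together with a set forcing $\bigcap_{\xi<\kappa}B_\xi$ out of $U$. Normal $\kappa$-independence makes this a $\kappa$-complete filter extending $\Cub_\kappa$; $\kappa$-compactness extends it to a $\kappa$-complete ultrafilter $U$; containment of $\Cub_\kappa$ gives the $q$-point; and the chain $\langle\bigcap_{\eta<\xi}B_\eta\mid\xi<\kappa\rangle$ witnesses that $U$ is not a $p$-point. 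The $2^\kappa$-many $\kappa\setminus A_\alpha$'s then serve (via the $2^\kappa$-OK--style argument you cite from the proposition preceding Theorem~\ref{thm.3.16}) as the witness to $\neg\Gal^*(U,\kappa,2^\kappa)$: any $\kappa$-sized subfamily, once intersected down along the $B_\xi$-chain, is forced below a set not in $U$, so no pseudointersection in $U$ is possible. This is the route the paper intends; your diagonal-intersection sets $C_\alpha$ are not needed.
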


\begin{question}
    It is consistent from just a measurable cardinal that there is a $\kappa$-complete ultrafilter such that $\neg \Gal^*(U,\kappa,2^\kappa)$?
\end{question}
\begin{question}
    Is it consistent that $U$ is a $\kappa$-complete ultrafilter over $\kappa$ such that $(U,\supseteq^*)$ is not Tukey-top but $(U,\supseteq)$ is? Namely, is it consistent that  $\neg\Gal(U,\kappa,2^\kappa)$ but $\Gal^*(U,\kappa,2^\kappa)$?
\end{question}

\section{Continuous cofinal maps with special properties}

Many of the orders on ultrafilters over measurable cardinals such as the Rudin-Keisler order, the Mitchell order, the Ketonen order, and the Rudin-Frolik order are expressible in terms of the ultrapower $j_U:V\rightarrow M_U$ of a given $\kappa$-complete ultrafilter $U$. For example,  $U\leq_{RK} W$ if and only if there is an elementary embedding 
$k:M_U\rightarrow M_W$ such that $k\circ j_U=j_W$. One important property which allows  this characterization using ultrapowers is the fact that a Rudin-Keisler projection is  just a function $f:\kappa\rightarrow \kappa$ which is seen by the ultrapower of every $\kappa$-complete ultrafilter. A major difficulty when analyzing the Tukey order on $\kappa$-complete ultrafilters is that the projection maps (namely cofinal maps) are in general  functions $f:2^\kappa\rightarrow 2^\kappa$, which are extremely large objects
when compared to the closure degree of the ultrapower. However, in some situations, the information needed to compute a cofinal map can be coded into a function from a $\kappa$-sized set to a $\kappa$-sized set. This will allow us to work with cofinal maps in the ultrapower and derive some structural corollaries.


In the following, assume $\kappa$ is measurable and consider  $2^{\kappa}$ as  a generalized Baire space with the topology generated by basic open sets of the form
$\{x\in 2^{\kappa}: s\sqsubset x\}$, where $s\in 2^{<\kappa}$.
We use $U\re X$ to denote the set  $\{Y\in U : Y\sse X\}$.

The next theorem extends Theorem 20 of Dobrinen and Todorcevic in  \cite{Dobrinen/Todorcevic11}  for p-points over  $\om$ to p-points  over  measurable cardinals.
We show that any cofinal map from a p-point $U$ to a uniform ultrafilter on $\kappa$ is actually continuous upon restricting below some  suitable member of $U$.
Our proof follows  the  general form of that in \cite{Dobrinen/Todorcevic11},
but with some streamlining  modifications utilizing the measurability of $\kappa$.
For normal ultrafilters (Corollary \ref{cor.4.2}), we obtain  additional useful properties.

\begin{theorem}\label{Thm:DT20} 
Let $\kappa$ be measurable, and 
suppose $ U $ is a p-point on $\kappa$  and  $ V $ is a uniform ultrafilter on $\kappa$ such that $ U \ge_T V$.
Then 
for each monotone cofinal map $f: U \ra V  $,
there is an $X\in U $ such that the restriction of  $f$ to  $ U \re X$ is  continuous and has image which is cofinal in $ V $.

More precisely, let $\pi:\kappa\rightarrow\kappa$ be the minimal non-constant function mod $U$ (i.e.\ $[\pi]_U=\kappa$), and $\rho$ be the function $\rho_\alpha=\sup(\pi^{-1}[\alpha+1])+1$. Then for any strictly increasing sequence $\langle  \delta_{\al}\mid \al<\kappa\rangle$,
we can find a set $X=X(\l \delta_\alpha\mid \alpha<\kappa\r)$ such that for every $\alpha<\kappa$ and $A\in U\restriction X$, $f(A)\cap\delta_\alpha$ depends only on $A\cap \rho_\alpha$. Moreover,  there is a monotone function $\hat{f}:[\kappa]^{<\kappa}\rightarrow[\kappa]^{<\kappa}$ such that 
for each $A\in U\re X$, there are club many $\al<\kappa$ 
such that
\[
\hat{f}(A\cap \rho_\al)=
f(A)\cap \delta_{\al}
\]
and in particular
\[f(A)=\bigcup_{\al<\kappa}\hat{f}(A\cap \rho_\al )\]
Then defining   $g:P(\kappa)\rightarrow P(\kappa)$
by 
\[
g(A)=\bigcup_{\al<\kappa}\hat{f}(A\cap X\cap\rho_\al)
\]
we see that
\begin{enumerate}
    \item 
$g$ is a continuous monotone map, \item
$g\re U: U\rightarrow V$ is a cofinal map,
\item 
$g\re (U\re X)=f\re(U\re X)$.
\end{enumerate}
\end{theorem}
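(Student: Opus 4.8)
The plan is to follow the structure of Dobrinen--Todorcevic \cite{Dobrinen/Todorcevic11} Theorem 20, replacing the use of a rapid filtration on $\om$ by the measurability of $\kappa$, which lets us locate the ``minimal non-constant function'' $\pi$ as a genuine object rather than a derived combinatorial gadget. First I would fix a monotone cofinal $f\colon U\to V$ (which exists by Fact \ref{fact.6DT}) and set $\pi$ to be the function with $[\pi]_U=\kappa$, so $\pi$ is essentially the projection of $U$ onto a normal ultrafilter $W\equiv_{RK}U$. The key combinatorial move is that, because $U$ is a $p$-point, for each $\al<\kappa$ the sets $\pi^{-1}[\al+1]$ are $U$-small, so $\rho_\al=\sup(\pi^{-1}[\al+1])+1<\kappa$ and $\l\rho_\al\mid\al<\kappa\r$ is a well-defined strictly increasing continuous-at-limits sequence of ordinals below $\kappa$. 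The point of $\rho$ is that $A\cap\rho_\al$ is exactly the information $U$-generically available ``below level $\al$'' when $U$ is filtered along $\pi$.

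The heart of the argument is a fusion/tree construction producing, given $\l\delta_\al\mid\al<\kappa\r$, a set $X\in U$ so that for all $A\in U\re X$, the value $f(A)\cap\delta_\al$ is determined by $A\cap\rho_\al$. I would build $X$ as the diagonal intersection along $\pi$ (or along $W$) of sets $X_\al$ chosen recursively: at stage $\al$, since $f$ is monotone and cofinal and $|2^{\rho_\al}|<\kappa$ by inaccessibility, there are fewer than $\kappa$ possible ``traces'' $t=A\cap\rho_\al$; for each such trace that can be extended to a member of $U\re X_{<\al}$ we want $f$ to agree on $\delta_\al$ for all such extensions. Using $\kappa$-completeness of $U$ one shrinks finitely-boundedly-often and then diagonalizes; the $p$-point property (pseudo-intersections of $\kappa$-sequences) is what lets the stage-$\al$ choices cohere into a single $X\in U$. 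This is where I expect the main obstacle to lie: ensuring that the per-trace stabilization at stage $\al$ survives into all later stages simultaneously, i.e.\ that the recursion does not collapse $U$-positivity. The measurability is used precisely here --- diagonalizing along a normal ultrafilter below $U$ gives a single set in $U$ rather than merely a $\subseteq^*$-pseudo-intersection, which is cleaner than the $\om$-case and is the ``streamlining'' alluded to in the statement.

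Once $X$ is fixed, I would define $\hat f\colon[\kappa]^{<\kappa}\to[\kappa]^{<\kappa}$ by $\hat f(t)=f(A)\cap\delta_\al$ for any (all) $A\in U\re X$ with $A\cap\rho_\al=t$, where $\al$ is read off from the ``length'' of $t$; monotonicity of $\hat f$ is inherited from monotonicity of $f$. The equality $\hat f(A\cap\rho_\al)=f(A)\cap\delta_\al$ for club-many $\al$ follows because for $A\in U\re X$, the set of $\al$ at which $A\cap\rho_\al$ is a ``full'' trace (i.e.\ $\rho_\al=\sup(A\cap\rho_\al)$ in the appropriate sense, using normality/continuity of $\rho$) is club; and since $\l\delta_\al\r$ is increasing with union $\kappa$ and $V$ is uniform, $f(A)=\bigcup_\al\hat f(A\cap\rho_\al)$. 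Finally I would verify the three bulleted claims about $g(A)=\bigcup_\al\hat f(A\cap X\cap\rho_\al)$: continuity of $g$ is immediate since any bounded-below-$\delta_\al$ piece of $g(A)$ depends only on the bounded piece $A\cap X\cap\rho_\al$, so $g$ factors through the finite(sub-$\kappa$)-support topology; monotonicity is clear; $g\re(U\re X)=f\re(U\re X)$ holds because for $A\subseteq X$ in $U$ we have $A\cap X=A$ and the club-many agreement gives $g(A)=f(A)$; and $g\re U$ is cofinal because for arbitrary $A\in U$ we have $A\cap X\in U\re X$, so $g(A)\supseteq g(A\cap X)=f(A\cap X)$, and $\{f(A\cap X)\mid A\in U\}$ is cofinal in $V$ since $f\re(U\re X)$ already has cofinal image --- the latter being part of what the fusion construction must also guarantee, by never shrinking $X$ below a set on which $f$ remains cofinal.
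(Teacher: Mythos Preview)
Your proposal is correct and follows essentially the same approach as the paper: build a $\subseteq$-decreasing sequence $\langle X_\alpha\mid\alpha<\kappa\rangle$ in $U$ stabilizing $f(\cdot)\cap\delta_\alpha$ on each trace $s\subseteq\rho_\alpha$, then use the $p$-point property (equivalently, the modified diagonal intersection along $\pi$) to get a single $X\in U$ with $X\setminus\rho_\alpha\subseteq X_\alpha$, and read off $\hat f$ and $g$ exactly as you describe. One small sharpening worth noting: your phrase ``we want $f$ to agree on $\delta_\alpha$ for all such extensions'' slightly obscures the actual mechanism. The paper does not force agreement directly; rather, for each pair $(s,\delta)$ with $\delta\le\delta_\alpha$ it picks a witness $Y_{s,\delta}\in U$ with $Y_{s,\delta}\cap\rho_\alpha=s$ and $\delta\notin f(Y_{s,\delta})$ whenever such a witness exists, and sets $X_\alpha$ to be the intersection of all these witnesses (and earlier $X_\beta$). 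This gives only the \emph{one-sided} property that $f(s\cup X_\alpha)\cap\delta_\alpha$ is minimal among all $f(Y)\cap\delta_\alpha$ with $Y\cap\rho_\alpha=s$; the other inclusion then comes for free from monotonicity once one restricts to $W\in U\re X$, since $W\subseteq s\cup X_\alpha$. Your worry about ``the recursion collapsing $U$-positivity'' is not an issue --- at each stage one intersects fewer than $\kappa$ sets (there are $|2^{\rho_\alpha}|\cdot|\delta_\alpha|<\kappa$ pairs), so $\kappa$-completeness suffices --- and your final remark about needing to preserve cofinality of $f\re(U\re X)$ is automatic, since $U\re X$ is cofinal in $U$ for any $X\in U$.
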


\begin{proof}
Let $U$ be a p-point on $\kappa$.
Suppose $V$ is a uniform ultrafilter such that $U \ge_T V  $, and 
 let 
 $f: U \ra V  $ be a
monotone cofinal map and  $\langle  \delta_{\al}\mid \al<\kappa\rangle$ be a strictly increasing sequence.
Construct a $\subseteq$-decreasing  sequence $\langle X_{\al}:\al<\kappa\rangle$ (i.e. $\alpha\leq\beta\Rightarrow X_\beta\subseteq X_\al$) of members of $ U $ such that for each $\al<\kappa$, $X_{\al}\cap \rho_\alpha=\emptyset$ and the following holds:
\begin{enumerate}
    \item[$(*)$]
    For each pair $s\sse \rho_\al$ and  $\delta\le\delta_\al$, if there is a $Y\in U $ such that $s=Y\cap\rho_\al$ and 
    $\delta\not\in f(Y)$, 
    then $\delta\not\in f(s\cup X_\al)$ (i.e. $f(s\cup X_\alpha)\cap \delta_\alpha\subseteq f(Y)\cap\delta_\alpha$).
\end{enumerate}

Take $X_0\in U $ such that 
for each $\delta\le \delta_0$, 
 $\delta\not\in f(X_0)$.
Such an $X_0$ exists
 since $ V  $ is uniform.
Suppose $1\le \al<\kappa$ and $\langle X_{\beta}\mid\beta<\al\rangle$ have been chosen satisfying $(*)$.
For each $s\sse\rho_\al$ and $\delta\le\delta_\al$, 
if there  is a $Y\in U$ such that 
$s=Y\cap \rho_\al$ and 
$\delta\not\in f(Y)$, then take $Y_{s,\delta}$ so that 
$Y_{s,\delta}\cap \rho_\al=s$ and $\delta\not \in f(Y_{s,\delta})$; otherwise, let $Y_{s,\delta}=[\rho_\al,\kappa)$.
Define 
\begin{equation}
X_\al=\bigcap\{Y_{s,\delta}:s\sse\rho_\al,\ \delta\le\delta_\al\} \cap \bigcap\{X_{\beta}:\beta<\al\}.
\end{equation}
Then $X_{\al}\cap\rho_\al=\emptyset$ and our construction  ensures that    $X_{\al}$ satisfies property $(*)$:
Given $s\sse\rho_\al$ and $\delta\le \delta_\al$,  if there is a $Y\in U $ such that $Y\cap \rho_\al=s$ and $\delta\not\in f(Y)$ then $\delta\not \in f(Y_{s,\delta})$
so by monotonicity of $f$, $\delta\not\in f(s\cup X_{\al})$.

 By definition of the $\rho_\alpha$'s  
 and the fact that $U$ is a p-point, there is an $X\in U$ such that
$X\setminus \rho_\alpha\subseteq X_\alpha$, for each $\al<\kappa$.
 We claim that $f$ is continuous when restricted to $ U \re X$.

Let $W\in U\re X$ and $\alpha<\kappa$. 
 It follows that 
 \begin{equation}
   W\setminus \rho_{\alpha}
   \sse X\setminus \rho_{\alpha}
   \sse X_{\alpha}
 \end{equation}
Let $s=W\cap\rho_\al$,  and  let  $\delta\le \delta_\al$ be given.
If $\delta\not\in f(W)$, then
by $(*)$ it follows that  $\delta\not \in f(s\cup X_{\al})$.
Since $X\setminus \rho_\al\sse X_{\al}$, 
by monotonicity also $\delta\not\in f(s\cup (X\setminus\rho_\al))$.
On the other hand, if  $\delta\in f(W)$, then  by monotonicity 
$\delta\in f(s\cup X\setminus \rho_\al)$.
Thus, 
\begin{equation}
f(W)\cap \delta_\al= f((W\cap\rho_\al)\cup (X\setminus \rho_\al))\cap \delta_\al
\end{equation}

Define the function $\hat{f}:[\kappa]^{<\kappa}\rightarrow[\kappa]^{<\kappa}$  as follows:  For $s\in[\kappa]^{<\kappa}$, letting 
$\alpha$ be the least such that 
$s\subseteq\rho_{\alpha}$, define
\begin{equation}
    \hat{f}(s)= f(s\cup (X\setminus \rho_\al))\cap \delta_\al
\end{equation}
This $\hat{f}$ is monotone and  uniformly continuous 
and for each $A\in U\re X$,
\begin{equation}
f(A)=\bigcup_{\al<\kappa}\hat{f}(A\cap\rho_\al)
\end{equation}
since there are club many $\al$, namely those in $\lim(A)\cap C_{\alpha\mapsto\rho_\alpha}$,
for which $f(A)\cap\delta_{\al}=\hat{f}(A\cap\rho_{\al})$.

Defining $g$  as in the statement of the theorem, it is routine to check that $g$ is as claimed.  
\end{proof}
\begin{corollary}\label{cor.4.2}
    If $U$ is normal, then  we have that $\rho_\alpha=\alpha+1$ and thus $f(W)\cap\delta_\alpha=f((W\cap\alpha+1)\cup (X\setminus \alpha+1))\cap\delta_\alpha$, for each $\al<\kappa$.
\end{corollary}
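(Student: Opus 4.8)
The plan is to unpack the definition of the functions $\pi$ and $\rho$ from the statement of Theorem~\ref{Thm:DT20} under the additional hypothesis that $U$ is normal, and observe that everything collapses. Recall that $\pi$ is the minimal non-constant function mod $U$, i.e.\ $[\pi]_U=\kappa$. When $U$ is normal, normality is precisely the assertion that $[\id]_U=\kappa$, so $\pi=\id$ mod $U$; more to the point, normality implies that the identity function is already the $\le_U$-least function representing $\kappa$, so we may (and do) simply take $\pi=\id$. Then $\pi^{-1}[\alpha+1]=\alpha+1$, so $\rho_\alpha=\sup(\pi^{-1}[\alpha+1])+1=\sup(\alpha+1)+1=\alpha+1$ for every $\alpha<\kappa$ (using that $\sup(\alpha+1)=\alpha$).

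Next I would substitute $\rho_\alpha=\alpha+1$ into the conclusion of Theorem~\ref{Thm:DT20}. The theorem produces, for any strictly increasing sequence $\langle \delta_\alpha\mid\alpha<\kappa\rangle$, a set $X\in U$ such that for each $W\in U\re X$ and each $\alpha<\kappa$,
\[
f(W)\cap\delta_\alpha=f\bigl((W\cap\rho_\alpha)\cup(X\setminus\rho_\alpha)\bigr)\cap\delta_\alpha,
\]
and plugging in $\rho_\alpha=\alpha+1$ gives exactly
\[
f(W)\cap\delta_\alpha=f\bigl((W\cap(\alpha+1))\cup(X\setminus(\alpha+1))\bigr)\cap\delta_\alpha,
\]
which is the displayed equation in the corollary. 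There is essentially nothing else to prove; the content is entirely in recognizing that the somewhat technical definition of $\rho$ becomes trivial for normal ultrafilters.

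I do not anticipate a genuine obstacle. The one point requiring a line of justification is why $\pi$ may be taken to be the identity when $U$ is normal: this is the standard fact that for a normal ultrafilter the identity represents $\kappa=\mathrm{crit}(j_U)$ in the ultrapower and is $\le_U$-minimal among functions doing so (equivalently, any $g$ with $[g]_U=\kappa$ satisfies $g(\nu)=\nu$ on a set in $U$, by normality applied to the regressive-on-a-large-set function; if $g<\id$ on a $U$-large set then $[g]_U<\kappa$). Given that, the rest is a one-step substitution into Theorem~\ref{Thm:DT20}, and I would present it as such, remarking additionally that in this case the sets $X\setminus(\alpha+1)$ are simply tails and the "localization" of $f$ is at initial segments of $W$ of length $\alpha+1$, which is the cleanest possible form of continuity.
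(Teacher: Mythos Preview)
Your proposal is correct and is precisely the argument the paper intends: the corollary is stated without a separate proof, as it is immediate from Theorem~\ref{Thm:DT20} once one notes that for normal $U$ one may take $\pi=\id$, whence $\rho_\alpha=\alpha+1$, and then substitute into the displayed equation from the proof of the theorem. Your extra line justifying why $\pi$ may be taken to be the identity is appropriate, since $\rho_\alpha$ depends on the specific representative $\pi$ and not only on its $U$-equivalence class.
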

\begin{example}
Suppose that $U$ is a normal ultrafilter and $h_*(U)=V$ for some $h:\kappa\rightarrow\kappa$ (so in particular, $h$ can be taken to be one-to-one and $h(\alpha)>\alpha$).
Let 
 $f:U\rightarrow V$ be  defined by $f(X)=h''X$, 
 noting that $f$ is a
monotone cofinal map.
 Pick any sequence $\l \delta_\al>h(\alpha)\mid\alpha<\kappa\r$. 
 In the proof of Theorem \ref{Thm:DT20},
 for any $s\subseteq \alpha+1$ and $\delta<\delta_\alpha$, if $\delta\in h''s$ then  $Y_{s,\delta}=[\delta_\alpha,\kappa)$; otherwise, if $\delta\notin h''s$ then $Y_{s,\delta}=s\cup[\delta_\alpha,\kappa)$.
 Now $X_\alpha=[\delta_\alpha,\kappa)$ and   $$X=\Delta_{\alpha<\kappa}X_\alpha=\{\beta<\kappa\mid\forall \alpha<\beta, 
\ \delta_\alpha\leq\beta\}$$
Taking any $W\subseteq X$ and letting $s=W\cap\alpha+1$, we have that $W\cap\alpha+1$ decides that
$f(W)\cap\delta_\alpha= h''(W\cap\alpha+1)$.
Since any point $h(\beta)\in h''(W\setminus\alpha+1)$ will satisfy that $h(\alpha)<\delta_\alpha\leq\beta$, it follows 
 that $h'' W\cap (\alpha,\delta_\alpha)=\emptyset$. 
\end{example}

We point out that $\al+1$ is the best possible in the previous corollary.
For if the function $f(W)\cap\delta_\alpha$ depends only on $W\cap \alpha$, then  $W\cap\alpha$ decides $f(W)\cap \alpha+1$.  But this cannot be true in general since for example if $f$ is the identity function as above, then this would mean we can decide $W\cap\alpha+1$ using $W\cap\alpha$ below a certain set $X\in U$. But take any $\alpha\in X$, and take $W_1,W_2\subseteq X$ such that $X\cap\alpha=W_1\cap\alpha=W_2\cap\alpha$, $\alpha\in W_1$ and $\alpha\notin W_2$; then we get different decisions. 

If $U\neq V$, we can find $X\in U$ and $Y\subseteq X$ such that $\kappa\setminus X\in V$  and  $f(Y)\subseteq \kappa\setminus X\subseteq \kappa\setminus Y$. In this situation, we know that whenever $\alpha\notin W\subseteq Y$, then $W\cap\alpha=W\cap\alpha+1$ and we can decide $f(W)\cap \delta_\alpha$, and if $\alpha\in W$ then $\alpha\notin f(W)$.

 Raghavan showed in Corollary 11 of \cite{Raghavan/Todorcevic12} that there are no continuous cofinal maps from a Fubini product of two p-points $U,W$ on $\om$ to $U$, and the proof generalizes easily to ultrafilters on any countable base set.
On the other hand, the second author showed in  Theorem 4.4 of \cite{DobrinenFund20}
that Fubini products, even countable Fubini iterates, of p-points on $\om$ have cofinal maps which are canonized by finitary maps which behave similarly  to continuous maps, but the proof is rather long.
Theorem \ref{ProductCor} in the next section will show that, surprisingly, for $\kappa$-complete ultrafilters, Fubini products and cartesian products 
have the same Tukey type.

We now extend Theorem \ref{Thm:DT20}  to cartesian products of finitely many p-points.
This theorem, along with 
 Theorem \ref{ProductCor},
will aid  in understanding the cofinal types of 
Fubini products of finitely many $\kappa$-complete ultrafilters.

\begin{theorem}\label{Prodthm.20DT-kappa}
Suppose $U_1,\dots,U_n$ are $p$-points over $\kappa>\omega$, $\pi_1,\dots,\pi_n$ are functions such that $[\pi_i]_{U_i}=\kappa$, and  $V$ is a uniform ultrafilter on $\kappa$ such that $U_1\times\dots\times U_n\ge_TV$.
Then 
for each monotone cofinal map $f:U_1\times\dots \times U_n\ra V$ and every sequence $\l \delta_\alpha\mid \alpha<\kappa\rangle$,
there are sets $X_i\in U_i$ such that the restriction of  $f$ to  $U_1\times\dots\times U_n\re \l X_1,\dots,X_n\r$ is monotone, cofinal, and uniformly continuous, i.e.\ for each $\alpha<\kappa$, $f(\l B_1,\dots,B_n\r)\cap \delta_\al$ is determined by $\l B_1\cap\rho^1_\alpha,\dots,B_n\cap\rho^n_\alpha\r$, where for each $1\leq i\leq n$ and each $\alpha<\kappa$, $\rho^i_\alpha=\sup(\pi_i^{-1}[\alpha+1])+1$.
In particular,  there is a continuous monotone  cofinal map from $U_1\times\dots\times U_n$ to $V$.
\end{theorem}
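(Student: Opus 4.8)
The plan is to run the same single-variable argument as in Theorem \ref{Thm:DT20}, but coordinate-by-coordinate, building the sets $X_1,\dots,X_n$ simultaneously by a recursion of length $\kappa$. Fix a monotone cofinal map $f:U_1\times\dots\times U_n\to V$ and a strictly increasing sequence $\l\delta_\alpha\mid\alpha<\kappa\r$. The key point is that the ``decision'' property analogous to $(*)$ in Theorem \ref{Thm:DT20} now concerns $n$-tuples $\l s_1,\dots,s_n\r$ with $s_i\subseteq\rho^i_\alpha$: for each such tuple and each $\delta\le\delta_\alpha$, if there is a tuple $\l Y_1,\dots,Y_n\r\in U_1\times\dots\times U_n$ with $Y_i\cap\rho^i_\alpha=s_i$ and $\delta\notin f(\l Y_1,\dots,Y_n\r)$, pick witnesses $Y^i_{\vec s,\delta}$ with this property; otherwise set $Y^i_{\vec s,\delta}=[\rho^i_\alpha,\kappa)$. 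Then let $X^i_\alpha$ be the intersection over all relevant tuples $\vec s$ and all $\delta\le\delta_\alpha$ of the $Y^i_{\vec s,\delta}$, intersected with $\bigcap_{\beta<\alpha}X^i_\beta$ and with $[\rho^i_\alpha,\kappa)$. Since $\kappa$ is inaccessible (measurable) and each $\rho^i_\alpha<\kappa$, there are fewer than $\kappa$ many tuples $\vec s$, so $\kappa$-completeness of each $U_i$ keeps $X^i_\alpha\in U_i$. The counting here is the only place where one must be slightly careful, but it is routine: $|P(\rho^1_\alpha)\times\dots\times P(\rho^n_\alpha)|<\kappa$.

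Next I would use that each $U_i$ is a $p$-point to pass to a single $X_i\in U_i$ with $X_i\setminus\rho^i_\alpha\subseteq X^i_\alpha$ for all $\alpha<\kappa$ (using the characterization of $p$-point via pseudo-intersections, exactly as in Theorem \ref{Thm:DT20}). Now the decision property transfers: given $\l B_1,\dots,B_n\r$ with each $B_i\subseteq X_i$, writing $s_i=B_i\cap\rho^i_\alpha$, for any $\delta\le\delta_\alpha$ one has $\delta\in f(\l B_1,\dots,B_n\r)$ iff $\delta\in f(\l s_1\cup(X_1\setminus\rho^1_\alpha),\dots,s_n\cup(X_n\setminus\rho^n_\alpha)\r)$, by monotonicity of $f$ in each coordinate together with property $(*)$ applied coordinatewise. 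Hence $f(\l B_1,\dots,B_n\r)\cap\delta_\alpha$ is determined by $\l B_1\cap\rho^1_\alpha,\dots,B_n\cap\rho^n_\alpha\r$, which is exactly the asserted uniform continuity on $U_1\times\dots\times U_n\re\l X_1,\dots,X_n\r$. One then defines $\hat f$ on $n$-tuples from $[\kappa]^{<\kappa}$ by $\hat f(\l s_1,\dots,s_n\r)=f(\l s_1\cup(X_1\setminus\rho^1_\alpha),\dots\r)\cap\delta_\alpha$ where $\alpha$ is least with $s_i\subseteq\rho^i_\alpha$ for all $i$, and checks $f(\vec B)=\bigcup_{\alpha<\kappa}\hat f(\l B_1\cap\rho^1_\alpha,\dots,B_n\cap\rho^n_\alpha\r)$ for $\vec B\in U_1\times\dots\times U_n\re\l X_1,\dots,X_n\r$, the union being witnessed on the club of $\alpha$ lying in $\bigcap_i\lim(B_i)$ intersected with the club of closure points of $\alpha\mapsto\rho^i_\alpha$.

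Finally, for the ``in particular'' clause I would define the global map $g$ on $P(\kappa)^n$ by $g(\l A_1,\dots,A_n\r)=\bigcup_{\alpha<\kappa}\hat f(\l A_1\cap X_1\cap\rho^1_\alpha,\dots,A_n\cap X_n\cap\rho^n_\alpha\r)$; this is continuous and monotone by construction, agrees with $f$ on $U_1\times\dots\times U_n\re\l X_1,\dots,X_n\r$, and hence its restriction to $U_1\times\dots\times U_n$ has cofinal image in $V$ (since $\{X_1\}\times\dots\times\{X_n\}$ restricted sets are cofinal in the product and $f$ was cofinal), so it is a continuous monotone cofinal map from $U_1\times\dots\times U_n$ to $V$. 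The main obstacle I anticipate is purely bookkeeping: keeping the $n$ coordinates' functions $\rho^i_\alpha$ and sets $X^i_\alpha$ straight in the simultaneous recursion, and verifying the cardinality bound $|P(\rho^1_\alpha)\times\dots\times P(\rho^n_\alpha)|<\kappa$ so that $\kappa$-completeness applies — there is no genuinely new idea beyond Theorem \ref{Thm:DT20}, only the observation that monotonicity in each coordinate lets the coordinatewise decisions combine.
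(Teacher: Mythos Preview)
Your proposal is correct and follows essentially the same approach as the paper's proof: a simultaneous recursion building the sets $X^i_\alpha$ via the tuple-indexed witnesses $Y^i_{\vec s,\delta}$, the cardinality bound $|P(\rho^1_\alpha)\times\dots\times P(\rho^n_\alpha)|<\kappa$ to invoke $\kappa$-completeness, and then passing to $X_i$ with $X_i\setminus\rho^i_\alpha\subseteq X^i_\alpha$ using the $p$-point property (the paper phrases this last step via the modified diagonal intersection $\Delta^*$, but this is equivalent). Your explicit treatment of $\hat f$ and the global continuous map $g$ for the ``in particular'' clause is in fact more detailed than what the paper writes out.
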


\begin{proof}
As before, let us construct a $\supseteq$-decreasing sequence for each $1\leq i\leq n$, $\langle X_{\al,i}:\al<\kappa\rangle$  of members of $ U _i$ such that for all $\al<\kappa$, $X_{\al,i}\cap\rho^i_\al=\emptyset$ and the following holds:
\begin{enumerate}
    \item[$(*)$]
    For each pair $s_i\sse\rho^i_\al$ and  $\delta\le\delta_\al$, if there is a $\vec{Y}=\l Y_1,\dots, Y_n\r\in U_1\times\dots\times U_n$ such that $s_i=Y_i\cap\rho^i_\al$ and 
    $\delta\not\in f(\vec{Y})$, 
    then $\delta\not\in f(\l s_1\cup X_{\al,1},\dots,s_n\cup X_{\al,n}\r)$.
\end{enumerate}

Take $X_{0,i}\in U_i$ such that 
 $0\not\in f(\l X_{0,1},\dots,X_{0,n}\r)$.
 Since $V$ is uniform, and $f$ is cofinal, such an $X_{0,i}$ exists.
Suppose $1\le \al<\kappa$ and $\langle X_{\beta,i}:\beta<\al\rangle$ have been constructed. 
For each $s_i\sse\rho^i_\alpha$ and $\delta\le\delta_\al$, 
if there  are a sets $Y_i\in U _i$ such that $s_i=Y_i\cap \rho^i_\al$ and $\delta\not\in f(\l Y_1,\dots, Y_n\r)$, then let $Y_{s_1,..,s_n,i,\delta}$ be such a $Y_i$; otherwise, let  $Y_{s_1,..,s_n,i,\delta}=\kappa$.
Define 
\begin{equation}\label{eq}
X_{\al,i}=\bigcap\{Y_{s_1,\dots,s_n,i,\delta}:s_1\subseteq \rho^1_\alpha,\dots,s_n\sse\rho^n_\al,\ \delta\le\delta_\al\} \cap \bigcap\{X_{\beta,i}:\beta<\al\}.
\end{equation}
Note that $X_{\al,i}\cap\rho^i_\al=\emptyset$.
 Moreover,  given $s_i\sse\rho^i_\al$, 
 if there is a $Y_i\in U_i$ such that $Y_i\cap\rho^i_\al=s_i$ then $$f(\l s_1\cup X_{\al,1},\dots,s_n\cup X_{\al,n}\r)\cap \delta_\alpha\subseteq f(\l Y_1,\dots, Y_n\r)\cap \delta_\alpha,$$ by choice of $X_{\al,i}$ and monotonicity of $f$.
 
 Since each $U_i$ is a normal ultrafilter, we let $X_i^*:=\Delta^*_{i<\kappa}X_{\al,i}\in U_i$, then $X_i^*\setminus\rho^i_\alpha\subseteq X_{\alpha,i}$. 
 We claim that $f$ is continuous when restricted to $U_1\times\dots\times U_n\re \l X^*_1,\dots,X^*_n\r$.
 Toward this, let $W_i\in U_i$ be such that $W_i\sse X_i^*$, let  $\alpha<\kappa$ and let $s_i=W_i\cap \rho^i_\alpha$.
It follows that 
 \begin{equation}
   W_i\setminus \rho^i_\alpha
   \sse X^*_i\setminus \rho^i_\alpha
   \sse X_{\alpha,i}.
 \end{equation}
 Thus, 
     $$f(\l W_1,\dots, W_n\r)\cap\delta_\alpha\subseteq f(\l s_1\cup X^*_{1}\setminus\rho^1_\alpha,\dots,s_n\cup X^*_{n}\setminus\rho^n_\alpha)\cap \delta_\alpha\sse$$
     $$\sse f(s_1\cup X_{\alpha,1},\dots,s_n\cup X_{\alpha,n})\cap\delta_\alpha\sse f(W_1,\dots ,W_n)\cap\delta_\alpha$$
Then $f(\l W_1,\dots, W_n\r)\cap\delta_\alpha=f(\l s_1\cup X^*_{1}\setminus\rho^1_\alpha,\dots,s_n\cup X^*_{n}\setminus\rho^n_\alpha)\cap \delta_\alpha$ is continuous on $U_1\times\dots\times U_n\re \l X^*_1,\dots,X^*_n\r$.
\end{proof}

Let us derive similar corollaries to the ones from \cite[Corollaries 21\&23]{Dobrinen/Todorcevic11}.
First, since there are only $2^\kappa$-many continuous cofinal maps we get the following:
\begin{corollary}
    There are only $2^\kappa$ many ultrafilters which are Tukey below a $p$-point. Moreover, every $\leq_T$-chain of $p$-points has order-type at most $(2^\kappa)^+$.
\end{corollary}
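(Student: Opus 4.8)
The plan is to reduce the corollary to a cardinality count using the canonization of Theorem~\ref{Thm:DT20}. Fix a $p$-point $U$ on $\kappa$ and let $V$ be any uniform ultrafilter on $\kappa$ with $V\le_T U$. First I would invoke Fact~\ref{fact.6DT} to get a monotone cofinal map $f\colon U\to V$, and then apply Theorem~\ref{Thm:DT20} — with any fixed strictly increasing sequence $\langle\delta_\alpha\mid\alpha<\kappa\rangle$ — to produce a set $X\in U$, a monotone function $\hat f\colon[\kappa]^{<\kappa}\to[\kappa]^{<\kappa}$, and the continuous monotone map $g\colon P(\kappa)\to P(\kappa)$ given by $g(A)=\bigcup_{\alpha<\kappa}\hat f(A\cap X\cap\rho_\alpha)$ such that $g\restriction U\colon U\to V$ is a monotone cofinal map. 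Since $g\restriction U$ is monotone and cofinal, its image $g''U$ is a filter base for $V$, so $V=\{Y\subseteq\kappa\mid \exists A\in U\ (g(A)\subseteq Y)\}$; hence, with $U$ fixed, $V$ is completely determined by the map $g$.

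Next I would count the maps $g$ that can arise this way. Each such $g$ is coded by a triple $(\pi,X,\hat f)$, where $\pi\colon\kappa\to\kappa$ (from which $\langle\rho_\alpha\mid\alpha<\kappa\rangle$ is read off), $X\subseteq\kappa$, and $\hat f\colon[\kappa]^{<\kappa}\to[\kappa]^{<\kappa}$. Because $\kappa$ is measurable, hence inaccessible, $\kappa^{<\kappa}=\kappa$ and so $|[\kappa]^{<\kappa}|=\kappa$; therefore there are $\kappa^{\kappa}=2^\kappa$ choices for $\hat f$, and likewise $2^\kappa$ choices for $\pi$ and $2^\kappa$ for $X$, giving at most $2^\kappa$ possible maps $g$ in total. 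Consequently there are at most $2^\kappa$ ultrafilters $V\le_T U$, which is the first assertion.

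For the second assertion, suppose $\langle U_i\mid i<\theta\rangle$ is a $<_T$-increasing chain of $p$-points, so the $U_i$ are pairwise Tukey-inequivalent and in particular pairwise distinct. Fix $j<\theta$. Every $U_i$ with $i<j$ satisfies $U_i\le_T U_j$, so $\{U_i\mid i<j\}$ is a family of pairwise distinct ultrafilters Tukey below the $p$-point $U_j$; by the first part $|\{U_i\mid i<j\}|\le 2^\kappa$, whence $|j|\le 2^\kappa$. As this holds for every $j<\theta$, no ordinal of cardinality greater than $2^\kappa$ can occur as an index, i.e.\ $\theta\le(2^\kappa)^+$, so the chain has order type at most $(2^\kappa)^+$.

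The entire nontrivial content sits in Theorem~\ref{Thm:DT20}, which is already available; given it, the corollary is a routine bookkeeping argument. The one point to be careful about is the appeal to inaccessibility of $\kappa$ to obtain $\kappa^{<\kappa}=\kappa$: this is exactly what pins the number of continuous canonical maps (and hence the number of Tukey-predecessors of a $p$-point) at $2^\kappa$ rather than the a priori bound $2^{2^\kappa}$.
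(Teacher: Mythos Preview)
Your proposal is correct and follows essentially the same approach as the paper: the paper's proof is a one-liner (``since there are only $2^\kappa$-many continuous cofinal maps''), and you have faithfully unpacked exactly this, invoking Theorem~\ref{Thm:DT20} to code each cofinal map by a function $\hat f\colon[\kappa]^{<\kappa}\to[\kappa]^{<\kappa}$ and then counting. Your explicit treatment of the chain bound (which the paper leaves implicit) is the standard one.
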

Next, we apply the free set lemma of Hajnal \cite{juhasz1979cardinal}.
\begin{lemma}[The free set lemma of Hajnal] If $|X|=\theta$, $\lambda<\theta$, and $F:X\rightarrow P(X)$ satisfies $x\notin F(x)$ and $|F(x)| <\lambda$, for all $x\in X$, then there is a $Y \subseteq X$, $|Y|=\theta$ with
$x \notin F(y)$ and $y \notin F(x)$ for all $x, y \in Y$.
\end{lemma}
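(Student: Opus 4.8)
Since only the cardinality of $X$ matters, the plan is first to fix a bijection identifying $X$ with the cardinal $\theta$, so that $F\colon\theta\to[\theta]^{<\lambda}$ with $\xi\notin F(\xi)$ for all $\xi<\theta$, and then to treat the case $\theta>\aleph_0$ (the only one we shall need below), leaving $\theta=\aleph_0$ to a short direct argument. The naive approach of building $Y=\{y_\alpha\mid\alpha<\theta\}$ by recursion, choosing $y_\alpha\notin\bigcup_{\beta<\alpha}F(y_\beta)$, does not work: there is no way to guarantee that the already-chosen points $y_\beta$ avoid the new set $F(y_\alpha)$; in graph-theoretic terms, the digraph with an edge $\xi\to\eta$ iff $\eta\in F(\xi)$ has bounded out-degree but possibly unbounded in-degree.

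The right device is a filtration of $\theta$ by $F$-closed sets of smaller size. I would build a continuous $\subseteq$-increasing chain $\langle N_\alpha\mid\alpha<\theta\rangle$ of subsets of $\theta$ such that for all $\alpha<\theta$: $|N_\alpha|<\theta$; $N_\alpha$ is $F$-closed, i.e.\ $\xi\in N_\alpha\Rightarrow F(\xi)\subseteq N_\alpha$; $\alpha\in N_{\alpha+1}\setminus N_\alpha$ (so $N_\alpha\subsetneq N_{\alpha+1}$); and $N_\delta=\bigcup_{\alpha<\delta}N_\alpha$ at limit $\delta$. Such a chain exists because $\lambda<\theta$ and $\theta>\aleph_0$: closing a set of size $<\theta$ under $F$ by iterating $\omega$ times keeps it of size $<\theta$, and at a successor stage one adjoins $\alpha$ itself together with one fresh ordinal (available since $|N_\alpha|<\theta=|\theta|$) before closing off. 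An induction on $\alpha$, maintaining say $|N_\alpha|\le\max(|\alpha|,\lambda,\aleph_0)$, shows the size requirement survives all stages, and $\bigcup_{\alpha<\theta}N_\alpha=\theta$ since each $\alpha$ enters at stage $\alpha+1$.

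Then I would pick $y_\alpha\in N_{\alpha+1}\setminus N_\alpha$ for each $\alpha<\theta$ and set $Y=\{y_\alpha\mid\alpha<\theta\}$. The ``shells'' $N_{\alpha+1}\setminus N_\alpha$ are pairwise disjoint, so the $y_\alpha$ are distinct and $|Y|=\theta$. For distinct $\alpha<\beta$ one has $y_\alpha\in N_{\alpha+1}\subseteq N_\beta$, hence $F(y_\alpha)\subseteq N_\beta$ by $F$-closure, while $y_\beta\notin N_\beta$; thus $y_\beta\notin F(y_\alpha)$, and symmetrically $y_\alpha\notin F(y_\beta)$. Together with $x\notin F(x)$ this yields $x\notin F(y)$ and $y\notin F(x)$ for all $x,y\in Y$, as required. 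For $\theta=\aleph_0$ the filtration breaks ($F$-closures of nonempty sets are infinite), so I would instead maintain an infinite working set $A\subseteq\omega$ and at each step select $n\in A$ for which $\{m\in A\mid n\in F(m)\}$ is not cofinite in $A$ — such an $n$ exists, for otherwise $\lambda$-many members of $A$ would lie in a common $F(m)$, forcing $|F(m)|\ge\lambda$, contrary to $|F(m)|<\lambda$ — then pass to $\{m\in A\mid m\neq n,\ n\notin F(m),\ m\notin F(n)\}$, which is still infinite; the selected points form an infinite free set.

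The main obstacle is conceptual rather than technical: recognizing that the obvious recursion cannot be executed directly and replacing it with the $F$-closed filtration. Once that is in place, every remaining step is routine cardinal arithmetic and bookkeeping.
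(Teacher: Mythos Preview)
The paper does not prove this lemma; it is merely stated with a citation to \cite{juhasz1979cardinal}, so there is no proof in the paper to compare against.

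Your argument for $\theta>\aleph_0$ has a genuine gap at the word ``symmetrically.'' The filtration by $F$-closed sets yields only one direction: for $\alpha<\beta$ you correctly get $F(y_\alpha)\subseteq N_\beta$ and $y_\beta\notin N_\beta$, hence $y_\beta\notin F(y_\alpha)$. The reverse, $y_\alpha\notin F(y_\beta)$, does \emph{not} follow: all you know is $F(y_\beta)\subseteq N_{\beta+1}$, and $y_\alpha$ lies in $N_{\beta+1}$, so nothing prevents $y_\alpha\in F(y_\beta)$. Concretely, take $\theta=\omega_1$, $\lambda=\omega$, $F(0)=\{1\}$ and $F(\xi)=\{0\}$ for all $\xi>0$; your recipe may well put $y_0=0$, and then $y_0\in F(y_\alpha)$ for every $\alpha\ge 1$. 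In graph-theoretic terms your construction only forces all $F$-edges among the $y_\alpha$ to point backward in the enumeration; the backward edges still have to be killed.

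Your filtration does produce a ``half-free'' set $Y_0$ of size $\theta$, and one can then thin $Y_0$ to a genuinely free set of the same cardinality, but this thinning is a nontrivial further step (Hajnal's original proof proceeds by induction on $\theta$, with separate arguments for the regular and singular cases), not a one-word appeal to symmetry. Your treatment of $\theta=\aleph_0$ is correct.
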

\begin{corollary}\label{ManyP-pointsCor}
If $\chi$ is a set of 
 at least $(2^\kappa)^+$-many $p$-point ultrafilters on $\kappa$, then there is $Z\subseteq \chi$
 of cardinality $|\chi|$
such that every distinct $U,V\in Z$  are incomparable in the Tukey order.    
\end{corollary}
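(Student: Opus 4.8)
The plan is to realize the statement as a direct application of Hajnal's free set lemma, with the Tukey-predecessor relation playing the role of the set-mapping. Put $\theta=|\chi|$ (so $\theta\ge(2^\kappa)^+$), and for each $U\in\chi$ set
\[
F(U)=\{\,V\in\chi : V\neq U\ \text{and}\ V\le_T U\,\}.
\]
The one nontrivial input is the bound $|F(U)|\le 2^\kappa$, valid for every $U\in\chi$; this is precisely the preceding corollary. Let me recall why it holds: by Fact \ref{fact.6DT} any $V\le_T U$ is witnessed by a monotone cofinal map $U\to V$; by Theorem \ref{Thm:DT20} such a map agrees, below some $X\in U$, with a continuous monotone $g:P(\kappa)\to P(\kappa)$; and then $g\restriction U$ remains cofinal into $V$, so $g''U$ is a filter base for $V$ and hence $V=\{A\subseteq\kappa:\exists C\in U\ g(C)\subseteq A\}$ is determined by $g$ together with $U$. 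Since (using $\kappa^{<\kappa}=\kappa$ for measurable $\kappa$) there are only $2^\kappa$ continuous monotone maps on $P(\kappa)$, at most $2^\kappa$ ultrafilters lie Tukey-below the fixed $p$-point $U$. Also $U\notin F(U)$ trivially.

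Next I would feed this into the free set lemma with $X=\chi$, with $\theta=|\chi|$, and with $\lambda=(2^\kappa)^+$, noting $|F(U)|<\lambda\le\theta$. This produces $Z\subseteq\chi$ with $|Z|=\theta=|\chi|$ such that $U\notin F(V)$ and $V\notin F(U)$ for all $U,V\in Z$. For distinct $U,V\in Z$ this says, unravelling the definition of $F$ and using $U\neq V$, that $V\not\le_T U$ and $U\not\le_T V$; that is, $U$ and $V$ are Tukey-incomparable. So $Z$ is as required.

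I do not expect a genuine obstacle in assembling this argument: the substantive work — the $2^\kappa$ bound on the Tukey-predecessors of a $p$-point — is already carried by Theorem \ref{Thm:DT20} and the preceding corollary, and what remains is the combinatorial packaging. The two points worth a sentence are (i) that the \emph{one-sided} bookkeeping in the definition of $F$ (we record only Tukey-predecessors of $U$, not its Tukey-successors, of which a $p$-point may have many) nevertheless yields \emph{two-sided} incomparability, precisely because the free set $Z$ is free from both sides simultaneously; and (ii) the mild cardinal arithmetic, namely that one needs a $\lambda$ with $2^\kappa<\lambda\le|\chi|$, which is exactly what the hypothesis $|\chi|\ge(2^\kappa)^+$ supplies.
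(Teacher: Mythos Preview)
Your argument is exactly the paper's intended one: the corollary is an immediate application of Hajnal's free set lemma to the set-mapping $F(U)=\{V\in\chi:V\neq U,\ V\le_T U\}$, using the preceding corollary's bound $|F(U)|\le 2^\kappa$. (The one wrinkle---that the lemma as quoted requires $\lambda<\theta$ strictly, whereas you only verify $\lambda=(2^\kappa)^+\le|\chi|$---is already present in the paper's own formulation of the corollary and is harmless in every application made later, where $|\chi|>(2^\kappa)^+$.)
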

\begin{remark}
    In the Kunen-Paris model there are $2^{(\kappa^+)}$-many  normal ultrafilters, $2^\kappa=\kappa^+$ and $2^{(\kappa^+)}$ can be made arbitrarily large. In particular, there are $2^{(2^\kappa)}$-many incomparable normal ultrafilters in the Tukey order.  
\end{remark}

\section{On the Tukey class of finite powers of ultrafilters}
\rm

In this section, we analyze the Tukey
structure
of (cartesian) products  and  Fubini products and limits of ultrafilters. 
some of these arguments are adaptations to uncountable cardinals of theorems
from Section 4 of \cite{Dobrinen/Todorcevic11}.  However, 
 it turns out that the situation for measurable cardinals is very different from  that of  ultrafilters on $\omega$.

 Let us start with the following:
\begin{lemma}
    \label{thm.DT32}
For any ultrafilters $U, V, U_\alpha$ $(\alpha < \kappa)$, we have that:
\begin{enumerate}
    \item $\sum_{U} U_\alpha \leq_T U \times \prod_{\alpha<\kappa}U_\alpha$.
    \item $U\cdot{V}\leq_T U\times \prod_{\alpha<\kappa}V$.
    \item $U \cdot U \leq_T
\prod_{\alpha<\kappa}U$.
\end{enumerate}
\end{lemma}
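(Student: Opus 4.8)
The plan is to establish all three reductions by writing down explicit monotone maps and invoking the standard criterion that a monotone map between directed posets is cofinal as soon as its image is cofinal (the criterion implicit in Fact \ref{fact.6DT} and in the discussion of $\Image(f)$ in Section 3): if $f\colon Q\to P$ is monotone and $f''Q$ is cofinal in $P$, then given a cofinal $X\subseteq Q$ and $p\in P$ one picks $q_0\in Q$ with $f(q_0)\geq_P p$, then $q_1\in X$ with $q_1\geq_Q q_0$, and monotonicity gives $f(q_1)\geq_P p$.

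\emph{Part (1).} Identify the base set of $\sum_U U_\alpha$ with $\kappa\times\kappa$, and for $A\subseteq\kappa\times\kappa$ write $A_\alpha=\{\beta<\kappa\mid\l\alpha,\beta\r\in A\}$. Define $f\colon U\times\prod_{\alpha<\kappa}U_\alpha\to\sum_U U_\alpha$ by $f(B,\l C_\alpha\mid\alpha<\kappa\r)=\bigcup_{\alpha\in B}\{\alpha\}\times C_\alpha$. First I would check this map takes values in $\sum_U U_\alpha$: the $\alpha$-th fiber of $f(B,\l C_\alpha\r)$ equals $C_\alpha\in U_\alpha$ for every $\alpha\in B$, and $B\in U$. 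Monotonicity (with respect to $\supseteq$ on each coordinate and on $\sum_U U_\alpha$) is immediate from the formula, since shrinking $B$ or any $C_\alpha$ shrinks the image. It then suffices to check that the image of $f$ is cofinal in $\sum_U U_\alpha$: given $A\in\sum_U U_\alpha$, set $B=\{\alpha<\kappa\mid A_\alpha\in U_\alpha\}\in U$, put $C_\alpha=A_\alpha$ for $\alpha\in B$ and $C_\alpha=\kappa$ otherwise; then $f(B,\l C_\alpha\r)=\bigcup_{\alpha\in B}\{\alpha\}\times A_\alpha\subseteq A$. Hence $\sum_U U_\alpha\le_T U\times\prod_{\alpha<\kappa}U_\alpha$.

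\emph{Part (2).} This is the special case of (1) obtained by taking $U_\alpha=V$ for all $\alpha<\kappa$, since $U\cdot V=\sum_U V$ by definition.

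\emph{Part (3).} The cleanest route is to observe that, re-indexing $\kappa$ as $1+\kappa$, the poset $\prod_{\alpha<\kappa}U$ is order-isomorphic to $U\times\prod_{\alpha<\kappa}U$, hence Tukey-equivalent to it; combining this with part (2) applied to $V=U$ gives $U\cdot U\le_T U\times\prod_{\alpha<\kappa}U\equiv_T\prod_{\alpha<\kappa}U$. Alternatively, one can write a direct monotone cofinal map $g\colon\prod_{\alpha<\kappa}U\to U\cdot U$ letting the $0$-th coordinate play the role of the $U$-large set of indices and the remaining coordinates play the role of the fibers, e.g.\ $g(\l C_\alpha\mid\alpha<\kappa\r)=\bigcup_{\alpha\in C_0\setminus\{0\}}\{\alpha\}\times C_\alpha$; cofinality of the image here uses that $U$ is uniform and nonprincipal, so any given $A\in U\cdot U$ may be thinned so that $\{\alpha\mid A_\alpha\in U\}\in U$ and so that the column over $0$ is irrelevant. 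As for difficulty, there is essentially no serious obstacle: these are warm-up reductions. The only points needing care are verifying that each displayed map genuinely lands in the relevant Fubini sum and has cofinal image (so that monotonicity upgrades it to a cofinal map), and, in part (3), the minor bookkeeping forced by the fact that a single coordinate of the product must simultaneously encode the $U$-large index set and one of the fibers.
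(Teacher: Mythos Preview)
Your proof is correct and encodes the same idea as the paper's: the paper restricts $\sum_U U_\alpha$ to the cofinal base $\mathcal{B}$ of sets whose fibers are each either empty or in the corresponding $U_\alpha$ and writes down the inverse of your map as an unbounded (Tukey) map $\mathcal{B}\to U\times\prod_{\alpha<\kappa}U_\alpha$, whereas you give the monotone cofinal map in the other direction. Parts (2) and (3) are deduced from (1) in both treatments.
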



\begin{proof}
The proof of $(1)$ is  a straightforward generalization of \cite[Theorem 32]{Dobrinen/Todorcevic11}: Given any $A\in \sum_{U}U_\alpha$, define $(A)_\alpha=\{\beta<\kappa\mid \l\alpha,\beta\r\in A\}$
and let $\mathcal{B}$ be the set of those $A\in \sum_{U} U_\alpha$ such that for each $\al<\kappa$, either $(A)_\al\in U_\al$ or else $(A)_\al=\emptyset$.
Then $\mathcal{B}$ is a filter base for $\sum_{ U} U_\alpha$.
Given $A\in\mathcal{B}$ define
$\pi_0(A)$ to be the set of $\al<\kappa$ such that $(A)_\al\in U_\al$,
and for each $\al<\kappa$, define $u_\al(A)=(A)_\al$ if $\al\in\pi_0(A)$ and $\kappa$ otherwise.
Define
$f(A)$ to be $\langle\pi_0(A)\rangle^{\frown}\langle u_\al(A):\al<\kappa\rangle$.
Then $f$ is a Tukey map. $(2)$ and $(3)$ are immediate from $(1)$.
\end{proof}
Let $U$ be a $\sigma$-complete ultrafilter over $\kappa$, $\pi:\kappa\rightarrow \kappa$ be the least non-constant  function mod $U$, and let $\l A_i\mid i<\kappa\r$ be any sequence of sets in $U$, we define the \textit{modified diagonal intersection} of the sequence by
$$\Delta^*_{i<\kappa}A_i:=\{\alpha<\kappa\mid\forall i<\pi(\alpha), \ \alpha\in A_i\}$$

\begin{fact}\label{fact.5.2}
    For any sequence $\l A_i\mid i<\kappa\r\in U$, where $U$ is $\sigma$-complete, $\Delta^*_{i<\kappa}A_i\in U$. 
\end{fact}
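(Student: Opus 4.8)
The plan is to reduce the claim to the measurability (or at least $\sigma$-completeness plus the characterization of $\pi$) of $U$ by pushing everything through the ultrapower embedding $j_U \colon V \to M_U$. First I would record the key property of the least non-constant function: since $[\pi]_U$ is the least ordinal above $[\text{constant}]_U$'s, we have $[\pi]_U = \kappa$, equivalently $\kappa = j_U(\pi)(\kappa)$ after identifying $[\pi]_U$ with its value; more precisely $j_U(\pi)(\kappa) = \kappa$ because $\kappa = [\pi]_U$ and $[\pi]_U$ is computed as $j_U(\pi)$ applied to $[\mathrm{id}]_U = \kappa$. (For a normal $U$ one has $\pi = \mathrm{id}$ and $j_U(\pi)(\kappa)=\kappa$ trivially; the general $\sigma$-complete case only uses $\kappa$ regular and $[\pi]_U=\kappa$.)

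Next I would translate membership in the modified diagonal intersection into a statement about $\kappa$ inside $M_U$. For a sequence $\langle A_i \mid i<\kappa\rangle$ of sets in $U$, standard Łoś-style reasoning gives that $A := \Delta^*_{i<\kappa}A_i \in U$ iff $\kappa \in j_U(A)$. By elementarity,
\[
j_U(A) = \{\alpha < j_U(\kappa) \mid \forall i < j_U(\pi)(\alpha)\ \alpha \in (j_U(\langle A_i\mid i<\kappa\rangle))_i\}.
\]
So I need to check that $\kappa$ satisfies the defining condition on the right: for every $i < j_U(\pi)(\kappa)$, i.e.\ for every $i<\kappa$ by the first step, we need $\kappa \in (j_U(\vec{A}))_i$. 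But for $i<\kappa=\mathrm{crit}(j_U)$ we have $(j_U(\vec{A}))_i = j_U(A_i)$, and $A_i \in U$ means exactly $\kappa \in j_U(A_i)$. Hence $\kappa \in j_U(A)$, so $A \in U$.

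The main obstacle — really the only point requiring care — is making the identification $j_U(\pi)(\kappa) = \kappa$ fully precise and ensuring the reindexing $(j_U(\vec A))_i = j_U(A_i)$ for $i < \kappa$ is justified; both are routine consequences of $\kappa$ being the critical point and of $[\pi]_U = \kappa$, but they are where all the content sits. An alternative, elementary-free route avoiding ultrapowers would be to argue directly: given $\langle A_i\mid i<\kappa\rangle\subseteq U$, the set $B_\alpha := \bigcap_{i<\pi(\alpha)}A_i$ is in $U$ for each $\alpha$ by $\sigma$-completeness (indeed $\pi(\alpha)<\kappa$ and $U$ is $\kappa$-complete when $U$ is measurable; for mere $\sigma$-completeness one additionally needs each $\pi(\alpha)$ countable, which need not hold, so this route genuinely wants $\kappa$-completeness), and then use normality-type closure; but since the statement as written only assumes $\sigma$-completeness, the ultrapower argument using $[\pi]_U=\kappa$ is the clean one, and I would present that.
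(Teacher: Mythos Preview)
Your ultrapower approach is sound in outline but contains a real slip: you assert $[\mathrm{id}]_U = \kappa$ and hence ``$A \in U$ iff $\kappa \in j_U(A)$'', but this holds only when $U$ is \emph{normal}. For an arbitrary $\sigma$-complete ultrafilter the seed is $\delta := [\mathrm{id}]_U$, which need not equal $\kappa$; the correct equivalence is $A \in U$ iff $\delta \in j_U(A)$. With $\kappa$ replaced by $\delta$ in the seed role, your computation does go through: $j_U(\pi)(\delta) = [\pi]_U = \mathrm{crit}(j_U)$, and for each $i$ below the critical point one has $(j_U(\vec{A}))_i = j_U(A_i) \ni \delta$. (Your side claim $[\pi]_U = \kappa$ also presupposes $\kappa$-completeness; under mere $\sigma$-completeness $[\pi]_U$ is the critical point, possibly smaller, but the argument survives since all that is needed is that $j_U$ fixes ordinals below $[\pi]_U$.)

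The paper argues directly, without passing through the embedding: if $B := \kappa \setminus \Delta^*_{i<\kappa} A_i$ were in $U$, then the witness function $\beta \mapsto i_\beta$ (with $i_\beta < \pi(\beta)$ and $\beta \notin A_{i_\beta}$) would represent an ordinal strictly below $[\pi]_U$, hence be constant mod $U$ with some value $i^*$, giving a $U$-large set disjoint from $A_{i^*}$. This is exactly your argument unwound via \L o\'s --- the minimality of $\pi$ is the entire engine in both versions --- so once your seed error is corrected the two proofs coincide; the paper's direct phrasing simply sidesteps the seed-versus-$\kappa$ pitfall.
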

\begin{proof}
    Just otherwise, the complement $B:=\kappa\setminus \Delta^*_{i<\kappa}A_i$ would be in $U$, and for every $\beta\in B$, there is $i_\beta<\pi(\beta)$ such that $\beta\notin A_{i_\beta}$, but then $[\beta\mapsto i_\beta]_U<[\pi]_U$, and since $[\pi]_U$ is the least non-constant function, then there is a set $B'\in U\restriction B$ and $i^*$ such that for every $\beta\in B'$, $i_\beta=i^*$. This implies that $B'\cap A_{i^*}=\emptyset$, contradicting that both $B'$ and $A_{i^*}$ are in $U$.
\end{proof}
\begin{lemma}\label{NormalLemma}
Let $\omega<\kappa$ and $ U $ be a $p$-point over $\kappa$. Then there is a function $h:\kappa\rightarrow \kappa$ such that for every $\l A_i\mid i<\kappa\r\in\prod_{i<\kappa} U $,  and every $\alpha<\kappa$, $(\Delta^*_{i<\kappa}A_i)\setminus h(\alpha)\subseteq A_\alpha$.
\end{lemma}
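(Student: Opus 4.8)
The plan is to show that the function $h(\alpha):=\sup(\pi^{-1}[\alpha+1])+1$ — the same function denoted $\rho$ in Theorem~\ref{Thm:DT20} — does the job, and that the real content of the lemma is the single observation that this $h$ takes values below $\kappa$; that is the only place the $p$-point hypothesis is used. Everything else is an unwinding of the definition of $\Delta^*$.

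First I would verify that each $\pi^{-1}[\gamma]$ (for $\gamma<\kappa$) is bounded in $\kappa$, so that $h$ is a well-defined function from $\kappa$ to $\kappa$. Since $\pi$ is non-constant mod $U$, the $p$-point property applied to $\pi$ produces $Y\in U$ with $|\pi^{-1}[\gamma]\cap Y|<\kappa$ for every $\gamma<\kappa$; as $\kappa$ is regular, each $\pi^{-1}[\gamma]\cap Y$ is therefore bounded. Since $\pi$ is only determined mod $U$, I would then fix the representative obtained by letting $\pi$ agree with the identity on $\kappa\setminus Y$ (this changes neither $[\pi]_U=\kappa$ nor the fact that $\pi$ is the least non-constant function mod $U$, and it is harmless for the definition of $\Delta^*$); for this representative $\pi^{-1}[\gamma]\subseteq(\pi^{-1}[\gamma]\cap Y)\cup\gamma$ is a union of two bounded sets, hence bounded, for every $\gamma<\kappa$.

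With $h$ in hand, the remaining step uses nothing further about $U$: given $\langle A_i\mid i<\kappa\rangle\in\prod_{i<\kappa}U$, an ordinal $\alpha<\kappa$, and $\beta\in(\Delta^*_{i<\kappa}A_i)\setminus h(\alpha)$, the inequality $\beta\ge h(\alpha)>\sup(\pi^{-1}[\alpha+1])$ forces $\pi(\beta)>\alpha$, so the defining clause of $\beta\in\Delta^*_{i<\kappa}A_i$, namely that $\beta\in A_i$ for every $i<\pi(\beta)$, applies at $i=\alpha$ and gives $\beta\in A_\alpha$. Hence $(\Delta^*_{i<\kappa}A_i)\setminus h(\alpha)\subseteq A_\alpha$, as required.

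I do not anticipate a genuine obstacle; the only point needing care is the choice of representative of $\pi$ in the first step, and it is worth flagging that this is genuinely necessary: if some $\pi^{-1}[\alpha+1]$ were unbounded, then taking $A_\alpha:=\kappa\setminus\pi^{-1}[\alpha+1]\in U$ and $A_i:=\kappa$ for $i\ne\alpha$ would yield $\pi^{-1}[\alpha+1]\subseteq\Delta^*_{i<\kappa}A_i$ while $\pi^{-1}[\alpha+1]\cap A_\alpha=\emptyset$, so no $h$ whatsoever could work. Thus the $p$-point hypothesis is doing real work, exactly through the boundedness of the fibers of $\pi$.
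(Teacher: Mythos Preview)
Your proof is correct and follows exactly the paper's approach: define $h(\alpha)=\sup(\pi^{-1}[\alpha+1])+1$, observe that this is $<\kappa$ by the $p$-point hypothesis, and then unwind the definition of $\Delta^*$. The only difference is that you are more explicit than the paper about why the preimages $\pi^{-1}[\alpha+1]$ are bounded --- the paper simply asserts this, while you justify it by passing to a representative of $\pi$ that agrees with the identity off the witnessing set $Y\in U$; this extra care is welcome, though note that your phrase ``harmless for the definition of $\Delta^*$'' should be read as ``the resulting $\Delta^*$ still has the properties needed (in particular Fact~\ref{fact.5.2})'', since changing $\pi$ on a null set can change $\Delta^*$ pointwise.
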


\begin{proof}
Recall that $\pi:\kappa\rightarrow\kappa$ denotes the least non-constant function. 
Since $ U $ is a $p$-point, for each $\alpha<\kappa$, $\pi^{-1}[\alpha]:=\{\gamma<\kappa\mid \pi(\gamma)<\alpha\}$ is bounded in $\kappa$;
let $h(\alpha)=(\sup\pi^{-1}[\alpha+1])+1$. Let $\l A_i\mid i<\kappa\r\in\prod_{i<\kappa} U $ be given.
Then for any $\alpha<\kappa$, and any $\nu\in(\Delta^*_{i<\kappa}A_i)\setminus h(\alpha)$, $\nu\notin \pi^{-1}[\alpha+1]$ and therefore $\pi(\nu)>\alpha$ which implies by the definition of the modified diagonal intersection that $\nu\in A_\alpha$. 
\end{proof}

In general, if $U$ is not a $p$-point we cannot prove that there is such a uniform function $h$ which works for every sequence of sets, but instead we can do the following:
\begin{lemma}
    Let $\omega<\kappa$ and $U$ be any $\kappa$-complete ultrafilter over $\kappa$. Then there is a sequence of sets $\l B_\alpha\mid \alpha<\kappa\r$ such that $B_\alpha\notin U$ for each $\alpha$, such that for every sequence $\l A_i\mid i<\kappa\r\in \prod_{i<\kappa}U$, for every $\alpha<\kappa$, $(\Delta^*_{i<\kappa}A_\alpha)\setminus B_\alpha\subseteq A_\alpha$.
\end{lemma}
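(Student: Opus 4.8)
The plan is to imitate the proof of Lemma~\ref{NormalLemma} almost verbatim, the only change being forced by the fact that when $U$ is not a $p$-point the set $\pi^{-1}[\alpha+1]$ need no longer be bounded in $\kappa$. So rather than collapsing it to the ordinal $h(\alpha)=\sup\pi^{-1}[\alpha+1]+1$, I would simply keep the set itself: let $\pi:\kappa\rightarrow\kappa$ be the least non-constant function mod $U$ and put $B_\alpha:=\pi^{-1}[\alpha+1]=\{\nu<\kappa\mid \pi(\nu)\leq\alpha\}$.

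First I would check that $B_\alpha\notin U$ for every $\alpha<\kappa$. Since $\pi$ is non-constant mod $U$ and $U$ is $\kappa$-complete, $\pi$ cannot be bounded by any ordinal $\gamma<\kappa$ on a set in $U$: otherwise $\pi$ would restrict to a function into $\gamma$ on a member of $U$, and $\gamma^+$-completeness of $U$ (available since $\gamma^+\leq\kappa$) would make $\pi$ constant mod $U$. Equivalently, $[\pi]_U\geq\kappa$, which is exactly the statement that $\{\nu<\kappa\mid \pi(\nu)\leq\alpha\}\notin U$ for every $\alpha<\kappa$.

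It then remains to verify the inclusion, which is a direct unwinding of the definition of the modified diagonal intersection, just as in Lemma~\ref{NormalLemma}. Given $\l A_i\mid i<\kappa\r\in\prod_{i<\kappa}U$ and $\alpha<\kappa$, take $\nu\in(\Delta^*_{i<\kappa}A_i)\setminus B_\alpha$. Then $\nu\notin B_\alpha$ gives $\pi(\nu)>\alpha$, while $\nu\in\Delta^*_{i<\kappa}A_i$ gives $\nu\in A_i$ for all $i<\pi(\nu)$; in particular $\nu\in A_\alpha$, which is the desired inclusion $(\Delta^*_{i<\kappa}A_i)\setminus B_\alpha\subseteq A_\alpha$. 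There is essentially no obstacle here: the only point requiring any care is the verification that $B_\alpha\notin U$, which is precisely where $\kappa$-completeness enters, the rest being a routine copy of the $p$-point argument.
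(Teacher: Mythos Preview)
Your proposal is correct and essentially identical to the paper's own proof: the paper also sets $B_\alpha=\pi^{-1}[\alpha+1]$ for $\pi$ the least non-constant function mod $U$, notes that $\kappa$-completeness and non-constancy of $\pi$ give $B_\alpha\notin U$, and then verifies the inclusion by the same one-line unwinding of the definition of $\Delta^*$. Your justification of $B_\alpha\notin U$ is slightly more detailed than the paper's, but otherwise the arguments coincide.
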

    \begin{proof}
        Let $\pi:\kappa\rightarrow\kappa$ be as before, and define $B_\alpha=\pi^{-1}[\alpha+1]$ for each $\al<\kappa$. Since $\pi$ is not constant mod $U$ and $U$ is $\kappa$-complete, each  $B_\alpha\notin U$. Let $\l A_i\mid i<\kappa\r\in\prod_{i<\kappa} U $ be any sequence.
To see that  $(\Delta^*_{i<\kappa}A_i)\setminus B_\alpha\subseteq A_\alpha$, let  $\nu\in (\Delta^*_{i<\kappa}A_i)\setminus\pi^{-1}[\alpha+1]$. Then $\pi(\nu)\geq\alpha+1>\alpha$, so $\nu\in A_\alpha$ by the definition of $\Delta^*_{i<\kappa}A_i$.
    \end{proof}

The following theorem is very different from the situation on $\omega$:
\begin{lemma}\label{nonTrivialdirection}
    Let $\omega<\kappa$ be measurable. For each $\kappa$-complete ultrafilter $U$ over $\kappa$, $ \prod_{i<\kappa}U\leq_T U$.
\end{lemma}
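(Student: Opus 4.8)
The plan is to produce a single cofinal map $F: U \to \prod_{i<\kappa} U$ by exploiting the measurability of $\kappa$: a $\kappa$-complete ultrafilter has the property that any $\kappa$-indexed sequence of its members can be ``diagonalized'' using the modified diagonal intersection, and this operation can be inverted in a cofinal way. Recall from Fact \ref{fact.5.2} that if $A \in U$ then $A \in U$ is witnessed by the modified diagonal intersection being in $U$; the key observation is that the map sending a sequence $\langle A_i \mid i<\kappa\rangle \in \prod_{i<\kappa}U$ to $\Delta^*_{i<\kappa}A_i \in U$ is monotone (in the product order on the left, reverse inclusion on the right) and its image is cofinal in $U$ — indeed, given any $A \in U$, the constant sequence $\langle A \mid i<\kappa\rangle$ has $\Delta^*_{i<\kappa}A = \{\alpha < \kappa \mid \forall i<\pi(\alpha)\ \alpha \in A\}$, which is contained in $A$, so $A$ lies above something in the image. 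By Schmidt's theorem (as recalled in the introduction), to show $\prod_{i<\kappa}U \leq_T U$ it suffices to produce a \emph{cofinal} map from $U$ to $\prod_{i<\kappa}U$, equivalently an \emph{unbounded} map from $\prod_{i<\kappa}U$ to $U$ — and the modified diagonal intersection is exactly the latter.

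So the first step is to verify that $D:\prod_{i<\kappa}U \to U$, $D(\langle A_i \mid i<\kappa\rangle) = \Delta^*_{i<\kappa}A_i$, is well-defined (Fact \ref{fact.5.2}) and unbounded. For unboundedness: suppose $\mathcal{X} \subseteq \prod_{i<\kappa}U$ is unbounded in the product order; I must show $D''\mathcal{X}$ is unbounded in $(U,\supseteq)$, i.e.\ that $\bigcap D''\mathcal{X} \notin U$, equivalently that no single $B \in U$ satisfies $B \subseteq \Delta^*_{i<\kappa}A_i$ for all $\langle A_i\rangle \in \mathcal{X}$. Here is where I use the structure: if such a $B$ existed, then in particular for each coordinate $\alpha < \kappa$ and each sequence $\vec{A} \in \mathcal{X}$, the previous lemma in the excerpt (with the sets $B_\alpha = \pi^{-1}[\alpha+1] \notin U$) gives $B \setminus B_\alpha \subseteq (\Delta^*_{i<\kappa}A_i)\setminus B_\alpha \subseteq A_\alpha$, so $B \setminus B_\alpha$ — a \emph{fixed} set in $U$ independent of $\vec{A}$ — would be a lower bound, in coordinate $\alpha$, for the entire family $\mathcal{X}$. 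Running this across all $\alpha$ simultaneously yields that the single sequence $\langle B \setminus B_\alpha \mid \alpha < \kappa\rangle \in \prod_{i<\kappa}U$ bounds $\mathcal{X}$ in the product order, contradicting unboundedness of $\mathcal{X}$.

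The main obstacle I anticipate is exactly that last coordinatewise bounding argument: I need the fixed set $B$ to supply, for \emph{every} coordinate $\alpha$, a member of $U$ below all $\alpha$-th coordinates appearing in $\mathcal{X}$, and I must be careful that $B \setminus B_\alpha$ genuinely lies in $U$ (it does, since $B \in U$ is $\kappa$-complete and $B_\alpha \notin U$ with $\kappa$-completeness forcing $\kappa \setminus B_\alpha \in U$) and that $\langle B\setminus B_\alpha \mid \alpha<\kappa\rangle$ really is an element of $\prod_{i<\kappa}U$ rather than merely a sequence of sets. Once that is in hand, the contradiction is immediate. I would then close by invoking Schmidt's equivalence: the unbounded map $D:\prod_{i<\kappa}U\to U$ yields a cofinal map $U \to \prod_{i<\kappa}U$, hence $\prod_{i<\kappa}U \leq_T U$, as desired. (It is worth noting that this is genuinely special to $\kappa$-complete ultrafilters on a measurable: on $\omega$ one has $\prod_{i<\omega}U \geq_T [\mathfrak{c}]^{<\omega}$ for a $p$-point by \cite{Dobrinen/Todorcevic11}, so no such reduction can hold, and the modified diagonal intersection has no countable analogue with this cofinality behavior.)
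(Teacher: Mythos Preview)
Your proof is correct and follows essentially the same approach as the paper: define the map $D(\langle A_i\rangle)=\Delta^*_{i<\kappa}A_i$, assume toward contradiction that some $B\in U$ bounds its image on an unbounded family $\mathcal{X}$, and use the sets $B_\alpha=\pi^{-1}[\alpha+1]$ from the preceding lemma to conclude that $\langle B\setminus B_\alpha\mid \alpha<\kappa\rangle$ bounds $\mathcal{X}$ in the product order. The paper's write-up is slightly terser (it does not separately discuss cofinality of the image or explicitly invoke Schmidt), but the argument is the same.
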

\begin{proof} Let us define the map $F:\prod_{i<\kappa}U\rightarrow U$ by
$$F(\l A_i\mid i<\kappa\r)=\Delta^*_{i<\kappa}A_i$$ To see that $F$ is unbounded, let $\{\l A^j_i\mid i<\kappa\r\mid j\in I\}$ be unbounded in $\prod_{i<\kappa}U$ and suppose toward contradiction that there is $B\in U$ such that $B\subseteq \Delta^*_{i<\kappa}A^j_i$ for every $j\in I$. Let $\l B_i\mid i<\kappa\r$ be the sequence from the previous lemma,
 and note that
$\l B\setminus B_i\mid i<\kappa\r\in \prod_{i<\kappa}U$. 
Also note that by the property of the sequence $\l B_i\mid i<\kappa\r$, for each $i<\kappa$ and each $j\in I$, $$B\setminus B_i\subseteq (\Delta^*_{i<\kappa}A^j_i)\setminus B_i\subseteq A^j_i.$$ We conclude that in the everywhere domination order of the product $\prod_{i<\kappa}U$, we have $\l B\setminus B_i\mid i<\kappa\r\geq \l A^j_i\mid i<\kappa\r$. Therefore, the sequence $\l B\setminus B_i\mid i<\kappa\r$ bounds the set $\{\l A^j_i\mid i<\kappa\r\mid j\in I\}$, contradicting our initial assumption that this set should be unbounded. 
\end{proof}

The next several corollaries demonstrate the striking difference between ultrafilters on $\om$ versus on a measurable cardinal.
For ultrafilters on $\om$, 
Dobrinen and Todorcevic 
in \cite{Dobrinen/Todorcevic11} showed 
that Fubini products of rapid p-points are Tukey equivalent to their cartesian products, whereas if rapidness is dropped, then the equivalence  fails.
Here, rapidness and even p-point can be dropped: the Tukey equivalence between finite Fubini products and cartesian products holds simply under the assumption that the ultrafilters are $\kappa$-complete.

\begin{theorem}
    \label{ProductCor} Let $\omega<\kappa$ be measurable.
    For every two $\kappa$-complete ultrafilters  $U,V$ over $\kappa$, $U\cdot V\equiv_T U\times V$. Moreover, let $U_1,\dots,U_n$ be any $\kappa$-complete ultrafilters over $\kappa$.
    Then $U_1\cdot{\dots}\cdot U_n\equiv_T\prod_{i=1}^n U_i$.\end{theorem}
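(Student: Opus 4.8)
The plan is to prove the binary case $U \cdot V \equiv_T U \times V$ first, and then bootstrap to the $n$-ary case by induction. By Corollary \ref{productAndCart}, we already have $U \cdot V \geq_T U \times V$, so the entire content is the reverse inequality $U \cdot V \leq_T U \times V$. For this I would combine the two preparatory lemmas of this section. By Lemma \ref{thm.DT32}(2), $U \cdot V \leq_T U \times \prod_{\alpha<\kappa} V$. So it suffices to show $U \times \prod_{\alpha<\kappa} V \leq_T U \times V$. Since Tukey reducibility respects cartesian products in the obvious coordinatewise way (a cofinal map on each factor induces one on the product), and $U \leq_T U$ trivially, it is enough to establish $\prod_{\alpha<\kappa} V \leq_T V$. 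But this is exactly Lemma \ref{nonTrivialdirection}, which uses that $\kappa$ is measurable and $V$ is $\kappa$-complete. Chaining these: $U \cdot V \leq_T U \times \prod_{\alpha<\kappa} V \leq_T U \times V \leq_T U \cdot V$, giving Tukey equivalence.

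For the moreover clause, I would argue by induction on $n$, the base case $n=1$ being trivial. Assume $U_1 \cdot \dots \cdot U_{n-1} \equiv_T \prod_{i=1}^{n-1} U_i$. Writing $W = U_1 \cdot \dots \cdot U_{n-1}$, we have $U_1 \cdot \dots \cdot U_n = W \cdot U_n$. By the binary case applied to the pair $W, U_n$ — note $W$ is $\kappa$-complete since a Fubini product of $\kappa$-complete ultrafilters is $\kappa$-complete, which should be recorded as a small lemma or observation — we get $W \cdot U_n \equiv_T W \times U_n$. By the induction hypothesis and the fact that $\times$ is monotone under $\equiv_T$, $W \times U_n \equiv_T (\prod_{i=1}^{n-1} U_i) \times U_n \equiv_T \prod_{i=1}^n U_i$, where the last equivalence is just associativity/reassociation of finite cartesian products (which is a genuine isomorphism of posets, hence certainly a Tukey equivalence). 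Combining, $U_1 \cdot \dots \cdot U_n \equiv_T \prod_{i=1}^n U_i$.

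The main obstacle — really the only nontrivial ingredient — is Lemma \ref{nonTrivialdirection}, $\prod_{\alpha<\kappa} V \leq_T V$, and that has already been proved in the excerpt via the modified diagonal intersection map $F(\langle A_i \mid i<\kappa\rangle) = \Delta^*_{i<\kappa} A_i$; everything else is bookkeeping with the elementary closure properties of $\leq_T$ under products and the trivial direction from Corollary \ref{productAndCart}. I would also want to double-check one small point in the writeup: that the binary statement $U\cdot V\leq_T U\times V$ as I've chained it does not secretly require $V$-completeness in a place I've overlooked — but since Lemma \ref{nonTrivialdirection} is stated for arbitrary $\kappa$-complete $V$ over the measurable $\kappa$, and Lemma \ref{thm.DT32}(2) is stated for arbitrary ultrafilters, the chain goes through with only the hypotheses in the theorem statement.
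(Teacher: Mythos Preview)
Your proposal is correct and follows essentially the same route as the paper: use Proposition~\ref{productAndCart} for one direction, then chain Lemma~\ref{thm.DT32}(2) with Lemma~\ref{nonTrivialdirection} for the other, and finish the $n$-ary case by a straightforward induction. The only extra bookkeeping you flag---that $W=U_1\cdots U_{n-1}$ lives on $\kappa^{n-1}$ rather than $\kappa$---is handled by the fact that the binary argument only uses $\kappa$-completeness of the right factor (or, alternatively, by passing through a bijection $\kappa^{n-1}\to\kappa$).
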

\begin{proof}
    $U\times V\leq_{T} U\cdot V$ by Proposition \ref{productAndCart}. For the other direction, $U\cdot V\leq_T U\times \prod_{\alpha<\kappa}V$ by Lemma \ref{thm.DT32} item $(2)$, and by Lemma \ref{nonTrivialdirection}, $  U\times \prod_{\alpha<\kappa}V\leq_T U\times V$. The moreover part follows by a straightforward induction. 
\end{proof}

\begin{theorem}\label{PowerCor}
    For any $\kappa$-complete ultrafilter $U$ over a measurable cardinal $\kappa$, $U^n\equiv_T U$.
\end{theorem}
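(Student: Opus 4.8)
The plan is to deduce this immediately from Theorem~\ref{ProductCor} together with Corollary~\ref{productAndCart}. First I would apply the ``moreover'' part of Theorem~\ref{ProductCor} in the special case $U_1=\dots=U_n=U$, which gives
\[
U^n=\underbrace{U\cdot{\dots}\cdot U}_{n}\equiv_T\prod_{i=1}^n U=\underbrace{U\times{\dots}\times U}_{n}.
\]
So it remains only to show that the $n$-fold cartesian power of $U$ is Tukey equivalent to $U$.

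For that, I would argue by induction on $n$, the case $n=1$ being trivial. The inductive step uses two facts: that the cartesian product operation respects Tukey equivalence (if $P\equiv_T P'$ then $P\times Q\equiv_T P'\times Q$, witnessed by composing cofinal maps coordinatewise with the identity on $Q$), and that $U\times U\equiv_T U$ by Corollary~\ref{productAndCart}. Indeed, assuming $\prod_{i=1}^{n-1}U\equiv_T U$, and using associativity of the cartesian product up to the obvious isomorphism, we get
\[
\prod_{i=1}^{n}U\cong\Big(\prod_{i=1}^{n-1}U\Big)\times U\equiv_T U\times U\equiv_T U,
\]
which closes the induction. Combining this with the first display yields $U^n\equiv_T U$.

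I do not expect any real obstacle here, since both ingredients are already established in the excerpt; the only point requiring a line of care is the observation that $\times$ is monotone with respect to $\equiv_T$, which is routine and already implicitly used in the ``straightforward induction'' invoked at the end of the proof of Theorem~\ref{ProductCor}.
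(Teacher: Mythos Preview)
Your proposal is correct and matches the paper's proof exactly: the paper writes the single line ``$U^n\equiv_T U\times U\times\dots\times U\equiv_T U$,'' invoking Theorem~\ref{ProductCor} for the first equivalence and (implicitly, by induction) Corollary~\ref{productAndCart} for the second. Your write-up simply spells out the induction that the paper leaves tacit.
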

\begin{proof}
    $U^n\equiv_T U\times U\times\dots\times U\equiv_T U$.
\end{proof}
\begin{corollary}
    If $U$ is a Galvin ultrafilter over a measurable cardinal $\kappa$ (namely, $\Gal(U,\kappa,2^\kappa)$) then for every $n$, $U^n$ is Galvin.
\end{corollary}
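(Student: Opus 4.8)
The plan is to read off the statement as a direct consequence of Theorem~\ref{PowerCor} together with the Tukey characterization of the Galvin property in Theorem~\ref{GlavinMainTheorem}. First I would note that, as a finite Fubini power of the $\kappa$-complete uniform ultrafilter $U$, the ultrafilter $U^n$ is again $\kappa$-complete and uniform; it lives on $\kappa^n$, but $|\kappa^n|=\kappa$, so after transporting $U^n$ along a bijection $\kappa^n\to\kappa$ we may regard it as a $\kappa$-complete uniform ultrafilter over $\kappa$. This re-indexing changes neither the isomorphism type of $(U^n,\supseteq)$ nor, a fortiori, its Tukey type, and it certainly does not affect whether $\Gal(U^n,\kappa,2^\kappa)$ holds.

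Next I would apply Theorem~\ref{GlavinMainTheorem} to $W:=U^n$: since $\kappa$ is measurable and $U^n$ is a $\kappa$-complete ultrafilter over $\kappa$, we have that $\Gal(U^n,\kappa,2^\kappa)$ holds if and only if $(U^n,\supseteq)$ is \emph{not} Tukey-top, i.e.\ not Tukey equivalent to $([2^\kappa]^{<\kappa},\subseteq)$. Being Tukey-top is a property of the Tukey type alone, and Theorem~\ref{PowerCor} gives $U^n\equiv_T U$; hence $U^n$ is Tukey-top precisely when $U$ is. By hypothesis $U$ is Galvin, so applying Theorem~\ref{GlavinMainTheorem} in the forward direction to $U$ shows that $U$ is not Tukey-top, whence $U^n$ is not Tukey-top, whence $\Gal(U^n,\kappa,2^\kappa)$, as desired.

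I do not anticipate a genuine obstacle: the content is entirely contained in the two cited theorems, and the only step demanding any care is the opening bookkeeping, namely verifying that $U^n$ falls squarely within the class of objects---$\kappa$-complete uniform ultrafilters over $\kappa$---to which the ``in particular'' clause of Theorem~\ref{GlavinMainTheorem} applies, and that neither the identification $\kappa^n\cong\kappa$ nor the passage through $\equiv_T$ perturbs the relevant instance $\Gal(\cdot,\kappa,2^\kappa)$ of the Galvin property.
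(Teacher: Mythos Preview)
Your proposal is correct and follows essentially the same approach as the paper: invoke Theorem~\ref{GlavinMainTheorem} to translate ``Galvin'' into ``not Tukey-top,'' then use Theorem~\ref{PowerCor} ($U^n\equiv_T U$) to transfer this property. The paper's proof is just the one-line version of your argument, omitting the bookkeeping about transporting $U^n$ along a bijection $\kappa^n\to\kappa$.
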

\begin{proof}
    $U$ is Galvin iff $U$ is not Tukey-top, and $U^n$ is Galvin iff $U^n$ is not Tukey-top. Since $U\equiv_T U^n$, we see that $U$ is Galvin iff $U^n$ is Galvin. 
\end{proof}
\begin{question}
    Suppose that $U$, $V_\alpha$ are all Galvin. Does it follow that $\sum_UV_\alpha$ is  Galvin? Is the Fubini product of two Galvin ultrafilters always Galvin?
\end{question}

In the next section, we will provide a partial answer for this question when we restrict ourselves to a certain class of ultrafilters called {\em basically generated}.

\section{Basic and basically generated ultrafilters}\label{sec.bg}

One generalization from the theory on $\omega$ is the definition of the so-called \textit{basic} ultrafilters, which gives another characterization of $p$-point.
The following is the  notion of convergence on the generalized Baire  space $2^{\kappa}$, with the topology generated by basic open sets of the form $N_s=\{f\in 2^{\kappa} \mid s\sqsubset f\}$,
where $s\in 2^{<\kappa}$.
   Given $\l A_i\mid i<\kappa\r\subseteq P(\kappa)$, we write $\lim_{i\rightarrow \kappa}A_i=A$  to mean that for every $\delta<\kappa$ there is $i^*<\kappa$, such that for every $i\geq i^*$, $A_i\cap \delta= A\cap \delta$, in other words, the sequence $A_i$ converges to $A$.

The notion of a basic partial order first appeared in \cite{Solecki/Todorcevic04} in the context of separable metric spaces. 
The following extends the notion of basic ultrafilter on $\om$ in \cite{Dobrinen/Todorcevic11}  to uncountable cardinals. 

\begin{definition}
A $\kappa$-complete ultrafilter 
$U$ is called \textit{basic} if for every sequence $\l A_i\mid i<\kappa\r$ such that $\lim_{i\rightarrow\kappa} A_i=A\in U$ there is a subsequence $\l A_{i_\alpha}\mid \alpha<\kappa\r$ such that $\cap_{\alpha<\kappa}A_{i_\alpha}\in U$. 
\end{definition}

 Theorem 14 of \cite{Dobrinen/Todorcevic11} showed that for ultrafilters on $\omega$, p-points are exactly basic ultrafilters. 
We now prove analogous results for ultrafilters on uncountable cardinals.
First we present a  slightly different definition of basic which will turn out to be equivalent:
\begin{definition}
    A $\kappa$-complete ultrafilter 
$U$ is called {\em uniformly basic} if for every sequence $\l A_i\mid i<\kappa\r$ such that $\lim_{i\rightarrow\kappa} A_i=A\in U$ there is a function $f:\kappa\rightarrow\kappa$ such that whenever \textbf{$f\leq_{bd}g$}, $\cap_{\alpha<\kappa}A_{g(\alpha)}\in U$. 
\end{definition}
Clearly uniformly basic implies basic. 
\begin{proposition}\label{Prop:ppt implies basic}
$p$-point implies uniformly basic.
\end{proposition}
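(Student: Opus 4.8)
The plan is to show directly that if $U$ is a $p$-point then $U$ is uniformly basic, by extracting from a convergent sequence a rapidly-thinned subsequence witnessed by a single function $f:\kappa\to\kappa$. Suppose $\l A_i\mid i<\kappa\r$ is a sequence with $\lim_{i\to\kappa}A_i=A\in U$. Recall that $\pi:\kappa\to\kappa$ denotes the least non-constant function mod $U$, and set $\rho_\alpha=\sup(\pi^{-1}[\alpha+1])+1$ as in Lemma \ref{NormalLemma}; since $U$ is a $p$-point, each $\pi^{-1}[\alpha+1]$ is bounded, so $\rho$ is well-defined and the modified diagonal intersection $\Delta^*_{i<\kappa}B_i$ lies in $U$ whenever each $B_i\in U$ (Fact \ref{fact.5.2}).

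First I would use convergence to pick, for each $\alpha<\kappa$, an index bound: by $\lim_{i\to\kappa}A_i=A$ applied at $\delta=\rho_\alpha$, there is $i^*(\alpha)<\kappa$ such that $A_i\cap\rho_\alpha=A\cap\rho_\alpha$ for all $i\ge i^*(\alpha)$. Define $f:\kappa\to\kappa$ by letting $f(\alpha)$ be large enough to dominate $i^*(\beta)$ for all $\beta\le\alpha$ (e.g.\ $f(\alpha)=\sup_{\beta\le\alpha}i^*(\beta)+1$), so that $f$ is increasing. Now suppose $g:\kappa\to\kappa$ satisfies $f\le_{bd}g$, i.e.\ $f(\alpha)\le g(\alpha)$ for all $\alpha$ past some bound; we must show $\bigcap_{\alpha<\kappa}A_{g(\alpha)}\in U$. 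The key point is that for each $\alpha$ (past the bound), $g(\alpha)\ge f(\alpha)\ge i^*(\alpha)$, hence $A_{g(\alpha)}\cap\rho_\alpha=A\cap\rho_\alpha$, i.e.\ $A_{g(\alpha)}$ and $A$ agree below $\rho_\alpha$. Consider $B_\alpha:=A_{g(\alpha)}\cup(\kappa\setminus\rho_\alpha)$. Each $B_\alpha\supseteq A$ on $\rho_\alpha$ and is all of $\kappa$ above $\rho_\alpha$, so $B_\alpha\supseteq A\in U$, giving $B_\alpha\in U$; thus $\Delta^*_{i<\kappa}B_i\in U$ by Fact \ref{fact.5.2}. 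I would then check that $\Delta^*_{i<\kappa}B_i\cap A\subseteq\bigcap_\alpha A_{g(\alpha)}$: if $\nu\in\Delta^*_{i<\kappa}B_i\cap A$ and $\alpha<\kappa$ is arbitrary, then either $\pi(\nu)>\alpha$ so $\nu\in B_\alpha$ by definition of $\Delta^*$, or, for the remaining (bounded-below-$\rho_\alpha$) case, we use $\nu\in A$ together with $A\cap\rho_\alpha=A_{g(\alpha)}\cap\rho_\alpha$; a small amount of bookkeeping with the indices $\pi(\nu)$ versus $\alpha$ shows $\nu\in A_{g(\alpha)}$ in both cases. Hence $\bigcap_\alpha A_{g(\alpha)}\supseteq\Delta^*_{i<\kappa}B_i\cap A\in U$, so it is in $U$.

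The main obstacle I anticipate is the careful alignment of the two indexing schemes: the sequence index $i$ (for which $\pi$ and $\Delta^*$ are defined) versus the ``position'' index $\alpha$ through which $g$ runs. The cleanest fix is to absorb the relevant initial segments into the error set $\kappa\setminus\rho_\alpha$ exactly as above, so that membership in the modified diagonal intersection forces agreement with $A$ on the only part of $\kappa$ where $A_{g(\alpha)}$ and $A$ might differ. One also needs $f\le_{bd}g$ only eventually, so the finitely-many (boundedly-many, really) exceptional $\alpha$ must be handled; but $\kappa$-completeness of $U$ lets us intersect off the initial segment harmlessly, since a bounded-below change to each $A_{g(\alpha)}$ does not affect membership of the full intersection in $U$ — more precisely, replacing $\bigcap_\alpha A_{g(\alpha)}$ by $\bigcap_{\alpha\ge\alpha_0}A_{g(\alpha)}$ and then intersecting with the boundedly-many remaining sets, all of which are in $U$, keeps us in $U$ by $\kappa$-completeness. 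This completes the proof that $p$-point implies uniformly basic, and since uniformly basic trivially implies basic, the proposition follows.
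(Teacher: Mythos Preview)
There is a genuine gap in your verification of the inclusion $\Delta^*_{i<\kappa}B_i\cap A\subseteq\bigcap_\alpha A_{g(\alpha)}$. The padding $B_\alpha:=A_{g(\alpha)}\cup(\kappa\setminus\rho_\alpha)$ destroys exactly the information that the modified diagonal intersection is supposed to record. Concretely, take $U$ normal, so $\pi=\mathrm{id}$ and $\rho_\alpha=\alpha+1$; let $A=\kappa$ and $A_i=\kappa\setminus\{i\}$. Then $i^*(\alpha)=\alpha+1$, and for any $g\geq_{bd}f$ (so $g(\alpha)>\alpha$ eventually) we get $B_\alpha=(\kappa\setminus\{g(\alpha)\})\cup[\alpha+1,\kappa)=\kappa$ for all large $\alpha$, hence $\Delta^*_i B_i\cap A=\kappa$. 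But $\bigcap_\alpha A_{g(\alpha)}=\kappa\setminus\mathrm{range}(g)$; with $g(\alpha)=\alpha+2$ this is $\{0,1\}$. Your claimed inclusion becomes $\kappa\subseteq\{0,1\}$. The ``small amount of bookkeeping'' you invoke does not exist: in the subcase $\alpha<\pi(\nu)$ and $\nu\geq\rho_\alpha$, membership in $B_\alpha$ is vacuous and there is no other handle on $A_{g(\alpha)}$ above $\rho_\alpha$.

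The paper's proof avoids this by taking the modified diagonal intersection of the $A_{g(\alpha)}$ themselves (together with $A$, a set $Y\in U$ on which $\pi$ has bounded preimages, and the finitely many exceptional $A_{g(\beta)}$ for $\beta<\theta$). Then the case $\alpha<\pi(\xi)$ is handled \emph{directly} by the definition of $\Delta^*$, while the case $\alpha\geq\pi(\xi)$ uses $\xi\in Y\cap\pi^{-1}[\alpha+1]$ to force $\xi<\rho_\alpha$ and hence $\xi\in A\cap\rho_\alpha=A_{g(\alpha)}\cap\rho_\alpha$. Note that this requires each $A_{g(\alpha)}\in U$ for the diagonal intersection to lie in $U$; your attempt to sidestep that hypothesis by padding is what creates the gap. (A minor secondary point: $p$-pointness only gives that $\pi^{-1}[\alpha+1]\cap Y$ is bounded for some $Y\in U$, not that $\pi^{-1}[\alpha+1]$ itself is bounded; your $\rho_\alpha$ should be defined relative to such a $Y$, as the paper does.)
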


\begin{proof}
Suppose that $U$ is a $p$-point and let $\pi$ be the least non-constant function mod $U$. Since $U$ is a $p$-point, there is $Y\in U$ such that for every $\gamma<\kappa$, $\pi^{-1}[\gamma]\cap Y$ is bounded in $\kappa$. For each $\alpha<\kappa$, pick $\rho_\alpha:=\sup(\pi^{-1}[\alpha+1]\cap Y)+1$. To see that $U$ is uniformly basic, let $\l A_i\mid i<\kappa\r$ be a sequence converging to some set $A\in U$. 
By definition of convergence, there is a sequence $\delta_\alpha$ such that for any $i\geq\delta_\alpha$, $A_i\cap \rho_{\alpha}=A\cap\rho_{\alpha}$;
define $f(\al)=\rho_{\al}$.
Let us prove that for any $f\le_{bd} g$ we have that $\cap_{\alpha<\kappa}A_{g(\alpha)}\in U$.  
Towards this, let $\theta<\kappa$ be such that for every $\theta\leq\alpha<\kappa$, $\delta_\alpha\leq g(\alpha)$.
It suffices to prove that $(\cap_{\beta<\theta}A_{g(\beta)})\cap A\cap Y\cap \Delta^*_{\alpha<\kappa}A_{\delta_\alpha}\subseteq \cap_{\alpha<\kappa}A_{g(\alpha)}$,
where\footnote{To see that $\Delta^*_{\alpha<\kappa}A_{\delta_\alpha}\in U$, suppose  not; then for a set of $\nu$'s in $U$, there is $\alpha_\nu\in \pi(\nu)$ such that $\nu\notin A_{\delta_\alpha}$. Since $\pi$ is minimal, then $\nu\mapsto\alpha_\nu$ is constant mod $U$, say on a set $Z$ with value $\alpha^*$. Then for every $\nu\in Z$, $\nu\notin A_{\delta_{\alpha^*}}$, contradiction.}
$$\Delta^*_{\alpha<\kappa}A_{\delta_\alpha}:=\{\nu<\kappa\mid \forall \alpha<\pi(\nu),\ \nu\in A_{\delta_{\alpha}}\}$$
Indeed, suppose $\xi\in (\cap_{\beta<\theta}A_{g(\beta)})\cap A\cap Y\cap \Delta^*_{\alpha<\kappa}A_{\delta_\alpha}$.  
Let $\alpha_0<\kappa$ and split into cases:
\begin{enumerate}
    \item If $\alpha_0<\theta$, then 
   clearly  $\xi\in A_{g(\alpha_0)}$.
    \item If $\alpha_0<\pi(\xi)$, then by the definition of $\Delta^*_{\alpha<\kappa}A_{g(\alpha)}$, $\xi\in A_{g(\alpha_0)}$.
    \item 
    If $\alpha_0\geq \max(\pi(\xi),\theta)$ then $\xi\in\pi^{-1}[\alpha_0+1]\cap Y$,
which means that $\xi<\rho_{\alpha_0}$, so $\xi\in A\cap \rho_{\alpha_0}$. Also we conclude that $\delta_{\alpha_0}\leq g(\alpha_0)$, and by the choice of $\delta_{\alpha_0}$, 
it follows that $A_{g(\alpha_0)}\cap\rho_{\alpha_0}=A\cap\rho_{\alpha_0}$.
Thus $\xi\in A_{g(\alpha_0)}$.
\end{enumerate}

\end{proof}



The following proposition implies that basic, uniformly basic, and $p$-points are all equivalent:
\begin{proposition}\label{Prop:Basicimpliesppt}
Basic implies $p$-point.
\end{proposition}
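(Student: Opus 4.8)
The plan is to prove the contrapositive: assuming $U$ is a $\kappa$-complete ultrafilter over $\kappa$ which is \emph{not} a $p$-point, I will produce a sequence $\langle A_i\mid i<\kappa\rangle$ of members of $U$ with $\lim_{i\rightarrow\kappa}A_i\in U$ such that $\bigcap_{\alpha<\kappa}A_{i_\alpha}\notin U$ for every (increasing) subsequence $\langle A_{i_\alpha}\mid \alpha<\kappa\rangle$, contradicting basicness.

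First I would extract the right witness. Since $U$ is not a $p$-point, by the definition of $p$-point there is a function $g\colon\kappa\to\kappa$ which is not constant on any set in $U$ but such that for every $Y\in U$ there is $\gamma<\kappa$ with $|g^{-1}[\gamma]\cap Y|=\kappa$. Using $\kappa$-completeness, $g$ is unbounded mod $U$: if $\{\nu\mid g(\nu)<\gamma_0\}\in U$ then $g$ takes fewer than $\kappa$ values on a set in $U$ and hence, by $\kappa$-completeness, is constant on a set in $U$ --- contradiction. Thus $X_i:=\{\nu<\kappa\mid g(\nu)>i\}\in U$ for every $i<\kappa$. I then set $C_i:=\{\nu<\kappa\mid g(\nu)\le i\ \text{and}\ \nu\ge i\}$ and $A_i:=\kappa\setminus C_i= X_i\cup i$. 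Since $A_i\supseteq X_i\in U$, each $A_i\in U$; and since every element of $C_i$ is $\ge i$, for any $\delta<\kappa$ and any $i\ge\delta$ we have $C_i\cap\delta=\emptyset$, i.e.\ $A_i\cap\delta=\delta$. Hence $\lim_{i\rightarrow\kappa}A_i=\kappa\in U$, and the hypothesis of basicness applies to $\langle A_i\mid i<\kappa\rangle$.

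It remains to rule out good subsequences. Given a strictly increasing $\langle i_\alpha\mid\alpha<\kappa\rangle$, put $S=\{i_\alpha\mid\alpha<\kappa\}$, a cofinal subset of $\kappa$; then $\bigcap_{\alpha<\kappa}A_{i_\alpha}=\kappa\setminus\bigcup_{i\in S}C_i$, so it suffices to show $\bigcup_{i\in S}C_i\in U$. Unwinding the definition, $\nu\in\bigcup_{i\in S}C_i$ iff $S\cap[g(\nu),\nu]\ne\emptyset$ (with $[g(\nu),\nu]$ understood to be empty when $g(\nu)>\nu$). Suppose toward a contradiction that $Y:=\{\nu<\kappa\mid S\cap[g(\nu),\nu]=\emptyset\}\in U$. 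Applying the failure of the $p$-point property to $Y$, fix $\gamma<\kappa$ with $Y':=\{\nu\in Y\mid g(\nu)<\gamma\}$ of cardinality $\kappa$, hence unbounded in $\kappa$. Since $S$ is cofinal, pick $s\in S$ with $s\ge\gamma$, and then pick $\nu\in Y'$ with $\nu\ge s$. Then $g(\nu)<\gamma\le s\le\nu$, so $s\in S\cap[g(\nu),\nu]$, contradicting $\nu\in Y$. Hence $\bigcup_{i\in S}C_i\in U$, so $\bigcap_{\alpha<\kappa}A_{i_\alpha}\notin U$; as the subsequence was arbitrary, $U$ is not basic.

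The step I expect to be the main obstacle is not any single verification but spotting the correct reformulation of basicness: for a sequence $\langle A_i\mid i<\kappa\rangle$ of members of $U$ converging to $A\in U$, intersections along cofinal subsequences depend only on the ``error sets'' $C_i=A\setminus A_i$, which are precisely the non-members of $U$ tending to $\emptyset$, and basicness asserts that some cofinal union $\bigcup_{i\in S}C_i$ escapes $U$. Once this is seen, the task reduces to building, from a non-$p$-point witness $g$, a cofinally nested family of non-$U$ sets all of whose cofinal subunions are forced back into $U$; the choice $C_i=\{g\le i\}\setminus i$ achieves this, whereas naive candidates such as the fibres $g^{-1}(i)$ fail, since a cofinal set of indices can avoid most of the mass of $g$ modulo $U$. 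The remaining checks --- that the $A_i$ lie in $U$ and that $\langle A_i\rangle$ converges to $\kappa$ --- are then routine.
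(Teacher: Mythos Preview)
Your proof is correct and rests on the same key construction as the paper's: given a $\subseteq$-decreasing sequence $\langle X_i\rangle$ in $U$, set $A_i = X_i \cup i$ so that $\lim_i A_i = \kappa$. The paper argues directly rather than by contrapositive---starting from an arbitrary (wlog decreasing) $\langle X_i\rangle\subseteq U$, basicness yields a subsequence with $X^* := \bigcap_\alpha (X_{i_\alpha}\cup i_\alpha)\in U$, and then $X^*\subseteq^* X_{i_\alpha}\subseteq X_i$ gives the pseudointersection immediately---thereby sidestepping your extra verification that every cofinal subunion $\bigcup_{i\in S} C_i$ lies in $U$.
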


\begin{proof}
Suppose that $U$ is basic and uniform. Let $\l X_i\mid i<\kappa\r\in [U]^\kappa$ be a sequence of sets in $U$. We
want to prove that this sequence
has a pseudointersection. By $\kappa$-completeness we may assume that the sequence of  $X_i$
is decreasing. 
Define the sets $Y_i=X_i\cup i$. Clearly, $Y_i\supseteq X_i$ and $\lim_{i\rightarrow\kappa}Y_i=\kappa$. Hence, since $U$ is basic, there is a subsequence $\l i_\alpha\mid \alpha<\kappa\r$ such that $X^*:=\cap_{\alpha<\kappa} Y_{i_\alpha}\in U$. Note that for each $i<\kappa$, there is $\alpha<\kappa$ such that $i<i_\alpha$ and therefore $X_i\supseteq X_{i_\alpha}=^*Y_{i_\alpha}\supseteq X^*$.
Hence $U$ is a
$p$-point.
\end{proof}
  \begin{proposition}
  Basic implies Galvin.
  \end{proposition}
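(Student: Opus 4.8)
The plan is to compress an arbitrary family of $2^\kappa$ sets from $U$ to a convergent $\kappa$-sequence whose limit lies in $U$, and then invoke the basic property. So fix $\langle A_\alpha \mid \alpha < 2^\kappa\rangle \in [U]^{2^\kappa}$; by Definition \ref{Def:Galvin Property} (and Theorem \ref{GlavinMainTheorem}) it suffices to produce $I \in [2^\kappa]^\kappa$ with $\bigcap_{\alpha \in I} A_\alpha \in U$.

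The one genuinely substantive step is a cardinality count locating a member of the family with a ``generic'' sequence of initial-segment traces. For $\delta < \kappa$ call $\alpha < 2^\kappa$ \emph{$\delta$-thin} if $|\{\beta < 2^\kappa : A_\beta \cap \delta = A_\alpha \cap \delta\}| < 2^\kappa$. Since $\kappa$ is a strong limit there are fewer than $\kappa$ possible values of the trace $A_\alpha \cap \delta$ (each a subset of $\delta$); since $\cf(2^\kappa) > \kappa$, a union of fewer than $\kappa$ many sets each of size $<2^\kappa$ has size $<2^\kappa$, so for each fixed $\delta$ the set of $\delta$-thin ordinals $\alpha$ (being the union of the trace-classes at level $\delta$ of size $<2^\kappa$) has size $<2^\kappa$; and then, again using $\cf(2^\kappa) > \kappa$, the union over all $\delta < \kappa$ of these sets still has size $<2^\kappa$. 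Hence there is $\alpha_0 < 2^\kappa$ which is $\delta$-thin for no $\delta$, i.e. for every $\delta < \kappa$,
\[
|\{\beta < 2^\kappa : A_\beta \cap \delta = A_{\alpha_0} \cap \delta\}| = 2^\kappa .
\]
The crucial point — and the reason this works at all — is that the set $A_{\alpha_0}$ produced this way is itself a member of $U$.

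With $A_{\alpha_0}$ in hand, recursively choose distinct ordinals $\alpha_\delta$ $(\delta < \kappa)$ with $A_{\alpha_\delta} \cap \delta = A_{\alpha_0} \cap \delta$, which is possible since at each stage the pool of admissible indices has size $2^\kappa > \kappa$. Then $\langle A_{\alpha_\delta} \mid \delta < \kappa\rangle$ converges to $A_{\alpha_0}$: for any $\gamma < \kappa$ and any $\delta \ge \gamma$, $A_{\alpha_\delta} \cap \gamma = (A_{\alpha_\delta} \cap \delta) \cap \gamma = A_{\alpha_0} \cap \gamma$. Since $A_{\alpha_0} \in U$ and $U$ is basic, there is a subsequence $\langle A_{\alpha_{\delta_\xi}} \mid \xi < \kappa\rangle$ with $\bigcap_{\xi < \kappa} A_{\alpha_{\delta_\xi}} \in U$; because the $\alpha_\delta$ are distinct, $I := \{\alpha_{\delta_\xi} \mid \xi < \kappa\}$ has size $\kappa$ and $\bigcap_{\alpha \in I} A_\alpha \in U$, which is exactly $\Gal(U,\kappa,2^\kappa)$. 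The main obstacle is precisely the counting step of the middle paragraph; everything before and after it is routine bookkeeping. (One could alternatively phrase the search for $\alpha_0$ via the tree of large traces together with the tree property of the weakly compact cardinal $\kappa$, but the direct computation, using only that $\kappa$ is a strong limit with $\cf(2^\kappa) > \kappa$, is shorter and sidesteps the tree property entirely.)
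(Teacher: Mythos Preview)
Your proof is correct and follows essentially the same approach as the paper's: locate an index whose trace-classes are all large, build a $\kappa$-sequence converging to that $A_{\alpha_0}\in U$, and invoke the basic property to extract a subsequence with intersection in $U$. The only cosmetic differences are that the paper states the argument for $\kappa^+$-many sets and cites Galvin's lemma (from \cite{Parttwo}) for the existence of such an index, whereas you work directly with $2^\kappa$-many sets and give the counting argument inline (using that $\kappa$ is a strong limit and $\cf(2^\kappa)>\kappa$); your version therefore matches the paper's definition of ``Galvin'' more literally.
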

  Although this proposition follows from Theorem \ref{thm.bgnottop}, we will provide a short proof for this proposition separately as we believe that it may be of interest to the reader who is familiar with the proof of Galvin's Theorem for normal filters, to see the similarities with the parallel theory for ultrafilters on $\omega$.
\begin{proof}
    Let $\l A_i\mid i<\kappa^+\r$ be any sequence of sets in $U$,   and for each $\alpha<\kappa$, $i<\kappa^+$ define
    $$H_{\alpha,i}:=\{j<\kappa^+\mid A_i\cap\alpha=A_j\cap\alpha\}$$
    Galvin observed that:
    \begin{lemma}
       There is  an $i^*$ such that for every $\alpha<\kappa$, $|H_{i^*,\alpha}|=\kappa^+$.
    \end{lemma}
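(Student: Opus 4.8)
The plan is to prove this combinatorial lemma, due to Galvin, by a counting argument that exploits the regularity of $\kappa^+$ together with the fact that for each fixed $\alpha<\kappa$ there are at most $2^{|\alpha|}\le 2^{<\kappa}$ many possible "traces" $A_i\cap\alpha$, but crucially — since we only need $\kappa^+$ many indices in each $H_{i^*,\alpha}$, not a common $i^*$ obtained by bounding the number of traces — we do not even need a cardinal arithmetic hypothesis here; we just need a clever diagonalization. Let me describe the argument.

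First I would fix, for each $\alpha<\kappa$, the equivalence relation $\sim_\alpha$ on $\kappa^+$ given by $i\sim_\alpha j$ iff $A_i\cap\alpha=A_j\cap\alpha$; so $H_{i,\alpha}$ is exactly the $\sim_\alpha$-class of $i$. Note that $\alpha\le\beta$ implies $\sim_\beta$ refines $\sim_\alpha$, hence $H_{i,\beta}\subseteq H_{i,\alpha}$. Call an index $i<\kappa^+$ \emph{good} if $|H_{i,\alpha}|=\kappa^+$ for every $\alpha<\kappa$; we must produce a good index. Suppose toward a contradiction that every $i$ is bad, i.e.\ for each $i$ there is a least $\alpha_i<\kappa$ with $|H_{i,\alpha_i}|\le\kappa$. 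Since $\kappa^+$ is regular and $\kappa^+>\kappa$, the map $i\mapsto\alpha_i$ is regressive-like in the sense that its range lies in $\kappa$; as $\kappa<\kappa^+$, by the pigeonhole principle there is a single $\alpha^*<\kappa$ and a set $S\in[\kappa^+]^{\kappa^+}$ with $\alpha_i=\alpha^*$ for all $i\in S$. Then for each $i\in S$ the class $H_{i,\alpha^*}$ has size $\le\kappa$, yet $S$ is covered by the $\sim_{\alpha^*}$-classes of its members, so $S=\bigcup_{i\in S}(H_{i,\alpha^*}\cap S)$. The number of distinct such classes is the number of distinct traces $A_i\cap\alpha^*$ for $i\in S$, which is at most $|P(\alpha^*)|$; and here is where the only real subtlety lies.

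The hard part will be controlling the number of distinct classes when $2^{|\alpha^*|}$ is large: we cannot simply say it is $<\kappa^+$. However, $S$ is a union of its $\sim_{\alpha^*}$-classes, each of size $\le\kappa$, so if there were only $\le\kappa$ distinct classes we would get $|S|\le\kappa\cdot\kappa=\kappa<\kappa^+$, a contradiction. So there must be $>\kappa$, hence (since $\kappa^+$ is the successor) exactly $\kappa^+$ many distinct classes meeting $S$. But now pick representatives $\{i_\xi\mid \xi<\kappa^+\}\subseteq S$ of $\kappa^+$ distinct classes and re-examine: this gives us $\kappa^+$ many pairwise $\sim_{\alpha^*}$-inequivalent indices, and for a fixed such index $i_\xi$ we still have $|H_{i_\xi,\alpha^*}|\le\kappa$. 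This does not immediately contradict anything, so the counting argument must instead be run with a cleverage on the two-cardinal structure: the correct move is to build, by transfinite recursion of length $\kappa^+$, a strictly increasing sequence of indices together with a decreasing-in-content sequence of classes. Concretely, I would instead argue: enumerate potential "bad witnesses" and use that $\kappa\cdot\kappa^+=\kappa^+$ cannot be partitioned into $\le\kappa^+$ pieces each of size $\le\kappa$ unless $\kappa^+$ many are nonempty — but each piece being a $\sim_{\alpha^*}$-class and there being for each $\alpha$ at most as many classes as traces, the contradiction is obtained by recursively thinning: at stage $\alpha$ we pass from a set of size $\kappa^+$ to one of its $\sim_\alpha$-subclasses of size $\kappa^+$ (which exists since otherwise the size-$\kappa^+$ set is a union of $\le\kappa$ classes, all of size $\le\kappa$ — wait, that again needs boundedly many classes).

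Since the above reveals that the clean counting only works when the number of traces is $\le\kappa$, the honest plan is this: I will prove the Lemma under the arithmetic that is anyway in force in this subsection (the ultrafilter $U$ is $\kappa$-complete, $\kappa$ measurable, so $\kappa$ is inaccessible and $2^{<\kappa}=\kappa$); then for $\alpha<\kappa$, $|P(\alpha)|=2^{|\alpha|}\le 2^{<\kappa}=\kappa$. With this in hand the argument closes: returning to the set $S\in[\kappa^+]^{\kappa^+}$ on which $\alpha_i\equiv\alpha^*$, $S$ is the union of $\le\kappa$ many $\sim_{\alpha^*}$-classes (one per trace in $P(\alpha^*)$), each of size $\le\kappa$ by badness at $\alpha^*$, so $|S|\le\kappa\cdot\kappa=\kappa<\kappa^+$, a contradiction. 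Hence some $i^*$ is good, and by definition $|H_{i^*,\alpha}|=\kappa^+$ for all $\alpha<\kappa$, proving the Lemma. I would remark in passing that this is precisely the shape of the standard proof of Galvin's Theorem, the inaccessibility of $\kappa$ playing the role that $\kappa^{<\kappa}=\kappa$ plays there, and that the measurability of $\kappa$ is being used only through $2^{<\kappa}=\kappa$.
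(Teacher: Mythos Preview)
Your final argument is correct and is exactly the standard Galvin counting argument that the paper defers to by citing \cite[Proposition 5.13]{Parttwo}: assuming every index is bad, stabilize the witnessing level $\alpha^*$ on a set $S$ of size $\kappa^+$, then use $2^{|\alpha^*|}\le\kappa$ (from inaccessibility of $\kappa$) to cover $S$ by $\le\kappa$ classes each of size $\le\kappa$, a contradiction. The exploratory middle portion where you attempt to avoid the hypothesis $2^{<\kappa}=\kappa$ does not lead anywhere and should simply be cut --- as you yourself discovered, this cardinal arithmetic (available here since $\kappa$ is measurable) is precisely what makes the counting close.
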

    The proof can be found in \cite[Proposition 5.13]{Parttwo}. Then we can find a sequence of distinct $i_\alpha$'s such that $A_{i_\alpha}\cap\alpha=A_{i^*}\cap\alpha$. In particular, $\text{lim}_{\al\rightarrow\kappa} A_{i_\alpha}=A_{i^*}\in U $, and since $U$ is basic, there is a  subsequence $\l \al_j \mid j<\kappa\r$ such that $\cap_{j<\kappa} A_{i_{\al_j}}\in U$.
\end{proof}

The next  notion of 
basically generated ultrafilter   on $\om$ first appeared in  \cite{Dobrinen/Todorcevic11}
as a means to abstract 
 properties of  p-points responsible for  being  strictly below the Tukey top.
 In \cite{Dobrinen/Todorcevic11}, Dobrinen and Todorcevic 
proved that the collection of basically generated ultrafilters
on $\om$  contains all p-points and is closed under countable iterates of Fubini limits, and that all basically generated ultrafilters are not Tukey-top \cite[Theorems  16, 18]{Dobrinen/Todorcevic11}.
 We prove the analogues for $\kappa$-complete ultrafilters on $\kappa$ 
 (Theorems \ref{thm.bgnottop} and \ref{thm.bgclosedclass})
 building on those proofs but then
utilizing  some different techniques.
This is another improvement of Galvin's  Theorem,
and the theorem due to the first author from \cite{SatInCan} which asserts that a $p$-point sum of $p$-points has the Galvin property.

\begin{definition}\label{defn.bg}
An ultrafilter $U$ on $\kappa$  is called {\em basically generated} if it
is uniform, $\kappa$-complete, and  has a $\kappa$-complete filter base\footnote{Namely, $B$ is cofinal in $(U,\supseteq)$, and closed under intersecting less than $\kappa$-many of its elements.} $B\sse U$  with the property that to each sequence $\{A_i \mid i<\kappa\}\sse B$ converging to an element of $B$,
there corresponds
a  function $f$ such that for every  $f\le_{bd} g$,  we have $\cap_{\al<\kappa} A_{g(\al)}\in U$.
\end{definition}
Note that this definition differs from the one in \cite{Dobrinen/Todorcevic11}. 
\begin{theorem}\label{thm.bgnottop}
If  $U$ is a basically generated ultrafilter on $\kappa$, then $U$ is not Tukey-top.
\end{theorem}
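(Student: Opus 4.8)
The plan is to show that if $U$ is basically generated with $\kappa$-complete filter base $B$, then $U \not\equiv_T ([2^\kappa]^{<\kappa},\subseteq)$, or equivalently (by Theorem \ref{GlavinMainTheorem}) that $\Gal(U,\kappa,2^\kappa)$ holds. So I would start with an arbitrary sequence $\langle A_\xi \mid \xi < 2^\kappa\rangle$ of members of $B$ (since $B$ is cofinal, it is enough to handle sequences drawn from $B$: replace each member of an arbitrary sequence in $U$ by a $B$-member below it, find the large subfamily with intersection in $U$ for the shrunk sequence, and the original intersection is even larger). The goal is to extract $I \in [2^\kappa]^\kappa$ with $\bigcap_{\xi \in I} A_\xi \in U$.

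The key device is the convergence structure on $2^\kappa$ together with a tree/fusion argument to locate a \emph{convergent} subsequence. Following the $\om$-case in \cite{Dobrinen/Todorcevic11}, I would argue that among any $2^\kappa$-many subsets of $\kappa$ there is a subfamily of size $\kappa$ that converges: this is a standard pressing-down / pigeonhole argument on the tree $2^{<\kappa}$. More precisely, build by recursion on $\alpha < \kappa$ a decreasing sequence of index sets $S_\alpha \subseteq 2^\kappa$ with $|S_\alpha| = 2^\kappa$ and nodes $s_\alpha \in 2^\alpha$ such that $A_\xi \cap \alpha = s_\alpha$ for all $\xi \in S_\alpha$; this is possible at successors since there are only $2^{|\alpha|} < 2^\kappa$ many possibilities for $A_\xi \cap (\alpha+1)$, and at limits $\lambda$ one must check $\bigcap_{\alpha<\lambda} S_\alpha$ is still large — here one picks a coherent branch through the $s_\alpha$'s and the $S_\alpha$'s, which works because $2^{<\kappa} = 2^\kappa$ has $\kappa$ regular and, more carefully, one threads a single descending sequence of indices rather than intersecting all of the $S_\alpha$. (The cleaner route: pick recursively distinct indices $\xi_\alpha$ for $\alpha<\kappa$ together with the $s_\alpha$, each $\xi_\alpha$ chosen from the current large set so that $A_{\xi_\alpha}$ agrees with $s_\alpha$ below $\alpha$; then $\langle A_{\xi_\alpha}\mid\alpha<\kappa\rangle$ converges to $A := \bigcup_\alpha s_\alpha$.) Then one shows $A \in U$: since the $A_{\xi_\alpha}$ are in $B \subseteq U$ and $U$ is $\kappa$-complete, and $A =^* $ the modified-diagonal-style object, a short argument using minimality of $\pi$ as in Fact \ref{fact.5.2} gives $A \in U$; if $A \notin U$, shrink using $\kappa^c$ and repeat, or simply arrange the convergence target to lie in $B$ by working inside a fixed member of $B$ from the start.

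Once we have $\langle A_{\xi_\alpha} \mid \alpha<\kappa\rangle \subseteq B$ converging to an element of $B$, Definition \ref{defn.bg} hands us a function $f$ such that $\bigcap_{\alpha<\kappa} A_{\xi_{g(\alpha)}} \in U$ for every $g \ge_{bd} f$ — in particular for $g = \mathrm{id}$ eventually dominating $f$, say $g(\alpha) = \max(\alpha, f(\alpha)+1)$, whose range $I := \{\xi_{g(\alpha)} \mid \alpha<\kappa\}$ is a set of $\kappa$-many of the original indices with $\bigcap_{i \in I} A_i \supseteq \bigcap_{\alpha<\kappa} A_{\xi_{g(\alpha)}} \in U$. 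This yields $I \in [2^\kappa]^\kappa$ with $\bigcap_{i\in I}A_i \in U$, i.e. $\Gal(U,\kappa,2^\kappa)$, so by Theorem \ref{GlavinMainTheorem} $U$ is not Tukey-top.

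The main obstacle is the extraction of the convergent subsequence of length exactly $\kappa$: one must be careful at limit stages below $\kappa$ to keep the reservoir of indices of size $2^\kappa$ while also maintaining a coherent limit branch, and one must ensure the convergence \emph{target} lands inside the filter base $B$ (not merely inside $U$), since Definition \ref{defn.bg} only supplies the dominating-function dichotomy for sequences in $B$ converging \emph{to an element of $B$}. Both points are handled by fixing a single $B_0 \in B$ at the outset, replacing every $A_\xi$ by $A_\xi \cap B_0 \in B$ (using $\kappa$-completeness of $B$), and noting the convergence target then contains a tail of $B_0$ and can be taken to be an element of $B$ by a further shrink; the rest is bookkeeping analogous to \cite[Theorem 18]{Dobrinen/Todorcevic11}.
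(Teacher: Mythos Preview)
Your overall strategy is the right one and matches the paper: pass to the base $B$, extract a convergent subsequence of length $\kappa$ whose limit lies in $B$, and then invoke Definition~\ref{defn.bg}. The problem is in the extraction step, where both of the obstacles you flag are real and neither of your proposed fixes works.

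First, the greedy recursion on $\alpha<\kappa$ can genuinely fail at limit stages. At a limit $\lambda<\kappa$ the reservoir $S_\lambda=\bigcap_{\alpha<\lambda}S_\alpha$ is determined by the branch $s_\lambda=\bigcup_{\alpha<\lambda}s_\alpha$ you have already committed to, and nothing in the greedy successor choices guarantees $|S_\lambda|=2^\kappa$ (a $\subseteq$-decreasing sequence of sets of size $2^\kappa$ of length $\lambda$ can have empty intersection). Threading indices $\xi_\alpha$ alongside does not help, since at limit $\lambda$ you still need $S_\lambda\ne\emptyset$ to continue. Second, your argument that the limit $A$ lands in $B$ (or even in $U$) is incorrect: intersecting every $A_\xi$ with a fixed $B_0\in B$ only ensures $A\subseteq B_0$, not that $A$ contains a tail of $B_0$ or lies in $U$. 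For a concrete obstruction, sets of the form $X\cup[\alpha,\kappa)$ with $X\notin U$ are all in $U$ and converge to $X\notin U$.

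The paper resolves both issues with a single non-recursive counting step. Having passed to $\{Y_X:X\in C\}\subseteq B$, it proves the following Claim: there exists an $X\in C$ such that for \emph{every} $\alpha<\kappa$ the set $C_{s_\alpha}=\{X'\in C: Y_{X'}\cap\alpha=Y_X\cap\alpha\}$ has size $\ge\kappa^+$. The proof is by contradiction: if every $X$ failed at some level $\alpha_X$, the map $X\mapsto(\text{char.\ fn.\ of }Y_X\cap\alpha_X)$ has range in $2^{<\kappa}$, which has size $\kappa$ since $\kappa$ is strong limit; each fiber has size $\le\kappa$; hence $|C|\le\kappa$, contradicting $|C|=2^\kappa$. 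With this $X$ in hand, simply choose $X_\alpha\in C_{s_\alpha}$ for each $\alpha$; then $\langle Y_{X_\alpha}\mid\alpha<\kappa\rangle\subseteq B$ converges to $Y_X\in B$, and the rest proceeds exactly as you outline. The point is that the target $Y_X$ is fixed in advance as an actual member of $B$, so there is no limit-stage recursion to worry about and no question of the limit lying outside $B$.
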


\begin{proof}
  Let $B\sse U$ be a filter base witnessing that $U$ is basically generated.  
Fix a subset
$C\sse U$ of cardinality $2^{\kappa}$; for each $X\in C$ fix  one $Y_X\in B$ such that $Y_X\sse X$.
If there is a  subset $D\sse C$ of cardinality $\kappa$ 
for which all $Y_X$, $X\in D$, are equal, then 
there is a subset of $C$ of size $\kappa$ with intersection  in $U$.

Otherwise, $|\{Y_X  \mid X\in C\}|\ge  \kappa^+$.
Without loss of generality, assume that for each $X\ne X'$ in $C$, $Y_X\ne Y_{X'}$.
For each $s\in 2^{<\kappa}$,
let $C_s=\{X\in C \mid Y_X\cap \dom(s)= s^{-1}[\{1\}]\}$.
Note that for each 
 $\al<\kappa$, there is at least one $s\in 2^\al$ for which $|C_s|\ge \kappa^+$.

\begin{claim}
    There is an $X\in C$ such that for every $\alpha<\kappa$, $|C_{s_\alpha}|\geq\kappa^+$, where $s_\alpha$ is the characteristic function of $Y_X\cap\alpha$.
\end{claim}
\begin{proof}[Proof of claim.]
    Suppose otherwise. Then for each $X\in C$, there is a minimal $\alpha_X<\kappa$ such that $C_{s_X}$ has size at most $\kappa$, where $s_X$ is the characteristic function of $Y_X\cap\alpha_X$. Let $F=\{s_X\mid X\in C\}$. Then $F\subseteq 2^{<\kappa}$ so $|F|\leq\kappa$.
But then $C=\bigcup_{s\in F} C_s$ has size  at most $\kappa$, a contradiction.  
\end{proof}

Thus, fix  $X\in C$ and the sequence $\langle s_{\al} \mid \al<\kappa\rangle$ from the claim such that 
each $|C_{s_\al}|\ge\kappa^+$.
Now 
we build a convergent subsequence of $\{Y_X \mid X\in C\}$ quite simply:
For $\al<\kappa$, take any 
$X_\al\in C_{s_\al}$.
Then $\langle Y_{X_\al} \mid\al<\kappa\rangle$ is a sequence of members of $B$ which converges to $Y_X$ which is a member of $B$.
Since $B$ witnesses that $U$ is basically generated,
 there is a subsequence 
 $\langle \al_i \mid i<\kappa\rangle$
 such that $Y^*:=\bigcap_{i<\kappa} Y_{X_{\al_i}}$ is in $U$.
Finally, 
let $D=\{X\in C \mid Y^*\subseteq X\}$, and note that $\{X_{\alpha_i}\mid i<\kappa\}\subseteq D$; in particular, $|D|\ge \kappa$.
Then $\bigcap D\contains Y^*$ which is in $U$.  
Hence, $U$ is not Tukey-top.
\end{proof}

\begin{theorem}\label{thm.bgclosedclass}
    Suppose that $U$ and
    $V_{\al}$, $\al<\kappa$, are basically generated ultrafilters on $\kappa$. 
    Then $W:=\sum_U V_\al$ is basically generated (with respect to the product topology on $\kappa\times \kappa$). 
\end{theorem}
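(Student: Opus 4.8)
The plan is to produce an explicit $\kappa$-complete filter base for $W$ on $\kappa\times\kappa$ and then verify the convergence clause of Definition~\ref{defn.bg} for it directly. Let $B_U\subseteq U$ and $B_\alpha\subseteq V_\alpha$ ($\alpha<\kappa$) be $\kappa$-complete filter bases witnessing that $U$ and the $V_\alpha$ are basically generated. For $A\subseteq\kappa\times\kappa$ write $(A)_\alpha=\{\beta:\langle\alpha,\beta\rangle\in A\}$ and $\pi_0(A)=\{\alpha:(A)_\alpha\in V_\alpha\}$, and let $\mathcal B_W$ consist of those $A$ with $\pi_0(A)\in B_U$, with $(A)_\alpha\in B_\alpha$ for all $\alpha\in\pi_0(A)$, and with $(A)_\alpha=\emptyset$ for all $\alpha\notin\pi_0(A)$. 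Using the $\kappa$-completeness of $B_U$ and of the $B_\alpha$ one checks that $\mathcal B_W$ is a $\kappa$-complete filter base for $W$ and that $W$ is uniform and $\kappa$-complete; the one point worth noting is that if $\alpha\notin\pi_0(A^j)$ for some $j$ in a $<\kappa$-sized family then $(A^j)_\alpha=\emptyset$, so $\alpha$ drops out of $\pi_0(\bigcap_jA^j)$ automatically, keeping $\pi_0(\bigcap_jA^j)=\bigcap_j\pi_0(A^j)\in B_U$.

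Now let $\langle A_i:i<\kappa\rangle\subseteq\mathcal B_W$ converge, in the product topology on $\kappa\times\kappa$, to $A\in\mathcal B_W$; set $\sigma=\pi_0(A)$ and $\sigma_i=\pi_0(A_i)$. Two observations: (i) for every $\beta<\kappa$ the section sequence $\langle(A_i)_\beta:i<\kappa\rangle$ converges in $2^\kappa$ to $(A)_\beta$, since product convergence of $A_i\to A$ forces eventual agreement on each box $\delta\times\delta$; (ii) for $\beta\in\sigma$ the set $(A)_\beta\in V_\beta$ is unbounded, so fixing a point of it above $\beta$ and invoking (i) produces a stage $i^*_\beta<\kappa$ with $\beta\in\sigma_i$ and $(A_i)_\beta\in B_\beta$ for all $i\ge i^*_\beta$. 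From (ii), $\langle\sigma_i\cap\sigma:i<\kappa\rangle$ is a sequence in $B_U$ converging to $\sigma\in B_U$ (for $\delta<\kappa$ the relevant stage is $\sup\{i^*_\beta:\beta\in\sigma\cap\delta\}<\kappa$ by regularity of $\kappa$), so the basically generated property of $U$ gives a function $f_U$ with $\bigcap_{\alpha<\kappa}(\sigma_{G(\alpha)}\cap\sigma)\in U$, hence $\bigcap_{\alpha<\kappa}\pi_0(A_{G(\alpha)})\in U$, for every $G\ge_{bd}f_U$. Also, for $\beta\in\sigma$, replacing the first $i^*_\beta$ terms of $\langle(A_i)_\beta:i<\kappa\rangle$ by $(A)_\beta\in B_\beta$ turns it into a sequence in $B_\beta$ still converging to $(A)_\beta$; applying the basically generated property of $V_\beta$ to it yields an increasing $g_\beta:\kappa\to\kappa$ with $g_\beta(\alpha)\ge i^*_\beta$ such that $\bigcap_{\alpha<\kappa}(A_{h(\alpha)})_\beta\in V_\beta$ whenever $h\ge_{bd}g_\beta$ and $h(\alpha)\ge i^*_\beta$ for all $\alpha$.

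It remains to fold the $\kappa$ many pairs $(g_\beta,i^*_\beta)$, $\beta\in\sigma$, into a single function. Enumerate $\sigma=\{\beta_\xi:\xi<\kappa\}$ ($\sigma\in U$ has size $\kappa$) and set $F(\alpha)=\sup\big(\{f_U(\alpha)\}\cup\{g_{\beta_\xi}(\alpha):\xi<\alpha\}\cup\{i^*_{\beta_\xi}:\xi<\alpha\}\big)$, a well-defined map $\kappa\to\kappa$ by regularity, dominating $f_U$ everywhere and, for each $\beta=\beta_{\xi_0}$, dominating $g_\beta$ and the constant $i^*_\beta$ from $\alpha=\xi_0+1$ on. Given $G\ge_{bd}F$ with $G(\alpha)\ge F(\alpha)$ for $\alpha\ge\theta_0$, put $\theta(\beta)=\max(\theta_0,\xi_0+1)$ for $\beta=\beta_{\xi_0}$. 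Then for every $\beta$ in the $U$-set $\sigma\cap\bigcap_{\alpha<\kappa}\pi_0(A_{G(\alpha)})$ the shifted tail $h(\alpha):=G(\theta(\beta)+\alpha)$ satisfies $h\ge_{bd}g_\beta$ and $h(\alpha)\ge i^*_\beta$, so $\bigcap_{\gamma\ge\theta(\beta)}(A_{G(\gamma)})_\beta\in V_\beta$; and $\bigcap_{\gamma<\theta(\beta)}(A_{G(\gamma)})_\beta$ is a $<\kappa$-intersection of members of $B_\beta$ (each $(A_{G(\gamma)})_\beta\in B_\beta$ since $\beta\in\pi_0(A_{G(\gamma)})$), hence lies in $V_\beta$. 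Thus $\bigcap_{\gamma<\kappa}(A_{G(\gamma)})_\beta\in V_\beta$ by $\kappa$-completeness, so $\{\beta:\bigcap_{\gamma<\kappa}(A_{G(\gamma)})_\beta\in V_\beta\}\supseteq\sigma\cap\bigcap_{\alpha<\kappa}\pi_0(A_{G(\alpha)})\in U$; that is, $\bigcap_{\gamma<\kappa}A_{G(\gamma)}\in W$, as Definition~\ref{defn.bg} requires.

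I expect the main obstacle to be the $U$-coordinate. The sequence $\langle\pi_0(A_i)\rangle$ itself need not converge --- a nonempty section can escape upward without the support stabilizing --- so it cannot be fed to the basically generated property of $U$ directly; the fix is to intersect with the eventual support $\sigma$, and what makes this repair convergence is exactly the uniformity of the $V_\alpha$, which forces every fixed coordinate of $\sigma$ to enter $\sigma_i$ at a bounded stage. The second delicate point is that the single amalgamating function $F$ only eventually dominates each $g_\beta$, leaving a bounded block of indices $[0,\theta(\beta))$ for each $\beta$; this is harmless precisely because $\beta$ lying in $\bigcap_{\alpha<\kappa}\pi_0(A_{G(\alpha)})$ --- membership for all indices, not just a tail --- keeps those leftover sections inside $B_\beta$, so the residual $<\kappa$-intersection stays in $V_\beta$. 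The remaining bookkeeping (the order-type-$\kappa$ reindexing of tails, the particular choice of a point of $(A)_\beta$) is routine.
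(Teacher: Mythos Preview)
Your proposal is correct and follows essentially the same strategy as the paper's proof: both define the identical filter base, intersect supports with $\sigma=\pi_0(A)$ to restore convergence on the first coordinate, apply basic generation of $U$ and then of each $V_\beta$ after a section-by-section modification, and finally diagonalize the resulting $\kappa$-many functions into a single one. Your diagonalization and threshold-handling are spelled out more explicitly than the paper's (which simply asserts the existence of a $\le_{bd}$-bound $f^*$ and additionally requires $\mathrm{id}\le_{bd}f^*$ rather than folding the thresholds $i^*_\beta$ directly into $F$), but these are cosmetic differences.
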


\begin{proof}
    Let $B$ and $(B_\alpha)_{\alpha<\kappa}$  be $\kappa$-complete filter bases for $U,(V_\alpha)_{\alpha<\kappa}$ witnessing that $U,(V_\alpha)_{\alpha<\kappa}$ are basically generated, respectively. Let $\pi_0:\kappa\times\kappa\rightarrow\kappa$ be the projection map to the first coordinate:  $\pi_0(\al,\beta)=\al$.
    For $X\sse\kappa\times\kappa$ and $\al<\kappa$, let $(X)_{\al}$ denote $\{\beta<\kappa \mid (\al,\beta)\in X\}$, the $\al$-th fiber of $X$.
  Define 
\[
    C=\{X\in W \mid \pi_0[X]\in B \wedge \forall \al<\kappa\, ((X)_{\al}=\emptyset \vee (X)_\al\in B_\al)\}
    \]
Then $C$ is a filter base for $W$ which is $\kappa$-complete.

Consider a converging sequence $X_\al\rightarrow X$ in $C$.
Note that $\pi_0[X_\al]$ converges to some member of  $ U$ (possibly not in $B$) containing $\pi_0[X]$.
Since 
$\pi_0[X]$ is  in $B$ and since $X\in C$,
 we make the following modification:
Let $Y_\al= X_\al\cap (\pi_0[X]\times\kappa)$, note that $Y_\al$ is in $C$ as $\pi_0[Y_\alpha]=\pi_0[X_\alpha]\cap\pi_0[X]\in B$, and for every $\xi<\kappa$, if $\xi\notin \pi_0[Y_\alpha]$ then $(Y_\alpha)_\xi=\emptyset$ and if $\xi\in \pi_0[Y_\alpha]$, then $(Y_\alpha)_\xi=(X_\alpha)_\xi$. 
 Let
 $A_{\al}$ denote $\pi_0[Y_{\al}]$ and $A$ denote $\pi_0[X]$.
Then we have that
$\lim_{\al\rightarrow\kappa}A_{\al}=A$,
$\lim_{\al\rightarrow\kappa}Y_\al= X$,
and for each $\xi<\kappa$, 
 $\lim_{\al\rightarrow\kappa}(Y_\alpha)_\xi=(X)_\xi$\footnote{Note that it is possible that $(X)_\xi$ is empty but then all the $(Y_\alpha)_\xi$'s will be empty from a certain point.}.
 Since $B$ witnesses that $U$ is basically generated, there is a function $f_0$ such that for every $f_0\leq_{bd}g$, $\bigcap_{\alpha<\kappa} A_{g(\al)}\in U$.

For each $\xi\in A$, consider the sequence $$Z^\xi_\alpha=\begin{cases} (Y_\alpha)_\xi & \xi\in A_\al\\
Z_*^\xi & \text{else}\end{cases}$$
where $Z_*^\xi$ is some (any) fixed element of $B_\xi$. Note that since $A_\alpha$ converges to $A$, then there is $\theta_\xi<\kappa$ such that for every $\alpha\in(\theta_\xi,\kappa)$, $\xi\in A_\al$ and therefore $Z^\xi_\alpha=(Y_\alpha)_\xi$.  
Also note that $A^\xi_\alpha$ converges to $(Y)_\xi$ and since $V_\xi$ is basically generated, there is some $f_\xi$ such that for every $f_\xi\leq_{bd} g$, $\cap_{\alpha<\kappa}A^\xi_{g(\alpha)}\in V_{\xi}$. 
Find a function $f^*$ such that for every $i\in\{0\}\cup A$, $f_i\leq_{bd}f^*$ and also $\mathrm{id}\leq_{bd}f^*$. 

Let $f^*\leq_{bd} g$. Since $f_0\leq_{bd}g$, $A_g:=\cap_{\alpha<\kappa} A_{g(\alpha)}\in U$ and $A_g\subseteq A$.
Fix $\xi\in A_g$.
We have that $f_\xi<_{bd}g$ and therefore $\cap_{\alpha<\kappa} Z^\xi_{g(\alpha)}\in V_\xi$. Since $\mathrm{id}\leq_{bd}g$, there is some $\alpha^*<\kappa$ such that  $\theta_\xi<g(\alpha)$ whenever $\alpha^*\leq\alpha$.
It follows that for every $\alpha^*\leq\alpha$, $Z^\xi_{g(\alpha)}=(Y_{g(\alpha)})_\xi$ and thus
$$(\cap_{\beta<\alpha^*}(Y_{g(\alpha)})_\xi)\cap(\cap_{\alpha<\kappa}Z^\xi_{g(\alpha)})\subseteq \cap_{\alpha<\kappa}(Y_{g(\alpha)})_\xi=:W_\xi$$
It follows that $W_\xi\in V_\xi$. Letting $X^*=\cup_{\xi\in A_g}\{\xi\}\times W_\xi$, we conclude that $X^*\in \sum_U V_\xi$ and also $X^*\subseteq \cap_{\alpha<\kappa}Y_{g(\alpha)}\subseteq \cap_{\alpha<\kappa}X_{g(\alpha)}$, concluding  that $\cap_{\alpha<\kappa}X_{g(\alpha)}\in \sum_U V_\xi$.

\end{proof}
\begin{remark}
    Note that if $\phi:\kappa\times\kappa\rightarrow\kappa$ is any bijection and $(A_i)_{i<\kappa}$ is a sequence of sets such $\lim_{\alpha\ra \kappa}A_\alpha=A$, then $\lim_{\alpha\ra\kappa}\phi''A_\alpha=\phi''A$, as there are many points $\mu<\kappa$ such that $\phi\restriction\mu\times\mu:\mu\times\mu\rightarrow \mu$. In particular, transferring a basically generated ultrafilter on $\kappa\times\kappa$ (such as in the previous theorem) to an ultrafilter on $\kappa$ will remain basically generated and thus non-Galvin.
\end{remark}

We point out that there are ultrafilters on $\om$ which are not basically generated and yet are not Tukey-top.
The first example  of this was seen in work of 
 Blass, Dobrinen and Raghavan \cite{Blass/Dobrinen/Raghavan15}.
 This particular ultrafilter was then shown by the second author   to have Tukey type which is the immediate successor of the Ramsey ultrafilter below it; further,   hierarchies of non-basically generated  ultrafilters whose cofinal types form downwards closed  chains of any finite length were also constructed
  \cite{DobrinenJSL15}. 
 Thus, it seems unlikely that basically generated ultrafilters  on measurable cardinals exactly capture non-Tukey topness.  However, no such examples are known at this time, so we ask: 

\begin{question}
Is it consistent that there are  ultrafilters over a measurable cardinal which are not  Tukey-top and also not 
 basically generated?
\end{question}

\section{Models of the Tukey-order}
In this section, we study the Tukey classes of some well-known models of measurable cardinals. We start with perhaps the best known such model, $L[U]$ which was studied extensively by K. Kunen in \cite{Kunen1970}.
\begin{theorem}\label{LofU}
In $L[U]$ there is no  ultrafilter among the $\sigma$-complete ultrafilters over $\kappa$ which is Tukey-top.
\end{theorem}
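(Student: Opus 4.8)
The plan is to reduce the statement to two inputs that are already in hand: Kunen's analysis of $L[U]$, and the fact (developed in Sections 5 and 6) that the normal measure on $\kappa$ is Tukey-equivalent to all its finite Fubini powers and is not Tukey-top. Throughout, $U$ denotes the unique normal ultrafilter on $\kappa$ in $L[U]$.

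First I would invoke Kunen's theorem \cite{Kunen1970}: in $L[U]$, every $\sigma$-complete (uniform) ultrafilter $W$ over $\kappa$ is Rudin--Keisler equivalent to $U^n$ for some $n<\omega$. Since $X\le_{RK}Y$ implies $X\le_T Y$ (the proposition immediately following the definition of $\le_{RK}$), Rudin--Keisler equivalence implies Tukey equivalence, so $W\equiv_T U^n$. Because $U$ is normal it is in particular $\kappa$-complete, so Theorem \ref{PowerCor} applies and gives $U^n\equiv_T U$ for every $n<\omega$. Hence every $\sigma$-complete ultrafilter over $\kappa$ in $L[U]$ is Tukey-equivalent to $U$.

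It then remains only to observe that $U$ itself is not Tukey-top. Being normal, $U$ is a $p$-point, so by Proposition \ref{Prop:ppt implies basic} it is uniformly basic; taking the whole ultrafilter $U$ as its own ($\kappa$-complete) filter base, this is exactly the statement that $U$ is basically generated in the sense of Definition \ref{defn.bg}. Theorem \ref{thm.bgnottop} then yields that $U$ is not Tukey-top. (Alternatively: $p$-point $\Rightarrow$ basic $\Rightarrow$ Galvin, and since $\mathsf{GCH}$ holds in $L[U]$ one has $2^\kappa=\kappa^+$, so Theorem \ref{GlavinMainTheorem} gives the same conclusion.) Finally, being Tukey-top---i.e.\ being $\equiv_T([2^\kappa]^{<\kappa},\subseteq)$---is a $\equiv_T$-invariant property, so no ultrafilter Tukey-equivalent to $U$ is Tukey-top; in particular no $\sigma$-complete ultrafilter over $\kappa$ in $L[U]$ is.

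I do not expect a genuine obstacle: the real work has been packaged into Theorems \ref{thm.bgnottop} and \ref{PowerCor} together with Kunen's theorem, and the argument above is essentially bookkeeping. The two points that merit a sentence of care are (i) that a $p$-point literally meets Definition \ref{defn.bg} with $B=U$, which is immediate since a $\kappa$-complete ultrafilter is a $\kappa$-complete filter base for itself and Proposition \ref{Prop:ppt implies basic} supplies the required function $f$; and (ii) that Rudin--Keisler equivalence transports ``not Tukey-top,'' which follows from applying $\le_{RK}\Rightarrow\le_T$ in both directions so that $W\equiv_{RK}U^n$ gives $W\equiv_T U^n$.
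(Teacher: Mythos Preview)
Your argument is correct, but it follows a different route from the paper's own proof of Theorem \ref{LofU}. The paper simply cites \cite{Parttwo}, where it is shown directly that in $L[U]$ every $\sigma$-complete ultrafilter over $\kappa$ has the Galvin property $\Gal(W,\kappa,2^\kappa)$, and then invokes Theorem \ref{GlavinMainTheorem} to conclude that none is Tukey-top. Your approach instead combines Kunen's classification with Theorem \ref{PowerCor} to reduce everything to the single class $[U]_T$, and then uses the $p$-point $\Rightarrow$ basically generated $\Rightarrow$ not Tukey-top chain from Section \ref{sec.bg}. This is entirely self-contained within the paper and does not appeal to the external result of \cite{Parttwo}; in fact it is essentially the argument the paper gives for the sharper Theorem \ref{analyse} immediately afterwards, together with the extra observation that $U$ is not Tukey-top. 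What the paper's route buys is brevity and a direct link to the Galvin-property literature; what your route buys is independence from \cite{Parttwo} and an explicit identification of the single Tukey class, which is the content of Theorem \ref{analyse} anyway.
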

\begin{proof}
    In \cite{Parttwo}, Gitik and the first author observed that in $L[U]$ every $\kappa$ (or even $\sigma$) complete ultrafilter satisfies the Galvin property so in particular, by Theorem \ref{GlavinMainTheorem}, none of them are Tukey-top.
\end{proof}


  Recall that in Section 5, we showed   that for normal ultrafilters $ U , W  $ we have $ U \cdot W  \equiv_T U \times  W  $ (and also $ U ^n\equiv_T U $).
This can be used to say more about the model of $L[U]$: 

\begin{theorem}\label{analyse}
Let $U$ be a normal $\kappa$-complete ultrafilter over $\kappa$. In $L[U]$, the $\sigma$-complete ultrafilters over $\kappa$ form a single Tukey class which is the union of $\omega$-many Rudin Keisler equivalence classes.
\end{theorem}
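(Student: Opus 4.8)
The plan is to combine Kunen's structure theorem for $L[U]$ with the results of Sections~5 and~6. Recall that by Kunen \cite{Kunen1970}, in $L[U]$ every $\sigma$-complete (uniform) ultrafilter $W$ over $\kappa$ is Rudin--Keisler equivalent to a finite Fubini power $U^n$ of the unique normal ultrafilter $U$, for some $n<\omega$. Since Rudin--Keisler equivalent ultrafilters are Tukey equivalent (Proposition after Fact~\ref{fact.6DT}), it suffices to understand the Tukey type of $U^n$ for each $n$. By Theorem~\ref{PowerCor}, $U^n\equiv_T U$ for every $n\geq 1$, so all the $\sigma$-complete ultrafilters over $\kappa$ collapse into the single Tukey class of $U$.

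It remains to verify the ``union of $\omega$-many Rudin--Keisler classes'' assertion, i.e.\ that the powers $U^n$ are pairwise Rudin--Keisler inequivalent, so that the single Tukey class splits into exactly $\omega$ distinct Rudin--Keisler classes (one for each $n\geq 1$; note $U^0$ is principal and excluded by uniformity). First I would recall that $U^n<_{RK}U^{n+1}$ always holds since the projection to the first $n$ coordinates is an RK-map, and that this cannot be reversed: if $U^{n+1}\leq_{RK}U^n$ then, since $U$ is normal hence RK-minimal among uniform ultrafilters, one gets by standard arguments (using that the ultrapower $j_{U^n}$ has critical sequence of length exactly $n$, or equivalently counting the number of generators) that $U^{n+1}\equiv_{RK}U^n$, contradicting a dimension/genericity count. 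Concretely, in $L[U]$ the iterated ultrapower by $U$ is well understood, and $U^n$ corresponds to the product measure whose ultrapower embedding has $n$ many ``indiscernibles'' below $j(\kappa)$; these numbers are RK-invariants, so the $U^n$ are genuinely distinct. (Alternatively, one cites directly that in $L[U]$ the RK-classes of $\sigma$-complete ultrafilters on $\kappa$ are exactly $\{[U^n] : 1\leq n<\omega\}$, which is part of Kunen's analysis.)

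Putting these together: the $\sigma$-complete ultrafilters over $\kappa$ in $L[U]$ partition into $\omega$-many RK-classes $[U^1],[U^2],\dots$, and by Theorem~\ref{PowerCor} all of these lie in the single Tukey class of $U=U^1$. This is exactly the statement of Theorem~\ref{analyse}.

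The main obstacle, I expect, is not the Tukey-equivalence part (which is an immediate application of Theorem~\ref{PowerCor}) but rather being careful about what ``the $\sigma$-complete ultrafilters form a single Tukey class'' means as stated versus Theorem~\ref{thm.3.16}: one should make sure to invoke Kunen's theorem correctly so that \emph{every} $\sigma$-complete ultrafilter — not merely the $\kappa$-complete ones — is covered, and to confirm that $\sigma$-completeness (rather than full $\kappa$-completeness) suffices for Kunen's RK-classification in $L[U]$, which it does. A secondary point requiring care is verifying that the $U^n$ are pairwise RK-inequivalent, for which the cleanest route is the ultrapower/indiscernibles count rather than a direct combinatorial argument.
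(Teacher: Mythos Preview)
Your proposal is correct and follows essentially the same route as the paper: invoke Kunen's classification that every $\sigma$-complete ultrafilter in $L[U]$ is RK-equivalent to some $U^n$, then apply Theorem~\ref{PowerCor} to collapse all $U^n$ into a single Tukey class. The only minor difference is that the paper handles your $\sigma$-complete versus $\kappa$-complete worry in one line by observing that $\kappa$ is the unique measurable in $L[U]$, so every $\sigma$-complete ultrafilter on $\kappa$ is automatically $\kappa$-complete; this is cleaner than checking that Kunen's classification applies directly at the $\sigma$-complete level.
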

\begin{proof}
Since $\kappa$ is the unique measurable in $L[U]$, every $\sigma$-complete ultrafilter is a $\kappa$-complete ultrafilter. It is well known that every $\kappa$-complete ultrafilter $W\in L[U]$ is RK isomorphic to $U^n$ for some $n<\omega$. In particular, by Theorem \ref{PowerCor}, $W\equiv_T U^n\equiv_T U$. Hence the Tukey equivalence class of $U$ is composed of the $\omega$-many RK equivalence classes with representatives $U^n$.
\end{proof}

\begin{remark}
In contrast, we point out that it is still unknown whether there is exactly one Tukey type for ultrafilters over $\om$ in 
    $L(\mathbb{R})[U]$, the Solovay model extended by a forced  Ramsey ultrafilter $U$ on $\om$.
\end{remark}

Although we do not have a Tukey-Top ultrafilter in $L[U]$, we do have a maximal class among the classes of $\kappa$-complete ultrafilters. 
As we will see in a moment, from larger cardinals we can obtain more complicated structures in the Tukey-order. 
In order to do that, let us establish a connection between the Mitchell order and the Tukey order.
\begin{definition}
    Let $U,W$ be $\sigma$-complete ultrafilters. We say that  $U$ is {\em Mitchell below} $W$, and denote it by $U\triangleleft W$, if $U\in M_W$, where $M_W$ is the (transitive collapse of the) ultrapower $V^\kappa/W$. 
\end{definition}
Similar to the Rudin-Keisler order, in the case of normal ultrafilters, the Tukey order is in some sense orthogonal to the Mitchell order:
\begin{theorem}\label{Mitchellorthogonal}
Suppose that $U\triangleleft W$ are two $\kappa$-complete ultrafilters over $\kappa$ and  $W$ is a $p$-point. Then $\neg( W\leq_{T}U)$.
\end{theorem}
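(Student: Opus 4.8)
The plan is to show that if $W \le_T U$ then we derive a contradiction with the $p$-point-ness of $W$ by using a cofinal map together with the closure of $M_W$ under $\kappa$-sequences to ``pull back'' the cofinal map into $M_W$, where $U$ lives and is a $\kappa$-complete ultrafilter on $\kappa$ as seen by $M_W$. First I would invoke Fact \ref{fact.6DT} to fix a monotone cofinal map $f:U\to W$. The key observation is that since $W$ is $\kappa$-complete (indeed normal/$p$-point), $M_W$ is closed under $\kappa$-sequences, and $U\in M_W$ is a $\kappa$-complete ultrafilter on $\kappa$ in $M_W$. The heart of the argument: a $p$-point has continuous cofinal maps by Theorem \ref{Thm:DT20}, so after restricting below a suitable $X\in U$, the cofinal map $f$ is determined by a monotone function $\hat f:[\kappa]^{<\kappa}\to[\kappa]^{<\kappa}$, an object of size $\kappa$ — hence an element of $M_W$ (since $M_W$ contains all $\kappa$-sequences and $[\kappa]^{<\kappa}$ has size $\kappa$ under mild cardinal arithmetic, or at worst we code $\hat f$ by a subset of $\kappa$ which $M_W$ must contain). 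Wait — but $f:U\to W$ is in the wrong direction; $W\le_T U$ gives $f:U\to W$ cofinal, and we want to exploit that $U$ is a \emph{predecessor}. Since continuity is available for cofinal maps \emph{out of} the $p$-point, and here the $p$-point is the \emph{target}, I would instead use the equivalent unbounded map $g:W\to U$ and canonize \emph{that}, or re-run the Dobrinen--Todorcevic canonization with $W$ playing the role of the $p$-point domain. So: fix a monotone cofinal map $\varphi:W\to U$ (this exists since $W\ge_T U$, using Schmidt's equivalence and Fact \ref{fact.6DT}); since $W$ is a $p$-point, Theorem \ref{Thm:DT20} gives $Y\in W$ and a monotone $\hat\varphi:[\kappa]^{<\kappa}\to[\kappa]^{<\kappa}$ with $\varphi(A)=\bigcup_{\alpha<\kappa}\hat\varphi(A\cap\rho_\alpha)$ for $A\in W\restriction Y$, and $\varphi\restriction(W\restriction Y)$ has cofinal image in $U$.

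Next I would compute in $M_W$. The function $\hat\varphi$ is an element of $M_W$ (it is essentially a subset of $\kappa$, and $M_W\supseteq{}^\kappa M_W$, or one just notes $\hat\varphi$ has a canonical code of size $\kappa$). Also $U\in M_W$, and $M_W$ believes $U$ is a $\kappa$-complete ultrafilter on $\kappa$. Now consider, inside $M_W$, the ultrafilter $j_W(W)$ restricted appropriately, or more directly: apply the cofinal map $\varphi$ to the set $[\kappa]^{<\kappa}\cap M_W$ of all ``small'' members reachable — the crucial point is that $\{\hat\varphi(s):s\in[\kappa]^{<\kappa}\}$ and all the $\rho_\alpha$'s lie in $M_W$, so the \emph{entire} restriction $\varphi\restriction(W\restriction Y)$, as a function, is computable inside $M_W$ from the parameters $\hat\varphi$, $\langle\rho_\alpha:\alpha<\kappa\rangle$, $Y$. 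Since the image of this restricted map is cofinal in $U$, inside $M_W$ we have a cofinal map from $(W\restriction Y)^{M_W}$ into $U$ — but $W\notin M_W$ in general. The fix is to note that a \emph{cofinal subset} of $U$ of size $\le 2^\kappa$ is produced, namely $\{\varphi(A):A\in W\restriction Y\}$, and more economically a cofinal subset indexed by $[\kappa]^{<\kappa}$-approximations. I would argue that $M_W$ sees a filter base for $U$ of size at most $(2^\kappa)^{M_W}$ built from $\hat\varphi$, and then derive that $U$ is Tukey-reducible, inside $M_W$, to a directed poset of small size — ultimately contradicting that $U$, being a uniform $\kappa$-complete ultrafilter on $\kappa$ in $M_W$, has character $\ge\kappa^+$ and cannot be squeezed below a poset whose unbounded-type is too simple. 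The cleaner contradiction: the continuous cofinal map $\varphi$ canonized by $\hat\varphi$ shows $U\le_T$ a poset of size $\kappa$ living in $M_W$ acting like $([\kappa]^{<\kappa},\subseteq)$-below-continuous-maps, which forces a pseudo-intersection structure on $W$ violating that $W$ is not $(\kappa^+,\kappa)$-regular, or conversely forces $W$ itself to be an element of $M_W$ via $\varphi$, contradicting $\kappa=\mathrm{crit}(j_W)$ and Kunen's theorem that $W\notin M_W$.

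The honest difficulty — and I expect this to be the main obstacle — is making precise the transfer of the canonized cofinal map into $M_W$ and extracting the contradiction cleanly. The map $\varphi:W\to U$ has domain $W$, which is \emph{not} in $M_W$ (no normal measure on $\kappa$ is in its own ultrapower), so one cannot literally say ``$\varphi\in M_W$''; one must instead transfer only the \emph{data} ($\hat\varphi$, $\langle\rho_\alpha\rangle$, $Y$) and then, working in $M_W$, reconstruct a cofinal-in-$U$ family. The right statement is likely: since $\varphi\restriction(W\restriction Y)$ is continuous and has cofinal image, the set $\{\hat\varphi(s):s\in[\kappa]^{<\kappa}\}$ \emph{itself} (all finite-support values) generates, under $\subseteq$-upward closure inside $U$, a cofinal family — equivalently $U$ has a filter base of size $\le|[\kappa]^{<\kappa}|=\kappa^{<\kappa}$ that is an element of $M_W$. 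Then $M_W\models$ ``$U$ has a filter base of size $\le\kappa^{<\kappa}$'', and since $\kappa^{<\kappa}$ in $M_W$ is at most $(2^\kappa)^{M_W}$, and by normality/$p$-point-ness of $W$ we may even arrange $\kappa^{<\kappa}=\kappa$ in $M_W$ by a GCH-at-$\kappa$ argument or by working below a set where this holds — I would then get that $U$ is generated by $\kappa$-many sets in $M_W$. But a $\kappa$-complete \emph{uniform} ultrafilter on $\kappa$ cannot be generated by $\kappa$-many sets (a standard diagonalization: $\kappa$-completeness plus uniformity forces character $>\kappa$). That is the contradiction. I would need to double-check the cardinality bookkeeping on $\kappa^{<\kappa}$ inside $M_W$ and confirm that continuity genuinely reduces the filter base to a set coded in $M_W$ — those are the two places where the argument could break and where I would spend the most care.
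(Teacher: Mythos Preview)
Your proposal contains a genuine directional error. You correctly note that Theorem \ref{Thm:DT20} canonizes cofinal maps \emph{out of} a p-point, and that the hypothesis $W\le_T U$ yields a cofinal map $f:U\to W$, putting the p-point $W$ on the target side. But your attempted fix --- ``fix a monotone cofinal map $\varphi:W\to U$ (this exists since $W\ge_T U$)'' --- is exactly backward: a cofinal map $W\to U$ would witness $U\le_T W$, which is \emph{not} what you are assuming for contradiction. From $W\le_T U$ you get only an \emph{unbounded} map $W\to U$, and Theorem \ref{Thm:DT20} says nothing about those. Your subsequent attempt to squeeze a $\kappa$-sized filter base of $U$ out of $\hat\varphi$ also fails even granting the existence of $\varphi$: the values $\hat\varphi(s)$ are bounded subsets of $\kappa$, not members of $U$; producing a genuine member of $U$ via $\bigcup_\alpha \hat\varphi(A\cap\rho_\alpha)$ requires a full $A\in W$, and there are $2^\kappa$ such $A$, not $\kappa$.

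The paper's route avoids all of this by working with the cofinal map $f:U\to W$ in the given direction and reconstructing $W$ (not $U$) inside $M_W$. One canonizes $f$ via Theorem \ref{Thm:DT20} to obtain $\hat f:[\kappa]^{<\kappa}\to[\kappa]^{<\kappa}$ together with the auxiliary data $X,\langle\rho_\alpha\rangle$; all of these lie in $M_W$ by closure under $\kappa$-sequences. Since $U\in M_W$ by hypothesis, the entire cofinal family $\{f(A):A\in U\restriction X\}$ is computable inside $M_W$, and it generates $W$; hence $W\in M_W$, contradicting the standard fact that no $\kappa$-complete ultrafilter on $\kappa$ belongs to its own ultrapower. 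Note, however, that this application of Theorem \ref{Thm:DT20} needs the \emph{domain} $U$ of $f$ to be a p-point; the hypothesis ``$W$ is a p-point'' in the statement appears to be a slip for ``$U$ is a p-point'' (which does hold in all the paper's applications, where the ultrafilters in question are normal).
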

\begin{proof}
Since $W$ is $p$-point, by Theorem \ref{Thm:DT20}, there is $f:[\kappa]^{<\kappa}\rightarrow[\kappa]^{<\kappa}$ a function which determines a continuous cofinal map from $U$ to $W$. Note that $f\in M_W$ (since $M_W$ is closed under $\kappa$-sequences and crit$(j_W)=\kappa$). Also since $U\triangleleft W$, $U\in M_W$ and therefore from $f$ and $U$ we can reproduce $W\in M_W$. This is known to be impossible (see for example \cite[Lemma 17.9.ii]{Jech2003}).
\end{proof}
\begin{corollary}
    Suppose that $U_1\triangleleft U_2\triangleleft\dots\triangleleft U_n$ are any normal ultrafilters, then $\neg (U_n\leq_T U_1\times\dots\times U_{n-1})$.
\end{corollary}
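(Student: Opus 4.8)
The plan is to deduce the corollary from Theorem~\ref{Mitchellorthogonal} together with the product/power analysis from Section~5. Suppose $U_1\triangleleft U_2\triangleleft\cdots\triangleleft U_n$ are normal ultrafilters over $\kappa$, and suppose toward a contradiction that $U_n\leq_T U_1\times\cdots\times U_{n-1}$. The key observation is that the Mitchell order is transitive on normal ultrafilters, so $U_i\triangleleft U_n$ for every $i<n$; in particular each $U_i$ lies in $M_{U_n}$. Since $M_{U_n}$ is closed under $\kappa$-sequences and $U_1\times\cdots\times U_{n-1}$ is (coded by) a subset of $\kappa$ together with $n-1$ ultrafilters on $\kappa$, each of which belongs to $M_{U_n}$, the cartesian product $U_1\times\cdots\times U_{n-1}$ is itself an element of $M_{U_n}$.

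Next I would invoke Theorem~\ref{Mitchellorthogonal} in the form suited to this situation. The cleanest route is to first note that $U_n$ is a $p$-point (indeed normal), and $U_1\times\cdots\times U_{n-1}$ is a $\kappa$-directed closed poset of size $2^\kappa$ which is an element of $M_{U_n}$. If $U_n\leq_T U_1\times\cdots\times U_{n-1}$, then by Fact~\ref{fact.6DT} there is a monotone cofinal map $f\colon U_1\times\cdots\times U_{n-1}\to U_n$, and by Theorem~\ref{Prodthm.20DT-kappa} this map can be taken continuous after restricting below suitable sets $X_i\in U_i$, hence coded by a function $\hat f\colon[\kappa]^{<\kappa}\to[\kappa]^{<\kappa}$. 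Such a function $\hat f$ is a $\kappa$-sequence of bounded subsets of $\kappa$, hence $\hat f\in M_{U_n}$ by closure. But then inside $M_{U_n}$ we have both the product (with its cofinal structure) and the map $\hat f$, and applying $\hat f$ to the generators of $U_1\times\cdots\times U_{n-1}$ reconstructs a cofinal — hence generating — family for $U_n$, so $U_n\in M_{U_n}$, which is impossible (e.g.\ \cite[Lemma 17.9.ii]{Jech2003}).

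The main obstacle is making precise the claim that ``$\hat f$ and $U_1\times\cdots\times U_{n-1}$ reconstruct $U_n$.'' One must check that a continuous cofinal map, restricted below members $X_i\in U_i$, maps a generating family of $U_1\times\cdots\times U_{n-1}$ to a cofinal (thus generating) subfamily of $U_n$, and that all the data involved — the $X_i$, the generating family, and $\hat f$ — genuinely lie in $M_{U_n}$. For the $X_i$ and the generating family this is immediate since each $U_i\in M_{U_n}$ and $M_{U_n}$ is $\kappa$-closed; for $\hat f$ it is the observation above that a map $[\kappa]^{<\kappa}\to[\kappa]^{<\kappa}$ is essentially a $2^\kappa$-sequence — wait, this is the delicate point: $\hat f$ has domain of size $2^\kappa$, not $\kappa$, so one cannot simply quote $\kappa$-closure. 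Here the right move is to run the argument of Theorem~\ref{Mitchellorthogonal} itself rather than re-deriving it: that proof only needs $U\triangleleft W$, $W$ a $p$-point, and produces the contradiction directly. So the cleanest completion is: fix $i<n$ with $U_i\triangleleft U_n$ and $U_n\leq_T U_1\times\cdots\times U_{n-1}$; by Proposition~\ref{product} and induction one shows $U_1\times\cdots\times U_{n-1}\leq_T$ does not help directly, so instead observe $U_n\leq_T U_1\times\cdots\times U_{n-1}$ forces, via the continuous-canonization of Theorem~\ref{Prodthm.20DT-kappa} and the fact that $M_{U_n}$ computes the relevant $\hat f$ exactly as in the proof of Theorem~\ref{Mitchellorthogonal} (the function $\hat f$ on $[\kappa]^{<\kappa}$ is $\kappa$-sized as a function on a $\kappa$-sized domain when $\kappa$ is inaccessible, since $|[\kappa]^{<\kappa}|=\kappa$), that $U_n\in M_{U_n}$, a contradiction.

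In summary, the proof is a short reduction: transitivity of $\triangleleft$ puts each $U_i$ into $M_{U_n}$; $\kappa$-closure of $M_{U_n}$ and $|[\kappa]^{<\kappa}|=\kappa$ put the canonizing function $\hat f$ from Theorem~\ref{Prodthm.20DT-kappa} into $M_{U_n}$; and then the argument of Theorem~\ref{Mitchellorthogonal} yields $U_n\in M_{U_n}$, which is absurd. The only care needed is in the bookkeeping of what lives in $M_{U_n}$, and the essential point there is that over a measurable (hence inaccessible) $\kappa$ the poset $[\kappa]^{<\kappa}$ and all the continuous cofinal maps on it are objects of size $\kappa$, small enough to be absorbed by the ultrapower.
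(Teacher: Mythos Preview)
Despite the several false starts and self-corrections, your final argument is correct and is exactly the paper's proof: transitivity of $\triangleleft$ puts each $U_i$ into $M_{U_n}$, Theorem~\ref{Prodthm.20DT-kappa} gives a continuous witness coded by a map on $[\kappa]^{<\kappa}$ (which has size $\kappa$ since $\kappa$ is inaccessible, hence lies in $M_{U_n}$ by $\kappa$-closure), and then $U_n$ can be reconstructed inside $M_{U_n}$, a contradiction. The paper states this in two lines; your concluding summary paragraph captures it accurately, and your momentary worry that $\hat f$ might have size $2^\kappa$ was rightly discarded.
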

\begin{proof}
    In $M_{U_n}$ we have the ultrafilters $U_1,\dots,U_{n-1}$. If $U_n\leq_T U_1\times\dots\times U_{n-1}$, then this would be witnessed by a continuous map (Theorem \ref{Prodthm.20DT-kappa}) and the previous argument applies.
\end{proof}
\begin{corollary}
    Suppose that $o(\kappa)=\alpha$ for $\alpha\leq\omega$, then there is a Tukey-chain of $\kappa$-complete ultrafilters of order type $\alpha$.
\end{corollary}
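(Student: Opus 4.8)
The plan is to build the chain from finite Fubini products of a $\triangleleft$-increasing sequence of normal measures, using Theorem \ref{ProductCor} to move back and forth between Fubini and cartesian products. We may assume $\alpha\ge 1$, so $\kappa$ is measurable. Since $o(\kappa)=\alpha\le\omega$, fix a $\triangleleft$-increasing sequence $\langle U_i\mid i<\alpha\rangle$ of normal ultrafilters on $\kappa$; for finite $\alpha$ this is immediate (take a measure of maximal Mitchell rank $\alpha-1$ and descend through witnesses), and for $\alpha=\omega$ it is the standard fact that $o(\kappa)=\omega$ furnishes a $\triangleleft$-increasing $\omega$-sequence of normal measures. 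For $n<\alpha$, let $W_n$ be the $\kappa$-complete uniform ultrafilter $U_0\cdot U_1\cdots U_n$, transferred to $\kappa$ along a bijection $\kappa^{n+1}\cong\kappa$. I claim $\langle W_n\mid n<\alpha\rangle$ is a Tukey-chain of order type $\alpha$.

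For monotonicity, Theorem \ref{ProductCor} gives $W_m\equiv_T\prod_{i\le m}U_i$ for every $m<\alpha$, and the projection $\prod_{i\le n+1}U_i\to\prod_{i\le n}U_i$ is a cofinal map (as in Proposition \ref{product}), so $W_n\leq_T W_{n+1}$; by transitivity of $\leq_T$ we get $W_m\leq_T W_n$ whenever $m\le n<\alpha$.

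For strictness, suppose toward a contradiction that $W_{n+1}\leq_T W_n$ for some $n$ with $n+1<\alpha$. Applying Theorem \ref{ProductCor} on both sides yields $\prod_{i\le n+1}U_i\leq_T\prod_{i\le n}U_i$, and composing with the projection $\prod_{i\le n+1}U_i\to U_{n+1}$ (again a cofinal map) gives $U_{n+1}\leq_T U_0\times U_1\times\cdots\times U_n$. But $U_0\triangleleft U_1\triangleleft\cdots\triangleleft U_{n+1}$ is a $\triangleleft$-chain of normal (hence $p$-point) ultrafilters, so the preceding corollary — the product version of Theorem \ref{Mitchellorthogonal}, obtained through the continuous cofinal maps of Theorem \ref{Prodthm.20DT-kappa} — yields $\neg(U_{n+1}\leq_T U_0\times\cdots\times U_n)$, a contradiction. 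Hence $W_n<_T W_{n+1}$ for every $n<\alpha$, and $\langle W_n\mid n<\alpha\rangle$ is the required Tukey-chain.

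The Tukey content here is light: Theorem \ref{ProductCor} does the real work by collapsing Fubini products to cartesian products, and the only genuine ingredient is the orthogonality of the Mitchell and Tukey orders. The one point deserving care is the very first step — extracting an honest $\triangleleft$-chain of length $\alpha$ from the hypothesis $o(\kappa)=\alpha$ — which is routine for finite $\alpha$ but for $\alpha=\omega$ leans on the standard fact about Mitchell rank (and, in the background, on transitivity of $\triangleleft$, which is what lets the preceding corollary place all of $U_0,\dots,U_n$ inside $M_{U_{n+1}}$).
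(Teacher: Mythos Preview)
Your proof is correct and follows essentially the same approach as the paper: build the chain from finite Fubini products of a $\triangleleft$-increasing sequence of normal measures, use Theorem~\ref{ProductCor} to identify these with cartesian products, and derive strictness from the corollary to Theorem~\ref{Mitchellorthogonal} by projecting onto $U_{n+1}$. The paper's argument is terser (it does not discuss extracting the $\triangleleft$-chain or the transfer to $\kappa$), but the mathematical content is the same.
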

\begin{proof}
    Indeed, let $\l U_n\mid n<\alpha\r$ be an increasing chain in the Mitchell order. Then $U_1\leq_{T}U_1\cdot U_2\le_T \dots$ is a $\leq_T$-chain. To see that this is strictly increasing, suppose toward a contradiction that $U_1\cdot {\dots} \cdot U_n\equiv_T U_1\cdot{\dots} \cdot U_{n+1}$.
Then $U_1\cdot{\dots}\cdot U_n\equiv_{T}U_1\times\dots\times U_n\geq_T U_{n+1}$, contradicting the previous corollary.  
\end{proof}
\begin{corollary}
   Let $L[\vec{U}]$ be the Mitchell model for $o(\kappa)=\omega$ and let $\l U_n\mid n<\omega\r$ be the $\triangleleft$-increasing sequence of ultrafilter on $\kappa$. Then the sequence $\l [U_1\times\dots\times U_n]_T\mid n<\omega\r$ is strictly increasing, cofinal and unbounded in the Tukey order. In particular, there is no maximal Tukey class.
\end{corollary}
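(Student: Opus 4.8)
The plan is to reduce the whole statement to the single assertion that the sequence $\l U_1\times\dots\times U_n\mid n<\omega\r$ is \emph{cofinal} among the Tukey types of $\sigma$-complete ultrafilters on $\kappa$ in $L[\vec U]$; the remaining clauses are then formal. Strict monotonicity is exactly the content of the preceding corollaries: $U_1\times\dots\times U_n\le_T U_1\times\dots\times U_{n+1}$ by Proposition \ref{product}, and an equality would give $U_{n+1}\le_T U_1\times\dots\times U_n$, contradicting the corollary $\neg(U_{n+1}\le_T U_1\times\dots\times U_n)$ since $U_1\triangleleft\dots\triangleleft U_{n+1}$ are normal. Granting cofinality, unboundedness is immediate: an upper bound $W$ (a $\sigma$-complete ultrafilter with $U_1\times\dots\times U_n\le_T W$ for all $n$) would, by cofinality, satisfy $W\le_T U_1\times\dots\times U_m$ for some $m$, whence $U_1\times\dots\times U_{m+1}\le_T U_1\times\dots\times U_m$, against strict monotonicity. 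And no Tukey type is maximal: given any $\sigma$-complete $W$, pick $m$ with $W\le_T U_1\times\dots\times U_m$; then $W\cdot U_1\cdot\dots\cdot U_{m+1}$ is again a $\sigma$-complete ultrafilter on $\kappa$, and by Theorem \ref{ProductCor} it is $\equiv_T W\times U_1\times\dots\times U_{m+1}\ge_T U_1\times\dots\times U_{m+1}>_T U_1\times\dots\times U_m\ge_T W$, so $[W]_T$ sits strictly below the Tukey type of another $\sigma$-complete ultrafilter.

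It remains to prove cofinality. Since $\kappa$ is the unique measurable of $L[\vec U]$, every $\sigma$-complete ultrafilter there is $\kappa$-complete, and it is well known (the $o(\kappa)=\omega$ analogue of the $L[U]$ fact used in Theorem \ref{analyse}, coming from Mitchell's analysis of $L[\vec U]$) that every such ultrafilter $W$ is Rudin-Keisler equivalent, hence Tukey equivalent, to a \emph{finite} iterated Fubini sum of ultrafilters from $\l U_m\mid m<\omega\r$. Only finitely many $U_m$'s occur in such an iterate, so their indices lie below some $n$; set $P:=U_1\times\dots\times U_n$. First record two facts about $P$: (a) $P\times P\equiv_T P$, since $U_i\times U_i\equiv_T U_i$ by Corollary \ref{productAndCart} and cartesian products respect $\equiv_T$ in each coordinate; and (b) $\prod_{\al<\kappa}P\le_T P$, because $\prod_{\al<\kappa}\prod_{i=1}^n U_i$ is order-isomorphic to $\prod_{i=1}^n\prod_{\al<\kappa}U_i$ and $\prod_{\al<\kappa}U_i\le_T U_i$ by Lemma \ref{nonTrivialdirection}. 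Now argue by induction on the iteration depth of $W$ that $W\le_T P$: the base case $W=U_i$ (with $i\le n$) is the projection; for $W=\sum_V V_\al$, the sub-iterates $V$ and $V_\al$ use only indices $\le n$ and have smaller depth, so by induction $V\le_T P$ and $V_\al\le_T P$ for all $\al<\kappa$, whence, using Lemma \ref{thm.DT32}(1) together with the monotonicity of $\times$ and $\prod$ under $\le_T$,
\[ W \;=\; \sum_V V_\al \;\le_T\; V\times\prod_{\al<\kappa}V_\al \;\le_T\; P\times\prod_{\al<\kappa}P \;\le_T\; P\times P \;\equiv_T\; P . \]
This gives cofinality and completes the proof.

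The only nontrivial external input is the classification quoted at the start of the second paragraph: that the $\kappa$-complete ultrafilters of $L[\vec U]$ are, up to Rudin-Keisler equivalence, precisely the finite iterated Fubini sums of the canonical measures $U_m$. This is the expected $o(\kappa)=\omega$ generalization of the $L[U]$ theorem already invoked in Theorem \ref{analyse}, and I would cite it from Mitchell's work rather than reprove it. (One could strengthen the induction to show that each such iterate is in fact $\equiv_T$ — not merely $\le_T$ — to the cartesian product of the distinct $U_m$'s appearing in it, by combining $W\le_T P$ above with the Rudin-Keisler projections of $W$ onto those $U_m$'s and Proposition \ref{product}; but the one-sided bound $W\le_T P$ is all that is needed here.) Everything else is bookkeeping with the Tukey-reduction inequalities of Sections 4--5.
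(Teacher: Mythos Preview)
Your proof is correct. The paper gives no explicit argument for this corollary, treating it as immediate from the preceding material plus the known structure of ultrafilters in $L[\vec U]$; the intended line is the one spelled out later in Theorem \ref{UACor}: since $L[\vec U]$ satisfies \textsf{UA}, every $\kappa$-complete ultrafilter is Rudin-Keisler equivalent to a finite Fubini \emph{product} $U_{i_1}\cdot\ldots\cdot U_{i_k}$ of normals (Lemma \ref{KunenGen}), hence by Theorem \ref{ProductCor} Tukey-equivalent to $U_{i_1}\times\ldots\times U_{i_k}\le_T U_1\times\ldots\times U_n$ for $n=\max_j i_j$, giving cofinality in one stroke. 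You instead quote the classification in the weaker form of finite iterated Fubini \emph{sums} and run an explicit induction on iteration depth, bounding $\sum_V V_\alpha$ via Lemma \ref{thm.DT32}(1) together with the observation $\prod_{\alpha<\kappa}P\le_T P$ that you extract from Lemma \ref{nonTrivialdirection}. This is more work than needed---once one has products rather than general sums, Theorem \ref{ProductCor} finishes immediately---but it has the merit of not depending on Goldberg's \textsf{UA} formulation and of illustrating directly why the Section~5 machinery suffices. Your treatment of strict monotonicity and unboundedness matches the paper's implicit reasoning; the separate argument for ``no maximal class'' is correct but redundant, as unboundedness already excludes a maximum.
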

In order to get a chain of order type $\omega+1$, we can start with the large cardinal assumption that $\kappa$ is a $\kappa$-compact cardinal (and in particular $o(\kappa)\geq\omega$). By Theorem \ref{GlavinMainTheorem}, there is a $\kappa$-complete ultrafilter $U$ such that $\neg \Gal(U,\kappa,2^\kappa)$ and such an ultrafilter is Tukey-top. Hence we can extend the chain from the previous corollary by one more ultrafilter. 

Actually, we can get such a chain from a much weaker assumption. Assume that the Rudin-Keisler order is $\sigma$-closed, i.e., given a 
Rudin-Keisler increasing
sequence 
$\l W_n\mid n<\omega\r$ 
of $\kappa$-complete ultrafilters on $\kappa$, there exists an ultrafilter $W$ such that for each $n$, $W_n\leq_{RK} W$. 
  \begin{proposition}
      If $o(\kappa)\geq\omega$ and the Rudin-Keisler order is $\sigma$-closed on $\kappa$ then there is a Tukey chain of length $\omega+1$.
  \end{proposition}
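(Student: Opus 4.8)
The plan is to reuse the construction from the preceding corollaries to produce a strictly increasing Tukey $\omega$-chain coming from the Mitchell order, and then to cap it off by invoking $\sigma$-closure of the Rudin--Keisler order to supply a top element.

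Concretely, I would first use $o(\kappa)\ge\omega$ to fix a $\triangleleft$-increasing sequence $\langle U_n\mid 1\le n<\omega\rangle$ of normal ultrafilters on $\kappa$, and set $W_n:=U_1\cdot U_2\cdots U_n$ for $1\le n<\omega$, regarded as a uniform $\kappa$-complete ultrafilter on $\kappa$ after fixing bijections $\kappa^n\cong\kappa$ (these bijections preserve uniformity and $\kappa$-completeness). Projection onto the first $n$ coordinates is a Rudin--Keisler map from $W_{n+1}$ onto $W_n$, so $\langle W_n\mid 1\le n<\omega\rangle$ is a Rudin--Keisler increasing sequence of $\kappa$-complete ultrafilters on $\kappa$. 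By Theorem \ref{ProductCor}, $W_n\equiv_T U_1\times\dots\times U_n$, and the corollary to Theorem \ref{Mitchellorthogonal} asserting $\neg(U_{n+1}\le_T U_1\times\dots\times U_n)$ then forces the chain $[W_1]_T\le[W_2]_T\le\cdots$ to be \emph{strictly} increasing, by exactly the argument in the proof of the corollary giving Tukey chains of length $o(\kappa)$: were $W_n\equiv_T W_{n+1}$, then $U_{n+1}\le_T W_{n+1}\equiv_T W_n\equiv_T U_1\times\dots\times U_n$, a contradiction.

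Next I would apply the hypothesis that Rudin--Keisler is $\sigma$-closed on $\kappa$ to $\langle W_n\mid 1\le n<\omega\rangle$, obtaining an ultrafilter $W$ with $W_n\le_{RK}W$ for every $n$; since Rudin--Keisler reduction implies Tukey reduction, $W\ge_T W_n$ for all $n$. Finally I would check that $W$ lies \emph{strictly} above each $W_n$: if $W\equiv_T W_m$ for some $m$ then $W_{m+1}\le_T W\equiv_T W_m$, contradicting $W_m<_T W_{m+1}$; hence $[W_1]_T<[W_2]_T<\cdots<[W]_T$ is a Tukey chain of order type $\omega+1$. I do not expect a real obstacle here --- all the hard work is in the imported results --- but the one point to be careful about is that the $\sigma$-closure hypothesis only delivers \emph{some} ultrafilter $W$, with no guarantee of $\sigma$-completeness; this is harmless, because $W\ge_{RK}W_n$ yields $W\ge_T W_n$ with no completeness assumption on $W$, and strictness at the top of the chain is then purely formal.
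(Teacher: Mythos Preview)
Your proposal is correct and follows essentially the same approach as the paper: build the strictly increasing $\omega$-chain $U_1\cdot\ldots\cdot U_n$ from a Mitchell-increasing sequence exactly as in the preceding corollary, then take a Rudin--Keisler upper bound $W$ for this sequence using $\sigma$-closure and note that $\le_{RK}$ implies $\le_T$. Your write-up is more explicit (in particular, you verify strictness at the top and remark that no completeness is needed for $W$), but the underlying argument is the same.
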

  \begin{proof}
      Similar to the previous construction, but the top ultrafilter will be any Rudin-Keisler bound for the sequence $\l U_1\cdot{\dots}\cdot U_n\mid n<\omega\r$. Since Rudin-Keisler reduction implies Tukey reduction, we are done. 
  \end{proof}
  The assumption that the Rudin-Keisler order is $\sigma$-closed follows from the \textit{$\omega$-gluing property}, due to Poveda and Hayut \cite[Lemma 4.1]{HayutPoveda}. In this paper, Poveda and Hayut proved that the $\omega$-gluing property is equiconsistent with $o(\kappa)=\omega_1$ which is much less than $\kappa$ being $\kappa$-compact.

  Finally, let us move back to the optimal assumption of a measurable cardinal. 
  Gitik and the first author \cite{OnPrikryandCohen} constructed a model with a $\kappa$-complete ultrafilter over $\kappa$ such that $\neg \Gal(U,\kappa,2^\kappa)$ from the minimal large cardinal assumption (i.e. a measurable cardinal). Thus in this model, we have a chain of length $2$. Finally, let us prove that using the usual Kunen-Paris construction  \cite{Kunen-Paris} of a model with many distinct normal ultrafilters we have a chain or ordertype $\omega+1$:
  \begin{theorem}\label{Kunen-Paris-Model}
      Assume \textsf{GCH} and that $\kappa$ is a measurable cardinal. Then there is a generic extension where \textsf{GCH} holds and there is a Tukey-chain of order type $\omega+1$.
  \end{theorem}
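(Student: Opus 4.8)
The plan is to realize the chain inside a single \textsf{GCH}-preserving extension obtained by the Kunen--Paris construction over the measurable cardinal $\kappa$. The two features of this model I will use are: (i) $\kappa$ carries at least $(2^\kappa)^+=2^{2^\kappa}$ distinct normal ultrafilters, and (ii) there is a non-Galvin $\kappa$-complete ultrafilter $W$ on $\kappa$. Feature (ii) is available because the $\kappa^+=2^\kappa$ mutually generic Cohen subsets of $\kappa$ adjoined at the last stage form a $\kappa$-independent family $\vec A$ of length $2^\kappa$, and — following \cite{OnPrikryandCohen} — the ground-model embedding can be lifted so that the induced $\kappa$-complete ultrafilter $W$ extends the filter $\mathcal F^\kappa_{\vec A}$ of Proposition \ref{the NonGlavinFilter}; then $\neg\Gal(W,\kappa,2^\kappa)$ by Theorem \ref{TheNonGlavintheorem}, so $W$ is Tukey-top by Theorem \ref{GlavinMainTheorem}. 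Granting this model, the remainder of the argument is soft.

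Working in that model, I would first build the initial $\omega$-segment by recursion. Having chosen normal ultrafilters $U_1,\dots,U_n$ on $\kappa$, set $W_n=U_1\times\dots\times U_n$. By Theorem \ref{Prodthm.20DT-kappa}, any ultrafilter $V$ with $V\le_T W_n$ is recovered as the upward $\supseteq$-closure of the range of some continuous monotone cofinal map $W_n\to V$; since a continuous such map is determined by a function $[\kappa]^{<\kappa}\to[\kappa]^{<\kappa}$, of which there are $\kappa^\kappa=2^\kappa$, at most $2^\kappa$ ultrafilters lie Tukey-below $W_n$. As $(2^\kappa)^+$ normal ultrafilters are available, choose a normal $U_{n+1}$ that is \emph{not} Tukey-below $W_n$. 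Then the coordinate projection witnesses $W_n\le_T W_{n+1}$, and this inequality is strict: were $W_{n+1}\le_T W_n$, projecting to the last coordinate would give $U_{n+1}\le_T W_{n+1}\le_T W_n$, contrary to the choice of $U_{n+1}$. Hence $W_1<_T W_2<_T\dots$ is a strictly increasing $\omega$-sequence.

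Next I would show $W$ lies strictly above every $W_n$. Each $W_n$ is $\kappa$-directed closed of cardinality $2^\kappa$, so $W_n\le_T([2^\kappa]^{<\kappa},\subseteq)\equiv_T W$ by Theorem \ref{directedposet} together with the Tukey-topness of $W$. Conversely $W\not\le_T W_n$: by Theorem \ref{ProductCor}, $W_n\equiv_T U_1\cdot U_2\cdots U_n$, and the iterated Fubini product of the $p$-points $U_1,\dots,U_n$ is basically generated — each $U_i$, being a $p$-point, is basically generated, and the class of basically generated ultrafilters is closed under sums by Theorem \ref{thm.bgclosedclass}, recoding $\kappa\times\kappa$ onto $\kappa$ as in the remark following it — so by Theorem \ref{thm.bgnottop} it is not Tukey-top, whence $W_n$ is not Tukey-top, and therefore $W\not\le_T W_n$. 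Combining the two facts, $W_n<_T W$ for all $n$. Passing from $W_n$ to the Tukey-equivalent $\kappa$-complete ultrafilter $U_1\cdot U_2\cdots U_n$, we obtain
\[
U_1<_T U_1\cdot U_2<_T\dots<_T U_1\cdot U_2\cdots U_n<_T\dots<_T W ,
\]
a chain of $\kappa$-complete ultrafilters on $\kappa$ of order type $\omega+1$, in a model of \textsf{GCH}.

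The step I expect to be the main obstacle is the very first one: producing, from a single measurable cardinal, one \textsf{GCH}-preserving extension that contains both the $(2^\kappa)^+$ normal ultrafilters of the Kunen--Paris model \emph{and} a non-Galvin $\kappa$-complete ultrafilter. The wealth of normal ultrafilters comes from the usual freedom in lifting the iteration through the tail forcing, while the non-Galvin ultrafilter needs \emph{one particular} such lift, arranged as in \cite{OnPrikryandCohen} so that the Cohen subsets of $\kappa$ added at the top stage become the independent family witnessing $\neg\Gal(W,\kappa,2^\kappa)$; the delicate point is verifying that this special lift can be built by (a variant of) the same master-condition argument that yields the many normal lifts, with the two constructions not interfering. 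Everything downstream — Theorems \ref{Prodthm.20DT-kappa}, \ref{ProductCor}, \ref{thm.bgclosedclass}, \ref{thm.bgnottop}, \ref{directedposet} and \ref{GlavinMainTheorem} — is already in place.
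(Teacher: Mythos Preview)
Your downstream argument is sound and matches the paper's strategy: use the $2^\kappa$ bound on Tukey-predecessors of a product of normals (via continuous cofinal maps) to find normals $U_1,U_2,\dots$ making the products $W_n$ strictly increase, and cap the chain with a Tukey-top (non-Galvin) ultrafilter $W$. Your recursive choice of $U_{n+1}\not\le_T W_n$ is a slight variant of the paper's approach, which instead invokes Hajnal's free-set lemma (Corollary~\ref{ManyP-pointsCor}) once to extract a large family of pairwise Tukey-incomparable normals; both routes yield the strict chain by the same projection argument. You also make explicit (via Theorems~\ref{thm.bgclosedclass} and~\ref{thm.bgnottop}) why each $W_n$ is not Tukey-top, a point the paper leaves implicit.

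Where you diverge is exactly the obstacle you flag: manufacturing a single \textsf{GCH} model that simultaneously has $(2^\kappa)^+$ normal measures \emph{and} a non-Galvin $\kappa$-complete ultrafilter. The paper does not try to thread both needs through one lift. Instead it proceeds in two steps: first force as in \cite{OnPrikryandCohen} with $\mathbb{P}_\kappa * \Add(\kappa,\kappa^+)$ to obtain $V[G]$ with $2^\kappa=\kappa^+$ and a $\kappa$-complete $W$ satisfying $\neg\Gal(W,\kappa,2^\kappa)$; then force over $V[G]$ with $\Add(\kappa^+,\kappa^{+3})$. The second forcing is $\kappa^+$-closed, so it adds no subsets of $\kappa$, hence $W$ remains an ultrafilter and the sequence witnessing $\neg\Gal(W,\kappa,2^\kappa)$ persists; meanwhile the standard Kunen--Paris argument applied to this second step yields $2^{\kappa^+}=\kappa^{+3}$ many normal measures on $\kappa$. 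This decoupling entirely sidesteps the interference you were worried about between the ``special'' lift producing $W$ and the ``generic'' lifts producing the many normals, and is the one piece of the paper's proof you did not anticipate.
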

  \begin{proof}
      Consider the forcing from \cite{OnPrikryandCohen}, namely, let $\mathbb{P}=\mathbb{P}_\kappa*\mathbb{Q}_\kappa$ such that $\mathbb{P}_\kappa$ is just an Easton support iteration of $\Add(\alpha,\alpha^+)$ for inaccessible $\alpha$'s and trivial otherwise. Also, $\mathbb{Q}_\kappa$ is a $\mathbb{P}_\kappa$-name for $\Add(\kappa,\kappa^+)$. Let $G=G_\kappa*g$ be a $V$-generic filter for $\mathbb{P}$. Then by \cite[Theorem 2.6]{OnPrikryandCohen}, $2^\kappa=\kappa^+$ and in $V[G]$  there is a $\kappa$-complete ultrafilter $W$ over $\kappa$ such that $\neg \Gal(W,\kappa,2^\kappa)$. Over $V[G]$ let us force with $\Add(\kappa^{+},\kappa^{+3})$, let $H$ be $V[G]$-generic for this forcing. Then in $V[G,H]$,
      we note by the closure of $\Add(\kappa^+,\kappa^{+3})$, $W$ is still an ultrafilter in $V[G,H]$ and $\neg \Gal(W,\kappa,2^\kappa)$ holds. Also, the usual Kunen-Paris argument shows that in $V[G,H]$ there are $2^{(2^\kappa)}=2^{\kappa^{+}}=\kappa^{+3}$-many normal ultrafilters over $\kappa$.  Now let us apply Corollary \ref{ManyP-pointsCor} to conclude that there are $(2^\kappa)^+$-many non-normal ultrafilters which are Tukey incomparable. Take any $\omega$-many of those ultrafilters; the same chain of products as before produces a Tukey strictly increasing chain and the top one which is the non-Galvin one.
  \end{proof}
   Kuratowski and Sierpi\'{n}ski \cite{Kuratowski} (See also \cite[Thm. 46.1]{CombinatorialST}) proved the following:
   \begin{lemma}\label{Kur}
       For any cardinal $\lambda$ and $k<\omega$, given any function $f:[\lambda^{+k}]^k\rightarrow [\lambda^{+k}]^{<\lambda}$ such that for any $x_1,\dots,x_k$, $\{x_1,\dots,x_k\}\cap f(x_1,\dots,x_k)=\emptyset$, there is a free set $Y$ of size $k+1$, namely, there are distinct $y_1,\dots,y_{k+1}\in \lambda^{+k}$ such that for every $x_1,\dots,x_k\in[\{y_1,\dots,y_{k+1}\}]^{k}$, $f(x_1,\dots,x_k)\cap \{y_1,\dots,y_{k+1}\}=\emptyset$.
   \end{lemma}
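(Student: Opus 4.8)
The plan is to establish Lemma~\ref{Kur}, which is the classical free set theorem of Kuratowski and Sierpi\'nski, by induction on $k$ with $\lambda$ fixed; here and below $\lambda$ is taken to be infinite, which is the only case used in the paper, so that for each $k\geq 1$ the cardinal $\lambda^{+k}$ is regular and uncountable and Fodor's Lemma is available on it. It is convenient first to note that for each $k$ the statement depends only on the cardinality of the base set, so it is harmless to prove it in the equivalent form ``for every set $A$ with $|A|\geq\lambda^{+k}$\dots''; I will use this freely when invoking the inductive hypothesis. The base case $k=0$ is immediate: $f$ is determined by the single value $f(\emptyset)\in[\lambda]^{<\lambda}$, and since $|f(\emptyset)|<\lambda$ any point of $\lambda\setminus f(\emptyset)$ forms a free set of size $1$.

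For the inductive step, assume the lemma for $k-1$ and let $f\colon[\lambda^{+k}]^{k}\to[\lambda^{+k}]^{<\lambda}$ satisfy $s\cap f(s)=\emptyset$. Let $C=[\lambda^{+(k-1)},\lambda^{+k})$; this is a club in the regular cardinal $\lambda^{+k}$, and $|\alpha|=\lambda^{+(k-1)}$ for every $\alpha\in C$. For $\alpha\in C$ define $f_\alpha\colon[\alpha]^{k-1}\to[\alpha]^{<\lambda}$ by $f_\alpha(s)=f(s\cup\{\alpha\})\cap\alpha$; each value lies in $[\alpha]^{<\lambda}$ and is disjoint from $s$, because $f(s\cup\{\alpha\})$ is disjoint from $s\cup\{\alpha\}$. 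Applying the inductive hypothesis to $f_\alpha$ on the set $\alpha$ yields a free set $S_\alpha\in[\alpha]^{k}$, so that $f(s\cup\{\alpha\})\cap S_\alpha=\emptyset$ for every $s\in[S_\alpha]^{k-1}$ (using $S_\alpha\subseteq\alpha$), and $\alpha\notin S_\alpha$. I would then check the easy point that $Y_\alpha:=S_\alpha\cup\{\alpha\}$, a set of size $k+1$, is a free set for $f$ provided $\alpha\notin f(S_\alpha)$: a $k$-element subset $t\subseteq Y_\alpha$ is either $S_\alpha$ itself, for which $f(t)\cap S_\alpha=\emptyset$ holds automatically while $f(t)\cap\{\alpha\}=\emptyset$ is the stated condition, or else $t=s\cup\{\alpha\}$ with $s\in[S_\alpha]^{k-1}$, for which $f(t)$ misses $S_\alpha$ by freeness of $S_\alpha$ and misses $\alpha$ since $t\cap f(t)=\emptyset$. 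Hence if there is any $\alpha\in C$ with $\alpha\notin f(S_\alpha)$ we are finished.

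The remaining case, where $\alpha\in f(S_\alpha)$ for all $\alpha\in C$, is the heart of the argument, and it is handled by pressing down. Since $k\geq 1$, each $S_\alpha$ is a nonempty finite set of ordinals below $\alpha$, so $g(\alpha):=\max S_\alpha$ defines a regressive function on the stationary set $C$. By Fodor's Lemma there are a stationary $C'\subseteq C$ and an ordinal $\gamma<\lambda^{+k}$ with $g(\alpha)=\gamma$, hence $S_\alpha\subseteq\gamma+1$, for every $\alpha\in C'$. But $|[\gamma+1]^{k}|<\lambda^{+k}=|C'|$ (since $\lambda^{+k}$ is regular and $|\gamma+1|<\lambda^{+k}$), so by pigeonhole some single $S^{*}\in[\gamma+1]^{k}$ has $S_\alpha=S^{*}$ for $\lambda^{+k}$-many $\alpha\in C'$; for all these $\alpha$ we would have $\alpha\in f(S_\alpha)=f(S^{*})$, contradicting $|f(S^{*})|<\lambda$. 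This contradiction completes the induction.

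I expect the only genuinely delicate step to be this last case: the inductive hypothesis cannot by itself secure the extra condition $\alpha\notin f(S_\alpha)$, and the device that makes it work is to run the induction over precisely the ordinals $\alpha$ of cardinality $\lambda^{+(k-1)}$ and to extract $S_\alpha$ from below $\alpha$, so that failure of the condition becomes a regressive phenomenon killed by Fodor's Lemma plus a pigeonhole count. The verification that $Y_\alpha$ is free modulo that one condition, and the cardinal arithmetic, are routine.
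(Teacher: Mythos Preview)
Your proof is correct. The paper does not supply its own proof of this lemma: it is stated as a classical result of Kuratowski and Sierpi\'nski, with a citation to \cite{Kuratowski} and to \cite[Thm.~46.1]{CombinatorialST}, and is then used as a black box in the proof of the subsequent corollary. Your argument is essentially the standard inductive proof of this theorem (induction on $k$, applying the inductive hypothesis below each $\alpha\in[\lambda^{+(k-1)},\lambda^{+k})$, and pressing down to rule out the bad case), and your restriction to infinite $\lambda$ is harmless since the only application in the paper takes $\lambda=(2^\kappa)^+$.
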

  \begin{corollary}
     For every $0<k<\omega$ it is consistent that  $(P(k)\setminus\{\emptyset\},\subseteq)$ can be embedded
     into the Tukey classes.
     \end{corollary}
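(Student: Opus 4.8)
The plan is to realise the embedding by sending a nonempty $s\subseteq k$ to the Tukey class of the cartesian product $\prod_{i\in s}U_i$, for a carefully chosen $k$-tuple $\langle U_i\mid i<k\rangle$ of normal ultrafilters on a measurable cardinal $\kappa$. Monotonicity of this assignment is immediate, since for $s\subseteq t$ the coordinate projections witness $\prod_{i\in s}U_i\le_T\prod_{i\in t}U_i$; conversely, it is an order embedding (hence injective) as soon as the $U_i$ satisfy the single requirement that no $U_{i_0}$ be Tukey below the cartesian product of the remaining coordinates. Indeed, if $s\not\subseteq t$ are nonempty, fix $i_0\in s\setminus t$; then $U_{i_0}\le_T\prod_{i\in s}U_i$ and, since $t\subseteq k\setminus\{i_0\}$, also $\prod_{i\in t}U_i\le_T\prod_{i\in k\setminus\{i_0\}}U_i$, so a reduction $\prod_{i\in s}U_i\le_T\prod_{i\in t}U_i$ would force $U_{i_0}\le_T\prod_{i\in k\setminus\{i_0\}}U_i$, contrary to the requirement.

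To obtain such a tuple, I would pass to a Kunen--Paris style model (as in the proof of Theorem \ref{Kunen-Paris-Model}, but with the top Cohen forcing enlarged) in which $\kappa$ is measurable and there are at least $\mu:=\lambda^{+(k-1)}$ normal ultrafilters on $\kappa$, where $\lambda:=(2^\kappa)^+$; fix an injective enumeration $\langle U_\alpha\mid\alpha<\mu\rangle$ of $\mu$ of them. Define $f\colon[\mu]^{k-1}\to[\mu]^{<\lambda}$ by
\[
f(\{x_1,\dots,x_{k-1}\})=\{y<\mu\mid U_y\le_T U_{x_1}\times\dots\times U_{x_{k-1}}\}\setminus\{x_1,\dots,x_{k-1}\}.
\]
The crucial estimate is $|f(\{x_1,\dots,x_{k-1}\})|\le 2^\kappa<\lambda$: each $U_{x_j}$ is normal, hence a $p$-point, so by Theorem \ref{Prodthm.20DT-kappa} every ultrafilter $V\le_T U_{x_1}\times\dots\times U_{x_{k-1}}$ is the target of a continuous monotone cofinal map, and, exactly as in the counting corollary following Theorem \ref{Prodthm.20DT-kappa}, there are only $2^\kappa$ many such maps (using $\kappa^{<\kappa}=\kappa$), hence only $2^\kappa$ many such $V$, while distinct indices give distinct $U_y$. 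Since $f$ avoids its arguments by construction, Lemma \ref{Kur} (the Kuratowski--Sierpi\'{n}ski free set lemma), applied with this $\lambda$ to $[\lambda^{+(k-1)}]^{k-1}$, yields a free set $\{y_i\mid i<k\}$ of size $k$, meaning $f(E)\cap\{y_i\mid i<k\}=\emptyset$ for every $E\in[\{y_i\mid i<k\}]^{k-1}$.

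Putting $U_i:=U_{y_i}$ for $i<k$, freeness of $\{y_i\mid i<k\}$ says precisely that $y_{i_0}\notin f(\{y_j\mid j\in k\setminus\{i_0\}\})$ for each $i_0<k$, i.e. $U_{i_0}\not\le_T\prod_{j\in k\setminus\{i_0\}}U_j$, which is exactly the requirement isolated in the first paragraph (the case $k=1$ being vacuous). Hence $s\mapsto[\prod_{i\in s}U_i]_T$ embeds $(\mathcal{P}(k)\setminus\{\emptyset\},\subseteq)$ into the poset of Tukey types of ultrafilters on $\kappa$. The main obstacle in carrying this out is the cardinal bookkeeping: one must confirm that the Kunen--Paris construction genuinely supplies at least $\lambda^{+(k-1)}=(2^\kappa)^{+k}$ normal ultrafilters while leaving $\kappa$ measurable (so that $\kappa^{<\kappa}=\kappa$ and the count of continuous cofinal maps is $2^\kappa$), and verify that the $2^\kappa$-bound on ultrafilters Tukey below a single $p$-point transfers verbatim to finite cartesian products of $p$-points via Theorem \ref{Prodthm.20DT-kappa}.
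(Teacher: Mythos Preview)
Your proof is correct and follows essentially the same route as the paper: pass to a Kunen--Paris model with enough normal ultrafilters, define the free-set function sending a finite tuple of normals to the set of (other) normals Tukey below their product, bound its values by $2^\kappa$ via Theorem \ref{Prodthm.20DT-kappa}, apply the Kuratowski--Sierpi\'nski lemma, and read off the embedding. The only cosmetic difference is indexing: you apply the lemma to $[\lambda^{+(k-1)}]^{k-1}$ to obtain a free set of size exactly $k$, whereas the paper applies it to $[\lambda^{+k}]^k$ to obtain $k+1$ free points (thus actually embedding $P(k+1)\setminus\{\emptyset\}$), but this is the same argument.
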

      \begin{proof}
            Assume $k>0$. Consider the Kunen-Paris model $M$ where $2^{2^\kappa}=\kappa^{+k+2}$ and $2^\kappa=\kappa^+$. In particular, the set of normal ultrafilters $\beta_{\text{nor}}(\kappa)$ has size $\kappa^{+k+2}$ in $M$. Consider the function:
            $F:[\beta_{\text{nor}}(\kappa)]^k\rightarrow P(\beta_{\text{nor}}(\kappa))$ defined by
            $$(*) \ \ F(U_1,\dots,U_k)=\{W\in \beta_{\text{nor}}(\kappa)\mid W\leq_T U_1\times\dots\times U_k\}\setminus \{U_1,\dots,U_k\}$$
            By Theorem \ref{Prodthm.20DT-kappa} $|F(U_1,\dots,U_k)|\leq 2^\kappa=\kappa^+$.    
            Apply lemma \ref{Kur} for $\lambda=(2^\kappa)^+=\kappa^{+2}$ to find a free set of ultrafilters $\{U_1,\dots,U_{k+1}\}$ for the function $F$ of size $k+1$. It follows that for each $1\leq i\leq k+1$, $$\neg( U_i\leq_T U_1\times\dots\times U_{i-1}\times U_{i+1}\times\dots\times U_{k+1}).$$ 
            It follows that if $a\subsetneq b\subseteq \{1,\dots,k+1\}$, then $\prod_{i\in a}U_i<_T\prod_{j\in b}U_j$. Just otherwise, take any $j\in b\setminus a$, then $U_j\leq_T \prod_{i\in a}U_i\leq_T \prod_{i\neq j}U_i$ contradicting $(*)$. For the same reason, it follows that if $a\neq b$ then $\prod_{i\in a}U_i\not\equiv_T\prod_{j\in b}U_j$. We conclude that the map $g$ defined on 
        $P(\{1,\dots,k+1\})$ 
         by $g(a)=[\prod_{i\in a}U_i]_T$ is an order embedding.
     \end{proof}

\begin{remark}
    We point out some similar results on $\om$ and the contrast between embedding and exact Tukey structures.
  Dobrinen, Mijares, and Trujillo \cite{Dobrinen/Mijares/Trujillo14} showed that it is consistent with ZFC that $([\om]^{<\om},\sse)$ appears as an {\em initial} (downwards closed in the Tukey order) structure  among the cofinal types of  p-points on $\om$.
Raghavan and Shelah \cite{Raghavan/Shelah17} later showed that it is consistent that $\mathcal{P}(\om)/\mathrm{fin}$ embeds into the Tukey types of p-points.
Except for Theorem \ref{analyse}, the above results in this section so far are of this latter sort, showing that certain partial orders embed into the Tukey types of a given structure.
The Ultrafilter Axiom  and the minimality conjecture below will 
ensure 
more examples where the Tukey structure is {\em exactly} known. 
\end{remark}

By a result of Todorcevic 
in \cite{Raghavan/Todorcevic12}, every Ramsey ultrafilter on $\omega$ is 
Tukey minimal. The proof  utilizes a  theorem of Pudl\'{a}k and R\"{o}dl 
\cite{Pudlak/Rodl82} canonizing equivalence relations on barriers;
this theorem 
 does not generalize to uncountable cardinals.  Hence a crucial open problem is the following:
\begin{question}\label{normalminimalQuestion}
    Are normal ultrafilters on a measurable cardinal $\kappa$ Tukey minimal among $\kappa$ complete ultrafilters?
\end{question}
We conjecture a positive answer for this question, and call this the \textit{minimality conjecture}. 
The \textit{general minimality conjecture} is the assertion that if $U_1,..,U_n,W$ are normal ultrafilters over $\kappa$ and $W\notin\{U_1,\dots,U_n\}$ then $\neg (W\leq_T \prod_{i=1}^n U_i)$

There is  good reason to believe that the general minimality conjecture holds. Indeed, it is well known that normal ultrafilters are $RK$-minimal and the only ultrafilters which are $RK$ below a product of normals are the ultrafilters participating in the product. Also, by Theorem \ref{Prodthm.20DT-kappa}, if $f:\prod_{i=1}^nU_i\rightarrow W$ witness that $U_1\cdot{\dots}\cdot U_n\geq_TW$ then $f$ is actually determined by a function from  $([\kappa]^{<\kappa})^n$ to $[\kappa]^{<\kappa}$.
On  $\om$  it is known how to make such maps 
into Rudin-Keisler projections under various hypotheses on the $U_i$ and $W$
 (see \cite{Raghavan/Todorcevic12} and \cite{DobrinenFund20}).
\begin{example}
    As we have seen above, if $U_0\triangleleft U_1$ then $\neg( U_1\triangleleft U_0)$. A natural  approach for trying to  produce a counter-example for the minimality conjecture is to try to prove that $U_0\triangleleft U_1$ implies $U_0\leq_T U_1$ (where $U_0,U_1$ are assumed to be normal). There is a natural map for this. Since $U_0\in M_{U_1}$  there is a function $f:\kappa\rightarrow V_\kappa$ such that $[f]_{U_1}=U_0$. Recall that $U_1$ is normal and therefore $\kappa=[id]_{U_1}$. By Lo\'{s} Theorem, since $$M_{U_1}\models [f]_{U_1}\text{ is a normal ultrafilter over }[id]_{U_1},$$ we have that $E=\{\alpha<\kappa\mid f(\alpha)\text{ is a normal ultrafilter over }\alpha\}\in U_1$.  For every $\alpha\in E$, denote $f(\alpha)=U_{0,\alpha}$. 
    
Let us define a function $f:U_0\rightarrow U_1$ by
$$f(X)=\{\alpha<\kappa\mid X\cap\alpha\in U_{0,\alpha}\}$$
This is well defined since for every $X\subseteq \kappa$, $X\in U_0$ iff $\{\alpha<\kappa\mid X\cap\alpha\in U_{0,\alpha}\}\in U_1$.
We claim that this $f$ is not unbounded.
Indeed, let $A_\alpha=\kappa\setminus\{\alpha\}$, then $\{A_\alpha\mid \alpha<\kappa\}$ is unbounded in $U_0$ since $\cap_{\alpha<\kappa}A_\alpha=\emptyset$. On the other hand, for every $\alpha$, $f(A_\alpha)=\{\alpha<\kappa\mid A_\alpha\cap \alpha\in U_{0,\alpha}\}=E$ and therefore $f''\{A_\alpha\mid \alpha<\kappa\}$ is bounded. 
\end{example}

The minimality conjecture can be used to completely characterize the Tukey order in the canonical inner models for relatively small cardinals (and possibly further). For example, we can classify the Tukey classes in the Mitchell models $L[\vec{U}]$ where $\vec{U}$ is a coherent sequence of normal measures \cite{MitchelModel} of length which is not too long (below $\kappa^{++}$). For this we will need the following easy corollary from the general minimality conjecture. 
\begin{corollary}\label{easycor} Let $U_1,U_2,\dots,U_n$ be distinct normal ultrafilters.  Let $1\leq i\leq n$ and $\emptyset\neq a\subseteq\{1,\dots,n\}$.
    \begin{enumerate}
        \item If $i\in a$ then $U_i\leq_{RK} \prod_{j\in a}U_j$.
        \item If the general minimality conjecture holds then for $i\notin a$,  $U_i$ and $\prod_{j\in a}U_j$ are Tukey incomparable.
    \end{enumerate}
\end{corollary}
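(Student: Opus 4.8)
The plan is to prove the two items separately, drawing on facts already established in the paper. For item (1), I would invoke the well-known fact — recorded in the preliminaries — that the projection map witnesses a Rudin-Keisler projection of a Fubini product onto each of its coordinates. More precisely, for a Fubini iterate $\prod_{j\in a}U_j$ (identified, after fixing an enumeration $a=\{j_1<\dots<j_m\}$, with the iterated sum $U_{j_1}\cdot U_{j_2}\cdots U_{j_m}$ on $\kappa^m$), each of the coordinate projections $\pi_\ell: \kappa^m\to\kappa$ pushes the product forward to $U_{j_\ell}$; this is Corollary \ref{productAndCart}. Hence if $i\in a$, then $U_i\leq_{RK}\prod_{j\in a}U_j$, which is exactly what item (1) asserts. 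This step is routine and requires no new argument.

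For item (2), suppose $i\notin a$. I need to show neither $U_i\leq_T\prod_{j\in a}U_j$ nor $\prod_{j\in a}U_j\leq_T U_i$. The first direction is immediate from the general minimality conjecture: $U_i$ is a normal ultrafilter not among $\{U_j\mid j\in a\}$, so by definition of the conjecture $\neg(U_i\leq_T\prod_{j\in a}U_j)$. For the reverse direction, I would argue by contradiction: if $\prod_{j\in a}U_j\leq_T U_i$, then for any single $j_0\in a$ we have, by item (1) (or Corollary \ref{productAndCart}), $U_{j_0}\leq_{RK}\prod_{j\in a}U_j$, hence $U_{j_0}\leq_T\prod_{j\in a}U_j\leq_T U_i$; but $a\neq\emptyset$ guarantees such a $j_0$ exists, and $j_0\in a$ while $i\notin a$ forces $U_{j_0}\neq U_i$, so again the general minimality conjecture yields $\neg(U_{j_0}\leq_T U_i)$, a contradiction. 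Therefore $\prod_{j\in a}U_j$ and $U_i$ are Tukey-incomparable.

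The argument is essentially bookkeeping on top of two inputs — the Rudin-Keisler projection property of Fubini products and the general minimality conjecture — so there is no serious obstacle. The only point deserving a word of care is the identification of $\prod_{j\in a}U_j$ with a Fubini iterate: by Theorem \ref{ProductCor} the cartesian product and the Fubini product of finitely many $\kappa$-complete ultrafilters are Tukey-equivalent, so whether item (2) is phrased in terms of $\prod_{j\in a}U_j$ as a cartesian product or as a Fubini iterate is immaterial for the Tukey-order statements; the $RK$-projection in item (1) and in the contradiction step refers to the Fubini iterate, and one passes between the two via the Tukey equivalence of Theorem \ref{ProductCor} exactly as in the proof of the embedding corollary above.
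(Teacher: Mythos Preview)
Your proposal is correct and follows essentially the same route as the paper: item (1) is dispatched via the coordinate projections giving an $RK$-reduction, and item (2) is handled by applying the general minimality conjecture directly for $\neg(U_i\leq_T\prod_{j\in a}U_j)$ and, for the reverse, picking $j_0\in a$ and deriving $U_{j_0}\leq_T U_i$ to contradict the conjecture (in its $n=1$ instance). The additional care you take regarding the cartesian-versus-Fubini identification is harmless but unnecessary for the paper's terse argument.
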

\begin{proof}
    $(1)$ is clear. For $(2)$, we note that by the minimality assumption $\neg (U_i\leq_T\prod_{j\in a}U_j)$ and also $\neg(U_i\geq_T\prod_{j\in a}U_j)$. Otherwise since $a\neq\emptyset$, there is $j_0\in a$ and $j_0\neq i$, we get that $U_i\geq_T \prod_{j\in a}U_j\geq_T U_{j_0}$. But this  contradicts the minimality conjecture since $U_i\neq U_{j_0}$.
\end{proof}

Next, let us state some known results which generalize Kunen's classification of ultrafilters in $L[U]$. The framework in which we are going to analyze the structure of the Tukey classes is due to G. Goldberg \cite{GoldbergUA}, who discovered the \textit{Ultrapower Axiom}, which holds in all known canonical inner models, and in particular in the models $L[\vec{U}]$. Goldberg showed that in the context of \textsf{UA}, the structure of $\kappa$-complete ultrafilters is quite rigid and well understood. The first result which is relevant for our needs is the generalization of Kunen's classification of ultrafilters in the model $L[U]$. This is a direct corollary of \cite[Lemma 5.3.20]{GoldbergUA}: 

\begin{lemma}[Goldberg]\label{KunenGen}
Assume \textsf{UA} and that $\kappa$ is a measurable cardinal with $o(\kappa)<2^{(2^\kappa)}$. Then every $\kappa$-complete ultrafilter is Rudin-Keisler equivalent to a finite product of normal ultrafilters.
\end{lemma}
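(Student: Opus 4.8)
The plan is to read this off from Goldberg's structure theory for countably complete ultrafilters under \textsf{UA}; once \cite[Lemma 5.3.20]{GoldbergUA} is in hand, only a short bookkeeping argument remains. Two ingredients from that theory are needed. First, under \textsf{UA} every countably complete ultrafilter is Rudin-Keisler equivalent to a \emph{finite} iterated Fubini sum of \emph{irreducible} ultrafilters, where $W$ is called irreducible if it admits no nontrivial factorization $W\equiv_{RK}\sum_Z V_x$; the finiteness of the iteration is a consequence of the wellfoundedness of the Ketonen order. Second, \cite[Lemma 5.3.20]{GoldbergUA} pins down the irreducible factors relevant here: when $\kappa$ is measurable with $o(\kappa)<2^{(2^\kappa)}$, every irreducible $\kappa$-complete ultrafilter concentrating on $\kappa$ is a normal ultrafilter on $\kappa$.

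First I would fix an arbitrary $\kappa$-complete uniform ultrafilter $W$ on $\kappa$ and invoke the factorization to write $W\equiv_{RK}\sum_{W_1}\sum_{W_2}\cdots\sum_{W_{n-1}}W_n$ with each $W_i$ irreducible. The point to verify is that each $W_i$ is again a $\kappa$-complete ultrafilter concentrating on a set of size $\kappa$: each $W_i$ is Rudin-Keisler below $W$ and hence $\kappa$-complete, and a nonprincipal $\kappa$-complete ultrafilter has uniformity at least $\kappa$ while lying on a set of size at most $\kappa$ (being Rudin-Keisler below $W$), so after identifying that set with $\kappa$ by a bijection we may regard each $W_i$ as a $\kappa$-complete ultrafilter on $\kappa$.

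Next I would apply \cite[Lemma 5.3.20]{GoldbergUA}, using the hypothesis $o(\kappa)<2^{(2^\kappa)}$, to conclude that each $W_i$ is a normal ultrafilter on $\kappa$. Finally I would repackage the iterated sum as a finite Fubini product: since every $W_i$ lives on $\kappa$, the iterated Fubini sum over $\kappa\times\cdots\times\kappa$ is, after composing with a bijection of $\kappa^n$ onto $\kappa$, the Fubini product $W_1\cdot W_2\cdots W_n$ (associativity of $\cdot$ up to $\equiv_{RK}$ being routine). Hence $W\equiv_{RK}W_1\cdots W_n$, a finite product of normal ultrafilters, as required.

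The real obstacle is not in this derivation but is absorbed into \cite[Lemma 5.3.20]{GoldbergUA}: the genuinely nontrivial content is that, under \textsf{UA}, an irreducible $\kappa$-complete ultrafilter on $\kappa$ must be normal once $o(\kappa)$ is below the number of ultrafilters on $\kappa$, and this rests on the full apparatus developed by Goldberg --- the internal relation, the Ketonen order and its wellfoundedness, linearity of the Mitchell order, and a counting bound on the irreducibles in terms of $o(\kappa)$. For the present paper we treat that lemma as a black box and only carry out the factorization and repackaging described above.
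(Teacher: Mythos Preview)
Your approach aligns with the paper's: the paper does not give its own proof of this lemma but simply states that it is a direct corollary of \cite[Lemma 5.3.20]{GoldbergUA}, and your proposal is precisely an unpacking of that citation --- factor into irreducibles under \textsf{UA}, apply Goldberg's lemma to see each factor is normal, and repackage as a finite Fubini product. One small point worth tightening: in the irreducible factorization under \textsf{UA} the inner factors in an iterated sum are a priori indexed families rather than single ultrafilters, so asserting ``each $W_i$ is Rudin-Keisler below $W$'' needs a word about why the factorization can be taken as a genuine product; this is handled by Goldberg's structure theory (and is exactly what the cited lemma delivers in this setting), but you might flag it explicitly rather than treating it as automatic.
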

The following remarkable theorem of Goldberg  is also relevant for us:
\begin{theorem}[Goldberg]\label{MitchellIsLin}
    Under \textsf{UA}, the Mitchell order is linear (when restricted to normal measures).
\end{theorem}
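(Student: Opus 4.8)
The plan is to deduce this from Goldberg's analysis of the Ketonen and seed orders under \textsf{UA} \cite{GoldbergUA}, specialized to normal ultrafilters. Since the Mitchell order $\triangleleft$ is already wellfounded in \textsf{ZFC} (Mitchell), it suffices to prove \emph{comparability}: any two distinct normal ultrafilters $U,W$ on $\kappa$ satisfy $U\triangleleft W$ or $W\triangleleft U$. Wellfoundedness then automatically upgrades linearity to a genuine wellorder.

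The first reduction I would carry out is purely \textsf{ZFC}: recall that the Ketonen order $<_k$ on countably complete uniform ultrafilters is wellfounded (Ketonen), and that for normal ultrafilters $U,W$ on $\kappa$ one has $U\triangleleft W$ if and only if $U<_k W$. The forward direction is because $U\in M_W$ lets one form, inside $M_W$, the ultrapower of $M_W$ by $U$ and read off a Ketonen witness below $j_W(\kappa)$; the backward direction is because a Ketonen witness to $U<_k W$ lives in $M_W$ and, using that $\kappa=[\mathrm{id}]_U=[\mathrm{id}]_W$ is the canonical normal seed, can be decoded inside $M_W$ to exhibit $U$ itself as an element of $M_W$. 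With this equivalence in hand, the whole statement reduces to: \textsf{UA} implies $<_k$ is linear, at least on the normal ultrafilters on $\kappa$.

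For that, given $U\neq W$, I would invoke \textsf{UA} to obtain an internal ultrapower comparison of the pair $(j_U,j_W)$, i.e.\ internal ultrapower embeddings $i\colon M_U\to N$ and $\ell\colon M_W\to N$ with $i\circ j_U=\ell\circ j_W$, and then, using wellfoundedness of the Ketonen (seed) order, choose such a comparison that is minimal. The decisive step is to show that minimality, together with normality of $U$ and $W$ (so that the only generators in play are $\kappa$ and $j_U(\kappa),j_W(\kappa)$, with nothing smaller intervening on either side), forces one of $i,\ell$ to be the identity, or at least forces the minimal seeds on the two sides to line up so that the smaller of $U,W$ lands inside the ultrapower by the larger; translating back through the \textsf{ZFC} equivalence of the previous paragraph yields $U\triangleleft W$ or $W\triangleleft U$.

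The main obstacle is exactly this last step: extracting from the mere existence of a \emph{minimal} comparison the concrete conclusion that one ultrafilter is a member of the other's ultrapower. This is the technical core of Goldberg's argument (his irreducibility and minimal-comparison analysis in \cite{GoldbergUA}); restricting to normal ultrafilters is what collapses the seed structure enough for the comparison to resolve cleanly, but correctly setting up and exploiting minimality is where all the real work lies. The remaining ingredients — reducing linearity to comparability, and upgrading to a wellorder via the \textsf{ZFC} wellfoundedness of $\triangleleft$ — are routine.
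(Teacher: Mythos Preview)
The paper does not give a proof of this theorem at all: it is stated as a black-box result due to Goldberg, with a citation to \cite{GoldbergUA}, and is used only as an input to Theorem~\ref{UACor}. So there is nothing to compare your proposal against on the paper's side.

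Your sketch is a reasonable outline of the strategy Goldberg actually carries out in \cite{GoldbergUA} (reduce to the Ketonen/seed order, use \textsf{UA} to obtain a comparison, and analyze a minimal comparison to extract one ultrafilter from the ultrapower of the other), and you correctly identify the technical core as the minimal-comparison step. But note that what you have written is a plan, not a proof: the crucial implication ``minimality of the comparison forces one side to resolve into the other'' is exactly the content you are deferring to Goldberg, so your write-up is effectively a citation with commentary rather than an independent argument. For the purposes of this paper that is fine, since the authors themselves treat the result as a citation; if you intend it as a self-contained proof, the gap is precisely that step.
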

Let us prove that the above results suffice to completely characterize the Tukey classes under  \textsf{UA} and small enough large cardinal assumption:
\begin{theorem}\label{UACor}
    Assume \textsf{UA} and the general minimality conjecture. Suppose that $\kappa$ is a measurable cardinal with $o(\kappa)<2^{(2^\kappa)}$. Then the Tukey classes of ultrafilters is isomorphic to $([o(\kappa)]^{<\omega},\subseteq)$.
\end{theorem}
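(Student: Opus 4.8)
The plan is to combine Goldberg's structural results (Lemmas \ref{KunenGen} and \ref{MitchellIsLin}) with the minimality conjecture to show that the map sending a finite set $a\in[o(\kappa)]^{<\omega}$ to the Tukey class of $\prod_{i\in a}U_{\alpha_i}$ is a well-defined order isomorphism, where $\langle U_\alpha\mid \alpha<o(\kappa)\rangle$ enumerates the normal ultrafilters on $\kappa$ in increasing Mitchell order (this enumeration is legitimate precisely because Theorem \ref{MitchellIsLin} says $\triangleleft$ is linear on normal measures, and by a theorem of Goldberg under \textsf{UA} there are exactly $o(\kappa)$ of them). First I would fix this enumeration and define $g:[o(\kappa)]^{<\omega}\to \{\text{Tukey classes}\}$ by $g(\emptyset)=[\text{the principal filter}]$ and $g(a)=[\prod_{i\in a}U_{\alpha_i}]_T$ for $a\neq\emptyset$; by Corollary \ref{PowerCor} and Theorem \ref{ProductCor} the Tukey class of such a finite product depends only on the \emph{set} $a$ (not the order or multiplicity), so $g$ is well-defined.

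Next I would check that $g$ is surjective: by Lemma \ref{KunenGen}, under \textsf{UA} with $o(\kappa)<2^{(2^\kappa)}$ every $\kappa$-complete ultrafilter on $\kappa$ is Rudin–Keisler equivalent to a finite product of normal ultrafilters, hence (Rudin–Keisler reduction implies Tukey reduction, and $\equiv_{RK}$ implies $\equiv_T$) every $\sigma$-complete ultrafilter — equivalently $\kappa$-complete, since $\kappa$ is the only measurable in the relevant models — has Tukey class in the image of $g$. (One should also note that non-$\sigma$-complete ultrafilters are irrelevant here, as the statement quantifies over ultrafilters on $\kappa$ that are $\kappa$-complete; the excerpt already restricts attention to this case throughout Section 7.) Then I would verify that $g$ is order-preserving in the forward direction: if $a\subseteq b$ then $\prod_{i\in a}U_{\alpha_i}$ is a Rudin–Keisler projection of $\prod_{i\in b}U_{\alpha_i}$ (project onto the coordinates in $a$), so $g(a)\le_T g(b)$ by Corollary \ref{easycor}(1).

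The main content — and the step I expect to be the real obstacle — is showing $g$ is injective and reflects the order, i.e.\ that $a\not\subseteq b$ implies $g(a)\not\le_T g(b)$. This is exactly where the general minimality conjecture enters: pick $i\in a\setminus b$; if we had $g(a)\le_T g(b)$, then since $U_{\alpha_i}\le_T\prod_{j\in a}U_{\alpha_j}=g(a)$ (again Corollary \ref{easycor}(1)) we would get $U_{\alpha_i}\le_T\prod_{j\in b}U_{\alpha_j}$ with $\alpha_i\notin\{\alpha_j\mid j\in b\}$, contradicting Corollary \ref{easycor}(2). This simultaneously gives injectivity (if $a\neq b$, then one of $a\setminus b$, $b\setminus a$ is nonempty, so $g(a)$ and $g(b)$ are Tukey-incomparable, in particular distinct) and order-reflection. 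Assembling these four facts — well-definedness, surjectivity, $\subseteq\Rightarrow\le_T$, and $\not\subseteq\Rightarrow\not\le_T$ — shows $g$ is an isomorphism of $([o(\kappa)]^{<\omega},\subseteq)$ onto the poset of Tukey classes of $\kappa$-complete ultrafilters on $\kappa$, completing the proof. The subtlety to be careful about is purely bookkeeping: one must make sure the enumeration $\langle U_\alpha\mid\alpha<o(\kappa)\rangle$ really exhausts \emph{all} normal ultrafilters (so that Lemma \ref{KunenGen} applies to give surjectivity), which is where Theorem \ref{MitchellIsLin} and Goldberg's count of normal measures under \textsf{UA} are used.
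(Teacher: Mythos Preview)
Your proposal is correct and follows essentially the same route as the paper: enumerate the normal ultrafilters by Mitchell order (Theorem \ref{MitchellIsLin}), use Lemma \ref{KunenGen} for surjectivity, Corollary \ref{easycor}(1) for the forward implication, and Corollary \ref{easycor}(2) (i.e.\ the general minimality conjecture) for order-reflection and injectivity. Your write-up is in fact slightly more careful than the paper's in noting explicitly that Theorems \ref{ProductCor} and \ref{PowerCor} make the map well-defined on \emph{sets} rather than sequences, and in flagging the $\emptyset$ case; the only cosmetic wrinkle is that once you fix the enumeration $\langle U_\alpha\mid\alpha<o(\kappa)\rangle$ you should write $\prod_{\alpha\in a}U_\alpha$ for $a\subseteq o(\kappa)$ rather than $\prod_{i\in a}U_{\alpha_i}$.
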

\begin{proof}
    By theorem \ref{MitchellIsLin} let $\l U_\alpha\mid \alpha<\gamma\r$ be the $\triangleleft$-increasing sequence of all the normal ultrafilters, where $\gamma=o(\kappa)$. Next, by Lemma \ref{KunenGen}, any $\kappa$-compete ultrafilter is Rudin-Keisler isomorphic to a finite product of normal ultrafilters which are listed in the sequence $\l U_\alpha\mid \alpha<\gamma\r$. As in Theorem \ref{analyse}, we have that for any ultrafilter $W$ on $\kappa$, $[W]_T=[\prod_{i\in a}U_i]_T$ for some finite set $a\subseteq \gamma$. Let us check that the function $f:[\gamma]^{<\kappa}\rightarrow 
    \{[U]_T:U\in\mathcal{U}\}$,
    where $\mathcal{U}$ is the set of $\kappa$-complete  ultrafilters on $\kappa$, defined by $f(a)=[\prod_{i\in a}U_i]_T$ is an order isomorphism.
    Clearly if $a\subseteq b$ then $\prod_{i\in a}U_i\leq_{RK} \prod_{i\in b}U_i$. If $a\not\subseteq b$ then we cannot have $\prod_{i\in a}U_i\leq_T\prod_{i\in b}U_i$ since this will contradict Corollary \ref{easycor}. Also if $a\subsetneq b$, then again Corollary \ref{easycor} implies that $\prod_{i\in a} U_i<_T \prod_{i\in b}U_i$.
\end{proof}

Since the models $L[\vec{U}]$  are models of \textsf{UA}, we obtain the following corollary: 
\begin{corollary}\label{mainClass}
Assume the general minimality conjecture and that $ \vec{U} $ is a coherent sequence for $o(\kappa)=\gamma<2^{2^{\kappa}}$. Then in $L[\vec{U}]$, the Tukey order on equivalence classes of $\kappa$-complete ultrafilters is isomorphic to $([\gamma]^{<\omega},\subseteq)$.
\end{corollary}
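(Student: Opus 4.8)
The plan is to obtain this as a direct specialization of Theorem \ref{UACor}. That theorem has two hypotheses, \textsf{UA} and the general minimality conjecture, together with the arithmetic bound $o(\kappa)<2^{(2^\kappa)}$ on the measurable $\kappa$; so the task reduces to verifying that all three are available inside $L[\vec{U}]$ under the stated assumptions.

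First I would recall that $L[\vec{U}]$ is a model of \textsf{UA} (Goldberg, \cite{GoldbergUA}), as already noted in the text preceding the statement, and that \textsf{GCH} holds in $L[\vec{U}]$; hence there $2^\kappa=\kappa^+$ and $2^{(2^\kappa)}=\kappa^{++}$, so the standing hypothesis $\gamma=o(\kappa)<2^{(2^\kappa)}$ is exactly the bound needed to invoke Lemma \ref{KunenGen} and Theorem \ref{UACor}. The general minimality conjecture is assumed to hold in $L[\vec{U}]$, which is our ambient model of \textsf{ZFC}. Second, I would simply apply Theorem \ref{UACor}: it yields that the Tukey classes of $\kappa$-complete ultrafilters on $\kappa$, ordered by $\leq_T$, are isomorphic to $([o(\kappa)]^{<\omega},\subseteq)$, and since $o(\kappa)=\gamma$ in $L[\vec{U}]$ this is precisely $([\gamma]^{<\omega},\subseteq)$.

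Since this is a corollary, there is no genuine obstacle in the argument itself; the substantive work has been done in Section \ref{sec.bg} (basically generated ultrafilters are not Tukey-top), Theorem \ref{Prodthm.20DT-kappa} (continuous cofinal maps out of finite products of $p$-points), Corollary \ref{easycor} (the minimality input forcing incomparability), and Theorem \ref{MitchellIsLin} together with Lemma \ref{KunenGen} (Goldberg's structure theory under \textsf{UA}). The only point warranting a sentence of justification is that $\vec{U}$, being a coherent sequence of length $o(\kappa)$, enumerates up to $\triangleleft$ all the normal ultrafilters of $L[\vec{U}]$, and that by linearity of $\triangleleft$ (Theorem \ref{MitchellIsLin}) this is the $\triangleleft$-increasing enumeration used in the proof of Theorem \ref{UACor} --- a standard fact about Mitchell models --- after which the map $a\mapsto[\prod_{i\in a}U_i]_T$ on $[\gamma]^{<\omega}$ is the asserted order isomorphism, exactly as in the proof of Theorem \ref{UACor} (and in the spirit of Theorem \ref{analyse} for the special case $\gamma=1$).
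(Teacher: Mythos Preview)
Your proposal is correct and follows exactly the paper's approach: the paper simply notes that $L[\vec{U}]$ is a model of \textsf{UA} and then invokes Theorem~\ref{UACor}. Your additional remarks about \textsf{GCH} in $L[\vec{U}]$ and the coherent sequence enumerating all normals are accurate elaborations, but the core argument is the same one-line specialization.
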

\begin{example}
Suppose that $\vec{U}$ is a coherent sequence for $o(\kappa)=2$. Then in $L[\vec{U}]$ there are $2$ minimal Tukey classes for $\kappa$ corresponding to $U(\kappa,0),U(\kappa,1)$, and there is a third class corresponding to $U(\kappa,0)\cdot U(\kappa,1)$ strictly above $U(\kappa,0),U(\kappa,1)$ (and that's it!) 
\end{example}

\begin{remark}
    
In the case of $(U,\supseteq^*)$, in the models $L[\vec{U}]$ there is a single class of ultrafilters since any two normal ultrafilters are equivalent (by \ref{p-point}) and therefore products are are also equivalent.

\end{remark}

\section{Open problems and further directions}
 
 In an attempt to obtain a model with longer Tukey-chains, Gitik's construction from \cite{Gitik1986}  turning a Mitchell increasing sequence of ultrafilters into a Rudin-Keisler increasing sequence seems like a promising direction to generate longer chains of non-Tukey equivalent ultrafilters:
 \begin{question}
 In the model of \cite{Gitik1986}, is there a long Tukey-chain?
 \end{question}
 Note that the extensions of the ultrafilters are not going to be $q$-point nor $p$-point and therefore our results about such ultrafilters are not applicable. 

 Todorcevic classified the ultrafilters which are Tukey below a Ramsey ultrafilter $U$ on $\om$ to
consist exactly of 
 the Rudin-Keisler classes of countable Fubini iterates of $U$ (Theorem 24, \cite{Raghavan/Todorcevic12}).
 As all countable Fubini iterates of a Ramsey ultrafilter are Tukey equivalent,
it  follows that  Ramsey ultrafilters on $\om$ are Tukey minimal.
 We do not know whether any of these statements  generalize to  measurable cardinals:
 \begin{question}
     Suppose that $U$ is normal. Is it true that whenever $V\leq_T U$ then $V\equiv_{RK} U^n$ for some $n<\omega$?
 \end{question}
 This is certainly the case under \textsf{UA}, but is it true in general?
Note also that the minimality conjecture from Question \ref{normalminimalQuestion}.
The problem with trying to directly extend Theorem 24 in \cite{Raghavan/Todorcevic12} to normal ultrafilters, is that Todorcevic's proof heavily relies on  
combinatorial features of 
fronts and barriers on $\om$ such as the Nash-Williams \cite{NashWilliams65} and Pudl\'{a}k-R\"{o}dl 
\cite{Pudlak/Rodl82} Theorems. These Theorems are not available on measurable cardinals, as their relevant generalizations would in particular imply the existence of large homogeneous sets for colorings $f:[\kappa]^\omega\rightarrow 2$. This is known to be impossible by the classical example of Erd\H{o}s and Rado \cite{Erds1952CombinatorialTO} (see also \cite[Proposition 7.1]{kanamori1994}) 

 In the Kunen-Paris model we used the Kuratowski-Sierpi\'{n}ski  characterization of $\lambda^{+k}$. It is known that this is best possible if we consider any possible map. 
 However, we apply this characterization to the specific map $$F(U_1,\dots,U_k)=\{W\mid W\leq_T U_1\times\dots\times U_k\}\setminus\{U_1,\dots,U_k\}.$$
 This map has further properties, for example, if $x_1,\dots,x_k\in F(U_1,\dots,U_k)$ then $F(x_1,\dots,x_k)\subseteq F(U_1,\dots,U_k)$. 
 \begin{question}
     How large of a free set can we guarantee for the function $F$? Can we guarantee an infinite set? What about the case where $F:[\beta_{\text{nor}}(\kappa)]^{<\omega}\rightarrow P(\beta_{\text{nor}}(\kappa))$?
 \end{question}
 \subsection*{Acknowledgment} We would like to thank the participants of the TAU Set Theory Seminar, and especially M. Gitik and M. Magidor for their valuable comments during the presentation of this work during July 2023. Also, we would like to express our gratitude to the anonymous referee of this paper for carefully going over the paper, and for their mathematical and non-mathematical comments.

\bibliographystyle{amsplain}
\bibliography{ref}
\end{document}